\renewcommand{\arraystretch}{1.2}
\newcolumntype{L}{>{\raggedright\arraybackslash}m{.311\textwidth}}
\newcolumntype{M}{>{\raggedright\arraybackslash}m{.4775\textwidth}}
\newcolumntype{W}{>{\centering\arraybackslash}m{.4775\linewidth}}
\newcolumntype{H}{>{\centering\arraybackslash}m{.311\textwidth}}
\newtheorem{thm}{Theorem}[section]
\newtheorem*{mainthm}{Theorem~\ref{thm:main_thm}}
\newtheorem{cor}[thm]{Corollary}
\newtheorem{lem}[thm]{Lemma}
\newtheorem{prop}[thm]{Proposition}
\newtheorem{conj}[thm]{Conjecture}
\theoremstyle{definition}
\newtheorem{defn}[thm]{Definition}
\newtheorem{rem}[thm]{Remark}
\newcommand{\bbA}{\mathbb{A}}
\newcommand{\bbC}{\mathbb{C}}
\newcommand{\bbF}{\mathbb{F}}
\newcommand{\bbN}{\mathbb{N}}
\newcommand{\bbP}{\mathbb{P}}
\newcommand{\bbQ}{\mathbb{Q}}
\newcommand{\bbR}{\mathbb{R}}
\newcommand{\bbZ}{\mathbb{Z}}
\newcommand{\calD}{\mathcal{D}}
\newcommand{\calK}{\mathcal{K}}
\newcommand{\calO}{\mathcal{O}}
\newcommand{\calS}{\mathcal{S}}
\newcommand{\kbar}{\overline{k}}
\newcommand{\QQbar}{\overline{\bbQ}}
\renewcommand{\hbar}{\overline{h}}
\newcommand{\ord}{\operatorname{ord}}
\newcommand{\Gal}{\operatorname{Gal}}
\newcommand{\id}{\operatorname{id}}
\newcommand{\tors}{\operatorname{tors}}
\newcommand{\Res}{\operatorname{Res}}
\newcommand{\PrePer}{\operatorname{PrePer}}
\newcommand{\Aut}{\operatorname{Aut}}
\newcommand{\rk}{\operatorname{rk}}
\newcommand{\Ell}{\operatorname{ell}}
\newcommand{\Xell}{X^{\Ell}}
\newcommand{\Yell}{Y^{\Ell}}
\newcommand{\Supp}{\operatorname{Supp}}
\renewcommand{\tilde}{\widetilde}
\renewcommand{\phi}{\varphi}
\newcommand{\pic}{\includegraphics[scale=.6]}
\renewcommand{\labelenumi}{(\Alph{enumi})}
\numberwithin{equation}{section}
\title[Preperiodic points over quadratic fields]{Preperiodic points for quadratic polynomials with small cycles over quadratic fields}
\author{John R. Doyle}
\address{Department of Mathematics \\
University of Rochester \\
Rochester, NY 14627}
\curraddr{Mathematics \& Statistics Department\\
Louisiana Tech University\\
Ruston, LA 71272}
\email{jdoyle@latech.edu}
\begin{document}

\begin{abstract}
Given a number field $K$ and a polynomial $f(z) \in K[z]$, one can naturally construct a finite directed graph $G(f,K)$ whose vertices are the $K$-rational preperiodic points of $f$, with an edge $\alpha \to \beta$ if and only if $f(\alpha) = \beta$. The dynamical uniform boundedness conjecture of Morton and Silverman suggests that, for fixed integers $n \ge 1$ and $d \ge 2$, there are only finitely many isomorphism classes of directed graphs $G(f,K)$ as one ranges over all number fields $K$ of degree $n$ and polynomials $f(z) \in K[z]$ of degree $d$. In the case $(n,d) = (1,2)$, Poonen has given a complete classification of all directed graphs which may be realized as $G(f,\bbQ)$ for some quadratic polynomial $f(z) \in \bbQ[z]$, under the assumption that $f$ does not admit rational points of large period. The purpose of the present article is to continue the work begun by the author, Faber, and Krumm on the case $(n,d) = (2,2)$. By combining the results of the previous article with a number of new results, we arrive at a partial result toward a theorem like Poonen's --- with a similar assumption on points of large period ---  but over all quadratic extensions of $\bbQ$. 
\end{abstract}

\keywords{Preperiodic points; uniform boundedness; dynamical modular curves}

\subjclass[2010]{37P05; 37P35}

\maketitle


\section{Introduction}\label{sec:intro}

Throughout this article, $K$ will be a number field. Let $\phi(z) \in K(z)$ be a rational map of degree $d \ge 2$, thought of as a self-map of $\bbP^1(K)$. For $n \ge 0$, let $\phi^n$ denote the $n$-fold composition of $\phi$; that is, $\phi^0$ is the identity map, and $\phi^n = \phi \circ \phi^{n-1}$ for each $n \ge 1$. A point $\alpha \in \bbP^1(K)$ is \textbf{preperiodic} for $\phi$ if there exist integers $m \ge 1$ and $n \ge 0$ for which $\phi^{m+n}(\alpha) = \phi^n(\alpha)$. In this case, the minimal such $n$ is called the \textbf{preperiod} of $\alpha$ and the minimal such $m$ is called the \textbf{eventual period} of $\alpha$; following the terminology and notation of \cite{poonen:1998}, we say that $\alpha$ has \textbf{preperiodic type} (or simply \textbf{type}) $m_n$ under $\phi$. If the preperiod $n$ is zero, then we say that $\alpha$ is \textbf{periodic} of \textbf{(exact) period} $m$. We define
\[
	\PrePer(\phi,K) := \{\alpha \in \bbP^1(K) : \alpha \text{ is preperiodic for $\phi$} \}.
\]
The set $\PrePer(\phi,K)$ comes naturally equipped with the structure of a directed graph, which we denote by $G(\phi,K)$. The vertices of $G(\phi,K)$ are the elements of $\PrePer(\phi,K)$, and there is a directed edge from $\alpha$ to $\beta$ if and only if $\phi(\alpha) = \beta$.

In 1950, Northcott \cite[Thm. 3]{northcott:1950} proved that the set $\PrePer(\phi,K)$ is finite. Drawing an analogy between preperiodic points for rational maps and torsion points on elliptic curves, Morton and Silverman have made the following conjecture, a dynamical analogue of the strong uniform boundedness conjecture (now Merel's theorem \cite{merel:1996}) for elliptic curves:

\begin{conj}[Morton-Silverman \cite{morton/silverman:1994}] \label{conj:ubc}
There is a constant $C(n,d)$ such that for any number field $K/\bbQ$ of degree $n$, and for any rational map $\phi \in K(z)$ of degree $d \ge 2$,
	\[ \# \PrePer(\phi,K) \le C(n,d). \]
\end{conj}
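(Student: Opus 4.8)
The plan is to emulate the proof of the analogous strong uniform boundedness theorem for torsion on elliptic curves (Mazur, Kamienny, Merel), replacing modular curves by \emph{dynamical modular curves}. I should flag at the outset that the statement to be proved is the Morton--Silverman conjecture itself, which remains open; what follows is the strategy along which one expects a proof to run, together with the precise point at which every known approach stalls. The only unconditional input available from the excerpt is Northcott's theorem, which gives finiteness of $\PrePer(\phi,K)$ for each fixed pair $(\phi,K)$ but says nothing uniform, so uniformity is the entire content.

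First I would reduce the problem to bounding the combinatorial shape of $G(\phi,K)$. The graph is determined by (i) its periodic points, organized into cycles, and (ii) the finite trees of strictly preperiodic points hanging off the periodic points. A bound $C(n,d)$ therefore follows from separate uniform bounds on: the set of periods $m$ that can occur, the number of cycles of each such period, and the depth and branching of the preperiodic trees (equivalently, the preperiodic types $m_n$ that can occur together with their multiplicities). Since the branching at any vertex of a degree-$d$ map is at most $d$, the crux is bounding the periods and preperiods and the number of cycles.

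Second --- the heart of the matter --- I would attack the period/type bound geometrically. For fixed $m$, $d$, and fixed type data, the pairs $(\phi,\alpha)$ with $\deg\phi = d$ and $\alpha$ a point of period (resp. preperiodic type) $m_n$ are parametrized, after rigidifying the $\PGL_2$-conjugation action, by an algebraic dynamical modular curve (or, for $d>2$, variety) such as $X_1^{\dyn}(m)$; the configurations we wish to exclude correspond to its points of degree $\le n$ over $\bbQ$. Following Merel's argument, one would then: (a) show that for all but finitely many $m$ the relevant dynamical modular curve has geometric genus (or higher-dimensional complexity) large enough that, by a uniform bound on points of bounded degree, it carries no degree-$n$ point of the relevant kind; and (b) treat the finitely many small $m$ directly, as in Poonen's and the author's prior analysis of the case $(n,d)=(2,2)$.

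The hard part --- indeed the reason the statement is a conjecture and not a theorem --- is step (a) in any generality. Unlike the elliptic-curve setting, the dynamical modular curves $X_1^{\dyn}(m)$ do not embed in an abelian variety carrying a rich Hecke action, so there is no known substitute for the Eisenstein-quotient and Kamienny--Merel machinery that forces the absence of low-degree points once the level is large. At present one cannot even rule out points of arbitrarily large period over a \emph{fixed} number field, let alone uniformly across all $K$ of degree $n$. This is precisely why the present article retreats to $(n,d)=(2,2)$ and, following Poonen, proves a classification only \emph{conditionally} on the nonexistence of points of large period: removing that hypothesis, even in this smallest nontrivial case, would already require overcoming exactly this obstruction.
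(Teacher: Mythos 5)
The statement you were asked to prove is Conjecture~\ref{conj:ubc}, the Morton--Silverman uniform boundedness conjecture. The paper does not prove it: immediately after stating it, the author notes that it is still unknown whether such a constant $C(n,d)$ exists for \emph{any} pair $(n,d)$, and the paper's actual results (Theorem~\ref{thm:main_thm}) are partial and conditional. So there is no proof in the paper against which your attempt can be compared, and your proposal is right not to claim one; any submission purporting to give a complete argument here would necessarily be wrong.

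As a strategy sketch, what you wrote accurately reflects the approach that underlies the known partial results, including those in this paper: parametrize maps together with marked orbit data by dynamical modular curves $X_1(G)$, determine their points of degree at most $n$ over $\bbQ$ for small period data (this is exactly what Poonen did for $(n,d)=(1,2)$ in Theorem~\ref{thm:poonen_bound} and what the author does for $(2,2)$ in Sections~\ref{sec:periodic} and \ref{sec:preper_pts}), and seek a general mechanism excluding low-degree points once the period is large. Your diagnosis of the obstruction is also correct: there is no known dynamical analogue of the Mazur--Kamienny--Merel machinery (Eisenstein quotient, Hecke action on the Jacobian), and even over a fixed quadratic field the nonexistence of points of period $5$, or of period greater than $6$, is open --- which is precisely why both Theorem~\ref{thm:poonen_bound} and Theorem~\ref{thm:main_thm} carry the hypothesis that $f_c$ admits no $K$-rational points of large period. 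In short: your text is a correct assessment of the state of the art and a reasonable research program, but it is not, and could not currently be, a proof of the statement.
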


It is still unknown whether such a constant $C(n,d)$ exists for any pair of integers $(n,d)$. In fact, Conjecture~\ref{conj:ubc} remains unsolved even in the simplest case of quadratic \emph{polynomials} over $\bbQ$, though a significant amount of work has been done in this setting. The main result in this direction is the following result due to Poonen.

\begin{thm}[{Poonen \cite[Cor. 1]{poonen:1998}}]\label{thm:poonen_bound}
Let $f \in \bbQ[z]$ be a quadratic polynomial. If $f$ does not admit rational points of period greater than $3$, then $G(f,\bbQ)$ is isomorphic to one of the following twelve directed graphs, which appear in Appendix~\ref{app:graphs}:
	\[ 0 , \ 2(1) , \ 3(1,1) , \ 3(2) , \ 4(1,1) , \ 4(2) , \ 5(1,1)a , \ 6(1,1) , \ 6(2) , \ 6(3) , \ 8(2,1,1) , \ 8(3) . \]
In particular, $\#\PrePer(f,\bbQ) \le 9$.
\end{thm}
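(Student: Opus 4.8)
The plan is to prove this by normalizing quadratic polynomials into a one-parameter family and then classifying the possible preperiodic structures via the associated \emph{dynamical modular curves}. First I would observe that every quadratic polynomial $f(z) \in \bbQ[z]$ is linearly conjugate over $\bbQ$ to a unique polynomial of the form $f_c(z) = z^2 + c$ for some $c \in \bbQ$; since linear conjugacy induces an isomorphism of the directed graphs $G(f,\bbQ) \cong G(f_c,\bbQ)$, it suffices to classify $G(f_c,\bbQ)$ as $c$ ranges over $\bbQ$. The point $\infty$ is always a fixed point (the unique vertex with no outgoing edge to a finite point), so the problem reduces to understanding the finite $\bbQ$-rational preperiodic points of $f_c$.

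The core of the argument is to bound, for each period $m$ and preperiod structure, the parameters $c$ admitting a rational point of that type. For fixed $m$ and a given preperiodic type $m_n$, the condition that $f_c$ has a $\bbQ$-rational point of that exact type is cut out by a curve $X_{m_n} \subset \bbA^2$ in the variables $(x,c)$, obtained from the dynatomic polynomials $\Phi_{m,c}(x)$ (whose roots are the exact period-$m$ points) together with the preperiodicity relations. The key steps are then: (i) determine which points of small period ($m \le 3$, by hypothesis) can occur, and for each configuration compute the corresponding rational points on the relevant curves $X_{m_n}$; (ii) show that all the curves $X_{m_n}$ arising for small $n$ have genus $0$ or $1$, and use the theory of rational points on such curves — parametrization in the genus-$0$ case, and Mordell–Weil rank computations plus the finiteness from Northcott's theorem together with Faltings-type/elliptic descent in the genus-$1$ case — to pin down exactly which rational values of $c$ produce each additional preperiodic point; (iii) assemble the compatible combinations of period-$1$, period-$2$, and period-$3$ cycles together with their preperiodic tails into the full directed graph $G(f_c,\bbQ)$.

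The main obstacle will be step (ii): controlling the rational points on the dynamical modular curves of higher preperiod, since as $n$ grows the genus increases and the curves quickly pass out of the range where a complete determination of rational points is elementary. One must carefully identify the finitely many curves whose genus-$1$ Jacobians have positive rank, explicitly compute their rational points (for instance via $2$-descent and the resulting elliptic curves), and rule out the spurious solutions corresponding to points of smaller period or to the point at infinity. A secondary subtlety is ensuring that the enumerated graph structures are genuinely \emph{realizable} and mutually exclusive: one must check that the various period and preperiod data can be glued consistently, and exhibit an explicit value of $c$ realizing each of the twelve graphs.

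Finally, once the realizable configurations are classified, the bound $\#\PrePer(f,\bbQ) \le 9$ follows by inspecting the vertex counts of the twelve listed graphs, the maximum being achieved by the graphs $8(2,1,1)$ and $8(3)$ together with the fixed point $\infty$. I would close by remarking that the hypothesis excluding rational points of period greater than $3$ is exactly what keeps the relevant modular curves in the tractable low-genus range, and that removing it would require understanding rational points on curves $\Phi_{m,c} = 0$ for $m \ge 4$, where even the existence of rational points of period $4$ is governed by a genus-$2$ curve with finitely many (conjecturally zero) points beyond the known ones.
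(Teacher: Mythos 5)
First, a point of orientation: the paper does not prove this statement at all --- it is Poonen's theorem, quoted with a citation to \cite{poonen:1998} --- so the only thing to compare against is the strategy the paper sketches in \textsection\ref{sec:intro} (and implements over quadratic fields), which is also Poonen's actual method. Your overall plan --- normalize to $f_c(z) = z^2 + c$, encode each periodic/preperiodic configuration by a curve built from dynatomic polynomials, determine the rational points on each such curve, and then assemble the compatible configurations into graphs --- is exactly that strategy, so at the level of architecture your proposal is the right one.

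However, step (ii) contains a concrete error that would prevent the proof from closing up: it is false that all the relevant curves have genus $0$ or $1$. Among the twelve graphs, none contains both a fixed point and a $3$-cycle, nor both a $2$-cycle and a $3$-cycle, so the classification \emph{requires} showing that no $f_c$ over $\bbQ$ admits the cycle structures $(1,1,3)$ or $(2,3)$. The curves governing these, $X_1(1,3) \cong X_1(1,1,3)$ and $X_1(2,3)$, have genus $2$ (they are birational to the classical modular curves $\Xell_1(18)$ and $\Xell_1(13)$; see Proposition~\ref{prop:1and2etc} and the remark following it), and your stated toolkit --- genus-$0$ parametrization and elliptic descent --- does not apply to them. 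One needs either the known determination of rational points on those classical modular curves, or genus-$2$ machinery of the sort this paper uses elsewhere: rank-zero Jacobian arguments as in Lemma~\ref{lem:rank0jac}, or Chabauty--Coleman/Stoll bounds as in Theorem~\ref{thm:stoll}. Two smaller inaccuracies: Northcott's theorem gives finiteness of $\PrePer(f,\bbQ)$ for a \emph{fixed} $f$ and says nothing about rational points on the modular curves, so it does no work in step (ii); and ``Faltings-type'' arguments do not apply to genus-$1$ curves (Faltings requires genus $\ge 2$) --- what is actually used there is Mordell--Weil rank $0$ plus explicit torsion computations. As written, the exclusion step cannot be completed, so the list of twelve graphs would not be established.
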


\begin{rem}
The upper bound given by Poonen is nine, while the largest graphs appearing in his classification have only eight vertices. This is due to the fact that the point at infinity, which is a fixed point for every polynomial, has been omitted from the graphs. In this article, we follow Poonen's convention of excluding the fixed point at infinity from $G(f,K)$.
\end{rem}

Concerning the assumption in Theorem~\ref{thm:poonen_bound}, it was conjectured by Flynn, Poonen, and Schaefer \cite{flynn/poonen/schaefer:1997} that if $f \in \bbQ[z]$ is quadratic, then $f$ does not admit rational points of period greater than three. Hutz and Ingram \cite{hutz/ingram:2013} have provided substantial computational evidence to support this conjecture. It is known that $f$ cannot have rational points of period four (Morton \cite[Thm. 4]{morton:1998}), period five (Flynn-Poonen-Schaefer \cite[Thm. 1]{flynn/poonen/schaefer:1997}), or, if one assumes certain standard conjectures on $L$-series of curves, period six (Stoll \cite[Thm. 7]{stoll:2006}). However, ruling out periods greater than six currently seems to be out of reach.

The main strategy for proving these results is as follows: Given a directed graph $G$, one constructs an algebraic curve $Y_1(G)$ defined over $\bbQ$ whose rational points parametrize (equivalence classes of) quadratic polynomial maps $f$ together with a collection of marked points which ``generate" a subgraph of $G(f,\bbQ)$ isomorphic to $G$. Then, one determines the full set of rational points on $Y_1(G)$; if $Y_1(G)$ has no rational points (outside a finite set of dynamical ``cusps"), then for no quadratic polynomial $f \in \bbQ[z]$ can the graph $G(f,\bbQ)$ have a subgraph isomorphic to $G$. To prove their results, Morton \cite{morton:1998}; Flynn, Poonen, and Schaefer \cite{flynn/poonen/schaefer:1997}; and Stoll \cite{stoll:2006} each had to determine the full set of rational points on one such algebraic curve. On the other hand, to prove Theorem~\ref{thm:poonen_bound}, Poonen had to consider the curves $Y_1(G)$ for several different graphs $G$. In the current article, as well as in previous work with Faber and Krumm \cite{doyle/faber/krumm:2014}, we adopt the same general strategy in order to attempt a classification like Theorem~\ref{thm:poonen_bound}, but over arbitrary quadratic extensions of $\bbQ$.

Over quadratic fields, cycles of length 1, 2, 3, and 4 occur infinitely often. (This follows, for example, from Propositions~\ref{prop:1or2or3} and \ref{prop:4curve}.) However, only one example (up to equivalence) of a 6-cycle is known (\cite{doyle/faber/krumm:2014,flynn/poonen/schaefer:1997,hutz/ingram:2013,stoll:2006}), and there are no known examples of 5-cycles. Experimental evidence in \cite{hutz/ingram:2013} suggests that if $K/\bbQ$ is a quadratic field and $f \in K[z]$ is a quadratic polynomial, then $f$ admits no $K$-rational points of period greater than six.

While the computations in \cite{hutz/ingram:2013} dealt specifically with the possible periods of periodic points, in order to formulate an appropriate version of Theorem~\ref{thm:poonen_bound} for quadratic extensions of $\bbQ$ we require the data of not just cycle lengths, but full preperiodic graphs $G(f,K)$. In \cite{doyle/faber/krumm:2014}, we gathered a substantial amount of data to this end: For each of the first two hundred quadratic fields $K$ (ordered by the absolute values of their discriminants), we chose a large collection of (equivalence classes of) quadratic polynomials $f \in K[z]$ and, for each such $f$, determined the full graph $G(f,K)$. The complete list of graphs (up to isomorphism) found in our search appears in Appendix~\ref{app:graphs}. The largest graphs that we found had fourteen vertices. Taking into account the additional fixed point at infinity, we make the following conjecture:

\begin{conj}\label{conj:quad_bound}
Let $K/\bbQ$ be a quadratic field, and let $f \in K[z]$ be a quadratic polynomial. Then
	\[ \# \PrePer(f,K) \le 15. \]
Moreover, the only graphs which may occur (up to isomorphism) as $G(f,K)$ for some quadratic field $K$ and quadratic polynomial $f \in K[z]$ are the $46$ graphs appearing in Appendix~\ref{app:graphs}.
\end{conj}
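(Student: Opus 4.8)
The plan is to adapt Poonen's strategy, described above, from rational points to quadratic points. For a directed graph $G$ one has a dynamical modular curve $Y_1(G)$ defined over $\bbQ$, and a quadratic polynomial $f \in K[z]$ over a quadratic field $K$ whose graph $G(f,K)$ contains a copy of $G$ produces a point of $Y_1(G)$ defined over $K$; conversely, such a point reconstructs $f$ together with the marked preperiodic orbit realizing $G$. Because we let $K$ range over all quadratic fields, the governing object is no longer $Y_1(G)(\bbQ)$ but the set of points of $Y_1(G)$ of degree at most $2$ over $\bbQ$. These are captured by $Y_1(G)(\bbQ)$ together with the $\bbQ$-rational points of the symmetric square $\operatorname{Sym}^2 Y_1(G)$, so the problem becomes the determination of rational points on a collection of curves and surfaces.

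First I would fix the admissible cycle structure. Under a hypothesis paralleling Poonen's --- that $f$ admits no $K$-rational point of period greater than four, the largest period known to occur infinitely often over quadratic fields --- Propositions~\ref{prop:1or2or3} and \ref{prop:4curve} constrain the periodic part of $G(f,K)$ to cycles of length $1,2,3,4$. I would then enumerate the finitely many combinatorial preperiodic graphs that are compatible both with these cycle lengths and with the local constraint that, $f$ being a degree-$2$ polynomial, each vertex has at most two preimages. This produces a finite list of candidate ``maximal'' graphs, together with a finite list of minimal forbidden configurations $G$ lying just beyond the $46$ graphs already realized.

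For each such minimal forbidden $G$, the goal is to show that $Y_1(G)$ has no quadratic points other than cusps --- equivalently that neither $Y_1(G)(\bbQ)$ nor $\operatorname{Sym}^2 Y_1(G)(\bbQ)$ contributes a genuinely new quadratic polynomial. Combining these non-existence results with the structural data recorded in \cite{doyle/faber/krumm:2014}, a combinatorial assembly step then glues the admissible generating subgraphs: knowing exactly which small clusters of periodic and strictly-preperiodic points can coexist determines $G(f,K)$ up to isomorphism, and one reads off both the classification and the bound $\#\PrePer(f,K) \le 15$.

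The hard part will be the determination of quadratic points on the curves $Y_1(G)$ of genus at least $2$. For rational points, methods such as Chabauty--Coleman after a Mordell--Weil rank computation suffice, as in the work of Morton, Flynn--Poonen--Schaefer, and Stoll. Quadratic points, however, are controlled by the rational points of $\operatorname{Sym}^2 Y_1(G)$, whose rational points can be infinite --- for instance when $Y_1(G)$ is hyperelliptic or bielliptic, or admits a map to an elliptic curve of positive rank. Separating the finitely many sporadic quadratic points from the infinite families forced by such maps, and handling the curves on which these techniques degenerate, is the principal obstacle; it is precisely this difficulty --- together with the inability to rule out periods five and six --- that keeps the full statement a conjecture and forces any unconditional theorem to remain partial.
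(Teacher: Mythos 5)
The statement you were asked to prove is Conjecture~\ref{conj:quad_bound}, and the paper does not prove it: it is stated explicitly as a conjecture, supported by the search data of \cite{doyle/faber/krumm:2014}, and the paper's actual contribution (Theorem~\ref{thm:main_thm}) is only a partial result toward it, proved under the additional hypothesis that $f_c$ admits no $K$-rational points of period greater than four. So there is no proof in the paper to match your argument against; the only fair comparison is between your program and the paper's program.

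On that level your proposal reconstructs the paper's approach faithfully: reduce to the normal form $f_c = z^2 + c$, constrain the admissible cycle structures via Propositions~\ref{prop:1or2or3} and \ref{prop:4curve}, identify the minimal admissible graphs lying just outside the list of 46, and for each such $G$ try to show that $Y_1(G)$ has no non-cuspidal points of degree at most $2$ over $\bbQ$. Your $\operatorname{Sym}^2 Y_1(G)$ framing is the abstract version of what the paper does concretely: Lemma~\ref{lem:ECquad_pts} for genus-one quotients, Lemma~\ref{lem:rank0jac} identifying non-obvious quadratic points on genus-two curves with nonzero elements of a rank-zero Mordell--Weil group, and Theorem~\ref{thm:stoll} (Chabauty--Stoll) for rational points on auxiliary quotients. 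But your proposal is a program, not a proof, and you concede as much in your final paragraph. Concretely, two gaps remain that neither you nor the paper can close: first, the conjecture is unconditional, while your plan (like the paper's theorem) assumes no points of period greater than four, so even a completed program would prove only a conditional analogue --- ruling out periods five and six requires curves of genus far beyond current techniques; second, the quadratic-point determination fails or remains incomplete for the ten graphs listed in Theorem~\ref{thm:main_thm} (for instance the curve in Theorem~\ref{thm:1and4quad_pts}, where the Chabauty bound is not sharp), and the paper defers exactly this to \cite{doyle/krumm/wetherell}. The correct assessment is therefore not that your argument contains a hidden error, but that it is a correct description of the strategy behind the paper's partial theorem rather than a proof of the conjecture, which remains open.
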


Our approach to Conjecture~\ref{conj:quad_bound} is twofold and follows the basic structure of Poonen's proof of Theorem~\ref{thm:poonen_bound}. First, for each of the 46 graphs $G$ appearing in Appendix~\ref{app:graphs}, we try to explicitly determine all realizations of $G$ as the graph $G(f,K)$ for some quadratic field $K$ and quadratic polynomial $f \in K[z]$. In particular, if there are only finitely many such instances (up to equivalence; see below) for a particular graph $G$, we would like to find all of them (or show that we already have), typically by determining the full set of quadratic points on a certain algebraic curve $Y_1(G)$ as described above. This is the direction we pursued in \cite{doyle/faber/krumm:2014}.

However, since in \cite{doyle/faber/krumm:2014} we dealt specifically with the 46 graphs we \emph{did} find in our initial search, our analysis yielded little evidence to suggest that there could not be additional graphs that our search \emph{did not} find. In the current article, which is the second part of this twofold approach, we turn our attention to those graphs that we did not find in the search in \cite{doyle/faber/krumm:2014}. We consider certain directed graphs $G$ which are ``minimal" among graphs not found in Appendix~\ref{app:graphs}, and we attempt to prove that such graphs $G$ cannot be realized as $G(f,K)$ for any quadratic field $K$ and quadratic polynomial $f \in K[z]$. The eventual goal is to narrow down the possible preperiodic graphs $G(f,K)$ until all that remains is the collection of 46 graphs appearing in Appendix~\ref{app:graphs}. This goal has not been fully attained; however, our main theorem (Theorem~\ref{thm:main_thm}) is a partial result in this direction.

In order to state the main result of this paper, we must first set some notation. Two polynomials $f,g \in K[z]$ are considered \emph{dynamically equivalent} if there exists a linear polynomial $\ell \in K[z]$ such that $g = \ell^{-1} \circ f \circ \ell$, since iteration commutes with conjugation. In particular, $\ell$ induces a directed graph isomorphism $G(g,K) \overset{\sim}{\longrightarrow} G(f,K)$, so for our purposes it suffices to consider a single quadratic polynomial from each equivalence class. Every quadratic polynomial over a given number field $K$ is linearly conjugate to a unique polynomial of the form
	\[ f_c(z) := z^2 + c \]
with $c \in K$; see \cite[p. 156]{silverman:2007}, for example. We therefore restrict our attention to the one-parameter family of maps of the form $f_c$.

Combining the results from \cite{doyle/faber/krumm:2014} with those in the current article yields our main theorem:

\begin{thm}\label{thm:main_thm}
Let $K$ be a quadratic field, and let $c \in K$. Suppose $f_c$ does not admit $K$-rational points of period greater than four, and suppose that $G(f_c,K)$ is not isomorphic to one of the 46 graphs shown in Appendix~\ref{app:graphs}. Then $G(f_c,K)$	\begin{enumerate}
		\item \emph{properly} contains one of the following graphs from Appendix~\ref{app:graphs}:
			\[ \text{10(1,1)b, 10(2), 10(3)a, 10(3)b, 12(2,1,1)b, 12(4), 12(4,2); or} \]
		\item contains one of the following graphs (from Figures \ref{fig:1and4}, \ref{fig:graph12_2_minus2pts}, and \ref{fig:1and2_3}, respectively):
			\[ \text{$G_0$, $G_2$, $G_4$.} \]
	\end{enumerate}
Moreover:
	\renewcommand{\labelenumi}{(\alph{enumi})}
	\begin{enumerate}
		\item There are at most four pairs $(K,c)$ for which $G(f_c,K)$ properly contains the graph 12(2,1,1)b, at most five pairs for which $G(f_c,K)$ properly contains 12(4), and at most one pair for which $G(f_c,K)$ properly contains 12(4,2). For each such pair we must have $c \in \bbQ$.
		\item There is at most one pair $(K,c)$ for which $G(f_c,K)$ contains $G_0$, at most four pairs for which $G(f_c,K)$ contains $G_2$, and at most three pairs for which $G(f_c,K)$ contains $G_4$. For each such pair we must have $c \in \bbQ$.
	\end{enumerate}
	\renewcommand{\labelenumi}{(\Alph{enumi})}
\end{thm}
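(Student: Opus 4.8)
The plan is to reduce the theorem to a finite, explicit case analysis driven by the strategy of Poonen and of the author's earlier work with Faber and Krumm: for each candidate ``minimal'' graph $G$ not appearing in Appendix~\ref{app:graphs}, construct the associated dynamical modular curve $Y_1(G)$ parametrizing pairs $(f_c, \text{marked points})$ realizing $G$, and then determine all quadratic points on $Y_1(G)$. The logical skeleton is a combinatorial sorting argument. Assuming $f_c$ admits no $K$-rational point of period greater than four, the possible cycle structures are highly constrained, so any graph $G(f_c,K)$ not among the 46 known graphs must, when one adds vertices one at a time along the preperiodic forest above each cycle, first pass through a graph that properly contains one of the listed ``boundary'' graphs (10(1,1)b, 10(2), 10(3)a, 10(3)b, 12(2,1,1)b, 12(4), 12(4,2)) or contains one of $G_0$, $G_2$, $G_4$. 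The first part of the proof is therefore to enumerate the minimal overgraphs of the 46 admissible graphs, subject to the period restriction, and verify that every such minimal overgraph contains one of these nine graphs as a subgraph.

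Next I would address the quantitative refinements (a) and (b). For each of the graphs $G$ appearing with an explicit finite bound --- namely proper containment of 12(2,1,1)b, 12(4), 12(4,2), and containment of $G_0$, $G_2$, $G_4$ --- the corresponding modular curve $Y_1(G)$ has genus large enough that, by Faltings, it has only finitely many points of any bounded degree. The task is to pin down the exact count of quadratic points and to show each forces $c \in \bbQ$. Concretely, I would compute a model for each $Y_1(G)$, determine its genus $g$, and analyze its quadratic points: when $g \ge 2$ the set of quadratic points is governed by the map to the Jacobian and by whether the curve admits a degree-2 map to $\bbP^1$ or to an elliptic curve of positive rank (the only two sources of infinitely many quadratic points, by the theorem of Harris--Silverman and Abramovich--Harris). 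Ruling out such maps, or showing any such map produces only the finitely many points already enumerated, yields the stated bounds (at most four, five, one, one, four, three pairs respectively). To force $c \in \bbQ$ for each surviving pair $(K,c)$, I would exhibit that the quadratic points in question arise from $\bbQ$-rational points on a quotient curve, so that $c$ descends to $\bbQ$ even though the marked preperiodic points generate the quadratic field $K$.

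The main obstacle, as in all work of this type, is the explicit determination of quadratic points on the higher-genus curves $Y_1(G)$ underlying the quantitative claims. For genus-2 and genus-3 curves this typically requires a careful Chabauty-type or Mordell--Weil sieve computation on the Jacobian together with an analysis of the pullback of rational points under degree-2 maps; the delicate point is establishing that the Jacobian has the rank one needs and that the relevant maps $Y_1(G) \to \bbP^1$ or $Y_1(G) \to E$ do not exist beyond those already accounted for. I expect the graphs 12(4) and 12(4,2), whose curves encode the rigid interaction between a $4$-cycle and its preimage tree, to be the most computationally demanding, since the period-4 locus already sits at the boundary of what is tractable; here I would lean on the models and partial computations established in \cite{doyle/faber/krumm:2014} and reduce the remaining analysis to a finite verification. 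The combinatorial reduction in the first part, by contrast, is essentially bookkeeping: tedious but not deep, provided one is systematic about enumerating minimal overgraphs compatible with the period-$\le 4$ hypothesis.
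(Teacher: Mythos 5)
There is a genuine gap, and it lies precisely in the part you dismiss as ``essentially bookkeeping.'' If you actually enumerate the minimal admissible overgraphs of the known graphs compatible with the period-$\le 4$ hypothesis, you do \emph{not} find that each one contains one of the nine graphs in the statement: you also find the graphs $G_1$, $G_3$, $G_5$, $G_6$, $G_7$, $G_8$, $G_9$, $G_{10}$ (Figures~\ref{fig:1_3and_all1_2}, \ref{fig:subgraph_of_14_211}, \ref{fig:2and_all1_2}, \ref{fig:1_2and2_2}, \ref{fig:1and3_2}, \ref{fig:1_2and3}, \ref{fig:2and3_2}, \ref{fig:2_2and3}), none of which contains any of 10(1,1)b, 10(2), 10(3)a/b, 12(2,1,1)b, 12(4), 12(4,2), $G_0$, $G_2$, $G_4$. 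The dichotomy (A)/(B) in the theorem is true only because one \emph{proves arithmetically} that these remaining graphs cannot be realized by any quadratic pair (or are realized only by pairs already in Appendix~\ref{app:graphs}); in the paper this is the content of Theorems~\ref{thm:1_3and_all1_2}, \ref{thm:2and_all1_2}, \ref{thm:1_2and2_2}, \ref{thm:1_2and3}, and \ref{thm:2_2and3}, each of which requires determining all quadratic points on an affine curve of genus 5 or 9 (via rank computations, twists, Lemma~\ref{lem:ECquad_pts}, and Stoll's Chabauty bound applied to auxiliary quotient curves). The same issue infects your claim that the cycle structures are ``highly constrained'': the restriction to the list $(1,1),(2),(3),(4),(1,1,2),(1,1,3),(1,1,4),(2,3),(2,4),(1,1,2,4)$ is not a formal consequence of the period-$\le 4$ hypothesis, but requires proving that no quadratic pair admits two disjoint 3-cycles, two disjoint 4-cycles, a $(3,4)$ structure, or a $(1,1,2,3)$ structure --- Theorems~\ref{thm:unique3cycle}, \ref{thm:unique4cycle}, \ref{thm:3and4}, \ref{thm:1and2and3}, which involve curves of genus 4, 2, 49, and 9 respectively. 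A combinatorial sorting argument cannot supply any of this.

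A secondary, smaller problem is in your quantitative step: appealing to Harris--Silverman and Abramovich--Harris (degree-2 maps to $\bbP^1$ or to positive-rank elliptic curves) can at best establish \emph{finiteness} of the set of non-obvious quadratic points, whereas the theorem asserts explicit counts (at most one, three, four, five pairs) together with $c \in \bbQ$. The paper obtains these counts by explicit means: Lemma~\ref{lem:rank0jac} on rank-zero Jacobians, Magma rank bounds, Theorem~\ref{thm:stoll}, and automorphism-orbit arguments showing that rational points on the auxiliary curves occur in orbits of size 4 or 6 (as in Lemmas~\ref{lem:1and4quotients}, \ref{lem:1and2_3aux}, \ref{lem:1_2and3pts}, \ref{lem:2_2and3pts}), with $c \in \bbQ$ forced because the surviving quadratic points all have rational $u$- or $x$-coordinate and $c$ is a rational function of that coordinate alone. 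Your proposal gestures at Chabauty computations ``for genus-2 and genus-3 curves,'' but the curves actually at issue have genus 5, 9, and 49, and the workable route is through their hyperelliptic or elliptic quotients --- a reduction your outline does not set up.
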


Though Theorem~\ref{thm:main_thm} is not a complete solution to Conjecture~\ref{conj:quad_bound}, it offers a precise description of what must be done to prove an analogue of Theorem~\ref{thm:poonen_bound} for quadratic fields. To prove Conjecture~\ref{conj:quad_bound} \emph{under the assumption that $f$ admits no quadratic points of period greater than four\footnote{A full proof of Conjecture~\ref{conj:quad_bound} would also require showing that there are no points of period greater than four, with the exception of the one known example of a point of period six. Even over $\bbQ$, however, proving non-existence of points of large period will likely require vastly different techniques, owing to the increasing complexity of the corresponding curves.}}, it suffices to completely determine the sets of quadratic points on the curves $Y_1(G)$ for the ten graphs $G$ appearing in the statement of Theorem~\ref{thm:main_thm}. The necessary analysis of these curves, which requires different, more technical methods than those used in the present article, is ongoing joint work with Krumm and Wetherell and will appear in \cite{doyle/krumm/wetherell}.

The following terminology will be used throughout the remainder of this paper: If $K$ is a quadratic field and $c \in K$, then we refer to the pair $(K,c)$ as a \textbf{quadratic pair}. For the sake of brevity, we will often ascribe to a quadratic pair $(K,c)$ the properties of the dynamical system $f_c$ over $K$. For example, we may say ``quadratic pairs $(K,c)$ that admit points of period $N$" rather than ``quadratic fields $K$ and elements $c \in K$ for which $f_c$ admits $K$-rational points of period $N$."

We briefly outline the rest of this article. In \textsection \ref{sec:DMC}, we discuss the dynamical modular curves $Y_1(G)$ for the family of maps $f_c$, where $G$ is an ``admissible graph" --- a finite directed graph with certain additional structure dictated by the dynamics of quadratic polynomials. We record in \textsection \ref{sec:curves} a collection of results concerning rational and quadratic points on algebraic curves. We then apply these results to the study of various dynamical modular curves in sections \ref{sec:periodic} and \ref{sec:preper_pts}, concluding with the proof of Theorem~\ref{thm:main_thm} in \textsection \ref{sec:main_thm}.

\subsection*{Acknowledgments}
Much of this paper appears, in some form, in my dissertation \cite{doyle:thesis}. I would like to thank Pete Clark, Dino Lorenzini, Robert Rumely, and Robert Varley for many helpful discussions while I was working on my thesis. I would also like to thank David Krumm for his suggestions on some of the quadratic point computations, and Tom Tucker for comments on an earlier draft.


\section[Dynamical modular curves]{Dynamical modular curves}\label{sec:DMC}

Given a finite directed graph $G$, we define a \emph{dynamical modular curve} whose $K$-rational points parametrize quadratic maps $f_c \in K[z]$ together with a collection of marked points that ``generate" a subgraph of $G(f_c,K)$ isomorphic to $G$. Throughout this article, we will use the notation $X_\bullet$, $Y_\bullet$, and $U_\bullet$ exclusively to represent various dynamical modular curves. When we need to refer to a \emph{classical} modular curve, which parametrizes elliptic curves together with certain level structure, we will heed the advice of \cite[p. 163]{silverman:2007} and write $\Xell_\bullet$ and $\Yell_\bullet$ to avoid confusion.

In this paper, we give only an informal description of these dynamical modular curves. A more formal treatment appears in \cite{doyle:dmc}.

\subsection{First case: periodic points}\label{sub:per_dyn}

Let $N$ be any positive integer. If $x$ is a point of period $N$ for $f_c$, then we have $f_c^N(x) - x = 0$. However, this equation is also satisfied if $x$ has period equal to a proper divisor of $N$. We therefore define the \textbf{$N$th dynatomic polynomial} to be the polynomial
	\[
		\Phi_N(x,c) := \prod_{n \mid N} \left(f_c^n(x) - x\right)^{\mu(N/n)} \in \bbZ[x,c],
	\]
which has the property that
	\begin{equation}\label{eq:Ncycle}
		f_c^N(x) - x = \prod_{n \mid N} \Phi_n(x,c)
	\end{equation}
for all $N \in \bbN$ --- see \cite[p. 571]{morton/vivaldi:1995}. (Here $\mu$ is the M\"{o}bius function.) If $(x,c) \in K^2$ satisfies $\Phi_N(x,c) = 0$, we say that $x$ has \textbf{formal period} $N$ for $f_c$. Every point of exact period $N$ has formal period $N$, but in some cases a point of formal period $N$ may have exact period $n$ a proper divisor of $N$. The fact that $\Phi_N(x,c)$ is a polynomial is shown in \cite[Thm. 4.5]{silverman:2007}.

Since $\Phi_N(x,c)$ has coefficients in $\bbZ$, the equation $\Phi_N(x,c) = 0$ defines an affine curve $Y_1(N) \subset \bbA^2$ over $K$, and this curve was shown to be irreducible over $\bbC$ by Bousch \cite[\textsection 3, Thm. 1]{bousch:1992}. We define $U_1(N)$ to be the Zariski open subset of $Y_1(N)$ on which $\Phi_n(x,c) \ne 0$ for each proper divisor $n$ of $N$. In other words, $(x,c)$ lies on $Y_1(N)$ (resp., $U_1(N)$) if and only if $x$ has formal (resp., exact) period $N$ for $f_c$. We denote by $X_1(N)$ the normalization of the projective closure of $Y_1(N)$.

There is a natural order-$N$ automorphism $\sigma$ on $U_1(N)$ given by $(x,c) \mapsto (f_c(x),c)$. We denote by $U_0(N)$ the quotient of $U_1(N)$ by $\sigma$. The $K$-rational points on the curve $U_0(N)$ parametrize quadratic maps $f_c \in K[z]$ together with $K$-rational \emph{cycles} of length $N$ for $f_c$. We define $X_0(N)$ and $Y_0(N)$ similarly.

For each $N \in \bbN$, define 
\[
	d(N) := \deg_x \Phi_N(x,c) = \sum_{n \mid N} \mu(N/n)2^n,
\]
and let $r(N) := d(N)/N$. The quantity $d(N)$ (resp., $r(N)$) represents the number of points of period $N$ (resp., periodic cycles of length $N$) for a general quadratic polynomial $f_c$ over $\QQbar$, excluding the fixed point at infinity in the case $N = 1$. The maps $X_1(N) \to \bbP^1$ and $X_0(N) \to \bbP^1$ obtained by projection onto the $c$-coordinate have degrees $d(N)$ and $r(N)$, respectively. The first few values of $d(N)$ and $r(N)$ are shown in Table~\ref{tab:d_and_r}.

\begin{table}
\centering
\renewcommand{\arraystretch}{1.5}
\caption{Values of $d(N)$ and $r(N)$ for small values of $N$}
\label{tab:d_and_r}
\begin{tabular}{|c||c|c|c|c|c|c|c|c|}
\hline
	$N$ & 1 & 2 & 3 & 4 & 5 & 6 & 7 & 8 \\
\hline
	$d(N)$ & 2 & 2 & 6 & 12 & 30 & 54 & 126 & 240 \\
\hline
	$r(N)$ & 2 & 1 & 2 & 3 & 6 & 9 & 18 & 30 \\
\hline
\end{tabular}
\end{table}

More generally, let $\{N_1,\ldots,N_m\}$ be a finite sequence of (not necessarily distinct) positive integers in nondecreasing order, and suppose that any integer $N$ occurring in the sequence appears with multiplicity at most $r(N)$. We define $U_1(N_1,\ldots,N_m)(K)$ \emph{as a set} to be
	\begin{align*}
	\{(x_1,\ldots,x_m,c) \in \bbA^{m+1}(K) : x_i &\text{ has period $N_i$ for $f_c$, and $x_i$ and $x_j$ }\\
			&\text{ have disjoint orbits whenever $i \ne j$} \}.
	\end{align*}
Note that $U_1(N_1,\ldots,N_m)(K)$ is contained in the set of $K$-rational points on the affine curve 
	\[
		\{\Phi_{N_i}(x_i,c) = 0 : i \in \{1,\ldots,m\} \} \subset \bbA^{m+1},
	\]
which is the fiber product of the curves $Y_1(N_i)$ relative to projection onto the $c$-coordinate. We let $X_1(N_1,\ldots,N_m)$ denote the normalization of the projective closure of $U_1(N_1,\ldots,N_m)$. The action of $f_c$ on each coordinate $x_i$ induces a natural automorphism of $U_1(N_1,\ldots,N_m)$; the group $\Gamma$ of such automorphisms is isomorphic to $\bigoplus_{i=1}^m (\bbZ/N_i\bbZ)$. We denote by $U_0(N_1,\ldots,N_m)$ the quotient of $U_1(N_1,\ldots,N_m)$ by $\Gamma$, and we similarly define $X_0(N_1,\ldots,N_m)$.

\subsection{General case: admissible graphs}\label{sub:admissible}
Given a number field $K$ and a parameter $c \in K$, the graph $G(f_c,K)$ necessarily has a great deal of structure and symmetry dictated by the dynamics of quadratic polynomial maps. For this reason, we restrict our attention to finite directed graphs $G$ that possess this additional structure.

\begin{defn}\label{defn:admissible}
A finite directed graph $G$ is \textbf{admissible} if it has the following properties:

\renewcommand{\labelenumi}{(\alph{enumi})}
\begin{enumerate}
\item Every vertex of $G$ has out-degree 1 and in-degree either 0 or 2.
\item For each $N \ge 2$, $G$ contains at most $r(N)$ $N$-cycles.
\end{enumerate}

We say that $G$ is \textbf{strongly admissible} if it satisfies the following additional condition:
\begin{enumerate}
\setcounter{enumi}{2}
\item If $G$ contains a fixed point (i.e., a vertex with a self-loop), then $G$ contains exactly two such vertices.
\end{enumerate}
\renewcommand{\labelenumi}{(\Alph{enumi})}
\end{defn}

Given an admissible graph $G$, we define the \textbf{cycle structure} of $G$ to be the nondecreasing list of lengths of disjoint cycles occurring in $G$. We will say that $G$ \emph{contains} the cycle structure $\tau = (N_1,\ldots,N_m)$ if $\tau$ is a subsequence of the cycle structure of $G$; that is, if $G$ has an admissible subgraph with cycle structure $\tau$.

Admissibility is a property shared by nearly all preperiodic graphs $G(f_c,K)$; more precisely, we have the following:

\begin{lem}[{\cite[Lem. 2.4 \& Cor. 2.6]{doyle:dmc}}]\label{lem:admissible}
Let $K$ be a number field, and let $c \in K$. The graph $G(f_c,K)$ is \textit{admissible} if and only if $0 \notin \PrePer(f_c,K)$ and is \textit{strongly admissible} if and only if $0 \notin \PrePer(f_c,K)$ and $c \ne 1/4$. Moreover, the set of parameters $c \in K$ for which $G(f_c,K)$ is not strongly admissible is finite.
\end{lem}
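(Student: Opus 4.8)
The plan is to read off the three defining properties of (strong) admissibility directly from the structure of the map $f_c(z) = z^2 + c$, handling each in turn, and then to obtain finiteness by a height bound on the critical orbit.

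First I would treat property (a). Every finite $K$-rational preperiodic point $\alpha$ has a unique image $f_c(\alpha)$, which is again preperiodic, so every vertex has out-degree $1$; this uses only that $f_c$ is a map. For in-degrees, the preimages of a vertex $\beta$ are the solutions of $z^2 = \beta - c$. If $\beta - c$ is a nonzero square in $K$ there are two distinct preimages, each automatically preperiodic since it maps to $\beta$, giving in-degree $2$; if $\beta - c$ is a nonsquare there are none, giving in-degree $0$; and the remaining case $\beta = c$ yields the single preimage $z = 0$, a double root of $z^2 = 0$. Since $0 \in \PrePer(f_c,K)$ if and only if $c = f_c(0) \in \PrePer(f_c,K)$, the only vertex that can have in-degree $1$ is $c$, and it does so precisely when $0 \in \PrePer(f_c,K)$. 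Thus (a) holds if and only if $0 \notin \PrePer(f_c,K)$.

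Next I would dispatch (b) and (c). Property (b) holds unconditionally: for each $N \ge 2$ the points of exact period $N$ lie among the $\deg_x \Phi_N(x,c) = d(N)$ roots in $x$ of the $N$th dynatomic polynomial, and they are partitioned into cycles of length exactly $N$, so the number of $N$-cycles is at most $d(N)/N = r(N)$. This establishes the first equivalence. For strong admissibility I would count fixed points, which are the roots of $z^2 - z + c$ with discriminant $1 - 4c$: there are two, one, or zero $K$-rational fixed points according as $1 - 4c$ is a nonzero square, zero, or a nonsquare, and exactly one fixed point occurs precisely when $c = 1/4$. Hence (c) fails if and only if $c = 1/4$, which gives the second equivalence.

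The substantive point, which I expect to be the main obstacle, is finiteness. Since $c = 1/4$ contributes only a single value, it suffices to bound the set of $c \in K$ for which $0$ is preperiodic, and I would do this by a place-by-place estimate on the critical orbit $a_n := f_c^n(0)$, which satisfies $a_{n+1} = a_n^2 + c$. At a non-archimedean place $v$ with $|c|_v > 1$, the ultrametric inequality forces $|a_n|_v = |c|_v^{2^{n-1}} \to \infty$, so the orbit is unbounded and $0$ is not preperiodic; hence $|c|_v \le 1$ at every finite place. At an archimedean place $v$ with $|c|_v > 2$, the bound $|a_{n+1}|_v \ge |a_n|_v^2 - |c|_v$ shows $|a_n|_v$ is eventually strictly increasing and diverges, again precluding preperiodicity; hence $|c|_v \le 2$ at every infinite place. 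Summing the local contributions bounds the Weil height $h(c)$ by an absolute constant. Because $c$ ranges over the fixed number field $K$ of bounded degree, Northcott's theorem then forces the set of such $c$ to be finite, and together with the point $c = 1/4$ this shows that $G(f_c,K)$ fails to be strongly admissible for only finitely many $c \in K$.
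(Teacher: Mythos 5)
Your proof is correct, but there is nothing in this paper to compare it against: the paper never proves Lemma~\ref{lem:admissible}, it imports the statement wholesale from the companion paper \cite{doyle:dmc} (Lem.~2.4 \& Cor.~2.6). So what you have written is a complete, self-contained proof of a quoted result, and each step holds up. The in-degree analysis correctly isolates the critical value: $z^2 = \beta - c$ has a repeated root exactly when $\beta = c$, the vertex $c$ belongs to $G(f_c,K)$ if and only if $0 \in \PrePer(f_c,K)$ (since $f_c(0)=c$ and preimages of preperiodic points are preperiodic), and in that case $c$ has in-degree one, so condition (a) of admissibility fails precisely when $0 \in \PrePer(f_c,K)$. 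Your observation that condition (b) is unconditional is right and worth making explicit, since $\Phi_N(x,c)$ is \emph{monic} in $x$ of degree $d(N)$ for every value of $c$, so the $K$-rational points of exact period $N$ number at most $d(N)$ and partition into cycles of size exactly $N$, giving at most $r(N)$ cycles; this is why admissibility reduces entirely to (a). The discriminant count $1-4c$ handles (c), failing exactly at $c = 1/4$. Finally, the finiteness argument (non-archimedean escape when $|c|_v > 1$, archimedean escape when $|c|_v > 2$, hence $h(c) \le \log 2$ whenever $0$ is preperiodic, then Northcott over the fixed field $K$) is the standard bounded-height argument and is the natural route to Cor.~2.6 of \cite{doyle:dmc}; the only cosmetic point to mind is the normalization of absolute values at complex places, but with the convention $h(c) = [K:\bbQ]^{-1}\sum_v n_v \log \max\left(1,|c|_v\right)$ your bound goes through verbatim.
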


Before discussing the dynamical modular curves $U_1(G)$ for admissible graphs $G$, we require one more definition.

\begin{defn}\label{defn:generate}
Let $G$ be an admissible graph, and let $\{P_1,\ldots,P_n\}$ be a set of vertices of $G$. Let $H$ be the smallest admissible subgraph of $G$ containing all of the vertices $P_1,\ldots,P_n$. We say that $\{P_1,\ldots,P_n\}$ is a \textbf{generating set} for $H$. If any other generating set for $G$ contains at least $n$ vertices, then we call $\{P_1,\ldots,P_n\}$ a \textbf{minimal generating set} for $G$.
\end{defn}

The dynamical modular curves $U_1(G)$ are formally defined in \cite{doyle:dmc}, where it is shown that $U_1(G)$ is always an algebraic curve that is irreducible over $\bbC$. For the purposes of this article, however, given an admissible graph $G$ minimally generated by $n$ vertices, we will be content to describe $U_1(G)(K)$ simply as a subset of $\bbA^{n+1}(K)$ for all number fields $K$. In each case, $U_1(G)(K)$ will have the property that if $G(f_c,K)$ contains a subgraph isomorphic to $G$, then there is a point $(x_1,\ldots,x_n,c) \in U_1(G)(K)$ such that $\{x_1,\ldots,x_n\}$ minimally generates the subgraph $G \subseteq G(f_c,K)$. We denote by $X_1(G)$ the normalization of the projective closure of $U_1(G)$.

\begin{rem}\label{rem:redundant}
If the cycle structure of a strongly admissible graph contains a ``1," then it must contain exactly two by definition. This is reflected in the fact that the curves $X_1(1)$ and $X_1(1,1)$ are isomorphic: The fixed points for $f_c$ are the roots of $x^2 - x + c$, so if $\alpha$ is one fixed point, then the second is given by $\alpha' = 1 - \alpha$. More generally, suppose $G$ is a strongly admissible graph minimally generated by $\{P_1,P_2,\ldots,P_n\}$, where $P_1$ is a fixed point. In this case, if $G'$ is the (weakly) admissible subgraph generated by $\{P_2,\ldots,P_n\}$, then we have an isomorphism $X_1(G) \cong X_1(G')$.
\end{rem}

\begin{figure}
\centering
\begin{overpic}[scale=.5]{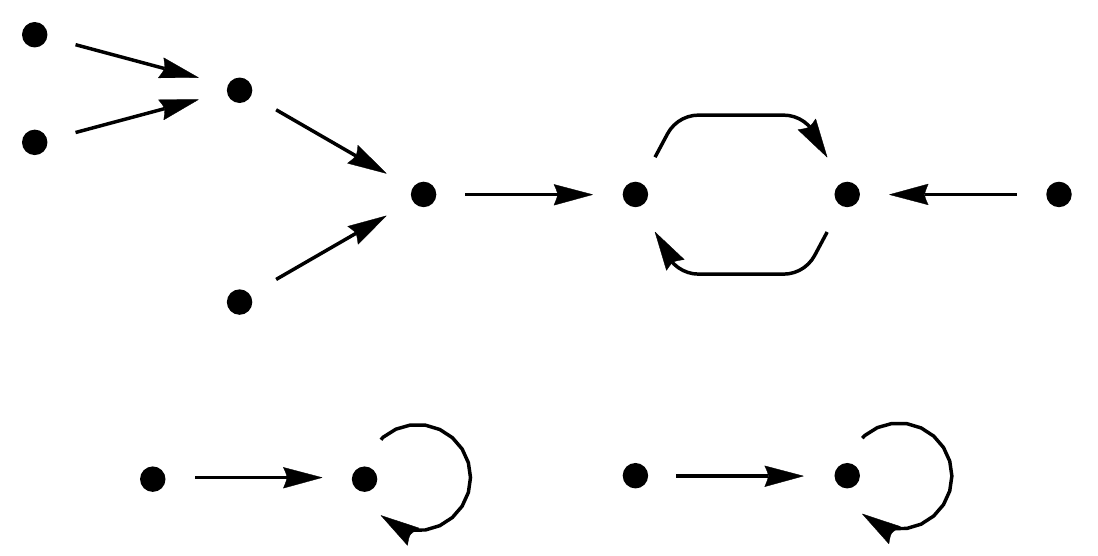}
	\put(44,0){$\alpha$}
	\put(-7,52){$\beta$}
	\put(110,0){$\alpha'$}
\end{overpic}
\caption{An admissible graph $G$}
\label{fig:U1_ex}
\end{figure}

We end this section with an example. Let $G$ be the graph appearing in Figure~\ref{fig:U1_ex}, which we further consider in \textsection \ref{sub:1and2_3}. The graph $G$ is minimally generated by the vertices $\alpha$, $\alpha'$, and $\beta$. Therefore, for each number field $K$, we define
	\[
		U_1(G)(K) = \{(\alpha,\alpha',\beta,c) \in \bbA^4(K) : \text{ $\alpha$ and $\alpha'$ are distinct fixed points and $\beta$ has type $2_3$ for $f_c$} \}.
	\]
By Remark~\ref{rem:redundant}, if $G'$ is the subgraph of $G$ generated by $\alpha$ and $\beta$, and if we write
	\[
		U_1(G')(K) = \{(\alpha,\beta,c) \in \bbA^3(K) : \text{ $\alpha$ has period $1$ and $\beta$ has type $2_3$ for $f_c$} \},
	\]
then the curves $U_1(G)$ and $U_1(G')$ are birational.


\section{Quadratic points on algebraic curves}\label{sec:curves}

Let $k$ be a number field, and let $X$ be an algebraic curve defined over $k$. We say that $P \in X(\kbar)$ is \textbf{quadratic over $k$} if the field of definition of $P$, denoted $k(P)$, is a quadratic extension of $k$. We will mostly be working in the situation that $k = \bbQ$, in which case we will simply say that $P$ is quadratic. We will denote by $X(k,2)$ the set of points on $X$ that are rational or quadratic over $k$; i.e.,
	\[
		X(k,2) := \{P \in X(\kbar) : [k(P) : k] \le 2\}.
	\]
If $X$ is an affine curve, then the \textbf{genus} of $X$ will be understood to be the geometric genus; i.e., the genus of the nonsingular projective curve birational to $X$.

Many of the curves we consider in this paper are hyperelliptic. Recall that a hyperelliptic curve defined over $k$ is a nonsingular projective curve $X$ of genus $g \ge 2$ for which there exists a degree two morphism $X \to \bbP^1$. A curve $X$ is hyperelliptic if and only if it has an affine model of the form $y^2 = f(x)$ for some polynomial $f(x) \in k[x]$ of degree $2g + 1$ or $2g + 2$ with no repeated roots. If $\deg f$ is odd, then the curve $X$ has a single $k$-rational point at infinity; if $\deg f$ is even, then the curve $X$ has two points at infinity, and they are $k$-rational if and only if the leading coefficient of $f$ is a square in $k$. A \textbf{Weierstrass point} on $X$ is a ramification point for the double cover of $\bbP^1$ given by $(x,y) \mapsto x$. In other words, $P$ is a Weierstrass point on $X$ if and only if $P = (x,0)$ or if $\deg f$ is odd and $P$ is the point at infinity.

If $X$ is a hyperelliptic curve defined over $k$, then there is a simple way to generate infinitely many points on $X$ that are quadratic over $k$: if $x \in k$ is such that $f(x)$ is not a $k$-rational square, then the point $(x,\sqrt{f(x)} )$ is quadratic over $k$. The fact that there are infinitely many such $x \in k$ is a consequence of Hilbert's irreducibility theorem. Following the terminology introduced in \cite{doyle/faber/krumm:2014}, quadratic points $(x,y)$ with $x \in k$ (resp., $x \not \in k$) will be called \textbf{obvious} (resp., \textbf{non-obvious}) quadratic points. We denote by $X(k,2)^o$ and $X(k,2)^n$ the sets of obvious and non-obvious quadratic points, respectively, on $X$. We will use the same terminology for genus one curves of the form $y^2 = f(x)$ with $\deg f \in \{3,4\}$, in which case we have the following result:

\begin{lem}[{\cite[Lem. 4.5.3]{krumm:2013}, \cite[Lem. 2.2]{doyle/faber/krumm:2014}}]\label{lem:ECquad_pts}
Let $k$ be a number field, let $E$ be an elliptic curve defined over $k$, and fix a model for $E$ of the form
	\[ y^2 = ax^3 + bx^2 + cx + d \]
with $a,b,c,d \in k$. Let $(x,y)$ be a quadratic point on $E$ with $x \not \in k$. Then there exist a point $(x_0,y_0) \in E(k)$ and an element $v \in k$ such that $y = y_0 + v(x - x_0)$ and
	\[ x^2 + \frac{ax_0 - v^2 + b}{a} x + \frac{ax_0^2 + v^2x_0 + bx_0 - 2y_0v + c}{a} = 0. \]
\end{lem}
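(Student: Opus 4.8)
The plan is to exploit the geometry of a quadratic point on a cubic curve. Let $(x,y)$ be a quadratic point with $x \notin k$, so that $k(x,y) = k(x)$ is a quadratic extension of $k$; write $\Gal(k(x)/k) = \{1,\tau\}$. The key observation is that the conjugate point $(x^\tau,y^\tau)$ also lies on $E$, and the three points $(x,y)$, $(x^\tau,y^\tau)$, and $\infty$ (or more precisely, the divisor-theoretic configuration) are forced by the group law to be related in a way that pins down a secant line defined over $k$.

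\textbf{The secant line.}
First I would consider the line $L$ through the two conjugate points $(x,y)$ and $(x^\tau, y^\tau)$. Since this pair of points is stable under $\tau$, the line $L$ is defined over $k$: its slope $v := (y - y^\tau)/(x - x^\tau)$ and intercept are fixed by $\tau$, hence lie in $k$. Write $L$ as $y = v(x - x_0) + y_0$ for some $(x_0, y_0) \in k^2$; I must argue that the base point of this parametrization can be taken to be an honest $k$-rational point of $E$. This is where the cubic structure enters: a line defined over $k$ meets the cubic $y^2 = ax^3 + bx^2 + cx + d$ in three points whose sum (with multiplicity) is a $\Gal$-stable divisor of degree three. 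Two of those intersection points are the conjugate pair $(x,y), (x^\tau,y^\tau)$; since the whole intersection divisor is $k$-rational and this conjugate pair constitutes a $k$-rational sub-divisor, the third intersection point must itself be $k$-rational. Call it $(x_0,y_0) \in E(k)$. Then $L$ is precisely the line through $(x_0,y_0)$ with slope $v$, which gives the asserted relation $y = y_0 + v(x - x_0)$.

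\textbf{Recovering the quadratic relation.}
Next I would substitute $y = y_0 + v(x - x_0)$ into the defining equation $y^2 = ax^3 + bx^2 + cx + d$. This produces a cubic polynomial in $x$ with coefficients in $k$, namely
\[
ax^3 + bx^2 + cx + d - \bigl(y_0 + v(x - x_0)\bigr)^2 = 0,
\]
whose three roots are $x$, $x^\tau$, and $x_0$. Dividing out the known $k$-rational root $x_0$ (equivalently, factoring the cubic as $a(X - x_0)(X^2 + pX + q)$ and matching coefficients) leaves the monic quadratic $X^2 + pX + q$ with $p,q \in k$ satisfied by $x$. Carrying out the coefficient comparison — using that the product of all three roots is $(y_0 - v x_0)^2 - d$ over $a$ and the sum is $(v^2 - b)/a$ — yields exactly the displayed formulas for the linear and constant coefficients in terms of $a,b,c,d,x_0,y_0,v$.

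\textbf{Main obstacle.}
The genuinely substantive step is the claim that the third intersection point $(x_0,y_0)$ is $k$-rational, since everything else is the routine algebra of dividing a cubic by a linear factor. The cleanest justification is the Galois-descent argument above: the full intersection divisor $L \cap E$ is defined over $k$, and removing the $\tau$-stable degree-two part $\{(x,y),(x^\tau,y^\tau)\}$ leaves a $\tau$-fixed degree-one effective divisor, i.e. a $k$-rational point. One must only check that the two conjugate points are genuinely distinct (guaranteed by $x \notin k$, so $x \ne x^\tau$) and handle the degenerate possibility that $L$ is vertical; but a vertical line forces $x = x^\tau$, again contradicting $x \notin k$, so $v$ is well-defined and finite. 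I would therefore expect the write-up to spend most of its effort on this descent claim and treat the coefficient-matching as a direct computation.
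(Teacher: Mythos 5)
Your proposal is correct and takes essentially the same approach as the proof in the sources the paper cites for this lemma (\cite[Lem.~4.5.3]{krumm:2013}, \cite[Lem.~2.2]{doyle/faber/krumm:2014}); the paper itself states the result without proof. The argument there is exactly your secant-line one: the line through $(x,y)$ and its Galois conjugate has slope $v \in k$, its third intersection with the cubic is $k$-rational because the intersection divisor is Galois-stable, and substituting $y = y_0 + v(x-x_0)$ into the cubic and matching coefficients (Vieta) produces the displayed quadratic.
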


We now consider curves of genus two, and much of what follows may be found in \cite{cassels/flynn:1996}. Let $X$ be a curve of genus two defined over a number field $k$. Since every genus two curve $X$ is hyperelliptic, $X$ has an affine model of the form $y^2 = f(x)$ with $f(x) \in k[x]$ of degree $d \in \{5,6\}$ having no repeated roots. We denote by $\iota$ the hyperelliptic involution of $X$:
	\[ \iota(x,y) = (x,-y). \]
Let $J$ be the Jacobian of $X$. If we assume that $X$ has a $k$-rational point (this is guaranteed if $d = 5$), then we may identify the Mordell-Weil group $J(k)$ with the group of $k$-rational degree zero divisors of $X$ modulo linear equivalence (see \cite[p. 39]{cassels/flynn:1996} and \cite[p. 168]{milne:1986}, for example). For $n \ge 2$, we will denote by $J(k)[n]$ the set of $k$-rational $n$-torsion points on $J$, and we denote by $J(k)_{\tors}$ the full torsion subgroup of $J(k)$.

If we let $\infty^+$ and $\infty^-$ be the points on $X$ at infinity, with $\infty^+ = \infty^-$ if $f$ has degree five, then the divisor $\infty^+ + \infty^-$ is a $k$-rational divisor on $X$, and the divisor class $\calK$ containing $\infty^+ + \infty^-$ is the canonical divisor class. By the Riemann-Roch theorem, every degree two divisor class $\calD$ contains an effective divisor, and this effective divisor is unique if and only if $\calD \ne \calK$. The effective divisors in the canonical class $\calK$ are precisely those divisors of the form $P + \iota P$. We may therefore represent every nontrivial element of $J(k)$ \emph{uniquely} by a divisor of the form $P + Q - \infty^+ - \infty^-$, up to reordering of $P$ and $Q$, where $P + Q$ is a $k$-rational divisor on $X$ (either $P$ and $Q$ are both $k$-rational points on $X$ or $P$ and $Q$ are Galois conjugate quadratic points on $X$). We therefore represent points of $J(k)$ as unordered pairs $\{P,Q\}$, with the identification
	\[ \{P,Q\} = \left[ P + Q - \infty^+ - \infty^- \right], \]
where $[D]$ denotes the divisor class of the divisor $D$. Note that $\{P,Q\} = \calO$ if and only if $[P + Q] = \calK$; that is, $\{P,Q\}$ is trivial if and only if $Q = \iota(P)$.

The identification of points on $J(k)$ as pairs $\{P,Q\}$ is useful for describing the non-obvious quadratic points on the genus two curve $X$. Let $P$ be a quadratic point on $X$, and let $\sigma$ be the nontrivial automorphism of $k(P)/k$. If $P \in X(k,2)^o$, then we can write $P = (x,\sqrt{z})$ for some $x,z \in k$ with $z$ not a square in $k$. Then $\sigma(P) = (\sigma(x) , \sigma(\sqrt{z})) = (x,-\sqrt{z}) = \iota(P)$, so that the point $\calD = \{P,\sigma(P)\} \in J(k)$ is equal to $\{P,\iota(P)\} = \calO$. On the other hand, if $P = (x,y) \in X(k,2)^n$, then $x \not \in k$, hence $\sigma(x) \ne x$. It follows that $\sigma(P)$ is equal to neither $P$ nor $\iota(P)$, so $\{P,\sigma(P)\}$ is a nontrivial element of $J(k)$.

This distinction between obvious and non-obvious quadratic points $P \in X(k,2)$ in terms of the corresponding points $\{P,\sigma(P)\} \in J(k)$ is not new; see \cite{bosman/etc:2013} for an application involving classical modular curves. However, we record here a useful consequence. A similar result is stated in \cite[Lem. 2.3]{doyle/faber/krumm:2014}, though we require a slightly more explicit version in the present article.

\begin{lem}\label{lem:rank0jac}
Let $X$ be a genus two curve defined over a number field $k$, and fix an affine model $y^2 = f(x)$ for $X$, where $f(x) \in k[x]$ has degree five or six and has no repeated roots. Suppose $X$ has a $k$-rational point. Let
	\[
		 \{ P_1,Q_1\}, \{P_2,Q_2\}, \{P_3,Q_3\}, \ldots 
	\]
be a (possibly finite) enumeration of the nonzero points of $J(k)$, and let $\Supp J(k)$ denote the set $\{P_1,Q_1,P_2,Q_2,\ldots,\} \subseteq X(k,2)$.

\begin{enumerate}
	\item If $X$ has only one $k$-rational point, then
		\[
			\Supp J(k) = X(k,2)^n.
		\]
	\item If $X$ has at least two $k$-rational points, then
		\[
			\Supp J(k) = X(k,2)^n \cup X(k).
		\]
\end{enumerate}
In particular, $J(k)$ is finite if and only if $X(k,2)^n$ is finite.
\end{lem}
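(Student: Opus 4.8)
The plan is to exploit the unique representation of each nonzero point of $J(k)$ as an unordered pair $\{P,Q\}$ (with $P+Q$ a $k$-rational effective divisor and $Q\neq\iota(P)$), together with the obvious/non-obvious dichotomy recorded in the discussion preceding the lemma. Throughout I would use that the hyperelliptic involution $\iota$ is defined over $k$, so that $\iota$ permutes the set $X(k)$. The first goal is the two-sided containment
\[
    X(k,2)^n \subseteq \Supp J(k) \subseteq X(k,2)^n \cup X(k),
\]
valid with no hypothesis on $\#X(k)$. For the right-hand inclusion, take a nonzero $\{P,Q\}\in J(k)$; since $P+Q$ is $k$-rational, either $P,Q\in X(k)$, or $P,Q$ are Galois-conjugate quadratic points with $Q=\sigma(P)$, where $\sigma$ generates $\Gal(k(P)/k)$. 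In the latter case $P$ cannot be an obvious quadratic point, for the computation above shows that $P\in X(k,2)^o$ would force $\sigma(P)=\iota(P)$, hence $\{P,Q\}=\{P,\iota(P)\}=\calO$, a contradiction; thus $P,Q\in X(k,2)^n$. For the left-hand inclusion, given $P\in X(k,2)^n$ the same computation shows $\{P,\sigma(P)\}$ is a nonzero point of $J(k)$ whose support contains $P$.

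Next I would determine exactly which rational points lie in $\Supp J(k)$, which is where the two cases diverge. If $\#X(k)\ge 2$, then for any $P\in X(k)$ the set $X(k)\setminus\{\iota(P)\}$ is nonempty (it has at least $\#X(k)-1\ge 1$ elements), so I may pick $R\in X(k)$ with $R\neq\iota(P)$; then $\{P,R\}$ is a nonzero point of $J(k)$ with $P$ in its support, giving $X(k)\subseteq\Supp J(k)$ and hence statement (2). If instead $\#X(k)=1$, let $P_0$ be the unique rational point. Since $\iota$ is defined over $k$, $\iota(P_0)$ is again rational, so $\iota(P_0)=P_0$ and $P_0$ is a Weierstrass point; consequently $\{P_0,P_0\}=[2P_0-\infty^+-\infty^-]=\calO$. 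Now any nonzero $\{P,Q\}$ having a rational point in its support must, by the dichotomy above, fall into the ``both rational'' case, so both $P$ and $Q$ equal $P_0$; but $\{P_0,P_0\}=\calO$, a contradiction. Hence no rational point lies in $\Supp J(k)$, and combined with the first paragraph this yields statement (1).

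For the final clause I would argue combinatorially together with a finiteness input. By uniqueness of the pair representation, distinct nonzero points of $J(k)$ give distinct unordered pairs drawn from $\Supp J(k)$, so $J(k)$ is finite if and only if $\Supp J(k)$ is finite. Since $X$ has genus two, Faltings' theorem gives that $X(k)$ is finite; the two-sided containment of the first paragraph then sandwiches $\Supp J(k)$ between $X(k,2)^n$ and a finite enlargement of it, so $\Supp J(k)$ is finite if and only if $X(k,2)^n$ is finite. Chaining these equivalences gives that $J(k)$ is finite if and only if $X(k,2)^n$ is finite.

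I do not expect a single hard obstacle, since the preceding text supplies the conceptual heart (the correspondence between quadratic points and pairs $\{P,\sigma(P)\}$). The two places needing genuine care are: correctly invoking the $k$-rationality of $\iota$ to force the unique rational point to be a Weierstrass point in case (1) and to guarantee an admissible partner $R$ in case (2); and remembering to cite finiteness of $X(k)$ (Faltings, as $g=2$) so that the concluding equivalence is honest in both directions rather than a one-way implication.
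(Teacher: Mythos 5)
Your proof is correct and follows essentially the same route as the paper's: the two-sided containment from the pair representation, the Weierstrass-point argument in the one-rational-point case, the construction of a nonzero class supported at any rational point when $\#X(k)\ge 2$, and Faltings' theorem for the final equivalence. The only cosmetic difference is that you choose a single partner $R\in X(k)\setminus\{\iota(P)\}$ where the paper splits into the Weierstrass and non-Weierstrass cases (taking $\{P,P\}$ or $\{P,Q\}$), which is the same argument in unified form.
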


\begin{proof}
From the discussion immediately preceding the lemma, we have
	\[
		X(k,2)^n \subseteq \Supp J(k) \subseteq X(k,2)^n \cup X(k).
	\]
First, suppose $X$ has only a single $k$-rational point $P$. Then $P$ is necessarily a Weierstrass point, since $\iota P \in X(k)$ as well. If $\{P,Q\} \in J(k)$, then $Q \in X(k)$; however, this implies that $Q = P = \iota P$, so $\{P,Q\} = \calO$. Therefore $P \not \in \Supp J(k)$, hence $X(k)$ and $\Supp J(k)$ are disjoint, proving (A).

Now suppose $X$ has more than one $k$-rational point, and let $P \in X(k)$. If $P$ is not a Weierstrass point, then $\{P,P\}$ is a nonzero point on $J(k)$. If $P$ is a Weierstrass point, then choosing any $Q \in X(k)$ different from $P$ yields a nonzero point $\{P,Q\} \in J(k)$. In both cases, we have $P \in \Supp J(k)$, and therefore $X(k) \subseteq \Supp J(k)$, proving (B).

Finally, since $X(k)$ is finite by Faltings' theorem \cite[Satz 7]{faltings:1983}, the set $X(k,2)^n$ is finite if and only if $\Supp J(k)$ is finite, which is equivalent to the statement that $J(k)$ is finite.
\end{proof}

We end this section by recording a result that allows us to bound the number of \emph{rational} points on some of curves that we discuss in the next two sections. The following result due to Stoll \cite{stoll:2006} is obtained by a modification of the method of Chabauty and Coleman \cite{chabauty:1941,coleman:1985}. Though a more general version holds for arbitrary number fields, we will only use the theorem for rational points, and we therefore state the result only in this case.

\begin{thm}[{\cite[Cor. 6.7]{stoll:2006}}]\label{thm:stoll}
Let $X$ be a curve of genus $g \ge 1$ defined over $\bbQ$, let $J$ be the Jacobian of $X$, and let $p$ be a prime of good reduction for $X$. Suppose that $r := \rk J(\bbQ) < g$. Then
	\[ \#X(\bbQ) \le \#X(\bbF_p) + 2r + \left\lfloor \frac{2r}{p - 2} \right\rfloor. \]
In particular, if $p > 2(r+1)$, then
	\[ \#X(\bbQ) \le \#X(\bbF_p) + 2r. \]
\end{thm}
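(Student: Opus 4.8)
The plan is to prove this via the Chabauty--Coleman method, in the refined form due to Stoll. The starting point is Coleman's theory of $p$-adic integration, which furnishes a $\bbQ_p$-bilinear pairing between $J(\bbQ_p)$ and the $g$-dimensional space $\Omega$ of regular differentials on $X_{\bbQ_p}$; concretely, fixing a base point $Q_0 \in X(\bbQ_p)$, each $\omega \in \Omega$ determines a locally analytic function $\lambda_\omega(Q) = \int_{Q_0}^Q \omega$ on $X(\bbQ_p)$ satisfying $\lambda_\omega(Q) = \langle [Q - Q_0], \omega\rangle$, so that it is additive along the Abel--Jacobi map. First I would exploit the hypothesis $r < g$: the image of $J(\bbQ)$ under the $p$-adic logarithm spans a $\bbQ_p$-subspace of dimension at most $r$, so the space $V \subseteq \Omega$ of differentials annihilating $J(\bbQ)$ has $\dim V \ge g - r \ge 1$. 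Shrinking if necessary, fix $V$ with $\dim V = g - r$. Since every $P \in X(\bbQ)$ maps into the closure of $J(\bbQ)$ in $J(\bbQ_p)$, each such $P$ is a common zero of all the functions $\lambda_\omega$ with $\omega \in V$; it therefore suffices to bound the number of these common zeros in $X(\bbQ_p)$.

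Next I would carry out the count one residue disk at a time. Because $p$ has good reduction, $X$ has a smooth model over $\bbZ_p$ and $X(\bbQ_p)$ is partitioned into residue disks indexed by $\bar Q \in X(\bbF_p)$. Choosing a $\bbZ_p$-lattice in $V$ and reducing a suitable (echelon) basis modulo $p$ yields a subspace $\bar V \subseteq H^0(X_{\bbF_p}, \Omega^1)$ with $\dim \bar V = g - r$. For each $\bar Q$, set $m_{\bar Q} := \min_{0 \ne \omega \in \bar V} \ord_{\bar Q}(\omega)$, and pick $\omega \in V$ whose reduction attains this order. Writing $\omega = \big(\sum_{i \ge 0} a_i t^i\big)\,dt$ in a local coordinate $t$ at $\bar Q$, with the $a_i \in \bbZ_p$ primitive and first unit coefficient at $i = m_{\bar Q}$, the integral becomes the power series $\lambda_\omega(t) = \sum_{j \ge 1} \frac{a_{j-1}}{j}\,t^j$, and I must count its zeros with $v(t) \ge 1$. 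A Newton-polygon (Strassmann-type) analysis, controlling the valuations of the denominators $j$, bounds this number of zeros in the disk by
\[ 1 + m_{\bar Q} + \left\lfloor \frac{m_{\bar Q}}{p-2} \right\rfloor. \]

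The remaining input is a global bound on $\sum_{\bar Q} m_{\bar Q}$, and here I would invoke Clifford's theorem. The numbers $m_{\bar Q}$ are precisely the multiplicities at the $\bbF_p$-rational points of the base divisor $D$ of the linear system $\bar V$, so $\sum_{\bar Q \in X(\bbF_p)} m_{\bar Q} \le \deg D$. By definition $\bar V \subseteq H^0\big(X_{\bbF_p}, \Omega^1(-D)\big) = H^0(K - D)$, where $K$ is the canonical class, whence $h^0(K - D) \ge g - r$. Since $D$ is effective we have $h^0(D) \ge 1$, so the class $K - D$ is effective and special; Clifford's theorem then gives $h^0(K-D) \le \tfrac12\deg(K - D) + 1 = g - \tfrac12 \deg D$. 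Combining with $h^0(K - D) \ge g - r$ yields $\deg D \le 2r$, and hence $\sum_{\bar Q} m_{\bar Q} \le 2r$.

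Assembling the pieces, and using the superadditivity of the floor ($\lfloor x\rfloor + \lfloor y\rfloor \le \lfloor x+y\rfloor$),
\[ \#X(\bbQ) \le \sum_{\bar Q \in X(\bbF_p)} \left(1 + m_{\bar Q} + \left\lfloor \frac{m_{\bar Q}}{p-2}\right\rfloor\right) \le \#X(\bbF_p) + 2r + \left\lfloor \frac{2r}{p-2}\right\rfloor, \]
which is the desired inequality; the final assertion is immediate, since $p > 2(r+1)$ forces $2r/(p-2) < 1$ and so the last floor vanishes. The main obstacle is the local step: setting up Coleman integration rigorously and, above all, establishing the delicate $p$-adic estimate that bounds the zeros of $\lambda_\omega$ in each residue disk by $1 + m_{\bar Q} + \lfloor m_{\bar Q}/(p-2)\rfloor$, where the good-reduction hypothesis and the precise contribution of the denominators $j$ enter. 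By contrast, the global count reduces cleanly to Clifford's theorem.
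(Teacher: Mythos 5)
This theorem is not proved in the paper at all --- it is quoted directly from Stoll \cite[Cor.~6.7]{stoll:2006} and used as a black box, so the only meaningful comparison is with Stoll's original argument, and your proposal reconstructs that argument faithfully. The Chabauty--Coleman setup with an annihilating space of differentials of dimension at least $g-r$, the per-residue-disk Strassmann/Newton-polygon bound $1+m_{\bar Q}+\left\lfloor m_{\bar Q}/(p-2)\right\rfloor$, the Clifford-theorem estimate $\sum_{\bar Q} m_{\bar Q} \le \deg D \le 2r$ applied to the base divisor of the reduced linear system, and the final assembly via superadditivity of the floor function are precisely the ingredients of Stoll's proof, so your plan is correct and essentially identical to the cited source.
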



\section{Periodic points for quadratic polynomials}\label{sec:periodic}

For each $N \in \{1,2,3,4\}$, it is known that there are infinitely many quadratic pairs $(K,c)$ that admit points of period $N$. On the other hand, there are no known quadratic pairs admitting points of period $5$, and there is only one known pair $(K,c) = \left(\bbQ(\sqrt{33}), -71/48\right)$ that admits points of period $6$. As a result of extensive computations, it has been conjectured \cite{hutz/ingram:2013} that a quadratic pair $(K,c)$ cannot admit points of period $N > 6$. Due to the difficulty of working with the curves $U_1(N)$ for $N \ge 5$, we will henceforth restrict our attention to points of period at most 4.

In this section, we determine all possible cycle structures that a graph $G(f_c,K)$ may admit, where $(K,c)$ is any quadratic pair with no points of period greater than four. For example, we will show that although there are infinitely many quadratic pairs that admit points of period 3, and infinitely many that admit points of period 4, there are no quadratic pairs that simultaneously admit points of period 3 \emph{and} period 4. In other words, there are no quadratic pairs $(K,c)$ that admit a cycle structure of $(3,4)$. 

For each of the different cycle structures $(N_1,\ldots,N_m)$ that we consider, we first provide a model for the corresponding dynamical modular curve $U_1(N_1,\ldots,N_m)$. We then describe the sets of quadratic points on each of these models, and we use this information to describe the set of quadratic pairs $(K,c)$ for which $G(f_c,K)$ has a cycle structure that contains $(N_1,\ldots,N_m)$. In this and the subsequent section, we make extensive use of the Magma computational algebra system \cite{magma}. The necessary computations have been included in an ancillary file with this article's arXiv submission.

\subsection{The curves $U_1(N)$ for $N \le 4$}

Before we study cycle structures involving multiple cycles, we begin by giving models for $U_1(N)$ for $N \le 4$. These models will be used throughout the remainder of the section. The following result for periods 1, 2, and 3 is due to Walde and Russo.

\begin{prop}[{\cite[Thms. 1, 3]{walde/russo:1994}}]\label{prop:1or2or3}
	Let $K$ be a number field, and let $c \in K$.
	\begin{enumerate}
		\item The map $f_c$ admits a point $\alpha \in K$ of period 1 (i.e., a $K$-rational fixed point) if and only if there exists $r \in K$ such that
			\[ \alpha = 1/2 + r , \ c = 1/4 - r^2. \]
		In this case, there are exactly two fixed points,
			\[ \alpha = 1/2 + r , \ \alpha' = 1/2 - r, \]
		unless $r = 0$, in which case there is only one. Therefore $U_1(1)$ (resp., $U_1(1,1)$) is isomorphic to $\bbA^1$ (resp., $\bbA^1$ with one point removed).
		\item The map $f_c$ admits a point $\alpha \in K$ of period 2 if and only if there exists $s \in K$, with $s \ne 0$, such that
			\[ \alpha = -1/2 + s , \ c = -3/4 - s^2. \]
		In this case, there are exactly two points of period 2, and they form a 2-cycle:
			\[ \alpha = -1/2 + s , \ f_c(\alpha) = -1/2 - s. \]
		Therefore $U_1(2)$ is isomorphic to $\bbA^1$ with one point removed.
		\item The map $f_c$ admits a point $\alpha \in K$ of period 3 if and only if there exists $t \in K$, with $t(t+1)(t^2 + t + 1) \ne 0$, such that
			\[ \alpha = \frac{t^3 + 2t^2 + t + 1}{2t(t + 1)} ,\ c = -\frac{t^6 + 2t^5 + 4t^4 + 8t^3 + 9t^2 + 4t + 1}{4t^2(t + 1)^2}. \]
		In this case, the 3-cycle containing $\alpha$ consists of
			\[ \alpha = \frac{t^3 + 2t^2 + t + 1}{2t(t + 1)}, \ f_c(\alpha) = \frac{t^3 - t - 1}{2t(t + 1)} , \ f_c^2(\alpha) = -\frac{t^3 + 2t^2 + 3t + 1}{2t(t + 1)}. \]
		Therefore $U_1(3)$ is isomorphic to $\bbA^1$ with four points removed.
	\end{enumerate}
\end{prop}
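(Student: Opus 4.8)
The plan is to handle the three cases uniformly: write down the relevant dynatomic polynomial, produce an explicit rational parametrization in a single variable, and then check that this parametrization is a bijection onto the locus of points of \emph{exact} period $N$ away from finitely many degenerate parameters (from which the stated models for $U_1(N)$ follow, $t$/$s$/$r$ serving as an affine coordinate).

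For $N = 1$ a fixed point satisfies $\Phi_1(x,c) = x^2 - x + c = 0$. Setting $r := \alpha - 1/2$ gives $c = \alpha - \alpha^2 = 1/4 - r^2$, and since the two roots of $\Phi_1$ sum to $1$ they are $1/2 \pm r$, coinciding exactly when $r = 0$; thus $r$ is a coordinate on $U_1(1) \cong \bbA^1$, and requiring two distinct fixed points ($r \ne 0$) gives $U_1(1,1) \cong \bbA^1 \setminus \{0\}$. For $N = 2$, direct expansion verifies the factorization
\[
	f_c^2(x) - x = (x^2 - x + c)(x^2 + x + c + 1) = \Phi_1(x,c)\,\Phi_2(x,c),
\]
so $\Phi_2(x,c) = x^2 + x + c + 1$; writing $s := \alpha + 1/2$ yields $c = -3/4 - s^2$, the two roots $-1/2 \pm s$ form the $2$-cycle because $f_c(-1/2 + s) = -1/2 - s$, and the point drops to period $1$ precisely when $s = 0$ (the common root $x = -1/2$ of $\Phi_1$ and $\Phi_2$ at $c = -3/4$). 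Hence $U_1(2) \cong \bbA^1 \setminus \{0\}$.

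The substantive case is $N = 3$, where I would avoid the degree-six polynomial $\Phi_3$ and instead exploit the cycle relations. Writing a $3$-cycle as $x_0 \to x_1 \to x_2 \to x_0$ with $x_{i+1} = x_i^2 + c$ and setting $p_i := x_i + x_{i+1}$ and $d_i := x_{i+1} - x_i$, the identities $x_{i+1} - x_{i+2} = (x_i - x_{i+1})(x_i + x_{i+1})$ (immediate from $x_{i+1} = x_i^2 + c$), multiplied over $i \bmod 3$ and divided by the nonzero product of the $d_i$, give
\[
	p_0 p_1 p_2 = 1,
\]
while $d_1 = d_0 p_0$, $d_2 = d_0 p_0 p_1$, and $d_0 + d_1 + d_2 = 0$ force the linear relation $1 + p_0 + p_0 p_1 = 0$. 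Solving these in the single parameter $t := p_0 = x_0 + x_1$ gives $p_1 = -(1+t)/t$ and $p_2 = -1/(t+1)$; recovering $\sigma_1 := x_0 + x_1 + x_2 = (p_0 + p_1 + p_2)/2$ and then $x_0 = \sigma_1 - p_1$, $x_1 = \sigma_1 - p_2$, $x_2 = \sigma_1 - p_0$ reproduces exactly the claimed formulas for $\alpha, f_c(\alpha), f_c^2(\alpha)$, and substituting into $c = x_1 - x_0^2$ yields the stated expression for $c$.

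The remaining work is the backward direction together with the bookkeeping of the excluded locus, which I expect to be the only delicate point. One must confirm that for $t$ outside the bad set the formulas genuinely define a point of \emph{exact} period $3$: the poles $t = 0, -1$ are cusps at infinity, and a short computation (using $t^3 = 1$) shows that at a root of $t^2 + t + 1$ one has $x_0 = x_1$, so the putative cycle degenerates. This accounts for all four removed points and gives $U_1(3) \cong \bbA^1 \setminus \{0,\,-1,\,\text{roots of } t^2+t+1\}$. The main obstacle is organizing the period-$3$ calculation so that the parametrization is manifestly invertible and verifying that \emph{no other} values of $t$ produce points of period properly dividing $3$ rather than exact period $3$; this is precisely the comparison of $\Phi_3$ with $\Phi_1$ along the parametrized family, and is where the degenerate locus $t^2+t+1=0$ must be isolated.
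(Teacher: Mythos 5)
Your proposal is mathematically sound, but note first that the paper itself does not prove this proposition: it is imported wholesale from Walde--Russo (cited as [Thms.\ 1, 3] there), so there is no internal proof to compare against. Your derivation is essentially the classical one that appears in that literature. Parts (A) and (B) are complete as written: the factorization $f_c^2(x)-x = (x^2-x+c)(x^2+x+c+1)$ is correct, and the identification of the degenerate locus ($r=0$, resp.\ $s=0$) is right. For (C), your cycle-relation argument checks out in detail: with $d_{i+1}=d_ip_i$ one gets $p_0p_1p_2=1$ (so $t\ne 0$ and $t\ne -1$ are automatic for a genuine $3$-cycle) and $1+p_0+p_0p_1=0$, and solving for $p_1=-(1+t)/t$, $p_2=-1/(t+1)$, $\sigma_1=(t^3-3t-1)/(2t(t+1))$ reproduces exactly the three displayed orbit points; moreover $x_0-x_1=(t^2+t+1)/(t(t+1))$, confirming your claim that the degeneration to a fixed point happens precisely at the roots of $t^2+t+1$. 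The inverse $t=\alpha+f_c(\alpha)$ being polynomial in $(\alpha,c)$ gives the isomorphism (not just a bijection), consistent with the paper's own use of this identity in Remark~\ref{rem:3aut} and Proposition~\ref{prop:3and3curve}. The one item you defer --- verifying that for admissible $t$ the formulas genuinely close up into a cycle, i.e.\ that $x_1^2+c=x_2$ and $x_2^2+c=x_0$ hold identically in $t$ with $c=x_1-x_0^2$ --- is a finite polynomial-identity check and not a conceptual gap, though a finished writeup must include it; once it is done, your observation that exactness is forced (period divides $3$ and $x_0\ne x_1$ rules out period $1$) closes the argument, and the excluded set is exactly the four points $\{0,-1\}\cup\{t: t^2+t+1=0\}$ as claimed.
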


\begin{rem}\label{rem:3aut}
A quick calculation shows that if $\alpha$ is a point of period 3 for $f_c$, then the parameter $t$ may be recovered from $\alpha$ and $c$ by the identity $t = \alpha + f_c(\alpha)$. From this, we can see that the order 3 automorphism on $X_1(3)$ given by $(x,c) \mapsto (f_c(x),c)$ corresponds to the order 3 automorphism on $\bbP^1$ that takes $t \mapsto -\frac{t+1}{t} \mapsto -\frac{1}{t+1} \mapsto t$.
\end{rem}

The following model for $U_1(4)$ is due to Morton. Though he does not state his result as explicitly as we do here, the details of the proof of Proposition~\ref{prop:4curve} are found in \cite[pp. 91--93]{morton:1998}.

\begin{prop}[{\cite[Prop. 3]{morton:1998}}]\label{prop:4curve}
Let $Y$ be the affine curve of genus 2 defined by the equation
	\begin{equation}\label{eq:4curve}
	v^2 = -u(u^2 + 1)(u^2 - 2u - 1),
	\end{equation}
and let $U$ be the open subset of $Y$ defined by
	\begin{equation}\label{eq:4cusps}
	v(u-1)(u+1) \ne 0.
	\end{equation}
Consider the morphism $\Phi : U \to \bbA^2$, $(u,v) \mapsto (\alpha,c)$, given by
	\[ \alpha = \frac{u-1}{2(u+1)} + \frac{v}{2u(u-1)}, \ c = \frac{(u^2 - 4u - 1)(u^4 + u^3 + 2u^2 - u + 1)}{4u(u+1)^2(u-1)^2}.\]
Then $\Phi$ maps $U$ isomorphically onto $U_1(4)$, with the inverse map given by
	\begin{equation}\label{eq:4inverse}
	u = -\frac{f_c^2(\alpha) + \alpha + 1}{f_c^2(\alpha) + \alpha - 1} \;,\;\; v = \frac{u(u-1)(2\alpha u + 2\alpha - u + 1)}{u+1}.
	\end{equation}
\end{prop}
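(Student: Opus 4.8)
The goal is to show that the explicit map $\Phi$ and the formulas in \eqref{eq:4inverse} are mutually inverse isomorphisms between $U$ and $U_1(4)$. Recall that a point $(\alpha,c) \in \bbA^2$ lies on $U_1(4)$ precisely when $\Phi_4(\alpha,c) = 0$ while $\Phi_1(\alpha,c)\Phi_2(\alpha,c) \ne 0$; that is, when $\alpha$ has exact period $4$ for $f_c$. The plan is to treat the two directions separately: first that $\Phi$ carries $U$ into $U_1(4)$, and then that \eqref{eq:4inverse} carries $U_1(4)$ back into $U$, with both compositions equal to the identity. Establishing these mutually inverse morphisms yields the asserted isomorphism of curves.

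Before verifying anything, I would motivate the coordinate $u$ so the computation is organized rather than opaque. Writing $\beta := f_c^2(\alpha)$, the point $\beta$ is antipodal to $\alpha$ in the $4$-cycle, and $\sigma^2 : \alpha \mapsto f_c^2(\alpha)$ is an involution swapping $\alpha$ and $\beta$. Comparing $\beta = (\alpha^2+c)^2 + c$ with $\alpha = (\beta^2+c)^2 + c$ and subtracting gives $\beta - \alpha = (\alpha-\beta)(\alpha+\beta)(\alpha^2+\beta^2+2c)$; dividing by $\beta - \alpha \ne 0$ (which holds since the period is exactly $4$) produces the clean relation $(\alpha+\beta)(\alpha^2+\beta^2+2c) = -1$. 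Thus the symmetric quantity $s := \alpha + f_c^2(\alpha)$ is a natural coordinate, and the inverse formula is exactly the M\"obius substitution $s = (u-1)/(u+1)$. This identifies $u$ as a coordinate on the quotient $U_1(4)/\langle \sigma^2\rangle$, with the hyperelliptic variable $v$ distinguishing $\alpha$ from $\beta$. On the level of degrees this matches the factorization $\deg\bigl(X_1(4) \to \bbP^1_c\bigr) = d(4) = 12 = 2 \cdot 6$, the degree-$2$ map being the hyperelliptic cover $Y \to \bbP^1_u$ and the degree-$6$ map being $u \mapsto c$.

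For the forward direction, I would first check that $\Phi$ is a morphism on all of $U$: the denominators in $\alpha$ and $c$ are powers of $u$, $u+1$, and $u-1$, and since \eqref{eq:4curve} forces $v = 0$ whenever $u = 0$, the condition $v(u-1)(u+1) \ne 0$ defining $U$ already guarantees $u \notin \{0,1,-1\}$. Substituting the formulas for $\alpha$ and $c$ into $\Phi_4(x,c)$ and reducing modulo \eqref{eq:4curve}, I would verify the polynomial identity $\Phi_4(\alpha,c) = 0$. To see that the period is exactly $4$ on $U$, I would check that $\Phi_1(\alpha,c)$ and $\Phi_2(\alpha,c)$ are nonzero, tracing their vanishing loci back to the excluded values $v = 0$ and $u = \pm 1$ in \eqref{eq:4cusps}.

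For the reverse direction, starting from $(\alpha,c) \in U_1(4)$ I would define $u$ and $v$ by \eqref{eq:4inverse} and verify three things: that the denominator $f_c^2(\alpha) + \alpha - 1$ is nonzero on $U_1(4)$ (so $u$ is well defined), that the resulting pair satisfies \eqref{eq:4curve} together with the open conditions \eqref{eq:4cusps} (so the image lands in $U$), and that $\Phi$ composed with \eqref{eq:4inverse} in either order is the identity. The curve-equation check is where the relation $(\alpha+\beta)(\alpha^2+\beta^2+2c) = -1$ does the real work: once $u$ is expressed through $s = \alpha + f_c^2(\alpha)$, the identity $v^2 = -u(u^2+1)(u^2-2u-1)$ should reduce to a consequence of the period-$4$ relations. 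The main obstacle throughout is purely computational, and I would carry out these reductions in a computer algebra system: because $\Phi_4(x,c)$ has degree $d(4) = 12$ in $x$, the identities, though routine in principle, involve heavy elimination. The genuine content is the change of variables itself --- the coordinate $u$ and the verification that the single relation above, together with the hyperelliptic splitting, captures all of the period-$4$ structure.
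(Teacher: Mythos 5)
Your outline is correct, and the computations you defer to a computer algebra system would all go through; but note that the paper itself contains no proof of this proposition at all --- it is quoted from Morton, and the surrounding text states that the details are found in \cite[pp.~91--93]{morton:1998}. So the relevant comparison is with Morton's derivation, and what you propose is essentially a reconstruction of it: one introduces the traces $\sigma = x_1 + x_3$ and $\tau = x_2 + x_4$ of the two antipodal pairs of a $4$-cycle, obtains $\sigma\tau = -1$ by exactly your telescoping argument, rationalizes this constraint via the M\"obius substitution $\sigma = (u-1)/(u+1)$, and recovers the individual points from the quadratic with roots $x_1,x_3$, whose discriminant (after clearing square factors) produces the right-hand side of \eqref{eq:4curve}; this is your observation that $v$ is the variable separating $\alpha$ from $f_c^2(\alpha)$, and your degree count $12 = 2\cdot 6$ is the same bookkeeping. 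Two items you leave as ``to verify'' do hold and can each be closed in a line. First, the denominator in \eqref{eq:4inverse} is nonzero on $U_1(4)$: if $\sigma = 1$ then $\tau = -1$, and the symmetric-function relations $\tau = \sigma^2 - 2p + 2c$, $\sigma = \tau^2 - 2q + 2c$, and $q = p^2 + c(\sigma^2 - 2p) + c^2$ (where $p = x_1x_3$, $q = x_2x_4$) force $q = c$ and $q = c+1$ simultaneously, a contradiction. Second, exactness of the period on $U$: one computes $\alpha - f_c^2(\alpha) = v/\bigl(u(u-1)\bigr)$ modulo \eqref{eq:4curve}, so the condition $v \ne 0$ in \eqref{eq:4cusps} is precisely what excludes periods $1$ and $2$, since $f_c^2(x) - x = \Phi_1(x,c)\,\Phi_2(x,c)$.
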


\begin{rem}\label{rem:4aut}
As noted in \cite{morton:1998}, the dynamical modular curve $X_1(4)$ is birational to the classical modular curve $X_1^{\Ell}(16)$, since \eqref{eq:4curve} is the model for $X_1^{\Ell}(16)$ appearing in \cite{washington:1991}. The order-4 automorphism of $X_1(4)$ obtained by mapping $\alpha \mapsto f_c(\alpha)$ corresponds to the map $(u,v) \mapsto (-1/u, v/u^3)$ on this model.
\end{rem}

\subsection{Cycle structures with cycles of equal length}

From Proposition~\ref{prop:1or2or3}, a quadratic polynomial $f_c$ defined over $\QQbar$ has precisely two fixed points (unless $c = 1/4$, in which case there is only one) and precisely one 2-cycle (unless $c = -3/4$, in which case there are none). On the other hand, since $r(3) = 2$ and $r(4) = 3$, $f_c$ will generically admit \emph{two} 3-cycles and \emph{three} 4-cycles over $\QQbar$. We now show, however, that a quadratic pair $(K,c)$ may admit at most one cycle of length 3 and at most one cycle of length 4.

\subsubsection{Cycles of length three}

The following result strengthens Proposition~\ref{prop:1or2or3}(C) in the case that $K$ is a quadratic field, stating that if $(K,c)$ is a quadratic pair, then $G(f_c,K)$ cannot contain a cycle structure of (3,3).

\begin{thm}\label{thm:unique3cycle}
Let $K$ be a quadratic field, and let $c \in K$. The map $f_c$ admits a point $\alpha \in K$ of period 3 if and only if there exists $t \in K$, with $t(t+1)(t^2 + t + 1) \ne 0$, such that
	\[ \alpha = \frac{t^3 + 2t^2 + t + 1}{2t(t + 1)} , \ c = -\frac{t^6 + 2t^5 + 4t^4 + 8t^3 + 9t^2 + 4t + 1}{4t^2(t + 1)^2}.\]
In this case, there are \emph{precisely} three points of period 3, and they form a 3-cycle:
	\[ \alpha = \frac{t^3 + 2t^2 + t + 1}{2t(t + 1)} , \ f_c(\alpha) = \frac{t^3 - t - 1}{2t(t + 1)} , \ f_c^2(\alpha) = -\frac{t^3 + 2t^2 + 3t + 1}{2t(t + 1)}. \]
\end{thm}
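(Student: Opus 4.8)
The content of Theorem~\ref{thm:unique3cycle} beyond Proposition~\ref{prop:1or2or3}(C) is the assertion that $f_c$ cannot acquire a \emph{second} $K$-rational $3$-cycle; equivalently, that no quadratic pair $(K,c)$ realizes the cycle structure $(3,3)$. My plan is to prove this by determining the quadratic points of the dynamical modular curve $U_1(3,3)$. By Remark~\ref{rem:3aut}, a $K$-rational point of period $3$ is recorded by the parameter $t = \alpha + f_c(\alpha) \in K$; the three points of its cycle correspond to the orbit of $t$ under the order-three automorphism $\nu\colon t \mapsto -(t+1)/t$, and $c$ is the $\nu$-invariant degree-six function $c(t)$ of Proposition~\ref{prop:1or2or3}(C). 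Thus a cycle structure of $(3,3)$ over $K$ is \emph{equivalent} to the existence of $t_1,t_2 \in K$ lying in distinct $\nu$-orbits with $c(t_1)=c(t_2)$, and these are exactly the non-cuspidal $K$-points of $U_1(3,3)$. I would present $U_1(3,3)$ as the curve $Z \subset \bbP^1 \times \bbP^1$ defined by $c(t_1)=c(t_2)$ with the three ``same-orbit'' components $t_2 \in \{t_1,\nu t_1,\nu^2 t_1\}$ removed. A sample computation (e.g.\ at $c=-29/16$, where after dividing out the orbit of a rational $3$-cycle the remaining period-$3$ points satisfy an \emph{irreducible} cubic over $\bbQ$) both confirms that $Z$ is the right object and shows it is not rational, so that genuine configurations must be ruled out geometrically.

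The geometric heart is to descend to a genus-two curve. The group $\langle \nu\rangle \times \langle \nu\rangle$ and the swap $s\colon (t_1,t_2)\mapsto(t_2,t_1)$ act on $Z$ over $\bbQ$. I expect the quotient $X := Z/\langle \nu\times\nu\rangle$ to have genus $2$ (a Riemann--Hurwitz count, using that $\nu\times\nu$ is fixed-point free away from the excluded loci $t_i^2+t_i+1=0$), and $s$ to descend to an involution $\bar s$ of $X$ distinct from the hyperelliptic involution. Because the quotient map is defined over $\bbQ$, any genuine $(3,3)$ over a quadratic field $K$ produces a point of $X$ rational over $K$, i.e.\ a point of $X(\bbQ,2)$. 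The extra involution $\bar s$ splits $\Jac(X)$ up to isogeny into a product of two elliptic curves, which is what makes the arithmetic feasible.

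I would then determine $X(\bbQ,2)$ with the machinery of \textsection\ref{sec:curves}. Writing an affine model $y^2=f(x)$ for $X$ and computing with the two elliptic quotients in Magma \cite{magma}, I would show $\rk \Jac(X)(\bbQ)=0$. By Lemma~\ref{lem:rank0jac} the non-obvious quadratic points together with the rational points then form the finite, explicitly computable set $\Supp \Jac(X)(\bbQ)$; for each such point I would lift it through $Z$ and check that it records only a degenerate configuration — a cusp ($t_i \in \{0,-1\}$ or $t_i^2+t_i+1=0$), the two $3$-cycles coinciding, or a forbidden parameter — and never two disjoint pointwise-$K$-rational $3$-cycles. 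The remaining \emph{obvious} quadratic points (with $x$-coordinate in $\bbQ$) form an infinite family and cannot be enumerated; these I would handle structurally, reducing the corresponding case to the statement that $f_c$ admits two disjoint $\bbQ$-rational $3$-cycles. This last is a finite rational-point problem, which I would dispatch by applying Theorem~\ref{thm:stoll} to $X$ (or to its elliptic quotients) using $\rk < \mathrm{genus}$, and then checking the resulting points are again degenerate.

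The main obstacle is the Mordell--Weil computation: the whole argument collapses unless $\rk \Jac(X)(\bbQ)=0$ (and $<2$ for the Chabauty step), so establishing the rank via the elliptic splitting, together with the careful bookkeeping that matches each of the finitely many admissible points of $X$ to a degenerate source, is the crux. Granting the non-existence of $(3,3)$, the theorem follows at once: if $f_c$ has a $K$-rational point of period $3$ then by Proposition~\ref{prop:1or2or3}(C) the three points of its cycle are $K$-rational, while the remaining three period-$3$ points (Galois-conjugate over a cubic field, exactly as in the $c=-29/16$ example) cannot all lie in the quadratic field $K$. Hence there are precisely three $K$-rational points of period $3$, forming the single $3$-cycle described in the statement.
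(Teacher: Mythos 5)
Your overall architecture is the same as the paper's: reduce the theorem to the statement that $U_1(3,3)$ has no rational or quadratic points, realize $U_1(3,3)$ inside $\bbP^1\times\bbP^1$ as $\{c(t_1)=c(t_2)\}$ minus the same-orbit components (this is exactly Proposition~\ref{prop:3and3curve}), pass to a genus-2 quotient by an order-3 automorphism, and combine a rank-zero Jacobian with Lemma~\ref{lem:rank0jac}. The gap is in \emph{which} order-3 subgroup you quotient by, and it is fatal to your treatment of the obvious quadratic points, the one infinite family in the problem. You take the diagonal $\nu\times\nu$, i.e.\ $(\alpha,\beta)\mapsto(f_c(\alpha),f_c(\beta))$; the paper instead uses $(\alpha,\beta)\mapsto(f_c(\alpha),f_c^{2}(\beta))$, i.e.\ $t\mapsto\nu(t)$, $u\mapsto\nu^{2}(u)$ (the $\sigma$ above \eqref{eq:3and3_wxy}). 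On the paper's quotient $C$ the swap descends to the \emph{hyperelliptic} involution (the three involutions in the resulting $S_3$ each have six fixed points on the genus-4 curve, so the quotient by the full order-6 group has genus $0$); since $c$ is swap-invariant, it therefore lies in the genus-zero subfield $\bbQ(x)$ --- this is identity \eqref{eq:3and3_cbyx}, and it is precisely what converts ``$x\in\bbQ$'' into ``$c\in\bbQ$''. On your diagonal quotient $X$ the swap \emph{commutes} with $\nu\times\nu$, the group generated is cyclic of order six, only the swap itself has fixed points, and the quotient of $X$ by the descended swap has genus $1$: that is exactly the non-hyperelliptic involution that splits $\Jac(X)$ into two elliptic curves, as you observe. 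But then $c$ \emph{cannot} be a function of the hyperelliptic coordinate of $X$: one has $[\bbQ(X):\bbQ(c)]=18/3=6$, while invariance of $c$ under both the swap and the hyperelliptic involution would put $\bbQ(c)$ inside the fixed field of a faithful Klein four-group, forcing $4\mid 6$. So an obvious quadratic point of $X$ does \emph{not} have rational $c$, and your reduction of that case to ``two disjoint $\bbQ$-rational $3$-cycles'' collapses. In short, the involution splitting you want for the rank computation and the hyperelliptic coincidence you need for the obvious points are mutually exclusive, and you chose the quotient that provides the first and destroys the second.

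Two further points. First, even where ``$c\in\bbQ$'' is available, concluding that the two $K$-rational cycles are $\bbQ$-rational is not a consequence of Proposition~\ref{prop:1or2or3}(C); it is Morton's theorem (Lemma~\ref{lem:3cyclesQ}, used as Corollary~\ref{cor:3cyclesQ}) that for $c\in\bbQ$ the polynomial $\Phi_3(x,c)$ has no irreducible quadratic factor. Your outline never invokes this, and your Chabauty substitute (Theorem~\ref{thm:stoll}) only addresses the completely-split case --- a rational point on the genus-4 curve --- not a quadratic period-3 point lying over a rational $c$. Second, the rank-zero input does not transfer: the paper verifies $\rk J(\bbQ)=0$ for \emph{its} quotient (Lemma~\ref{lem:3and3_Ypts}); your $X$ is a different genus-2 curve (on it the swap is not hyperelliptic), so nothing guarantees its Jacobian has rank zero, and you correctly identify that your argument collapses otherwise. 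The repair is simply to quotient by $(\alpha,\beta)\mapsto(f_c(\alpha),f_c^{2}(\beta))$: then the swap is hyperelliptic, $c\in\bbQ(x)$, the rank is known to be zero, and the obvious points funnel into Morton's results over $\bbQ$ --- which is the paper's proof.
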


To prove Theorem~\ref{thm:unique3cycle} we will require the following result of Morton which, in particular, implies Theorem~\ref{thm:unique3cycle} with the quadratic field $K$ replaced with $\bbQ$.

\begin{lem}[{\cite[Thm. 3]{morton:1992}}]\label{lem:3cyclesQ}
Fix $c \in \bbQ$. The degree six polynomial $\Phi_3(x,c) \in \bbQ[x]$ cannot have irreducible quadratic factors, nor can it split completely into linear factors, over $\bbQ$.
\end{lem}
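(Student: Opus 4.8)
The plan is to fix $c \in \bbQ$ and determine \emph{a priori} which factorization types of the degree-six polynomial $\Phi_3(x,c)$ over $\bbQ$ are possible, then rule out the two offending ones. First I would set up the Galois-theoretic constraint. The six roots of $\Phi_3(x,c)$ are the points of formal period $3$; for all but finitely many $c$ these form two genuine $3$-cycles that are cyclically permuted by $\sigma := f_c$, and the finitely many exceptional $c$ (where a would-be $3$-cycle degenerates to a fixed point) can be checked directly and are among the cusps below. Since $c \in \bbQ$, the map $\sigma$ is defined over $\bbQ$, so the absolute Galois group acts on the roots through the centralizer $C_{S_6}(\sigma) \cong \bbZ/3 \wr \bbZ/2$ of order $18$. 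Analyzing this action shows that within a single cycle Galois acts by a power of $\sigma$ in an all-or-nothing fashion (fixing one point of a cycle forces the whole cycle, by commutation with $\sigma$), and any element swapping the two cycles bijects their per-cycle orbit parts. Hence every Galois orbit of roots has size $1$ or $3$ when the two cycles are not swapped, and size $2$ or $6$ when they are; the two regimes cannot mix. Consequently the only possible factorization types are $6$, $3+3$, $3+1+1+1$, $1+1+1+1+1+1$, and $2+2+2$, so the lemma is equivalent to excluding the types $1+1+1+1+1+1$ (complete splitting) and $2+2+2$ (three irreducible quadratics).

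Next I would transport both excluded cases onto the $t$-line of Proposition~\ref{prop:1or2or3}(C). For $c \in \bbQ$ one has $\bbQ(\alpha) = \bbQ(t)$, since $t = \alpha + f_c(\alpha) \in \bbQ(\alpha)$ by Remark~\ref{rem:3aut} while $\alpha$ is the displayed rational function of $t$; thus a root $\alpha$ of $\Phi_3(x,c)$ is rational (resp.\ quadratic) exactly when the corresponding $t$ is. Writing $\tau(t) = -(t+1)/t$ for the order-$3$ automorphism of Remark~\ref{rem:3aut}, so that $c \circ \tau = c$, I would introduce the affine curve
\[
  W := \overline{\{(t,t') : c(t) = c(t'),\ t' \notin \{t,\tau(t),\tau^2(t)\}\}},
\]
a model for the dynamical modular curve $X_1(3,3)$ parametrizing two disjoint $3$-cycles. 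Because rational roots of $\Phi_3(x,c)$ necessarily occur in full cycles, the complete-splitting case corresponds to a $\bbQ$-rational point of $W$ (two distinct rational $\tau$-orbits over a common $c$), while the $2+2+2$ case corresponds to a quadratic point of $W$ with Galois-conjugate coordinates $(t_0,\bar t_0)$: indeed $c(t_0)=c(\bar t_0)=\overline{c(t_0)}$ forces $c(t_0)\in\bbQ$, and conjugation swaps the two quadratic $3$-cycles while squaring to the identity, which is exactly the $2+2+2$ pattern. Equivalently this is a $\bbQ$-rational point of the quotient $Z := W/\langle s\rangle$ by the coordinate-swap $s$ that does \emph{not} lift to two rational points. (The relation $c = c\circ\tau$ factors $c$ through $X_0(3) = \bbP^1_t/\langle\tau\rangle$, which keeps $Z$ small.)

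I would then carry out the arithmetic of $W$ and $Z$. The residual curve $W$ has bidegree $(3,3)$ in $\bbP^1\times\bbP^1$, and after computing its geometric genus and that of $Z$ I expect both to have genus at least $2$; I would put $Z$ (and, if needed, a suitable quotient of $W$) in standard hyperelliptic form. For the complete-splitting case I would apply Faltings' theorem together with the Chabauty--Coleman bound of Theorem~\ref{thm:stoll}, after computing the rank of the relevant Jacobian at a prime of good reduction, to enumerate $W(\bbQ)$; for the $2+2+2$ case I would determine $Z(\bbQ)$ in the same way and, if $Z$ has genus two, use the obvious/non-obvious dichotomy of Lemma~\ref{lem:rank0jac} (invoking Lemma~\ref{lem:ECquad_pts} on any elliptic quotient) to decide which rational points of $Z$ arise from genuine conjugate pairs on $W$. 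In every case the final step is to match each solution found to a \emph{cusp}: a degenerate configuration where $t$ or $t'$ hits the excluded locus $t(t+1)(t^2+t+1)=0$ (or $\infty$) of Proposition~\ref{prop:1or2or3}(C), or where the second $3$-cycle collapses into the first, so that no honest pair of $\bbQ$-definable $3$-cycles (rational or conjugate-quadratic) is produced.

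The main obstacle is precisely this last stretch: computing the genera of $W$ and $Z$, verifying that the associated Jacobian has rank strictly less than the genus so that Chabauty--Coleman applies, and then bookkeeping the cusps carefully enough to certify that \emph{every} rational point of $W$ and every relevant rational point of $Z$ is degenerate. This is exactly the explicit curve analysis carried out by Morton in \cite{morton:1992}, and it is the computational heart of the lemma; the Galois-theoretic reduction of the first paragraph is routine by comparison.
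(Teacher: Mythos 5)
Your proposal cannot be compared against an internal argument, because the paper does not prove this lemma: it is quoted directly from Morton \cite[Thm.~3]{morton:1992}, and the only related material in the paper is the parallel machinery of \textsection 4 for the quadratic-field analogue (Proposition~\ref{prop:3and3curve}, Theorem~\ref{thm:3and3_quad_pts}). Measured against Morton's actual proof, your skeleton is faithful and correct: the centralizer of order $18$ and the resulting list of factorization types ($6$, $3+3$, $3+1+1+1$, $1^6$, $2+2+2$) is right; for $c \in \bbQ$ no root of $\Phi_3(x,c)$ can have exact period $1$ (the fixed point would need multiplier a primitive cube root of unity, forcing $c \notin \bbQ$), so the only degeneration is the multiplicity-two cycle at $c = -7/4$, where $\Phi_3$ is the square of an irreducible cubic and the lemma is unthreatened; and $\bbQ(\alpha) = \bbQ(t)$ via $t = \alpha + f_c(\alpha)$ is exactly how one transports the problem. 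Your curve $W$ is precisely the genus-$4$ curve $h(t,u)=0$ of Proposition~\ref{prop:3and3curve} (Morton's curve $N$), whose four rational points --- all cusps --- Morton determined, and this disposes of complete splitting as you say. Be aware, though, that your final paragraph defers the entire computational content back to \cite{morton:1992}; since the statement being proved \emph{is} Morton's theorem, a proposal whose decisive step is ``this is exactly the analysis carried out by Morton'' does not add an independent proof beyond the citation the paper already gives.

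There is also one concrete technical error, and it hits the $2+2+2$ case. You expect the swap quotient $Z = W/\langle s\rangle$ to have genus at least $2$ so that Faltings or Theorem~\ref{thm:stoll} with $r < g$ applies. In fact the closure of $h(t,u)=0$ in $\bbP^1 \times \bbP^1$ has bidegree $(3,3)$, hence arithmetic genus $4$; since its geometric genus is also $4$, it is smooth, so the fixed points of $s$ are exactly its intersections with the diagonal. One computes $h(t,t) = 2t(t+1)(t^3+t^2-2t-1)$, and the coefficient of $t^3u^3$ in $h$ vanishes, so $(\infty,\infty)$ also lies on the curve: six fixed points in all ($(0,0)$, $(-1,-1)$, $(\infty,\infty)$, and the three conjugate points $(\theta,\theta)$ with $\theta^3+\theta^2-2\theta-1=0$, the $c=-7/4$ cusps). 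Riemann--Hurwitz then gives $2\cdot 4 - 2 = 2(2g_Z-2) + 6$, i.e.\ $g_Z = 1$. So $Z$ is an elliptic curve, Faltings is unavailable, and your Chabauty step as written does not apply; you would instead have to prove $\rk Z(\bbQ) = 0$ (which must hold, since positive rank would yield infinitely many rational $c$ with factorization type $2+2+2$) and then match the finitely many rational points to cusps. Note finally that the genus-$2$ curve Morton and the paper actually use --- the curve $C$ of Lemma~\ref{lem:3and3_Ypts}, with function field Morton's $N_-$ --- is the quotient of $W$ by an \emph{order-$3$} automorphism, not by the swap, so your $Z$ is a genuinely different auxiliary curve from the one in the literature, and the plan needs repair at exactly this point before it can be called a proof.
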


\begin{cor}\label{cor:3cyclesQ}
If $c \in \bbQ$, then $f_c$ admits at most three rational points of period 3 and admits no quadratic points of period 3.
\end{cor}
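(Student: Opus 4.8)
The plan is to deduce both assertions directly from Morton's Lemma~\ref{lem:3cyclesQ}, after recording the structural constraint that the action of $f_c$ imposes on the roots of $\Phi_3(x,c)$. Recall that $\deg_x \Phi_3(x,c) = d(3) = 6$, and that a point $\alpha$ of exact period $3$ is in particular a root of $\Phi_3(\,\cdot\,,c)$, since it has formal period $3$. The key observation is that because $c \in \bbQ$, the order-three map $\sigma \colon \alpha \mapsto f_c(\alpha)$ of Remark~\ref{rem:3aut} is defined over $\bbQ$; in particular it sends rational points to rational points, and it permutes the points of exact period $3$ in $3$-cycles.

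For the first assertion, I would argue that the rational points of period $3$ come in complete $3$-cycles: if $\alpha \in \bbQ$ has exact period $3$, then $f_c(\alpha)$ and $f_c^2(\alpha)$ are also rational, are distinct from $\alpha$ and from each other, and likewise have exact period $3$. Hence the number of rational points of period $3$ is a multiple of $3$, and since $\Phi_3$ has degree $6$ this number lies in $\{0,3,6\}$. If it equalled $6$, then $\Phi_3(x,c)$ would have six distinct rational roots and would therefore split completely into linear factors over $\bbQ$, contradicting Lemma~\ref{lem:3cyclesQ}. Thus there are at most three rational points of period $3$.

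For the second assertion, suppose $\alpha$ is a quadratic point of period $3$, so that $[\bbQ(\alpha):\bbQ] = 2$. Then the minimal polynomial of $\alpha$ over $\bbQ$ is an irreducible quadratic, and since $\alpha$ is a root of $\Phi_3(x,c) \in \bbQ[x]$, this minimal polynomial is an irreducible quadratic factor of $\Phi_3(x,c)$. This again contradicts Lemma~\ref{lem:3cyclesQ}, so no such $\alpha$ exists.

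The proof carries essentially no analytic or computational content of its own: the single nontrivial input is Lemma~\ref{lem:3cyclesQ}, and the only point demanding care is the reduction of the ``six rational points'' case to the ``splits completely'' hypothesis. For that I must confirm that the six period-$3$ points are genuinely distinct (which holds since points in the same cycle are distinct by exactness of the period, and points in different cycles lie in disjoint orbits) and that they account for all six roots of the degree-six polynomial $\Phi_3$, so that complete splitting into linear factors---rather than merely the existence of six rational roots---is forced.
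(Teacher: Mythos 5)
Your proof is correct and follows exactly the route the paper intends: the paper states this as an immediate corollary of Morton's Lemma~\ref{lem:3cyclesQ} without writing out the deduction, and your argument (rational period-$3$ points come in full $3$-cycles, so more than three would force six distinct rational roots of the monic degree-six $\Phi_3$ and hence complete splitting; a quadratic point would give an irreducible quadratic factor via its minimal polynomial) is precisely that deduction. The care you take about distinctness of the six roots and about exact versus formal period is warranted and handled correctly.
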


In order to prove Theorem~\ref{thm:unique3cycle}, we must show that the curve 
	\[ U_1(3,3) = \{ (\alpha,\beta,c) \in \bbA^3 : \mbox{$\alpha$ and $\beta$ have period 3 and lie in distinct orbits under $f_c$} \} \]
has no quadratic points. We first provide a model for $U_1(3,3)$.
\begin{prop}\label{prop:3and3curve}
Let $Y$ be the affine curve of genus 4 defined by the equation
	\begin{equation}\label{eq:3and3curve}
	h(t,u) := t(t+1)u^3 + (t^3 + 2t^2 - t - 1)u^2 + (t^3 - t^2 - 4t - 1)u - t(t+1) = 0,
	\end{equation}
and let $U$ be the open subset of $Y$ defined by
	\begin{equation}\label{eq:3and3cusps}
	t(t+1)(t^2 + t + 1)(u^2 + u + 1)(u^3 + u^2 - 2u - 1) \ne 0.
	\end{equation}
Consider the morphism $\Phi : U \to \bbA^3$, $(t,u) \mapsto (\alpha,\beta,c)$, given by
	\begin{equation}\label{eq:unique3cycle_param}
		\alpha = \frac{t^3 + 2t^2 + t + 1}{2t(t+1)} , \ 
		\beta = \frac{u^3 + 2u^2 + u + 1}{2u(u+1)} , \
		c = -\frac{t^6 + 2t^5 + 4t^4 + 8t^3 + 9t^2 + 4t + 1}{4t^2(t+1)^2}.
	\end{equation}
Then $\Phi$ maps $U$ isomorphically onto $U_1(3,3)$, with the inverse map given by
	\begin{equation}\label{eq:3and3inverse}
	t = \alpha + f_c(\alpha) \ , \ u = \beta + f_c(\beta).
	\end{equation}
\end{prop}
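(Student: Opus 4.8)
\emph{The plan.} The idea is to realize $U_1(3,3)$ as the distinct-orbit component of the fiber product of two copies of $U_1(3)$ over the $c$-line, using the rational parametrization of $U_1(3)$ already in hand. By Proposition~\ref{prop:1or2or3}(C), a point of period $3$ for $f_c$ is cut out by a parameter $t$ with $\alpha=\alpha(t)$ and $c=c(t):=-N(t)/D(t)$, where $N(t)=t^6+2t^5+4t^4+8t^3+9t^2+4t+1$ and $D(t)=4t^2(t+1)^2$. Remark~\ref{rem:3aut} shows that $t=\alpha+f_c(\alpha)$ recovers $t$ from $(\alpha,c)$ and that $\alpha\mapsto f_c(\alpha)$ corresponds to the order-$3$ automorphism $\psi\colon t\mapsto -(t+1)/t$. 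Thus a point $(\alpha,\beta,c)\in U_1(3,3)$ determines parameters $t,u$ with $\alpha=\alpha(t)$, $\beta=\alpha(u)$, $c(t)=c(u)$, and $\beta$ lies in a different orbit from $\alpha$ exactly when $u\notin\{t,\psi(t),\psi^2(t)\}$. This directly produces the formulas \eqref{eq:unique3cycle_param}, and the inverse \eqref{eq:3and3inverse} follows by applying Remark~\ref{rem:3aut} to each of $\alpha,\beta$.

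The key computation is the factorization of the numerator of $c(t)-c(u)$,
\[
	N(u)D(t)-N(t)D(u)=4\,(u-t)(tu+t+1)\bigl((t+1)u+1\bigr)\,h(t,u),
\]
which I would verify by direct expansion (the matching of the top $u^6$- and $t^6$-coefficients already forces the constant $4$). The cubic factor $(u-t)(tu+t+1)((t+1)u+1)$ vanishes precisely on $u\in\{t,\psi(t),\psi^2(t)\}$, i.e.\ exactly on the locus where $\alpha$ and $\beta$ share an orbit. Hence ``$\alpha,\beta$ have period $3$, the same $c$, and disjoint orbits'' is equivalent to $h(t,u)=0$ together with the open conditions isolating exact period $3$ and forcing distinct orbits. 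This identifies the curve $Y$ of \eqref{eq:3and3curve} as the distinct-orbit component and shows that $\Phi$ carries $U$ into $U_1(3,3)$.

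The delicate point is checking that the open conditions \eqref{eq:3and3cusps} are exactly right, which is subtle because they are asymmetric in $t$ and $u$ although $h$ is symmetric. I would argue on the curve itself via three specializations: $h(t,0)=-t(t+1)$ and $h(t,-1)=t(t+1)$ show that on $\{h=0\}$ the condition $t(t+1)\ne0$ already forces $u(u+1)\ne0$, so no separate factor $u(u+1)$ is needed; and $h(t,t)=2t(t+1)(t^3+t^2-2t-1)$ shows that the diagonal branch $u=t$ of the same-orbit locus meets $Y$ exactly over the roots of $u^3+u^2-2u-1$. Since $Y$ is invariant under the commuting order-$3$ automorphisms $\tau_1\colon(t,u)\mapsto(\psi(t),u)$ and $\tau_2\colon(t,u)\mapsto(t,\psi(u))$ (each preserves $c(t)=c(u)$ and the disjointness of orbits), which cyclically permute the three branches $u\in\{t,\psi(t),\psi^2(t)\}$, and since the cubic $u^3+u^2-2u-1$ is $\psi$-invariant (one checks $u^3\,g(\psi(u))=g(u)$ for $g(u)=u^3+u^2-2u-1$), every same-orbit point of $Y$ lies over a root of $u^3+u^2-2u-1$. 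Together with $t(t+1)(t^2+t+1)(u^2+u+1)\ne0$ this yields precisely \eqref{eq:3and3cusps}. Finally, $\Phi$ is regular on $U$ (denominators nonvanish there) and the inverse \eqref{eq:3and3inverse} is polynomial, namely $t=\alpha+\alpha^2+c$ and $u=\beta+\beta^2+c$, so the two are mutually inverse morphisms and $\Phi$ is an isomorphism.

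For the genus I would note that $h$ has bidegree $(3,3)$, so the closure of $Y$ in $\bbP^1\times\bbP^1$ has arithmetic genus $(3-1)(3-1)=4$; checking nonsingularity of this model on the affine chart and at the finitely many points at infinity (or a direct Magma computation) then gives geometric genus $4$. The main obstacle is the third paragraph---matching the precise open set \eqref{eq:3and3cusps} to the geometric degeneracy loci, which hinges on the $\psi$-symmetry of $Y$ and the $\psi$-invariance of $u^3+u^2-2u-1$---rather than the factorization identity, which although large is entirely mechanical.
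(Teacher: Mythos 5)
Your outline is essentially the paper's own proof: the same factorization of the numerator of $c(t)-c(u)$ into $(u-t)(tu+t+1)(tu+u+1)\,h(t,u)$ underlies both arguments, your specializations $h(t,0)=-t(t+1)$, $h(t,-1)=t(t+1)$, $h(t,t)=2t(t+1)(t^3+t^2-2t-1)$ reproduce the paper's handling of the degenerate loci, and the injectivity and inverse formulas are identical. Your symmetry argument via $\tau_1,\tau_2$ and the $\psi$-invariance of $g(u):=u^3+u^2-2u-1$ is a clean structural substitute for the paper's four explicit case computations in the ``into'' direction, and it is correct (modulo routine care at the indeterminacy points of $\psi$, which are all degenerate and excluded anyway).

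However, there is a genuine gap at the surjectivity step. What you prove is the containment: every same-orbit point of $Y$ with nondegenerate parameters lies over a root of $g$. That suffices for $\Phi(U)\subseteq U_1(3,3)$. Surjectivity needs the \emph{reverse} implication: given $(\alpha,\beta,c)\in U_1(3,3)$, your factorization correctly places its parameters $(t,u)$ on $Y$ with $t(t+1)(t^2+t+1)(u^2+u+1)\ne 0$, but you must still show $g(u)\ne 0$, i.e.\ that no \emph{distinct-orbit} pair lies over a root of $g$. Your containment says nothing about points of $Y$ over roots of $g$ that are \emph{not} on the three linear branches, so such a pair is not excluded; it would be a point of $U_1(3,3)$ missed by $\Phi$, and it could not be hit by other parameters since \eqref{eq:3and3inverse} determines $(t,u)$ uniquely. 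This is precisely where the paper has to import an outside input: a root of $g$ forces $c=-7/4$ (in your notation, $N(t)-\tfrac{7}{4}D(t)=g(t)^2$), and Morton's theorem that $f_{-7/4}$ admits only a single $3$-cycle over $\QQbar$ then rules out disjoint orbits. Alternatively, you could close the gap inside your own framework: for $u_0$ a root of $g$, the fiber $h(t,u_0)=0$ is a cubic in $t$ with leading coefficient $u_0(u_0+1)\ne 0$, and your diagonal/branch computations together with the $\psi$-invariance of $g$ exhibit the three roots $u_0,\psi(u_0),\psi^2(u_0)$, which are distinct because $g$ is coprime to $u^2+u+1$; hence every point of $Y$ over $u_0$ lies on a same-orbit branch. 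Either way, this step must be supplied: as written, your argument establishes an injection of $U$ into $U_1(3,3)$ with the stated inverse on its image, but not the claimed isomorphism onto $U_1(3,3)$.
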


\begin{rem}
Though the expression for $h(t,u)$ is different, the function field of the curve $Y$ is isomorphic to the function field $N$ studied in \cite{morton:1992}. We have introduced a change of variables in order to keep our parameter $t$ consistent with the parameter $\tau$ appearing in \cite{poonen:1998,walde/russo:1994}. The fact that $\Phi$ maps onto $U_1(3,3)$ is due to Morton \cite[pp. 354--5]{morton:1992}.
\end{rem}

\begin{proof}
In order for $\Phi$ to be well-defined, we must have $t(t+1)u(u+1) \ne 0$ for $(t,u) \in U$. That $t(t+1) \ne 0$ follows from \eqref{eq:3and3cusps}, and if $u(u+1) = 0$, then \eqref{eq:3and3curve} implies that $t(t+1) = 0$, contradicting \eqref{eq:3and3cusps}. Moreover, a simple computation shows that \eqref{eq:3and3inverse} provides a left inverse for $\Phi$, which in particular shows that $\Phi$ is injective.

Now we must show that $\Phi$ maps $U$ into $U_1(3,3)$. Let $(\alpha,\beta,c) = \Phi(t,u)$ for some $(t,u) \in U$. One can easily verify that $f_c^3(\alpha) = \alpha$, $f_c^3(\beta) = \beta$, and
	\begin{equation}\label{eq:3not1}
	\alpha - f_c(\alpha) = \frac{t^2 + t + 1}{t(t+1)} , \ \beta - f_c(\beta) = \frac{u^2 + u + 1}{u(u+1)} .
	\end{equation}
Since $t^2 + t + 1$ and $u^2 + u + 1$ are nonzero on $U$, $\alpha$ and $\beta$ cannot be fixed points. Hence $\alpha$ and $\beta$ have period exactly 3.

We also need for $\alpha$ and $\beta$ to lie in distinct orbits under $f_c$. It suffices to show that $\beta \not \in \{\alpha,f_c(\alpha),f_c^2(\alpha)\}$. A simple calculation shows that
	\[
		\begin{gathered}
		\alpha - \beta = \frac{(t+u+1) \left(t u^2+(t-2) u-1\right)}{u (u+1)} , \ f_c(\alpha) - \beta = \frac{\left(t^2-t-1\right) u-t}{t u} , \\
		f_c^2(\alpha) - \beta = -\frac{u^2 + (t+2) u+t}{u+1}.
		\end{gathered}
	\]

\begin{itemize}
	\item If $t + u + 1 = 0$, then $u = -(t+1)$, and thus $0 = h(t,u) = -t^2(t+1)^2$.
	\item If $tu^2 + (t-2)u - 1 = 0$, then $t = \frac{2u + 1}{u(u+1)}$, and thus $0 = h(t,u) = \frac{u^3 + u^2 - 2u - 1}{u + 1}$.
	\item If $(t^2 - t - 1)u - t = 0$, then $u = \frac{t}{t^2 - t - 1}$, and thus $0 = h(t,u) = \frac{t^2(t+1)(t^3 + t^2 - 2t - 1)}{(t^2 - t - 1)^3}$. Note that if $t^3 + t^2 - 2t - 1 = 0$ and $u = \frac{t}{t^2 - t - 1}$, then $u$ satisfies $u^3 + u^2 - 2u - 1 = 0$.
	\item If $u^2 + (t+2)u + t = 0$, then $t = -\frac{u(u+2)}{u+1}$, and thus $0 = h(t,u) = -u(u^3 + u^2 - 2u - 1)$.
\end{itemize}

In each of these four cases, the resulting equality contradicts \eqref{eq:3and3cusps}. Therefore $\beta$ cannot lie in the orbit of $\alpha$, from which it now follows that the image of $\Phi$ is contained in $U_1(3,3)$. It remains only to show that $\Phi$ maps onto $U_1(3,3)$.

Suppose $\alpha$ and $\beta$ are points of period 3 for $f_c$ such that $\alpha$ and $\beta$ lie in distinct orbits. By Proposition~\ref{prop:1or2or3}(C), there exist $t$ and $u$ satisfying $t(t+1)(t^2 + t + 1)u(u+1)(u^2 + u + 1) \ne 0$ such that
	\[
	\begin{gathered}
	\alpha = \frac{t^3 + 2t^2 + t + 1}{2t(t+1)} , \ \beta = \frac{u^3 + 2u^2 + u + 1}{2u(u+1)}, \\
	c = -\frac{t^6 + 2t^5 + 4t^4 + 8t^3 + 9t^2 + 4t + 1}{4t^2(t+1)^2} = -\frac{u^6 + 2u^5 + 4u^4 + 8u^3 + 9u^2 + 4u + 1}{4u^2(u+1)^2}.
	\end{gathered}
	\]
Subtracting these two expressions for $c$ and clearing denominators, we find that
	\[ (t-u) (t u+t+1) (t u+u+1) \cdot h(t,u) = 0. \]
We must therefore show that $(t-u)(tu + t + 1)(tu + u + 1) \ne 0$. It is clear that $t - u \ne 0$, since $t = u$ implies that $\alpha = \beta$. Now suppose $tu + t + 1 = 0$. Then we can write $u = -(t+1)/t$. However, substituting $u = -(t+1)/t$ into the above expression for $\beta$ yields
	\[ \beta = \frac{t^3 - t - 1}{2t(t+1)} = f_c(\alpha), \]
where the second equality follows from Proposition~\ref{prop:1or2or3}(C). If $tu + u + 1 = 0$, then a similar argument yields $\beta = f_c^2(\alpha)$. We have therefore shown that if $(t-u)(tu + t + 1)(tu + u + 1) = 0$, then $\beta$ lies in the orbit of $\alpha$ under $f_c$, a contradiction. Thus $h(t,u) = 0$, so $(t,u)$ lies on the curve $Y$.

Finally, we must show that \eqref{eq:3and3cusps} is satisfied by $(t,u)$. We have already stated that $t(t+1)(t^2+t+1)(u^2+u+1) \ne 0$, so it suffices to show that $u^3 + u^2 - 2u - 1 \ne 0$. Suppose to the contrary that $u^3 + u^2 - 2u - 1 = 0$. Then we have
	\[
		c = -\frac{7u^2(u+1)^2 + (u^3 + u^2 - 2u - 1)^2}{4u^2(u+1)^2} = -\frac{7}{4}.
	\]
However, the map $f_{-7/4}$ admits only a single 3-cycle over $\QQbar$ (see \cite[p. 358]{morton:1992}), so $\alpha$ and $\beta$ could not lie in distinct orbits.
\end{proof}

It now remains to find all quadratic points on $U_1(3,3)$.

\begin{thm}\label{thm:3and3_quad_pts}
Let $Y$ be the genus 4 affine curve defined by \eqref{eq:3and3curve}. Then
	\[ Y(\bbQ,2) = Y(\bbQ) = \{(0,-1), (0,0), (-1,0), (-1,-1)\}. \]
\end{thm}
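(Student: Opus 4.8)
The plan is to combine the symmetry of $h$ with Morton's result (Corollary~\ref{cor:3cyclesQ}) to dispose of all ``obvious'' quadratic points, and then to handle the genuinely quadratic points by passing to a genus-two quotient. First I would record the symmetry $h(t,u)=h(u,t)$, visible from \eqref{eq:3and3curve} after expansion, which reflects that $U_1(3,3)$ parametrizes an \emph{unordered} pair of $3$-cycles. Now let $(t,u)\in Y(\bbQ,2)$. If $t\in\bbQ$, then the formula for $c$ in \eqref{eq:unique3cycle_param} gives $c\in\bbQ$, so by Corollary~\ref{cor:3cyclesQ} the map $f_c$ has no quadratic point of period $3$ and at most three rational points of period $3$; since $u$ (via \eqref{eq:3and3inverse}) is the parameter of a second period-$3$ orbit for this $c$, the coordinate $u$ must either lie in $\bbQ$ — forcing all six period-$3$ points to be rational, a contradiction — or generate a cubic extension of $\bbQ$, in which case $(t,u)$ is not quadratic. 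Thus there is no quadratic point with $t\in\bbQ$ once we exclude $t\in\{0,-1\}$, where \eqref{eq:unique3cycle_param} degenerates; by the symmetry of $h$ the same holds with $t$ and $u$ reversed. Direct substitution of $t\in\{0,-1\}$ (and symmetrically $u\in\{0,-1\}$) into $h$ shows that the only points of $Y$ with a coordinate in $\{0,-1\}$ are the four listed points, all cusps in the sense of \eqref{eq:3and3cusps}, and the same argument shows $Y(\bbQ)$ contains nothing else. It therefore remains to rule out quadratic points $(t,u)$ for which $t$ and $u$ both generate the \emph{same} quadratic field.

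Such points occur in Galois-conjugate pairs $\{(t,u),(\bar t,\bar u)\}$ cutting out $\bbQ$-rational effective divisors of degree two, so the natural tool is the Jacobian. I would exploit the order-$3$ automorphism $\rho:=\sigma_t\sigma_u$ of $Y$, where $\sigma_t\colon(t,u)\mapsto(-(t+1)/t,u)$ and $\sigma_u\colon(t,u)\mapsto(t,-(u+1)/u)$ are the cycle-rotations of Remark~\ref{rem:3aut}. A short check — the only candidate fixed points force $t^2+t+1=u^2+u+1=0$, and one verifies no such pair lies on $Y$, including at infinity — shows that $\rho$ acts freely on the smooth projective model $X_1(3,3)$; Riemann–Hurwitz then gives that $Z:=X_1(3,3)/\langle\rho\rangle$ has genus $2$ and that $X_1(3,3)\to Z$ is an unramified cyclic cover of degree $3$. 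I would produce an explicit model $y^2=f(x)$ for $Z$ from $\rho$-invariant functions and then compute (in Magma) the Mordell–Weil group $J_Z(\bbQ)$. The goal is to show $\rk J_Z(\bbQ)=0$: in that case Lemma~\ref{lem:rank0jac} identifies the finitely many non-obvious quadratic points of $Z$ with $\Supp J_Z(\bbQ)$ and shows $Z(\bbQ)$ is finite, whereas if the rank were $1$ I would bound $Z(\bbQ)$ instead by Theorem~\ref{thm:stoll}. The same analysis applies to the companion quotient $Z':=X_1(3,3)/\langle\sigma_t\sigma_u^{-1}\rangle$, and the two quotients together account for the interesting part of $\Jac(X_1(3,3))$.

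Finally I would descend the problem to $Z$: a remaining quadratic point $P=(t,u)$ maps to a point $\pi(P)\in Z$ of degree at most two, and since $X_1(3,3)\to Z$ is an unramified $\bbZ/3\bbZ$-cover, $P$ can be quadratic over $\bbQ$ only when the fiber through $\pi(P)$ splits, i.e.\ when a specific Kummer class becomes trivial over $\bbQ(\pi(P))$. Using the finiteness of the relevant rational and non-obvious quadratic points on $Z$ and $Z'$ supplied by the rank computation, I would check the finitely many resulting fibers and conclude that splitting occurs only over the images of the four cusps, so no genuine quadratic point survives. This yields $Y(\bbQ,2)=Y(\bbQ)=\{(0,-1),(0,0),(-1,0),(-1,-1)\}$.

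The hard part is the middle step: verifying that $J_Z(\bbQ)$ (and $J_{Z'}(\bbQ)$) has rank $0$, since this is precisely the hypothesis that converts Faltings-style finiteness into an effective, finite list and that is required to apply Lemma~\ref{lem:rank0jac} or Theorem~\ref{thm:stoll}. A secondary difficulty is controlling how quadratic points behave under the cyclic cubic cover — in particular ensuring that the infinitely many ``obvious'' quadratic points on the genus-two quotient do \emph{not} pull back to quadratic points on $Y$ except over the cusps — which is where the splitting analysis of the cover must be made precise.
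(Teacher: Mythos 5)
Your first two steps are sound and run parallel to the paper's own proof: the paper likewise disposes of points with a rational coordinate by combining the parametrization with Morton's result (Corollary~\ref{cor:3cyclesQ}), and it likewise passes to a genus-two quotient of $X_1(3,3)$ by an order-$3$ diagonal rotation (it uses $(t,u)\mapsto\bigl(-(t+1)/t,\,-1/(u+1)\bigr)$, i.e.\ your $\sigma_t\sigma_u^{-1}$), computes that the quotient's Jacobian has Mordell--Weil rank $0$, and applies Lemma~\ref{lem:rank0jac} to list the non-obvious quadratic points of the quotient and to check that their preimages have degree greater than two. (Your part 1 does need the small extra checks that no point of $Y$ of degree at most two has $u^2+u+1=0$ or $u^3+u^2-2u-1=0$, where the parametrization fails to produce a second genuine $3$-cycle; these are finite computations of the kind the paper also performs.)

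The genuine gap is your third step. Once the non-obvious quadratic points of the quotient $Z$ are eliminated, a quadratic point of $Y$ could still map to one of the \emph{infinitely many} obvious quadratic points of $Z$, and these cannot be treated as ``finitely many resulting fibers'': the splitting condition for the unramified $\bbZ/3\bbZ$-cover is the vanishing of the specialization of the torsor class in $H^1(\bbQ(\pi(P)),\bbZ/3\bbZ)$, and since the quadratic field $\bbQ(\pi(P))$ varies with the point, neither Chevalley--Weil nor a covering-collection argument produces a finite list to check. You flag this as a ``secondary difficulty,'' but it is in fact the crux, and the proposal does not resolve it. The paper's resolution is not a splitting analysis at all: it is the computation \eqref{eq:3and3_cbyx}, which shows that $c$ is a rational function of the hyperelliptic coordinate $x$ alone (this works because $x$ is invariant under both cycle rotations \emph{and} the swap of the two cycles, so $\bbQ(c)\subseteq\bbQ(x)$). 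Consequently any quadratic point of $Y$ whose image in the quotient is rational or an obvious quadratic point has $x\in\bbQ$, hence $c\in\bbQ$; then Corollary~\ref{cor:3cyclesQ} --- the very tool you deploy in part 1 --- forces $\alpha,\beta\in\bbQ$, hence $t,u\in\bbQ$, a contradiction. That single observation is what lets Morton's theorem absorb the infinitely many obvious quadratic points on the quotient, and it is the ingredient missing from your argument.
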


Theorem~\ref{thm:3and3_quad_pts} immediately implies Theorem~\ref{thm:unique3cycle}, since $Y(\bbQ,2) \subset Y(\QQbar) \setminus U(\QQbar)$; in other words, $U_1(3,3)$ has no rational or quadratic points.

The computation of rational points on the curve $Y$ was done by Morton in \cite[pp. 362--364]{morton:1992}, though with a different model for $Y$ as explained above. We therefore need only show that $Y$ has no quadratic points.

Consider the automorphism $\sigma$ on $X$ defined by
	\[ \sigma(t) = -\frac{t+1}{t} , \ \sigma(u) = -\frac{1}{u+1}. \]
By Remark~\ref{rem:3aut} and the relations in \eqref{eq:unique3cycle_param}, the automorphism $\sigma$ corresponds to the automorphism on $Y_1(3,3)$ that takes $(\alpha,\beta,c) \mapsto (f_c(\alpha), f_c^2(\beta),c)$. Set
	\begin{align}
		\begin{split}
		\label{eq:3and3_wxy}
		w &:= t + \sigma(t) + \sigma^2(t);\\
		x &:= tu + \sigma(tu) + \sigma^2(tu);
		\end{split}
	\end{align}
Using the fact that $c = -\dfrac{t^6 + 2t^5 + 4t^4 + 8t^3 + 9t^2 + 4t + 1}{4t^2(t+1)^2}$, a Magma computation verifies that
	\begin{equation}\label{eq:3and3_cbyx}
		c = \frac{x^3 - 8x^2 + 19x - 13}{4(x-2)(x-3)}.
	\end{equation}
Furthermore, one can show that $w$ and $x$ satisfy the equation
	\[ (w+1)^2 = -\frac{x^3 - x^2 - 16x + 29}{(x-2)(x-3)}, \]
so that $(x,(x-2)(x-3)(w+1))$ is a point on the genus 2 curve\footnote{The curve $C$ has function field isomorphic to the field $N\_$ described in \cite[p. 368]{morton:1992}.} $C$ defined by
	\[ y^2 = -(x-2)(x-3)(x^3 - x^2 - 16x + 29). \]
By construction, we have a map $\psi: Y \to C$ given by $(t,u) \mapsto (x,(x-2)(x-3)(w+1))$, and any quadratic point on $Y$ must map to a quadratic or rational point on $C$.

\begin{lem}\label{lem:3and3_Ypts}
Let $C$ be the genus two curve defined by
	\[ y^2 = -(x-2)(x-3)(x^3 - x^2 - 16x + 29). \]
The only non-obvious quadratic points on $C$ are the points of the form
	\[  (x_1, \pm(17x_1 - 36)) , \ (x_2, \pm(11x_2 - 34)) , \ (x_3, \pm(3x_3 - 8)) , \ \left(x_4, \pm\frac{1}{27}(x_4 + 2)\right) , \ \]
	where
\[ x_1^2 + 5x_1 - 15 = x_2^2 + 3x_2 - 19 = x_3^2 - 5x_3 + 7 = 9x_4^2 - 47 x_4 + 59 = 0. \]
\end{lem}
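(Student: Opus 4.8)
The strategy is to translate the problem of finding non-obvious quadratic points on $C$ into a computation of the Mordell--Weil group of its Jacobian, using Lemma~\ref{lem:rank0jac}. Since the defining polynomial $f(x) = -(x-2)(x-3)(x^3 - x^2 - 16x + 29)$ has degree five, the curve $C$ has a single rational point $\infty$ at infinity, which is a Weierstrass point, and one checks that $f$ has no repeated roots so that $C$ is a genuine genus two curve with a $\bbQ$-rational point. Writing $J$ for the Jacobian of $C$, every nonzero element of $J(\bbQ)$ is uniquely represented as $\{P,Q\} = [P + Q - 2\infty]$ with $P + Q$ a $\bbQ$-rational divisor. By Lemma~\ref{lem:rank0jac}, the set $C(\bbQ,2)^n$ of non-obvious quadratic points is finite precisely when $J(\bbQ)$ is finite, and in that case it is obtained from $\Supp J(\bbQ)$ by discarding the $\bbQ$-rational points (an obvious quadratic point can never occur in the support of a nonzero class, since such a point is fixed by the hyperelliptic involution up to conjugation). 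Thus the entire lemma reduces to an explicit determination of $J(\bbQ)$.

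The crux of the argument --- and the step I expect to be the main obstacle --- is showing that $\rk J(\bbQ) = 0$. I would carry this out by a two-descent on $J$ (for instance via the implementation in Magma), bounding the rank above by zero; the finite list of candidate points in the statement already signals that the rank must vanish, since otherwise $C(\bbQ,2)^n$ would be infinite by Lemma~\ref{lem:rank0jac}. Once the rank is known to be zero, $J(\bbQ) = J(\bbQ)_{\tors}$ is finite, and I would pin down the torsion subgroup exactly by reducing modulo several primes $p$ of good reduction and intersecting the resulting constraints coming from $\#J(\bbF_p)$, forcing $J(\bbQ)_{\tors}$ into a single abelian group whose elements can all be exhibited explicitly.

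With the finite list of points of $J(\bbQ)$ in hand, the remaining step is bookkeeping. For each nonzero $\{P,Q\} \in J(\bbQ)$ I would compute its Mumford representation $(u(x), v(x))$, where $u$ is monic of degree at most two and cuts out the $x$-coordinates of $P$ and $Q$, and where $y = v(x)$ holds on the support with $v^2 \equiv f \pmod{u}$. The classes with $\deg u = 2$ and $u$ irreducible over $\bbQ$ are exactly those whose support is a conjugate pair of non-obvious quadratic points $(x, v(x))$; here $u(x)$ is one of the quadratics $x^2 + 5x - 15$, $x^2 + 3x - 19$, $x^2 - 5x + 7$, $9x^2 - 47x + 59$, with $v(x)$ the corresponding linear polynomial $17x - 36$, $11x - 34$, $3x - 8$, $\tfrac{1}{27}(x+2)$. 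Collecting these together with their images under the hyperelliptic involution --- which accounts for the $\pm$ signs and corresponds to negation in $J(\bbQ)$ --- yields precisely the eight points in the statement, while the classes with $\deg u \le 1$ or $u$ reducible contribute only $\bbQ$-rational points and are discarded. This establishes that the displayed points constitute the complete set of non-obvious quadratic points on $C$.
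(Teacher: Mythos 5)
Your proposal is correct and follows essentially the same route as the paper: a Magma rank computation (two-descent) showing $\rk J(\bbQ) = 0$, an explicit enumeration of the finite group $J(\bbQ)$, and an application of Lemma~\ref{lem:rank0jac} to identify the non-obvious quadratic points with the supports of the nonzero classes. The paper states this more tersely (it simply lists the twelve elements of $J(\bbQ)$), while you spell out the torsion bound and the Mumford-representation bookkeeping, but the substance is identical.
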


\begin{proof}
Let $J$ denote the Jacobian of $C$. A Magma computation shows that $\rk J(\bbQ) = 0$, so we may apply Lemma~\ref{lem:rank0jac} to find all non-obvious quadratic points on $C$. We find that $J(\bbQ)$ consists of the following twelve points:

	\begin{center}
	\begin{tabular}{c c}
	$\calO$ & $\left\{\left(x_1, \pm(17x_1 - 36)\right),\left( x_1', \pm(17x_1' - 36) \right)\right\}$\\
	$\{\infty, (2,0)\}$ & $\left\{(x_2, \pm(11x_2 - 34)),(x_2', \pm(11x_2' - 34))\right\}$\\
	$\{\infty, (3,0)\}$ & $\left\{ (x_3, \pm(3x_3 - 8)) , (x_3', \pm(3x_3' - 8))\right\}$\\
	$\{(2,0),(3,0)\}$ & $\left\{ \left(x_4, \pm\frac{1}{27}(x_4 + 2)\right) , \left(x_4', \pm\frac{1}{27}(x_4' + 2)\right) \right\}$
	\end{tabular}
	\end{center}
with each $x_i$ as in the statement of the lemma and $x_i'$ the Galois conjugate of $x_i$ over $\bbQ$. Applying Lemma~\ref{lem:rank0jac} yields the result.
\end{proof}

We may now complete the proof of Theorem~\ref{thm:3and3_quad_pts}. First, a computation in Magma verifies that the preimages under $\psi$ of the non-obvious quadratic points listed in Lemma~\ref{lem:3and3_Ypts} have degree strictly greater than two. Therefore a quadratic point on $Y$ must map to an obvious quadratic point or a rational point on $C$. In either case, we have $x \in \bbQ$, which implies $c \in \bbQ$ by \eqref{eq:3and3_cbyx}.

Suppose $(t,u)$ is a quadratic point on $Y$, and let $K$ be the quadratic extension of $\bbQ$ generated by $t$ and $u$. First, we note that $t(t+1)u(u+1) \ne 0$, since the only points on $Y$ that satisfy $t(t+1)u(u+1) = 0$ are the four rational points listed above. We may therefore define $\alpha,\beta,c \in K$ as in \eqref{eq:unique3cycle_param}, and $\alpha$ and $\beta$ must either be fixed points or points of period 3 for $f_c$. If $\alpha$ or $\beta$ is a fixed point, then by \eqref{eq:3not1} we must have $(t^2 + t + 1)(u^2 + u + 1) = 0$. However, it is not difficult to verify that such a point $(t,u)$ on $Y$ must have degree strictly greater than two over $\bbQ$. Therefore $\alpha$ and $\beta$ must have period precisely three for $f_c$. By Corollary~\ref{cor:3cyclesQ}, since $c \in \bbQ$, $f_c$ cannot admit quadratic points of period 3, so $\alpha,\beta \in \bbQ$. Finally, since $\alpha,\beta,c \in \bbQ$, we have $t = \alpha + f_c(\alpha) \in \bbQ$ and $u = \beta + f_c(\beta) \in \bbQ$, a contradiction. Therefore $Y$ has no quadratic points, completing the proof of Theorem~\ref{thm:3and3_quad_pts} and, consequently, Theorem~\ref{thm:unique3cycle}. \qed

\subsubsection{Cycles of length four}

In this section, we prove the following analogue of Theorem~\ref{thm:unique3cycle} for points of period four, showing that if $(K,c)$ is a quadratic pair, then $G(f_c,K)$ cannot contain a cycle structure of (4,4).

\begin{thm}\label{thm:unique4cycle}
Let $K$ be a quadratic field, and let $c \in K$. The map $f_c$ admits a point $\alpha \in K$ of period 4 if and only if there exist $u,v \in K$, with $v^2 = -u(u^2 + 1)(u^2 - 2u - 1)$ and $v(u+1)(u-1) \ne 0$, such that
	\[ \alpha = \frac{u-1}{2(u+1)} + \frac{v}{2u(u-1)} , \ c = \frac{(u^2 - 4u - 1)(u^4 + u^3 + 2u^2 - u + 1)}{4u(u+1)^2(u-1)^2}.\]
In this case, there are \emph{precisely} four points of period 4, and they form a 4-cycle:
	\begin{align*}
	\alpha &= \frac{u-1}{2(u+1)} + \frac{v}{2u(u-1)}\\
	f_c(\alpha) &= -\frac{u+1}{2(u-1)} + \frac{v}{2u(u+1)}\\
	f_c^2(\alpha) &= \frac{u-1}{2(u+1)} - \frac{v}{2u(u-1)}\\
	f_c^3(\alpha) &= -\frac{u+1}{2(u-1)} - \frac{v}{2u(u+1)}.
	\end{align*}
Moreover, we must have $c \in \bbQ$, and $\alpha$ and $f_c^2(\alpha)$ are Galois conjugates over $\bbQ$.
\end{thm}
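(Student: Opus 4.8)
**

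The plan is to mirror the structure of the proof of Theorem~\ref{thm:unique3cycle}, replacing the cycle structure $(3,3)$ with $(4,4)$. The first task is to show that the map $\Phi$ from Proposition~\ref{prop:4curve} establishes the stated parametrization: the ``if'' direction and the explicit form of the $4$-cycle follow directly from Proposition~\ref{prop:4curve}, since $\Phi$ maps $U$ isomorphically onto $U_1(4)$, and the four listed iterates $\alpha, f_c(\alpha), f_c^2(\alpha), f_c^3(\alpha)$ can be computed by applying the order-$4$ automorphism $(u,v) \mapsto (-1/u, v/u^3)$ from Remark~\ref{rem:4aut} to the formula for $\alpha$. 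The genuinely new content is the uniqueness claim: over a quadratic field, $f_c$ admits at most one $4$-cycle, i.e.\ $G(f_c,K)$ cannot contain a cycle structure of $(4,4)$.

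To prove uniqueness I would construct the dynamical modular curve $U_1(4,4)$, whose points parametrize a map $f_c$ together with two disjoint $4$-cycles, and show it has no quadratic points. Following the $(3,3)$ template, each $4$-cycle corresponds (via the inverse map \eqref{eq:4inverse}) to a parameter on the model \eqref{eq:4curve}, so a point of $U_1(4,4)$ yields a pair of points $(u_1,v_1), (u_2,v_2)$ on $Y$ sharing the same $c$-value; eliminating $c$ via the formula for $c$ in terms of $u$ gives a defining equation for a curve $Z$ covering $U_1(4,4)$. The strategy is then to descend to a quotient: using the order-$4$ automorphism of each factor, form symmetric functions of the two $4$-cycles (the analogues of $w$ and $x$ in \eqref{eq:3and3_wxy}) to map $Z$ to a lower-genus curve $C$ — ideally of genus $2$ — on which I can compute the Mordell--Weil rank of the Jacobian and apply Lemma~\ref{lem:rank0jac} to enumerate all non-obvious quadratic points. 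A Magma computation then checks that the preimages of these finitely many quadratic points, together with the obvious and rational points, pull back to points on $Z$ of degree greater than two, exactly as in the final paragraph of the proof of Theorem~\ref{thm:3and3_quad_pts}.

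The assertion that $c \in \bbQ$ and that $\alpha, f_c^2(\alpha)$ are Galois conjugates would follow from the structure of the argument rather than a separate computation. Since $f_c^2(\alpha)$ is obtained from $\alpha$ by sending $v \mapsto -v$ (the action of $\sigma^2$ on the model), if $(u,v)$ is defined over $K$ with $u \in \bbQ$ but $v \notin \bbQ$, then $\alpha$ and $f_c^2(\alpha)$ are swapped by $\Gal(K/\bbQ)$; the analogue of Lemma~\ref{lem:3cyclesQ} for period $4$ (rational points of period $4$ were ruled out by Morton \cite{morton:1998}, and one needs the corresponding statement that $\Phi_4(x,c)$ has no quadratic factors for $c \in \bbQ$) forces $u \in \bbQ$, whence $c \in \bbQ$. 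The quadratic points on $U_1(4)$ are therefore precisely the obvious ones, which are exactly those with $u \in \bbQ$ and $v \notin \bbQ$.

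The main obstacle I anticipate is the rank and quadratic-point computation on the quotient curve $C$. For the $(3,3)$ case the quotient had genus $2$ with rank-$0$ Jacobian, making Lemma~\ref{lem:rank0jac} directly applicable; for $(4,4)$ the fiber product of two copies of a genus-$2$ curve has substantially higher genus, so the key difficulty is finding a quotient low enough in genus — and with a Jacobian of rank strictly below its genus — that either Lemma~\ref{lem:rank0jac} or the Chabauty--Coleman bound of Theorem~\ref{thm:stoll} becomes effective. If no single quotient suffices, I would expect to need the full automorphism group $(\bbZ/4\bbZ)^2$ acting on $Z$, decomposing the Jacobian into pieces and handling each symmetric-function quotient separately, which is where the bulk of the technical and computational work will lie.
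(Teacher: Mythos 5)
Your reduction of the ``if'' direction and the explicit form of the 4-cycle to Proposition~\ref{prop:4curve} (together with the order-4 automorphism of Remark~\ref{rem:4aut}) is correct, and matches the paper. But your argument for the rationality of $c$ has a genuine gap: you invoke ``the analogue of Lemma~\ref{lem:3cyclesQ} for period 4,'' i.e.\ a statement about how $\Phi_4(x,c)$ factors \emph{when $c \in \bbQ$}, in order to conclude that $u \in \bbQ$ and hence that $c \in \bbQ$. That is circular --- a hypothesis conditioned on $c \in \bbQ$ cannot be what establishes $c \in \bbQ$. Worse, the statement you appeal to (``$\Phi_4(x,c)$ has no quadratic factors for $c \in \bbQ$'') is false: every obvious quadratic point on $X_1(4)$, of which there are infinitely many by Hilbert irreducibility, yields a rational $c$ for which $\Phi_4(x,c)$ has irreducible quadratic factors --- that is precisely how quadratic pairs admitting 4-cycles arise. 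What actually forces $u \in \bbQ$ is a computation on the curve $Y$ of Proposition~\ref{prop:4curve} itself: $Y$ is birational to $\Xell_1(16)$, its Jacobian $J$ satisfies $\rk J(\bbQ) = 0$ and $\#J(\bbQ) = 20$ (see \cite{bosman/etc:2013}), and applying Lemma~\ref{lem:rank0jac} to the twenty points of $J(\bbQ)$ shows that every non-obvious quadratic point on $Y$ has $v = 0$, which is excluded by the condition $v(u+1)(u-1) \ne 0$. Hence every relevant quadratic point on $Y$ has $u \in \bbQ$, which gives $c \in \bbQ$; your observation that Galois conjugation then acts by $v \mapsto -v$, swapping $\alpha$ and $f_c^2(\alpha)$, is the right one, but nothing in your proposal supplies the Jacobian computation it rests on.

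For the uniqueness claim, your plan to construct $U_1(4,4)$ as a fiber product and hunt for low-genus quotients is not wrong in principle, but it is unexecuted, and as you acknowledge, the existence of a quotient amenable to Lemma~\ref{lem:rank0jac} or Theorem~\ref{thm:stoll} is exactly the open difficulty; this is not how the paper proceeds. Instead, once $c \in \bbQ$ is in hand, the paper cites Panraksa \cite[Thm.\ 2.3.5]{panraksa:2011} --- the correct period-4 analogue of Lemma~\ref{lem:3cyclesQ} --- which says that for fixed $c \in \bbQ$ the degree-12 polynomial $\Phi_4(x,c)$ cannot have four distinct irreducible quadratic factors over $\bbQ$. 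More than four $K$-rational points of period 4 would mean at least eight; none can be rational by Morton \cite[Thm.\ 4]{morton:1998}, so they would pair into at least four distinct quadratic factors, a contradiction. Note the order of logic is the reverse of yours: rationality of $c$ comes first, from the geometry of $X_1(4)$, and the factorization result is then used to bound the number of period-4 points --- not to prove $c \in \bbQ$.
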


By Proposition~\ref{prop:4curve}, the only part of Theorem~\ref{thm:unique4cycle} that remains to be proven is the statement that a quadratic pair $(K,c)$ may admit \emph{at most} four points of period four, and that for every such pair we must have $c \in \bbQ$. We begin by proving the latter:

\begin{prop}\label{prop:4cycle_rat_c}
Let $K$ be a quadratic field, and let $c \in K$ be such that $f_c$ admits a point $\alpha \in K$ of period four. Then $c \in \bbQ$, and $\alpha$ and $f_c^2(\alpha)$ are Galois conjugates over $\bbQ$.
\end{prop}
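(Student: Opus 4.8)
The plan is to push the problem onto the genus two curve $Y : v^2 = -u(u^2+1)(u^2-2u-1)$ of Proposition~\ref{prop:4curve} and to pin down its quadratic points. Suppose $\alpha \in K$ has period four for $f_c$ with $c \in K$. Then the whole orbit $\{\alpha, f_c(\alpha), f_c^2(\alpha), f_c^3(\alpha)\}$ lies in $K$, and applying the inverse map \eqref{eq:4inverse} produces a point $(u,v) \in U(K)$, where $U \subseteq Y$ is the open set defined by \eqref{eq:4cusps}. Via the order-four automorphism $(u,v) \mapsto (-1/u, v/u^3)$ of Remark~\ref{rem:4aut}, the four cycle points correspond exactly to the four $K$-points $(u, \pm v)$ and $(-1/u, \pm v/u^3)$ of $Y$, with $f_c^2(\alpha)$ matched to $\iota(u,v) = (u,-v)$. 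Because the $c$-coordinate in Proposition~\ref{prop:4curve} is a rational function of $u$ alone, the entire proposition reduces to the single claim that $u \in \bbQ$: granting this, $c \in \bbQ$ is immediate, and the nontrivial $\sigma \in \Gal(K/\bbQ)$ then fixes $u$ while sending $v = \sqrt{-u(u^2+1)(u^2-2u-1)}$ to $-v$, so $\sigma(u,v) = (u,-v)$ and hence $\sigma(\alpha) = f_c^2(\alpha)$. Here I would invoke Morton's theorem \cite{morton:1998} that $f_c$ has no rational points of period four over $\bbQ$ in order to rule out the degenerate possibility $v \in \bbQ$, which would force the entire cycle to be rational.

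So it suffices to prove $u \in \bbQ$, i.e.\ that $(u,v)$ is never a genuinely quadratic point with $u \notin \bbQ$. In the language of Section~\ref{sec:curves}, obvious quadratic points already satisfy $u \in \bbQ$, so the content is to show that no non-obvious quadratic point of $Y$ lies in $U$. To enumerate the non-obvious quadratic points I would appeal to Lemma~\ref{lem:rank0jac}: a Magma descent should establish that the Jacobian $J$ of $Y$ satisfies $\rk J(\bbQ) = 0$, whence $\Supp J(\bbQ) = Y(\bbQ,2)^n$ is finite and is computed by running through the finite group $J(\bbQ)$ and recording the supports of its nonzero elements. The expectation is that every non-obvious quadratic point arising this way is a Weierstrass point, namely one of the conjugate pairs cut out by $u^2+1 = 0$ or $u^2-2u-1=0$; each of these has $v = 0$ and so violates \eqref{eq:4cusps}. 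Thus none of them lies in $U$, forcing $u \in \bbQ$ and finishing the argument.

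The crux of the whole proof is this Mordell--Weil computation. Everything downstream --- the matching of cycle points to points of $Y$, the reduction to $u \in \bbQ$, and the final conjugacy --- is formal once the quadratic points of $Y$ inside $U$ are known. The main obstacle is therefore the determination of $J(\bbQ)$: if its rank were positive, Lemma~\ref{lem:rank0jac} would not apply, and the Chabauty-type bound of Theorem~\ref{thm:stoll} would be of no use here, since it controls only rational points rather than quadratic ones. Granting $\rk J(\bbQ) = 0$, the remaining task of listing $J(\bbQ)$ and verifying that each non-obvious point in its support satisfies $v(u-1)(u+1) = 0$ is a finite check that I would carry out in Magma.
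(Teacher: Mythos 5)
Your proposal is correct and follows essentially the same route as the paper's proof: both reduce the proposition to showing that every quadratic point of $Y$ lying in $U$ has $u \in \bbQ$, by applying Lemma~\ref{lem:rank0jac} to the rank-zero Jacobian of $Y$ and checking that the non-obvious quadratic points occurring in the supports of $J(\bbQ)$ all have $v = 0$, hence violate \eqref{eq:4cusps}, after which $c \in \bbQ$ and the Galois conjugacy $\sigma(\alpha) = f_c^2(\alpha)$ follow formally. The only cosmetic differences are that the paper obtains $\rk J(\bbQ) = 0$ and $\#J(\bbQ) = 20$ from the known identification of $Y$ with $\Xell_1(16)$ (citing \cite{bosman/etc:2013}) rather than from a fresh descent, and it disposes of the degenerate rational-point case $v \in \bbQ$ through the same enumeration of $J(\bbQ)$ instead of quoting Morton's theorem.
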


\begin{proof}
Consider the Jacobian $J$ of the curve $Y$ defined in Proposition~\ref{prop:4curve}. Since $Y$ is birational to $\Xell_1(16)$, it is known that $\rk J(\bbQ) = 0$ and $\#J(\bbQ) = \#J(\bbQ)_{\tors} = 20$. (A proof is given in \cite[Lem. 14]{bosman/etc:2013}, for example.) Therefore the following twenty points are all of the points on $J(\bbQ)$:

	\begin{center}
	\begin{tabular}{c c c c}
	$\calO$ & $\{ \infty, ( -1 , -2 )\}$ & $\{(1,2),( 1,2) \}$ & $\{( 1 , -2 ),( -1 , -2 ) \}$ \\
	
	$\{ \infty, ( 0 , 0 )\}$ & $\{( 0 , 0 ),( 1 , 2 ) \}$ & $\{( 1 , 2 ),( -1 , 2 ) \}$ & $\{( -1 , 2 ),( -1 , 2 ) \}$ \\
	
	$\{ \infty, ( 1 , 2 )\}$ & $\{( 0 , 0 ),( 1 , -2 ) \}$ & $\{( 1 , 2 ),( -1 , -2 ) \}$ & $\{( -1 , -2 ),( -1 , -2 ) \}$ \\
	
	$\{ \infty, ( 1 , -2 )\}$ & $\{( 0, 0 ),( -1 , 2 ) \}$ & $\{( 1 , -2 ),( 1 , -2 ) \}$ & $\{(\sqrt{-1} , 0) , (-\sqrt{-1} , 0) \}$  \\
	
	$\{ \infty, ( -1 , 2 )\}$ & $\{( 0, 0 ),( -1 , -2 ) \}$ & $\{( 1 , -2 ),( -1 , 2 ) \}$ & $\{(1 + \sqrt{2} , 0) , (1 - \sqrt{2} , 0) \}$
	\end{tabular}
	\end{center}
	
Applying Lemma~\ref{lem:rank0jac}, we find that if $(u,v)$ is a non-obvious quadratic point on $Y$, then $v = 0$, and such points are not allowed by Proposition~\ref{prop:4curve}. For every other quadratic point on $Y$, we have $u \in \bbQ$. Writing
	\[ \alpha = \frac{u-1}{2(u+1)} + \frac{v}{2u(u-1)} , \ c = \frac{(u^2 - 4u - 1)(u^4 + u^3 + 2u^2 - u + 1)}{4u(u+1)^2(u-1)^2} \]
shows that $c \in \bbQ$, proving the first claim in the proposition.

Finally, recall from Proposition~\ref{prop:4curve} that
	\[
		f_c^2(\alpha) = \frac{u-1}{2(u+1)} - \frac{v}{2u(u-1)}.
	\]
Since $u \in \bbQ$, $v \not \in \bbQ$, and $v^2 \in \bbQ$, the point $f_c^2(\alpha)$ is visibly the Galois conjugate of $\alpha$.
\end{proof}

The following result, together with Propositions~\ref{prop:4curve} and \ref{prop:4cycle_rat_c}, completes the proof of Theorem~\ref{thm:unique4cycle}.

\begin{prop}\label{prop:unique4cycle}
Let $K$ be a quadratic field, and let $c \in K$. The map $f_c$ admits \emph{at most} four $K$-rational points of period 4.
\end{prop}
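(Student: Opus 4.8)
The goal is to show that $f_c$ has at most four $K$-rational points of period four. Since $c\in K$, the forward orbit of any $K$-rational point of period four lies in $K$, so these points are partitioned into $K$-rational $4$-cycles of four points each; the assertion is therefore equivalent to showing that a quadratic pair $(K,c)$ cannot admit two \emph{distinct} $K$-rational $4$-cycles.

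The plan is to translate this into a question about \emph{rational} points on an auxiliary curve, exploiting the rigidity already in hand. By Proposition~\ref{prop:4cycle_rat_c} we may assume $c\in\bbQ$, and every $K$-rational point of period four corresponds under Proposition~\ref{prop:4curve} to a point $(u,v)$ on $Y\colon v^2=\delta(u)$, where $\delta(u):=-u(u^2+1)(u^2-2u-1)$, with $u\in\bbQ$ and $v$ quadratic. By Remark~\ref{rem:4aut} the order-four automorphism acts on the $u$-line through the involution $u\mapsto-1/u$, so a single $4$-cycle is recorded by the unordered pair $\{u,-1/u\}$ of rational numbers, and the parameter is a rational function $c(u)$ of degree six satisfying $c(u)=c(-1/u)$. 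Hence $c$ factors as $c=g(\xi)$ through the symmetric coordinate $\xi:=u-1/u$, with $g$ of degree three, and the three cycles over a given $c$ are the three roots of $g(\xi)=c$. Since a $K$-rational cycle forces $\xi\in\bbQ$, and a rational cubic with two rational roots has all three rational, two distinct $K$-rational cycles produce a rational point off the diagonal on the residual curve
\[ V\colon g(\xi_1)=g(\xi_2),\qquad \xi_1\neq\xi_2. \]

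The heart of the argument is to determine $V(\bbQ)$. The curve $V$ is a double cover of the $\xi_1$-line, so I would first compute a plane model together with its branch locus in order to read off the genus $g_V$ (expected to be small). I would then compute $\rk\Jac(V)(\bbQ)$ in Magma: if the rank is zero I would enumerate $V(\bbQ)$ via Lemma~\ref{lem:rank0jac} when $g_V=2$, or directly from the torsion when $g_V\le 1$; and if $0<\rk<g_V$ I would apply the Chabauty--Coleman bound of Theorem~\ref{thm:stoll}. For each of the finitely many rational points $(\xi_1,\xi_2)$ thus produced, I would recover $u_i$ from $u_i^2-\xi_i u_i-1=0$ and check the remaining constraints: rationality of $u_i$ requires $\xi_i^2+4$ to be a square, genuine quadraticity of each cycle requires $\delta(u_i)$ to be a non-square, and membership of \emph{both} cycles in a single field $K$ requires $\delta(u_1)\delta(u_2)$ to be a square. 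The claim to verify is that these conditions are jointly unsatisfiable off the diagonal.

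The step I expect to be the main obstacle is the rational-point determination on $V$: the entire approach hinges on $V$ having small genus and a Jacobian of rank strictly less than its genus (ideally zero), so that Lemma~\ref{lem:rank0jac} or Theorem~\ref{thm:stoll} applies. If this rank hypothesis fails for $V$ itself, I would instead descend through a different quotient of the full fiber product of $Y$ with itself over the $c$-line --- using the commuting automorphisms $u_1\mapsto-1/u_1$, $u_2\mapsto-1/u_2$, and the swap $u_1\leftrightarrow u_2$ --- to reach a curve on which the Chabauty hypothesis does hold, and then transport the finitely many rational points back to the $(u_1,u_2)$ setting for the final case check.
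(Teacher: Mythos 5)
Your proposal is correct, and it takes a genuinely different route from the paper's. Both arguments share the same first step: Proposition~\ref{prop:4cycle_rat_c} forces $c \in \bbQ$, and — via \eqref{eq:4inverse}, since $\alpha$ and $f_c^2(\alpha)$ are Galois conjugate, so that $f_c^2(\alpha)+\alpha = \Tr_{K/\bbQ}(\alpha) \in \bbQ$ — it forces $u \in \bbQ$ for every $K$-rational point of period 4. After that, the paper is a two-line reduction to the literature: it cites Panraksa's theorem \cite{panraksa:2011} that for fixed $c \in \bbQ$ the polynomial $\Phi_4(x,c)$ cannot have four distinct quadratic factors over $\bbQ$, which is precisely what eight $K$-rational points of period 4 would produce. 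You instead make the geometry explicit, in the same spirit as the paper's own treatment of the $(3,3)$ case (Theorem~\ref{thm:unique3cycle} via Proposition~\ref{prop:3and3curve}): since $c$ is invariant under $u \mapsto -1/u$, it factors through $\xi = u - 1/u$ as $c = g(\xi) = (\xi^3 - 3\xi^2 - 16)/(4\xi^2)$, and two distinct $K$-rational $4$-cycles would give distinct rational roots $\xi_1 \ne \xi_2$ of $g(\xi) = c$ (distinct because the four points of $Y$ above a fixed non-cuspidal $\xi$ form a single orbit under the automorphism of Remark~\ref{rem:4aut}, hence a single cycle). Your deferred computation does work out, and pleasantly so: the residual curve is
\[
V \colon \ \xi_1^2\xi_2^2 + 16(\xi_1 + \xi_2) = 0,
\]
a model of $Y_0(4,4)$ of genus one, and solving the quadratic in $\xi_2$ shows $V$ is birational over $\bbQ$ to $\eta^2 = 4 - \xi_1^3$, i.e., to the Mordell curve $y^2 = x^3 + 4$, which has rank zero and Mordell--Weil group $\bbZ/3\bbZ$. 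Its three rational points pull back to points of $V$ with $\xi_i \in \{0,\infty\}$, i.e., $u \in \{0,\pm 1,\infty\}$, all excluded by \eqref{eq:4cusps}; so your fallback descent through other quotients and the final square-class case check are never needed. As for what each approach buys: the paper's proof is shorter but leans on an external black box, while yours is self-contained, exhibits $X_0(4,4)$ explicitly as a rank-zero genus-one curve, and directly yields the stronger cycle-level statement — at most one $K$-rational $4$-cycle — which the paper only records in the remark following its proof, there attributed to \cite{doyle:thesis}.
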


\begin{proof}
By Proposition~\ref{prop:4cycle_rat_c}, if $(K,c)$ is a quadratic pair that admits a point of period 4, then $c \in \bbQ$. Panraksa \cite[Thm. 2.3.5]{panraksa:2011} has shown that if one fixes $c \in \bbQ$, then the degree 12 polynomial $\Phi_4(x,c) \in \bbQ[x]$ cannot have four distinct quadratic factors over $\bbQ$. Since each point of period 4 for $f_c$ is a root of $\Phi_4(x,c)$, and since having more than four points of period 4 implies at least eight such points, it follows that $f_c$ cannot admit more than four quadratic points of period 4.
\end{proof}

\begin{rem}
A somewhat stronger statement than Proposition~\ref{prop:unique4cycle} is true. By Proposition~\ref{prop:4cycle_rat_c}, if $(K,c)$ is a quadratic pair with a point $\alpha \in K$ of period four, then actually $c \in \bbQ$, and $\alpha$ and $f_c^2(\alpha)$ (hence also $f_c(\alpha)$ and $f_c^3(\alpha)$) are Galois conjugates over $\bbQ$. Therefore the 4-cycle $\{\alpha,f_c(\alpha),f_c^2(\alpha),f_c^3(\alpha)\}$ is \emph{rational}. Proposition~\ref{prop:unique4cycle} then follows from \cite[Cor. 4.19]{doyle:thesis}, which states that if $c \in \bbQ$, then $f_c$ admits at most one rational 4-cycle.
\end{rem}

We conclude this section by combining the statements of Theorems~\ref{thm:unique3cycle} and \ref{thm:unique4cycle}:

\begin{cor}\label{cor:number_of_cycles}
If $K$ is a quadratic field and $c \in K$, then the cycle structure of $G(f_c,K)$ may contain the integer 1 at most twice and the integers 2, 3, and 4 at most once.
\end{cor}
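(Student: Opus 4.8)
The plan is to bound, separately for each $N \in \{1,2,3,4\}$, the number of disjoint $N$-cycles that can appear in $G(f_c,K)$, and then simply collect the four bounds. In each case the number of disjoint $N$-cycles is at most the number of $K$-rational points of exact period $N$ divided by $N$, so it suffices to control these point counts for a quadratic pair $(K,c)$.

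The cases $N = 1$ and $N = 2$ require nothing beyond elementary degree considerations together with Proposition~\ref{prop:1or2or3}. For $N = 1$, the fixed points of $f_c$ are the roots of $f_c(x) - x = x^2 - x + c$, a degree-two polynomial, so there are at most two of them in $K$; since each fixed point is its own $1$-cycle, the integer $1$ can occur at most twice in the cycle structure. For $N = 2$, the points of period $2$ lie among the at most $d(2) = 2$ roots of $\Phi_2(x,c)$, and Proposition~\ref{prop:1or2or3}(B) shows that when they exist they constitute a single $2$-cycle; hence $2$ occurs at most once.

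The real content of the corollary lies in $N = 3$ and $N = 4$, where a naive count does not suffice: over $\QQbar$ a generic $f_c$ carries $r(3) = 2$ cycles of length three and $r(4) = 3$ cycles of length four, so the degrees $d(3) = 6$ and $d(4) = 12$ of $\Phi_3$ and $\Phi_4$ would by themselves permit a second (and third) cycle. This is precisely the obstruction that the earlier theorems have already dispatched: Theorem~\ref{thm:unique3cycle} shows that over a quadratic field $K$ the map $f_c$ has \emph{precisely} three points of period three, forming a single $3$-cycle, and Theorem~\ref{thm:unique4cycle} shows that it has \emph{precisely} four points of period four, forming a single $4$-cycle. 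Thus $3$ and $4$ each occur at most once. Assembling the four bounds yields the stated multiplicity limits of two, one, one, and one on the integers $1$, $2$, $3$, and $4$. The proof of the corollary is therefore purely a matter of citing Proposition~\ref{prop:1or2or3} together with Theorems~\ref{thm:unique3cycle} and~\ref{thm:unique4cycle}; no genuinely new difficulty arises at this stage, the hard work having been carried out in establishing those two theorems via the analysis of quadratic points on $U_1(3,3)$ and on the model for $U_1(4)$.
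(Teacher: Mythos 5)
Your proposal is correct and follows the same route as the paper: the corollary is stated there precisely as the combination of Proposition~\ref{prop:1or2or3} (which handles periods 1 and 2) with Theorems~\ref{thm:unique3cycle} and \ref{thm:unique4cycle} (which show that a quadratic pair admits at most one 3-cycle and at most one 4-cycle). Your accounting of where the real work lies --- in the analysis of quadratic points on $U_1(3,3)$ and on the model for $U_1(4)$, not in the corollary itself --- matches the paper's presentation exactly.
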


\subsection{Cycles of different lengths}

The following result for the cycle structures $(1,2)$, $(1,3)$, and $(2,3)$ follows from the proof of \cite[Thm. 2]{poonen:1998}; see also \cite[Thm. 2]{walde/russo:1994} for the $(1,2)$ case. A detailed proof of parts (B) and (C) may also be found in \cite[Lems. 3.24, 3.27]{doyle/faber/krumm:2014}.

\begin{prop}\label{prop:1and2etc}
	\mbox{}
	\begin{enumerate}
		\item Let $K$ be a number field, and let $c \in K$. The map $f_c$ admits $K$-rational points of period 1 and period 2 if and only if
			\[ c = -\frac{\left(q^2+3\right) \left(3 q^2+1\right)}{4(q-1)^2(q+1)^2} \]
		for some $q \in K$, with $q(q-1)(q+1) \ne 0$. In this case, the parameters $r$ and $s$ from Proposition~\ref{prop:1or2or3} are given by
			\[ r = -\frac{q^2 + 1}{(q-1)(q+1)} , \ s = \frac{2q}{(q-1)(q+1)}. \]
		\item Let $Y$ be the affine curve of genus 2 defined by the equation
			\begin{equation}\label{eq:1and3curve}
				y^2 = t^6 + 2t^5 + 5t^4 + 10t^3 + 10t^2 + 4t + 1,
			\end{equation}
		and let $U$ be the open subset of $Y$ defined by
			\begin{equation}\label{eq:1and3cusps}
				t(t+1)(t^2 + t + 1) \ne 0.
			\end{equation}
		Consider the morphism $\Phi: U \to \bbA^3$,  $(t,y) \mapsto (\alpha,\beta,c)$, given by
			\[ \alpha = \frac{t^2+t+y}{2 t (t+1)} , \ \beta = \frac{t^3+2 t^2+t+1}{2 t (t+1)}, \ c = -\frac{t^6+2 t^5+4 t^4+8 t^3+9 t^2+4 t+1}{4 t^2 (t+1)^2}.\]
		Then $\Phi$ maps $U$ isomorphically onto $U_1(1,3)$, with the inverse map given by
			\[ t = \beta + f_c(\beta) , \ y = (2\alpha - 1)t(t+1). \]
		\item Let $Y'$ be the affine curve of genus 2 defined by the equation
			\begin{equation}\label{eq:2and3curve}
				z^2 = t^6 + 2t^5 + t^4 + 2t^3 + 6t^2 + 4t + 1,
			\end{equation}
		and let $U'$ be the open subset of $Y'$ defined by
			\begin{equation}\label{eq:2and3cusps}
				t(t+1)(t^2 + t + 1)z \ne 0.
			\end{equation}
		Consider the morphism $\Phi': U' \to \bbA^3$, $(t,z) \mapsto (\alpha,\beta,c)$, given by
			\[ \alpha = -\frac{t^2+t-z}{2 t (t+1)} , \ \beta = \frac{t^3+2 t^2+t+1}{2 t (t+1)} , \ c = -\frac{t^6+2 t^5+4 t^4+8 t^3+9 t^2+4 t+1}{4 t^2 (t+1)^2}.\]
		Then $\Phi'$ maps $U'$ isomorphically onto $U_1(2,3)$, with the inverse map given by
			\[ t = \beta + f_c(\beta) , \ z = (2\alpha + 1)t(t+1) .\]
	\end{enumerate}
In particular, for each pair $(N_1,N_2) \in \{(1,2),(1,3),(2,3) \}$, there exist infinitely many quadratic pairs $(K,c)$ that admit points of periods $N_1$ and $N_2$.
\end{prop}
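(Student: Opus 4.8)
The plan is to handle all three cases by the same device used for Proposition~\ref{prop:3and3curve}: a point of $U_1(N_1,N_2)$ records a value of $c$ that must be produced \emph{simultaneously} by the period-$N_1$ and the period-$N_2$ parametrizations of Proposition~\ref{prop:1or2or3}, so the defining equation of the relevant curve is obtained by equating the two resulting expressions for $c$ and clearing denominators.

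For part (A), Proposition~\ref{prop:1or2or3}(A) and (B) say that $f_c$ has a $K$-rational fixed point iff $c = 1/4 - r^2$ for some $r \in K$, and a $K$-rational point of period $2$ iff $c = -3/4 - s^2$ for some $s \in K$ with $s \ne 0$. Both occur iff $1/4 - r^2 = -3/4 - s^2$, i.e.\ $r^2 - s^2 = 1$, a smooth conic containing the rational point $(1,0)$. I would parametrize it by writing $r+s = -(q-1)/(q+1)$ and $r-s = -(q+1)/(q-1)$, whose product is automatically $1$; solving for $r$ and $s$ gives the stated formulas, and $c = 1/4 - r^2$ then simplifies to the displayed expression. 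The hypothesis $q(q-1)(q+1)\ne 0$ is exactly what keeps the denominators finite and forces $s \ne 0$, and conversely every solution of $r^2 - s^2 = 1$ with $s \ne 0$ arises from a unique such $q$, giving the claimed equivalence. This is a routine conic computation.

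For (B) and (C), a point of $U_1(1,3)$ (resp.\ $U_1(2,3)$) contains a point $\beta$ of period $3$; by Proposition~\ref{prop:1or2or3}(C) this determines $t = \beta + f_c(\beta)$ with $t(t+1)(t^2+t+1)\ne 0$, together with the formula for $c$ in $t$. It also contains a fixed point $\alpha = 1/2 + r$ (resp.\ a point $\alpha = -1/2 + s$ of period $2$, with $s \ne 0$). Equating the two expressions for $c$ and substituting $y = (2\alpha-1)t(t+1) = 2r\,t(t+1)$ (resp.\ $z = (2\alpha+1)t(t+1) = 2s\,t(t+1)$) yields exactly \eqref{eq:1and3curve} (resp.\ \eqref{eq:2and3curve}); the key identity to verify by expansion is $t^2(t+1)^2 + (t^6+2t^5+4t^4+8t^3+9t^2+4t+1) = t^6+2t^5+5t^4+10t^3+10t^2+4t+1$, and its analogue with $-3t^2(t+1)^2$ for (C). I would then check that each sextic is squarefree (so the curve has genus $2$), that $\Phi$ and $\Phi'$ are well-defined on $U$ and $U'$, and that the stated inverses really invert them. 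Crucially, the orbit-disjointness required for membership in $U_1(N_1,N_2)$ is automatic here, since a fixed point or a period-$2$ point can never lie in a period-$3$ orbit; this is what makes (B) and (C) far less delicate than the $(3,3)$ case. For surjectivity I would run the construction backward: given $(\alpha,\beta,c)$ of the prescribed types, Proposition~\ref{prop:1or2or3} recovers $t$ and $r$ (resp.\ $s$), and the cusp conditions \eqref{eq:1and3cusps} and \eqref{eq:2and3cusps} follow from $\beta$ having period $3$ (and, for (C), from $s \ne 0$, equivalently $z \ne 0$).

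Finally, for the infinitude statement I argue case by case. For $(1,2)$, the parametrization of (A) produces, as $q$ ranges over the infinitely many elements of $\bbQ$ with $q(q-1)(q+1)\ne 0$, infinitely many distinct $c \in \bbQ$ for which $f_c$ has rational points of periods $1$ and $2$; pairing each with any quadratic field $K$ gives infinitely many quadratic pairs. For $(1,3)$ and $(2,3)$, I invoke the remark in Section~\ref{sec:curves} that a hyperelliptic curve $y^2 = f(t)$ over $\bbQ$ has infinitely many obvious quadratic points: by Hilbert irreducibility, $f(t)$ is a non-square for infinitely many $t \in \bbQ$, and each such $(t,\sqrt{f(t)})$ maps under $\Phi$ (resp.\ $\Phi'$) to a point over $\bbQ(\sqrt{f(t)})$ with $c \in \bbQ$, yielding infinitely many quadratic pairs admitting points of both prescribed periods. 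I expect no deep obstacle in any of this; the most error-prone step is the bookkeeping in the surjectivity argument together with the precise matching of the cusp locus --- in particular keeping track of which of the two fixed points (the sign of $y$) is being recorded --- but unlike Proposition~\ref{prop:3and3curve} there is no genuine difficulty, precisely because orbit-disjointness comes for free.
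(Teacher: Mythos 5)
Your proposal is correct and takes essentially the same approach the paper relies on: the paper gives no self-contained proof of this proposition (it defers to the proof of \cite[Thm. 2]{poonen:1998}, to \cite[Thm. 2]{walde/russo:1994}, and to \cite[Lems. 3.24, 3.27]{doyle/faber/krumm:2014}), but your method --- equating the two expressions for $c$ supplied by Proposition~\ref{prop:1or2or3}, clearing denominators to get the conic in (A) and the sextics in (B) and (C), and observing that orbit-disjointness is automatic for cycles of distinct lengths --- is precisely the technique of those sources and of the paper's own proofs of the analogous Propositions~\ref{prop:3and3curve}, \ref{prop:1and4curve}, and \ref{prop:3and4}. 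Your individual computations (the parametrization of $r^2 - s^2 = 1$, the identities producing the two sextics, the matching of cusp conditions including $z \ne 0$ in (C), and the Hilbert-irreducibility argument for the infinitude statement) all check out.
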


\begin{rem}
As explained in \cite{poonen:1998}, the curves $X_1(1,3) \cong X_1(1,1,3)$ and $X_1(2,3)$ are birational to $\Xell_1(18)$ and $\Xell_1(13)$, respectively.
\end{rem}

The following description of the rational and quadratic points on the curves appearing in parts (B) and (C) of Theorem~\ref{prop:1and2etc} appears in \cite[\textsection 2.6]{krumm:2013} and \cite[Thm. 2.4]{doyle/faber/krumm:2014}.

\begin{lem}\label{lem:1and3/2and3curves}
Let $Y$ and $Y'$ be the curves defined by \eqref{eq:1and3curve} and \eqref{eq:2and3curve}, respectively.
\begin{enumerate}
	\item The set of rational points on $Y$ is
		\[ Y(\bbQ) = \{(-1, \pm 1) , (0, \pm 1)\}, \]
	and the set of quadratic points on $Y$ consists of the obvious quadratic points
		\[	Y(\bbQ,2)^o = \left\{\left(t,\pm \sqrt{t^6 + 2t^5 + 5t^4 + 10t^3 + 10t^2 + 4t + 1}\right) : t \in \bbQ, t(t+1) \ne 0\right\} \]
	and the non-obvious quadratic points
		\[	Y(\bbQ,2)^n = \left\{(t, \pm (t-1)) : t^2 + t + 1 = 0 \right\}. \]
	\item The set of rational points on $Y'$ is
		\[ Y'(\bbQ) = \{ (-1, \pm 1) , (0, \pm 1) \} ,\]
	and the set of quadratic points on $Y'$ consists only of the obvious quadratic points
		\[ Y'(\bbQ,2)^o = \left\{\left(t,\pm \sqrt{t^6 + 2t^5 + t^4 + 2t^3 + 6t^2 + 4t + 1}\right) : t \in \bbQ, t(t+1) \ne 0\right\}. \]
\end{enumerate}
\end{lem}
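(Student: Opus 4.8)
The plan is to derive both parts from Lemma~\ref{lem:rank0jac}, so the first task is to put each curve into the position where that lemma applies. Both $Y$ (given by \eqref{eq:1and3curve}) and $Y'$ (given by \eqref{eq:2and3curve}) are genus two hyperelliptic curves whose defining sextics have leading coefficient $1$, which is a square; hence each has two rational points at infinity, and in particular a rational point. We may therefore identify $J(\bbQ)$ (for $J$ the Jacobian of each curve) with classes of degree-zero $\bbQ$-rational divisors and represent its points as unordered pairs $\{P,Q\}$, exactly as in the discussion preceding Lemma~\ref{lem:rank0jac}.

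The crux of the argument, and the one genuinely nontrivial input, is to establish that $\rk J(\bbQ) = 0$ for each curve. I would verify this by running a $2$-descent on each Jacobian in Magma and checking that it returns an upper bound of $0$ for the rank. Alternatively, since $X_1(1,3) \cong \Xell_1(18)$ and $X_1(2,3)$ is birational to $\Xell_1(13)$ (see the remark following Proposition~\ref{prop:1and2etc}), one may instead invoke the well-known arithmetic of these classical modular curves, whose Jacobians have rank zero over $\bbQ$. This is precisely the step where, for a less favorable curve, a Shafarevich--Tate obstruction at $2$ could prevent the descent from pinning the rank down; for these two curves I expect rank zero to come out unconditionally, and it is the main obstacle in the sense that everything else is bookkeeping.

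With rank zero established, $J(\bbQ) = J(\bbQ)_{\tors}$ is finite, and I would enumerate it explicitly, recording each nonzero class as a pair $\{P,Q\}$. Because each curve has more than one rational point, Lemma~\ref{lem:rank0jac}(B) gives $\Supp J(\bbQ) = Y(\bbQ,2)^n \cup Y(\bbQ)$ (and similarly for $Y'$). I then sort the points appearing in the support by their $t$-coordinate: a point with $t$-coordinate in $\bbQ$ is rational (the two points at infinity being discarded when listing the affine rational points), while a point with $t$-coordinate not in $\bbQ$ is a non-obvious quadratic point, occurring together with its Galois conjugate. For $Y$ this produces the four affine points $(-1,\pm 1),(0,\pm 1)$ and the non-obvious points $(t,\pm(t-1))$ with $t^2+t+1=0$; one checks these lie on $Y$ directly, since $t^2+t+1=0$ forces $t^3=1$, whence $t^6+2t^5+5t^4+10t^3+10t^2+4t+1 = (t-1)^2$. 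For $Y'$ the support contains only points with rational $t$-coordinate, giving $Y'(\bbQ,2)^n = \varnothing$.

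The obvious quadratic points require no deep input and are handled last. By the definition recalled in Section~\ref{sec:curves}, for every $t \in \bbQ$ at which the defining sextic is not a square in $\bbQ$ the two points with that $t$-coordinate are obvious quadratic points, and these exhaust $Y(\bbQ,2)^o$ and $Y'(\bbQ,2)^o$; the restriction $t(t+1)\neq 0$ simply excludes $t=0$ and $t=-1$, where each sextic specializes to $1$ and the corresponding points are the rational points found above rather than quadratic ones.
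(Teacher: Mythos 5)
Your proposal is correct, and it reconstructs the intended argument: the paper itself gives no proof of this lemma, citing instead \cite[\textsection 2.6]{krumm:2013} and \cite[Thm.~2.4]{doyle/faber/krumm:2014}, where the result is obtained exactly as you propose --- rank zero of the Jacobians (via descent, or via the birationality of $X_1(1,3)$ and $X_1(2,3)$ with $\Xell_1(18)$ and $\Xell_1(13)$, whose Mordell--Weil groups are the classical $\bbZ/21\bbZ$ and $\bbZ/19\bbZ$), explicit enumeration of the torsion, and the support dichotomy of Lemma~\ref{lem:rank0jac}. This is also the same template the present paper uses for its analogous computations (e.g., Lemma~\ref{lem:3and3_Ypts} and Proposition~\ref{prop:4cycle_rat_c}), and your verification that the points $(t,\pm(t-1))$ with $t^2+t+1=0$ lie on $Y$, together with your observation that the description of the obvious quadratic points follows formally from the determination of the rational points, correctly completes the bookkeeping.
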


\begin{prop}\label{prop:1and3/2and3}
Let $K$ be a quadratic field, and let $c \in K$. If $f_c$ has $K$-rational points of periods 1 and 3 (resp. 2 and 3), then $c \in \bbQ$. In this case, the points of period 3 are rational, and the points of period 1 (resp. 2) are quadratic.
\end{prop}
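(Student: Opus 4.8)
The plan is to handle both the $(1,3)$ and $(2,3)$ cases in parallel, using the explicit genus-two models supplied by Proposition~\ref{prop:1and2etc} together with the complete determination of their rational and quadratic points in Lemma~\ref{lem:1and3/2and3curves}. Concretely, for the $(1,3)$ case I would start with a quadratic pair $(K,c)$ admitting a $K$-rational fixed point $\alpha$ and a $K$-rational point $\beta$ of period $3$. Since points of different periods automatically lie in disjoint orbits, the triple $(\alpha,\beta,c)$ is a $K$-point of $U_1(1,3)$, and applying the inverse map $\Phi^{-1}$ from Proposition~\ref{prop:1and2etc}(B) produces a $K$-rational point $(t,y) \in U(K)$ on the curve $Y$ of \eqref{eq:1and3curve}, with $t = \beta + f_c(\beta)$ and $y = (2\alpha-1)t(t+1)$. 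The $(2,3)$ case is identical, replacing $Y$, $U$, $\Phi$ by $Y'$, $U'$, $\Phi'$ and \eqref{eq:1and3curve} by \eqref{eq:2and3curve}.

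Next I would invoke Lemma~\ref{lem:1and3/2and3curves} to pin down $(t,y)$. Because $[K:\bbQ]\le 2$, the point lies in $Y(\bbQ,2)$ (resp.\ $Y'(\bbQ,2)$), so it is either rational, an obvious quadratic point, or a non-obvious quadratic point. The key observation is that the first and last possibilities are excluded by the cusp condition \eqref{eq:1and3cusps} (resp.\ \eqref{eq:2and3cusps}): every rational point listed in the lemma has $t \in \{-1,0\}$, hence $t(t+1)=0$, and every non-obvious quadratic point on $Y$ satisfies $t^2+t+1=0$ --- both of which are forbidden on $U$. (For $Y'$ there are no non-obvious quadratic points at all, so this step is even simpler.) Hence $(t,y)$ must be an obvious quadratic point, which by definition has $t \in \bbQ$ and $y \notin \bbQ$. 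Feeding $t\in\bbQ$ into the formula $c = -\frac{t^6+2t^5+4t^4+8t^3+9t^2+4t+1}{4t^2(t+1)^2}$ gives $c \in \bbQ$, the main assertion.

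Finally I would read off the rationality statements directly from the parametrization. With $t\in\bbQ$, the period-$3$ point $\beta = \frac{t^3+2t^2+t+1}{2t(t+1)}$ lies in $\bbQ$, and since the whole $3$-cycle is given by formulas rational in $t$ (Proposition~\ref{prop:1or2or3}(C)), all period-$3$ points are rational. On the other hand $\alpha = \frac{t^2+t+y}{2t(t+1)}$ (resp.\ $\alpha = -\frac{t^2+t-z}{2t(t+1)}$ in the $(2,3)$ case) involves the irrational coordinate $y \notin \bbQ$ (resp.\ $z\notin\bbQ$) with $t\in\bbQ$, so $\alpha$ is genuinely quadratic; thus $K = \bbQ(\alpha)$ and the points of period $1$ (resp.\ $2$) are quadratic. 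I expect the only delicate point to be the bookkeeping in the middle step --- namely checking that the image point genuinely lies on the open locus $U$ so that the cusps can be discarded, and correctly matching the obvious-versus-non-obvious dichotomy of Lemma~\ref{lem:1and3/2and3curves} to the rationality of the dynamical coordinates. All of the substantive arithmetic (the computation of $Y(\bbQ,2)$ and $Y'(\bbQ,2)$) has already been carried out in the cited lemma, so no new curve computations are required.
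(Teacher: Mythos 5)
Your proposal is correct and follows essentially the same route as the paper: pass from the pair $(\alpha,\beta,c)$ to a $K$-rational point $(t,y)$ on $Y$ (resp.\ $(t,z)$ on $Y'$) via Proposition~\ref{prop:1and2etc}, use Lemma~\ref{lem:1and3/2and3curves} together with the cusp conditions to force $t \in \bbQ$, and then read off from the parametrization that $c$ and the $3$-cycle are rational while the period-$1$ (resp.\ period-$2$) points are quadratic. The only difference is that you spell out the exclusion of rational and non-obvious quadratic points more explicitly than the paper does, which is a matter of exposition rather than substance.
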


\begin{proof}
We first suppose that $f_c$ admits $K$-rational points of period 1 and 3. Then there exists a $K$-rational point $(t,y)$ on the curve $Y$ defined by \eqref{eq:1and3curve}, with $t(t+1)(t^2 + t + 1) \ne 0$, for which
	\[ c = -\frac{t^6 + 2t^5 + 4t^4 + 8t^3 + 9t^2 + 4t + 1}{4t^2(t + 1)^2}. \]
By Lemma~\ref{lem:1and3/2and3curves}, the only quadratic points on $Y$ satisfying $t^2 + t + 1 \ne 0$ are those with $t \in \bbQ$. It follows from Proposition~\ref{prop:1and2etc} that $c$ and the points of period 3 are also in $\bbQ$ but, since $y \not \in \bbQ$, the fixed points for $f_c$ are not in $\bbQ$.

A similar argument applies to the case that $f_c$ admits $K$-rational points of period 2 and 3, using the fact that the only quadratic points on the curve $Y'$ defined by \eqref{eq:2and3curve} have $t \in \bbQ$ by Lemma~\ref{lem:1and3/2and3curves}.
\end{proof}

For each of the three different cycle structures described in Theorem~\ref{prop:1and2etc}, we can find infinitely many quadratic pairs $(K,c)$ for which $G(f_c,K)$ contains the given structure. For the remainder of this section we show that for every other possible cycle structure with cycles of length at most 4, there are at most finitely many quadratic pairs $(K,c)$ for which $G(f_c,K)$ contains the given structure.

\subsubsection[Periods 1 and 4]{Periods 1 and 4}\label{subsub:1and4}

\begin{figure}[h]
\centering
	\begin{overpic}[scale=.6]{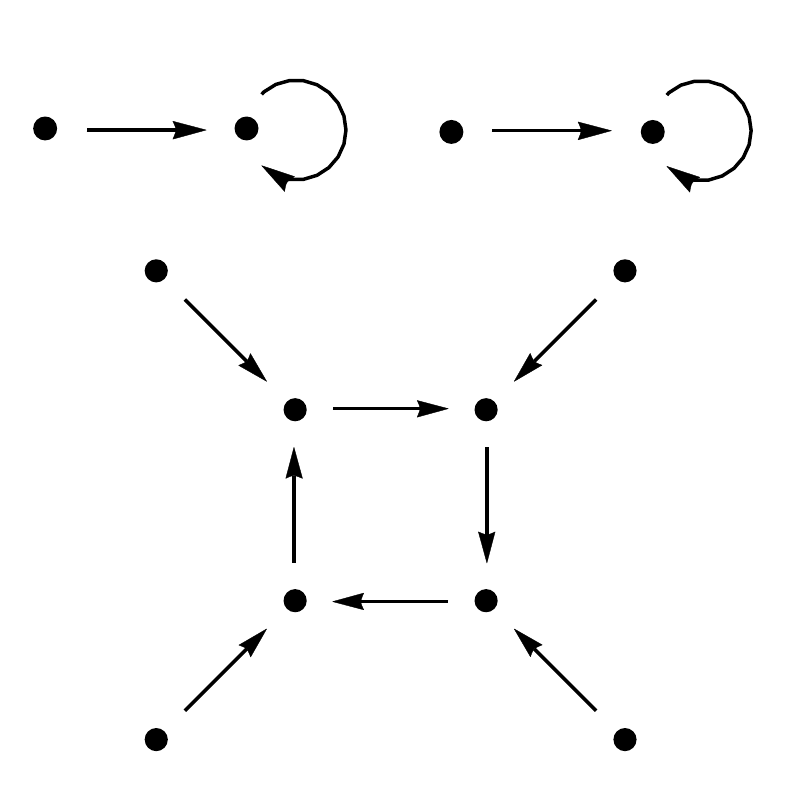}
		\put(35,119){$\alpha$}
		\put(105,119){$\alpha'$}
		\put(78,72){$\beta$}
	\end{overpic}
	\caption{The graph $G_0$ generated by fixed points $\alpha$ and $\alpha'$ and a point $\beta$ of period 4}
	\label{fig:1and4}
\end{figure}

We now discuss the situation in which a quadratic pair $(K,c)$ simultaneously admits $K$-rational points of periods 1 and 4. Unfortunately, we have thus far been unable to fully determine all such occurrences. The problem is that the general bound we obtain from Theorem~\ref{thm:stoll} for the number of rational points on a certain hyperelliptic curve does not appear to be sharp, so different techniques will be required in order to complete the analysis in this case. We are able, however, to make the following statement.

\begin{thm}\label{thm:1and4}
There is at most one quadratic pair $(K,c)$ that simultaneously admits points of period 1 and period 4. (In other words, there is at most one quadratic pair $(K,c)$ for which $G(f_c,K)$ contains the graph $G_0$ shown in Figure~\ref{fig:1and4}.) Moreover, such a pair must have $c \in \bbQ$.
\end{thm}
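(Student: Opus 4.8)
The goal is to bound the number of quadratic pairs $(K,c)$ for which $f_c$ admits $K$-rational points of both period $1$ and period $4$. Following the strategy used throughout this section, I would first produce a model for the dynamical modular curve $U_1(1,4)$ (equivalently $U_1(1,1,4)$, via Remark~\ref{rem:redundant}) parametrizing such pairs. The natural approach is to take the fiber product of the period-$1$ parametrization from Proposition~\ref{prop:1or2or3}(A) with Morton's period-$4$ model from Proposition~\ref{prop:4curve}, expressing both over a common $c$-coordinate. Since a $K$-rational fixed point exists if and only if $c = 1/4 - r^2$ for some $r \in K$, and the period-$4$ locus is cut out by \eqref{eq:4curve}, I would impose the condition that $1/4 - r^2$ equals the rational function of $u$ giving $c$ in Proposition~\ref{prop:4curve}. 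This should yield an explicit (likely hyperelliptic) curve whose quadratic points control the problem.

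\emph{Leveraging the rationality of $c$.} The decisive simplification comes from Proposition~\ref{prop:4cycle_rat_c}: any quadratic pair admitting a point of period $4$ must have $c \in \bbQ$, with the $4$-cycle defined over $\bbQ$ and the points pairing into Galois conjugates over $\bbQ$. This forces $u \in \bbQ$ and $v^2 = -u(u^2+1)(u^2-2u-1) \in \bbQ$. Now the period-$1$ condition reads $1/4 - r^2 = c \in \bbQ$, so $r^2 \in \bbQ$; the fixed points $1/2 \pm r$ are quadratic (or rational) over $\bbQ$, generating a field $\bbQ(\sqrt{1/4 - c})$. Thus the entire question descends to $\bbQ$: I must count rational points $u \in \bbQ$ on the period-$4$ curve \eqref{eq:4curve} for which the associated $c$-value also makes $1/4 - c$ of the required form, i.e.\ for which $f_c$ has a fixed point in the (at most one) relevant quadratic field. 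Concretely, this amounts to asking when the discriminant $1/4 - c(u)$ and the period-$4$ discriminant $-u(u^2+1)(u^2-2u-1)$ define the same quadratic field, which is a single rational condition on $u$.

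\emph{The hyperelliptic curve and Chabauty.} Setting $1/4 - c(u) = \square \cdot \bigl(-u(u^2+1)(u^2-2u-1)\bigr)$ (so that both square roots live in one quadratic field $K$) should produce, after clearing denominators, a hyperelliptic curve $H/\bbQ$ whose rational points parametrize exactly the admissible $u \in \bbQ$. I would then compute the genus and the Mordell--Weil rank of its Jacobian in Magma. If the rank is strictly less than the genus, Theorem~\ref{thm:stoll} (Stoll's Chabauty--Coleman bound) gives an effective bound on $\#H(\bbQ)$; choosing a good prime $p$ and counting $\#H(\bbF_p)$ should pin down the rational points up to a small explicit set, from which I extract the genuine solutions by discarding cusps (points violating \eqref{eq:4cusps}) and degenerate fixed-point loci ($r=0$, etc.).

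\emph{The main obstacle.} The difficulty, flagged in the theorem's preamble, is precisely that the Stoll bound on this curve is \emph{not sharp}: the count $\#H(\bbF_p) + 2r + \lfloor 2r/(p-2)\rfloor$ exceeds the number of points I can actually account for dynamically, so I cannot rule out the one leftover candidate. Hence rather than a clean nonexistence result, the conclusion is the weaker ``at most one pair.'' I would organize the argument so that all but (at most) one $\bbF_p$-point of $H$ lifts to a known or cusp point, leaving a single unexplained residue class; the rationality claim $c \in \bbQ$ then follows immediately from Proposition~\ref{prop:4cycle_rat_c} regardless of whether that candidate is genuine. Sharpening this to eliminate the last pair would require the more refined techniques deferred to \cite{doyle/krumm/wetherell}.
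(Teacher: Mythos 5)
Your overall skeleton is the same as the paper's: the paper builds the fiber-product model of $U_1(1,4)$ (Proposition~\ref{prop:1and4curve}), uses Proposition~\ref{prop:4cycle_rat_c} to force $u\in\bbQ$ and $c\in\bbQ$, and your square-class condition $1/4-c(u)=\square\cdot\bigl(-u(u^2+1)(u^2-2u-1)\bigr)$ is, after clearing denominators, exactly the paper's genus-4 curve $C_4\colon y^2=(u^2+1)(u^2-2u-1)(u^6-4u^5-3u^4-8u^3+3u^2-4u-1)$ from Lemma~\ref{lem:1and4quotients}(B), to which Stoll's bound (Theorem~\ref{thm:stoll}) is applied with $\rk J_4(\bbQ)\le 2$ and $p=5$. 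However, two steps of your plan have genuine gaps.

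First, your ``single rational condition'' only covers the configuration in which the fixed points generate the \emph{same} quadratic field as the $4$-cycle. It omits the case where $1/4-c(u)$ is itself a rational square, i.e.\ $f_c$ has \emph{rational} fixed points together with a quadratic $4$-cycle; this is still a quadratic pair admitting points of periods $1$ and $4$, and it is governed by a different curve, namely $C_3\colon w^2=-u(u^6-4u^5-3u^4-8u^3+3u^2-4u-1)$ of Lemma~\ref{lem:1and4quotients}(A), whose rational points must be determined separately (the paper shows they are all cusps, again via Stoll plus an automorphism argument). Without this case your argument does not cover all pairs the theorem counts.

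Second, and more seriously, your mechanism for extracting ``at most one pair'' from the non-sharp bound would fail. For $C_4$ the bound is $\#C_4(\bbF_5)+2r+\lfloor 2r/3\rfloor = 10+4+1=15$ against $8$ known cusp points, so up to \emph{seven} extra rational points are permitted; moreover, Stoll's theorem explicitly allows several rational points in a single residue class (that is what the $+2r$ term accounts for), so ``leaving a single unexplained residue class'' does not bound the number of extra rational points by one, and distinct extra points could a priori yield distinct values of $c$, hence distinct pairs. The missing idea is a symmetry argument: $C_4$ carries the order-4 automorphism $\sigma(u,y)=(-1/u,\,y/u^5)$, whose nontrivial powers fix no rational point (the quadratic and sextic factors are irreducible over $\bbQ$, so $C_4$ has no rational Weierstrass points), so any new rational point forces a full orbit of four; since $c(u)$ is invariant under $u\mapsto -1/u$, each such orbit determines a \emph{single} value of $c$. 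As $15-8=7<8$, there is room for at most one orbit, i.e.\ at most one extra pair $(K,c)$, with $c\in\bbQ$ because $u\in\bbQ$. (The paper packages this as $\#\calS_4\in\{0,4\}$ in Lemma~\ref{lem:1and4quotients}(B) and $\#\calS\in\{0,8\}$ in Theorem~\ref{thm:1and4quad_pts}.) Without this, or an equivalent orbit-counting step, your outline cannot reach the stated conclusion.
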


We begin by giving a more computationally useful model for the dynamical modular curve
	\[ U_1(1,4) = \{(\alpha,\beta,c) \in \bbA^3 : \mbox{ $\alpha$ is a fixed point and $\beta$ is a point of period 4 for $f_c$} \}. \]

\begin{prop}\label{prop:1and4curve}
Let $Y$ be the affine curve of genus 9 defined by the equation
	\begin{equation}\label{eq:1and4curve}
		\begin{cases}
			v^2 &= -u(u^2 + 1)(u^2 - 2u - 1)\\
			w^2 &= -u(u^6 - 4u^5 - 3u^4 - 8u^3 + 3u^2 - 4u - 1),
		\end{cases}
	\end{equation}
and let $U$ be the open subset of $Y$ defined by
	\begin{equation}\label{eq:1and4cusps}
		(u-1)(u+1)v \ne 0.
	\end{equation}
Consider the morphism $\Phi: U \to \bbA^3$, $(u,v,w) \mapsto (\alpha,\beta,c)$, given by
	\[ \alpha = \frac{1}{2} + \frac{w}{2u(u-1)(u+1)} , \ \beta = \frac{u-1}{2(u+1)} + \frac{v}{2u(u-1)} , \ c = \frac{(u^2 - 4u - 1)(u^4 + u^3 + 2u^2 - u + 1)}{4u(u+1)^2(u-1)^2} .\]
Then $\Phi$ maps $U$ isomorphically onto $U_1(1,4)$, with the inverse map given by
	\begin{equation}\label{eq:1and4inverse}
		u = -\frac{\beta + f_c^2(\beta) + 1}{\beta + f_c^2(\beta) - 1} , \ v = \frac{u(u-1)(2\beta u + 2\beta - u + 1)}{u+1} , \ w = u(u-1)(u+1)(2\alpha - 1).
	\end{equation}
\end{prop}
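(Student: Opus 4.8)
The plan is to follow the template used in the proof of Proposition~\ref{prop:3and3curve}, recognizing $Y$ as the fiber product (over the $c$-line) of Morton's period-$4$ model from Proposition~\ref{prop:4curve} with the curve recording the existence of a rational fixed point. The coordinates $(u,v)$ together with the formula for $c$ in the statement are exactly the period-$4$ data of Proposition~\ref{prop:4curve}, with the period-$4$ point renamed $\beta$; the extra coordinate $w$ and the second equation in \eqref{eq:1and4curve} encode that $1-4c$ is a square. Indeed, by Proposition~\ref{prop:1or2or3}(A) a fixed point of $f_c$ has the form $\alpha = 1/2 + r$ with $c = 1/4 - r^2$, so the fixed-point condition is $(2\alpha - 1)^2 = 1 - 4c$; setting $w = u(u-1)(u+1)(2\alpha - 1)$ converts this into $w^2 = u^2(u-1)^2(u+1)^2(1 - 4c)$. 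The essential (routine but unavoidable) computation is the polynomial identity
\[ u^2(u-1)^2(u+1)^2\left(1 - 4c\right) = -u\left(u^6 - 4u^5 - 3u^4 - 8u^3 + 3u^2 - 4u - 1\right), \]
obtained after substituting the displayed expression for $c$; this is what makes the second equation of \eqref{eq:1and4curve} equivalent to the existence of the fixed point, and it is readily checked in Magma.

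With this identity in hand I would verify the four usual claims. For \emph{well-definedness}, on $U$ we have $(u-1)(u+1) \neq 0$, and if $u = 0$ then the first equation of \eqref{eq:1and4curve} forces $v = 0$, contradicting \eqref{eq:1and4cusps}; hence $u(u-1)(u+1) \neq 0$ and every denominator of $\Phi$ is nonzero. For \emph{injectivity}, a direct substitution shows that \eqref{eq:1and4inverse} is a left inverse of $\Phi$: the formulas for $u$ and $v$ are precisely those of \eqref{eq:4inverse} applied to the period-$4$ point $\beta$, and $w = u(u-1)(u+1)(2\alpha - 1)$ inverts the expression for $\alpha$.

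For the \emph{image} of $\Phi$, since $(u,v)$ satisfies \eqref{eq:4curve} with $v(u-1)(u+1) \neq 0$, Proposition~\ref{prop:4curve} guarantees that $\beta$ has exact period $4$ and that $c$ takes the stated value; and using the identity above together with the second equation of \eqref{eq:1and4curve},
\[ \alpha^2 - \alpha + c = \left(\alpha - \tfrac12\right)^2 + c - \tfrac14 = \frac{w^2}{4u^2(u-1)^2(u+1)^2} + c - \tfrac14 = \frac{1 - 4c}{4} + c - \tfrac14 = 0, \]
so $\alpha$ is a fixed point. Note that, in contrast to the $(3,3)$ case of Proposition~\ref{prop:3and3curve}, no orbit-disjointness check is required, since a fixed point and a point of period $4$ automatically lie in distinct orbits.

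For \emph{surjectivity}, given $(\alpha,\beta,c) \in U_1(1,4)$, Proposition~\ref{prop:4curve} produces a point $(u,v)$ on \eqref{eq:4curve} with $v(u-1)(u+1) \neq 0$ mapping to $(\beta,c)$; since $\alpha$ is a fixed point we have $(2\alpha - 1)^2 = 1 - 4c$, so setting $w := u(u-1)(u+1)(2\alpha - 1)$ yields, via the identity once more, a point $(u,v,w)$ on $Y$ lying in $U$ with $\Phi(u,v,w) = (\alpha,\beta,c)$. Because the inverse formulas \eqref{eq:1and4inverse} are themselves morphisms on $U_1(1,4)$ --- their only denominators are $\beta + f_c^2(\beta) - 1$, nonzero on the period-$4$ locus by Proposition~\ref{prop:4curve}, and $u+1$, nonzero on $U$ --- the map $\Phi$ is an isomorphism onto $U_1(1,4)$ rather than merely a birational map. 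The genus assertion plays no role in the parametrization and can be confirmed by a Magma computation (or by Riemann--Hurwitz applied to the double cover $(u,v,w) \mapsto (u,v)$ of the genus-$2$ curve \eqref{eq:4curve} branched along $w = 0$). I expect no conceptual obstacle here; the one point demanding care is the bookkeeping of the open conditions, ensuring that the excised locus \eqref{eq:1and4cusps} matches exactly the cusps of the period-$4$ model so that the correspondence is an honest isomorphism.
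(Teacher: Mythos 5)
Your proposal is correct and follows essentially the same route as the paper's proof: well-definedness via $v \ne 0 \Rightarrow u \ne 0$, injectivity from the left inverse \eqref{eq:1and4inverse}, the image verified through Propositions~\ref{prop:1or2or3} and \ref{prop:4curve}, and surjectivity by equating the two expressions for $c$ and setting $w = u(u-1)(u+1)(2\alpha-1)$, which is exactly the paper's substitution $w = 2u(u-1)(u+1)r$. Your explicit identity $u^2(u-1)^2(u+1)^2(1-4c) = -u(u^6 - 4u^5 - 3u^4 - 8u^3 + 3u^2 - 4u - 1)$ is the same computation the paper performs when it derives the formula for $r^2$, so the two arguments differ only in organization (your fiber-product framing and the extra remark that the inverse formulas are morphisms), not in substance.
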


\begin{proof}
Since $v \ne 0$ implies $u \ne 0$, the condition $(u-1)(u+1)v \ne 0$ implies that $\Phi$ is well-defined. One can verify that $f_c(\alpha) = \alpha$, so $\alpha$ is a fixed point for $f_c$, and that $f_c^4(\beta) = \beta$. Moreover,
	\[ \beta - f_c^2(\beta) = \frac{v}{u(u-1)}, \]
which is nonzero by hypothesis, so $\beta$ is a point of exact period 4 for $f_c$. Therefore $\Phi$ maps $Y$ into $U_1(1,4)$. A quick computation shows that \eqref{eq:1and4inverse} serves as a left inverse for $\Phi$, so $\Phi$ is injective.

It now remains to show that $\Phi$ is surjective. Suppose $(\alpha,\beta,c) \in U_1(1,4)$. By Proposition~\ref{prop:4curve}, there exist $u,v \in K$ such that
	\[
		\begin{gathered}
			v^2 = -u(u^2 + 1)(u^2 - 2u - 1) , \\
			\beta = \frac{u-1}{2(u+1)} + \frac{v}{2u(u-1)} , \
			c = \frac{(u^2 - 4u - 1)(u^4 + u^3 + 2u^2 - u + 1)}{4u(u+1)^2(u-1)^2}.
		\end{gathered}
	\]
Also, by Proposition~\ref{prop:1or2or3}, there exists $r \in K$ such that
	\[  \alpha = 1/2 + r , \ c = 1/4 - r^2. \]
Equating these two expressions for $c$ yields
	\[ r^2 = -\frac{u^6-4 u^5-3 u^4-8 u^3+3 u^2-4 u-1}{4 u(u-1)^2 (u+1)^2}. \]
Setting $w := 2u(u-1)(u+1)r$ yields the second equation in \eqref{eq:1and4curve} and allows us to rewrite $\alpha$ as
	\[ \alpha = \frac{1}{2} + \frac{w}{2u(u-1)(u+1)}, \]
completing the proof.
\end{proof}

We now offer a bound on the number of quadratic points on the affine curve $Y$ defined by \eqref{eq:1and4curve}.

\begin{thm}\label{thm:1and4quad_pts}
Let $Y$ be the genus 9 affine curve defined by \eqref{eq:1and4curve}. Then
	\begin{align*}
	Y(\bbQ,2) &= Y(\bbQ) \cup \{(t,0,\pm(2t + 2)) : t^2 + 1 = 0 \} \cup \calS \\
		&= \{ (0,0,0), (\pm 1,\pm 2,\pm 4)\} \cup \{(t,0,\pm(2t + 2)) : t^2 + 1 = 0 \} \cup \calS,
	\end{align*}
where $\#\calS \in \{0,8\}$. Moreover, if $(u,v,w) \in \calS$, then $u \in \bbQ$, and
	\begin{equation}\label{eq:1and4auto}
	\calS = \left\{(u,\pm v, \pm w) , \left( -\frac{1}{u}, \pm \frac{v}{u^3}, \pm \frac{w}{u^4} \right) \right\}.
	\end{equation}
\end{thm}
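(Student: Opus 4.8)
Write $A(u) = -u(u^2+1)(u^2-2u-1)$ and $B(u) = -u(u^6 - 4u^5 - 3u^4 - 8u^3 + 3u^2 - 4u - 1)$, so that $Y$ is the fiber product over the $u$-line of the curve $C_1 \colon v^2 = A(u)$ of Proposition~\ref{prop:4curve} (genus $2$) and a curve $C_2 \colon w^2 = B(u)$ of genus $3$. The plan is to push a point of $Y(\bbQ,2)$ down to $C_1$ and exploit that $\rk \Jac(C_1)(\bbQ) = 0$. The forgetful map $(u,v,w)\mapsto(u,v)$ sends any point of $Y(\bbQ,2)$ to a point of $C_1$ of degree at most two. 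By the analysis in the proof of Proposition~\ref{prop:4cycle_rat_c} (which rests on Lemma~\ref{lem:rank0jac} and the vanishing of the rank), every such point on $C_1$ either is a non-obvious quadratic point, forcing $v=0$, or has $u \in \bbQ$. This dichotomy organizes the whole argument.

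\textbf{The branch $v = 0$.} Here $A(u)=0$, so $u=0$, $u^2+1=0$, or $u^2-2u-1=0$, and I would substitute each into $w^2 = B(u)$. For $u=0$ one gets $(0,0,0)$; for $u^2+1=0$ a direct computation gives $B(u) = 8u = (2u+2)^2$, producing $(u,0,\pm(2u+2))$; and for $u^2-2u-1=0$ one finds $B(u) = 288u+120$, which a short check shows is not a square in $\bbQ(\sqrt2)$, so no point of degree $\le 2$ arises. This recovers exactly $Y(\bbQ)$ together with the displayed $v=0$ family.

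\textbf{The branch $u \in \bbQ$.} Now $A(u),B(u)\in\bbQ$, and $(u,v,w)$ has degree at most two over $\bbQ$ precisely when the classes of $A(u)$ and $B(u)$ in $\bbQ^*/(\bbQ^*)^2$ generate a subgroup of order at most two, i.e.\ when $A(u)$, $B(u)$, or $A(u)B(u)$ is a square. I would record the $\bbQ$-rational automorphism $\rho\colon (u,v,w)\mapsto(-1/u,\,v/u^3,\,w/u^4)$, which lifts the automorphism of $C_1$ in Remark~\ref{rem:4aut}, together with $\iota_2\colon w\mapsto -w$; one checks $\rho^2\colon v\mapsto -v$, so $\langle\rho,\iota_2\rangle$ is abelian of order eight, acts on the smooth model of $Y$, and preserves the locus of affine points with $u\notin\{0,\infty\}$. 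Hence the extra set $\calS$ lies in this locus and is a union of $\langle\rho,\iota_2\rangle$-orbits, a generic one being exactly the eight points displayed in the statement (with common $u$-pair $\{u,-1/u\}\subset\bbQ$). The three square-class possibilities correspond to $\bbQ$-rational points on $C_1$ (when $A$ is a square), on $C_2$ (when $B$ is a square), and on the genus four hyperelliptic curve $C_3\colon z^2 = (u^2+1)(u^2-2u-1)(u^6-4u^5-3u^4-8u^3+3u^2-4u-1)$ obtained from $(vw)^2 = A(u)B(u)$ (when $A(u)B(u)$ is a square).

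\textbf{Bounding the rational points, and the obstacle.} The contribution from $C_1$ is harmless: $C_1(\bbQ)=\{\infty,(0,0),(\pm1,\pm2)\}$ has $u\in\{0,\pm1\}$, where $B(u)$ is already a square, so it only reproduces $Y(\bbQ)$. For $C_2$ I would compute $C_2(\bbQ)$ via its quotients---$C_2/\bar\rho$ is an elliptic curve while $C_2/\bar\rho\iota_2$ is a genus two curve (the latter because $\bar\rho\iota_2$ is fixed-point free), so $\Jac(C_2)\sim E\times J'$ with $\dim = 1+2$---and then apply Theorem~\ref{thm:stoll} to these factors, expecting $C_2(\bbQ)$ to have only $u\in\{0,\pm1\}$, where $A(u)$ is again a square; this eliminates the ``$B$ square, $A$ nonsquare'' orbits. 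The genuinely new orbits therefore all come from $C_3(\bbQ)$ with $A(u)$ a non-square, and I would bound $\#C_3(\bbQ)$ using Theorem~\ref{thm:stoll}, after computing $\rk\Jac(C_3)(\bbQ)$ (the descent is aided by the fact that $\rho$ makes $\tau'=\bar\rho$ act as a square root of $-1$, endowing $\Jac(C_3)$ with an action of $\bbZ[i]$ and forcing the rank to be even). The hard part is exactly this final Chabauty step: as flagged in the discussion preceding the theorem, the bound from Theorem~\ref{thm:stoll} on the genus four curve $C_3$ is not sharp, exceeding the count explained by known rational points of $Y$ by precisely one $\langle\rho,\iota_2\rangle$-orbit's worth. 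The method thus yields ``at most one extra orbit'' but cannot decide whether it is attained, which is exactly what produces the dichotomy $\#\calS\in\{0,8\}$ with the stated orbit shape; removing it requires the finer techniques deferred to \cite{doyle/krumm/wetherell}.
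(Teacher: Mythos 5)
Your proposal is correct and takes essentially the same route as the paper: the same dichotomy ($v=0$ versus $u\in\bbQ$) extracted from the rank-zero Jacobian argument in Proposition~\ref{prop:4cycle_rat_c}, the same two auxiliary hyperelliptic curves (the genus-3 curve $w^2=B(u)$ and the genus-4 curve satisfied by $vw/u$, which are exactly the curves $C_3$ and $C_4$ of Lemma~\ref{lem:1and4quotients}), and the same combination of Theorem~\ref{thm:stoll} with automorphism-orbit counting, which is precisely what produces the unresolved possibility $\#\calS\in\{0,8\}$. Your minor variations --- organizing the $u\in\bbQ$ case by square classes instead of showing directly that $v,w\notin\bbQ$, determining the genus-3 curve's rational points via its elliptic and genus-2 quotients rather than by a direct rank bound and Stoll's theorem plus the order-four orbit argument, and the $\bbZ[i]$-action parity remark for the genus-4 Jacobian --- do not change the substance of the argument.
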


Assuming Theorem~\ref{thm:1and4quad_pts} for the moment, we now prove Theorem~\ref{thm:1and4}.

\begin{proof}[Proof of Theorem~\ref{thm:1and4}]
Suppose $(K,c)$ is a quadratic pair that simultaneously admits points $\alpha$ and $\beta$ of periods 1 and 4, respectively. By Proposition~\ref{prop:1and4curve}, there exists a $K$-rational point $(u,v,w)$ on the curve $Y$ defined by \eqref{eq:1and4curve} such that
	\[ \alpha = \frac{1}{2} + \frac{w}{2u(u-1)(u+1)} , \ \beta = \frac{u-1}{2(u+1)} + \frac{v}{2u(u-1)} , \ c = \frac{(u^2 - 4u - 1)(u^4 + u^3 + 2u^2 - u + 1)}{4u(u+1)^2(u-1)^2} .\]
The point $(u,v,w)$ cannot be one of the known points listed in Theorem~\ref{thm:1and4quad_pts}, since all such points satisfy $(u-1)(u+1)v = 0$ and therefore lie outside the open subset $U \subset Y$ defined by \eqref{eq:1and4cusps}. Hence $(u,v,w)$ is a $K$-rational point in $\calS$. This implies that $\#\calS = 8$, and from \eqref{eq:1and4auto} all eight elements of $\calS$ lie in $Y(K)$. Since the expression for $c$ in terms of $u$ is invariant under $u \mapsto -1/u$, all eight points yield the same value of $c$, and therefore the same quadratic pair $(K,c)$. Finally, since $(u,v,w) \in \calS$, we must have $u \in \bbQ$, and therefore $c \in \bbQ$ as well.
\end{proof}

The bound in Theorem~\ref{thm:1and4quad_pts} for the number of quadratic points on $Y$ relies on the following lemma, which describes the set of \emph{rational} points on two different quotients of $Y$.

\begin{lem}\label{lem:1and4quotients} \mbox{}
\begin{enumerate}
\item Let $C_3$ be the genus 3 hyperelliptic curve defined by
	\begin{equation}\label{eq:1and4C3}
		w^2 = -u(u^6 - 4u^5 - 3u^4 - 8u^3 + 3u^2 - 4u - 1).
	\end{equation}
Then
	\[ C_3(\bbQ) = \{ (0,0) , (\pm 1, \pm 4), \infty \}. \]
\item Let $C_4$ be the genus 4 hyperelliptic curve defined by
	\begin{equation}\label{eq:1and4C4}
	y^2 = (u^2 + 1)(u^2 - 2u - 1)(u^6 - 4u^5 - 3u^4 - 8u^3 + 3u^2 - 4u - 1).
	\end{equation}
Then
	\[ C_4(\bbQ) = \{(0,\pm 1), (\pm 1, \pm 8), \infty^{\pm} \} \cup \calS_4, \]
where $\#\calS_4 \in \{0,4\}$. Moreover, if $\calS_4$ is nonempty and $(u,y) \in \calS_4$, then
	\[ \calS_4 = \left\{(u,\pm y),(-1/u,\pm y/u^5)\right\}. \]
\end{enumerate}
\end{lem}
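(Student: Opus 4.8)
The two curves are quotients of the genus-$9$ curve $Y$ of Proposition~\ref{prop:1and4curve}: projecting $(u,v,w)\mapsto(u,w)$ sends $Y$ onto $C_3$, while the map $(u,v,w)\mapsto\bigl(u,\,vw/u\bigr)$ sends $Y$ onto $C_4$, since multiplying the two defining equations of $Y$ gives $(vw/u)^2 = (u^2+1)(u^2-2u-1)(u^6 - 4u^5 - 3u^4 - 8u^3 + 3u^2 - 4u - 1)$. The curve $C_3$ has an odd-degree ($7$) model, hence genus $3$ with a single rational Weierstrass point at infinity, and $C_4$ has an even-degree ($10$) model with square leading coefficient, hence genus $4$ with two rational points $\infty^{\pm}$; a direct substitution confirms that the points listed in the statement lie on each curve. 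The plan is to bound the rational points by the refined Chabauty--Coleman estimate of Theorem~\ref{thm:stoll}, after computing the Mordell--Weil ranks in Magma, and then --- for $C_4$, where the bound will fail to be sharp --- to pin down the leftover points using an automorphism coming from the $4$-cycle structure.

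For part (A) I would first check in Magma that $r_3 := \rk\Jac(C_3)(\bbQ)$ is strictly less than $3$. Choosing a prime $p$ of good reduction with $p > 2(r_3+1)$, Theorem~\ref{thm:stoll} gives $\#C_3(\bbQ) \le \#C_3(\bbF_p) + 2r_3$, so it suffices to exhibit one such $p$ for which the right-hand side equals $6$ (after verifying that the six known points stay distinct modulo $p$). Since these six points are already rational, this forces $C_3(\bbQ) = \{(0,0),(\pm 1,\pm 4),\infty\}$ exactly.

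For part (B) the analogous computation should give $r_4 := \rk\Jac(C_4)(\bbQ) < 4$, and Theorem~\ref{thm:stoll} then yields a bound on $\#C_4(\bbQ)$; the difficulty is that this bound exceeds the eight known rational points --- I expect $\#C_4(\bbQ)\le 12$, and in any case a bound strictly below $16$ is what I will need. To organize the remaining points I would use the order-$4$ automorphism $\tau\colon(u,y)\mapsto(-1/u,\,-y/u^5)$, which preserves $C_4$ (one checks $F(-1/u) = F(u)/u^{10}$, where $F$ is the degree-$10$ polynomial defining $C_4$) and descends from the map $\alpha\mapsto f_c(\alpha)$ of Remark~\ref{rem:4aut}; a short calculation gives $\tau^2=\iota$, the hyperelliptic involution. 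The eight known points form the two $\langle\tau\rangle$-orbits $\{(0,\pm1),\infty^{\pm}\}$ and $\{(\pm1,\pm8)\}$, and all points with $u\in\{0,\pm1,\infty\}$ already appear there. Hence any further rational point $(u,y)$ has $u\notin\{0,\pm1,\infty\}$, so its orbit $\{(u,\pm y),(-1/u,\pm y/u^5)\}$ consists of four distinct rational points; since two such orbits would give $8+8=16$ points, exceeding the bound, at most one can occur. This yields $\#\calS_4\in\{0,4\}$ together with the stated description.

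The main obstacle is the combination of the rank computations and the non-sharpness of Chabauty for $C_4$: if the descent failed to establish $r_3<3$ or $r_4<4$ the method would collapse, and even granting these inequalities the Chabauty bound for $C_4$ overshoots by exactly one $\tau$-orbit. This is precisely why the present argument cannot decide whether $\calS_4$ is empty, and hence why the complete resolution of the period-$1$-and-$4$ case --- i.e.\ the sharpening of Theorem~\ref{thm:1and4} --- is left to the more delicate techniques of \cite{doyle/krumm/wetherell}. The role of the automorphism $\tau$ is exactly to upgrade the crude numerical bound into the clean dichotomy $\#\calS_4\in\{0,4\}$.
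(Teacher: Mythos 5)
Your part (B) is essentially the paper's proof: rank bound $2$ for $\Jac(C_4)(\bbQ)$, a Stoll bound below $16$ (the paper gets $15$ from $p=5$, $\#C_4(\bbF_5)=10$), and the order-$4$ automorphism forcing extra points into a single orbit of four. The only thing to add there is the justification that the orbit of a new point really has four \emph{distinct} elements: you need $y\ne 0$, i.e.\ that $C_4$ has no finite rational Weierstrass points, which follows from the irreducibility over $\bbQ$ of the quadratic and sextic factors in \eqref{eq:1and4C4}.

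Part (A), however, has a genuine gap: the sharp Chabauty bound you are hoping for cannot exist. The rank of $\Jac(C_3)(\bbQ)$ is exactly $1$, not $0$ (Magma's bound is $1$, and the class $[(1,4)-\infty]$ has infinite order), so Stoll's bound reads $\#C_3(\bbQ)\le \#C_3(\bbF_p)+2$. Moreover the six known points have $u$-coordinates in $\{0,\pm1,\infty\}$ and $w$-coordinates in $\{0,\pm4\}$, so they remain pairwise distinct modulo every odd prime of good reduction; hence $\#C_3(\bbF_p)\ge 6$ and the bound is at least $8$ for \emph{every} admissible $p$ --- your own parenthetical requirement that the six points stay distinct mod $p$ makes the right-hand side $\ge 8$ automatically. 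So the method always leaves room for two phantom points, and you must kill them by the same orbit argument you deploy in (B), now applied to $C_3$: the involution $\sigma(u,w)=(-1/u,\,w/u^4)$ commutes with the hyperelliptic involution $\iota$, the only rational Weierstrass points are $(0,0)$ and $\infty$ (the sextic is irreducible over $\bbQ$), and any point fixed by $\sigma$ or $\sigma\iota$ satisfies $u^2=-1$. Since the points with $u=\pm1$ are already among the known ones, any additional rational point would have $u\notin\{0,\pm1\}$ and thus produce an orbit of four \emph{new} rational points, exceeding the allowance of two from the Stoll bound; this contradiction is what actually pins down $C_3(\bbQ)$.
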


\begin{proof}
We first consider the curve $C_3$. Let $J_3$ denote the Jacobian of $C_3$. A two-descent using Magma's \texttt{RankBound} function gives an upper bound of 1 for $\rk J_3(\bbQ)$. (The point $[(1,4) - \infty] \in J_3(\bbQ)$ has infinite order, so $J_3(\bbQ)$ has rank equal to one.) Theorem~\ref{thm:stoll} may therefore be used to bound the number of rational points on $C_3$.

The prime $p = 7$ is a prime of good reduction for $C_3$, and a quick computation shows that $C_3(\bbF_7)$ consists of six points. Since $\rk J_3(\bbQ) = 1$, applying Theorem~\ref{thm:stoll} yields the bound
	\[ \#C_3(\bbQ) \le \#C_3(\bbF_7) + 2 \cdot \rk J_3(\bbQ) = 8. \]
We have already listed six rational points, so there can be at most two additional rational points on $C_3$.

Let $\iota$ be the hyperelliptic involution $(u,w) \mapsto (u,-w)$, and let $\sigma$ be the automorphism given by
	\[
		\sigma(u,w) = \left(-\frac{1}{u},\frac{w}{u^4}\right),
	\]
which also has order 2. We claim that if $(u,w) \not \in \{(0,0),\infty\}$ is a rational point on $C_3$, then the set
	\[ \{(u,w) , \iota(u,w), \sigma(u,w), \sigma \iota (u,w) \} \]
contains exactly four elements. This will complete the proof of (A), since we have already said that there can be at most \emph{two} rational points in addition to the six we have already found.

First, we note that the only points fixed by $\iota$ are the Weierstrass points, and the only rational Weierstrass points on $C_3$ are $(0,0)$ and $\infty$, since the sextic polynomial appearing in \eqref{eq:1and4C3} is irreducible over $\bbQ$. Now suppose $(u,w)$ is fixed by one of $\sigma$ and $\sigma \iota$. In this case, we necessarily have $u = -1/u$, which implies that $(u,w)$ is not rational. Since just one additional rational point would actually yield four such points, we conclude that $C_3(\bbQ)$ consists only of the six points listed in the lemma.

We prove (B) similarly. Let $J_4$ denote the Jacobian of $C_4$. Magma's \texttt{RankBound} function gives a bound of 2 for $\rk J_4(\bbQ)$, so we may again apply Theorem~\ref{thm:stoll} to bound the number of rational points on $C_4$. The prime $p = 5$ is a prime of good reduction for $C_4$, and we find that $\#C_4(\bbF_5) = 10$, so
	\[ \#C_4(\bbQ) \le \#C_4(\bbF_5) + 2 \cdot \rk J_4(\bbQ) + \left \lfloor \frac{2 \cdot \rk J_4(\bbQ)}{3} \right\rfloor \le 15. \]
We have already listed eight rational points, so there can be at most seven additional rational points.

Suppose now that $C_4(\bbQ)$ contains an additional rational point $(u,y)$. Consider the automorphism $\sigma$ given by
	\[ \sigma(u,y) = \left(-\frac{1}{u},\frac{y}{u^5}\right), \]
which has order four:
	\[ (u,y) \mapsto \left(-\frac{1}{u},\frac{y}{u^5}\right) \mapsto (u,-y) \mapsto \left(-\frac{1}{u},-\frac{y}{u^5}\right) \mapsto (u,y). \]
The points $\sigma^k(u,y)$ are distinct for $k \in \{0,1,2,3\}$ unless $(u,y)$ is fixed by $\sigma^k$ for some $k \in \{1,2,3\}$, in which case $(u,y)$ must be fixed by the hyperelliptic involution $\iota = \sigma^2$. However, since the quadratic and sextic polynomials appearing in \eqref{eq:1and4C4} are irreducible over $\bbQ$, $C_4$ has no finite rational Weierstrass points. Therefore $\{\sigma^k(u,y) : k \in \{0,1,2,3\} \}$ consists of four distinct points.

In fact, this argument shows that $\#C_4(\bbQ)$ is divisible by four. Since we have already found eight points and have obtained an upper bound of 15 for $\#C_4(\bbQ)$, we have that $\#C_4(\bbQ) \in \{8,12\}$ and therefore $\#\calS_4 \in \{0,4\}$, as claimed. If $\calS_4$ is nonempty and $(u,y) \in \calS_4$, then the four rational points in $\calS_4$ are the orbit of $(u,y)$ under $\sigma$.
\end{proof}

We are now ready to prove Theorem~\ref{thm:1and4quad_pts}.

\begin{proof}[Proof of Theorem~\ref{thm:1and4quad_pts}]
Since the only rational points on the affine curve defined by
	\[
		v^2 = -u(u^2 + 1)(u^2 - 2u - 1)
	\]
satisfy $u \in \{0,\pm 1\}$, $Y(\bbQ)$ consists only of the nine rational points listed in the statement of the theorem; that is,
	\[ Y(\bbQ) = \{ (0,0,0), (\pm 1,\pm 2,\pm 4)\}. \]
Now suppose $(u,v,w)$ is a quadratic point on $Y$ different from those listed in the theorem (i.e., an element of $\calS$). We first show that $vw \ne 0$. Indeed, suppose $v = 0$. Then either $u = 0$, in which case $w = 0$ and $(u,v,w)$ is a rational point; $u^2 + 1 = 0$, in which case we have one of the known quadratic points on $Y$; or $u^2 - 2u - 1 = 0$, in which case one can check that $[\bbQ(w) : \bbQ] > 2$. A similar argument shows that $w \ne 0$.

Since $vw \ne 0$, and since $u^2 + 1 \ne 0$, the following eight points must be distinct quadratic points on $Y$, all defined over the same quadratic extension of $\bbQ$:
\[	\left\{(u,\pm v, \pm w) , \left( -\frac{1}{u}, \pm \frac{v}{u^3}, \pm \frac{w}{u^4} \right) \right\}. \]
Therefore, the addition of a single quadratic point implies the existence of at least eight additional quadratic points. We now show that there can be \emph{at most} eight additional quadratic points.

If $(u,v,w) \in \calS$, it follows from the proof of Proposition~\ref{prop:4cycle_rat_c} that either $u \in \bbQ$ or $(u^2 + 1)(u^2 - 2u - 1) = 0$. However, we have already ruled out the possibility that $(u^2+1)(u^2-2u-1)= 0$, so we must have $u \in \bbQ$. Given that $u \in \bbQ$, we have already seen that $v \in \bbQ$ if and only if $u \in \{0,\pm1\}$, and the same is true for $w$ by Lemma~\ref{lem:1and4quotients}(A). Since $(u,v,w)$ is not a rational point, we must have $u \in \bbQ \setminus \{0,\pm 1\}$, $v,w \not \in \bbQ$, and $v^2,w^2 \in \bbQ$. We may therefore write $v = v'\sqrt{d}$, $w = w'\sqrt{d}$ for some $v',w' \in \bbQ$ and $d \ne 1$ a squarefree integer, which means $vw \in \bbQ$. Since
	\[ (vw)^2 = u^2(u^2 + 1)(u^2 - 2u - 1)(u^6 - 4u^5 - 3u^4 - 8u^3 + 3u^2 - 4u - 1), \]
setting $y = vw/u$ gives a (finite) rational point on the curve $C_4$ from Lemma~\ref{lem:1and4quotients}(B). Since we have $u \not \in \{0,\pm 1\}$, the point $(u,y)$ must be in the set $\calS_4$ from Lemma~\ref{lem:1and4quotients}(B). Because the map
	\begin{align*}
		Y &\to C_4\\
		(u,v,w) &\mapsto \left(u,\frac{vw}{u}\right)
	\end{align*}
has degree two, and since $\#\calS_4 \le 4$, this allows at most eight additional points $(u,v,w)$ on $Y$.
\end{proof}

\subsubsection[Periods 2 and 4]{Periods 2 and 4}
For the question of whether a quadratic pair $(K,c)$ may admit $K$-rational points of periods 2 and 4, we refer to the following result from \cite[\textsection 3.17]{doyle/faber/krumm:2014}:
\begin{thm}\label{thm:2and4}
In addition to the known pair $(\bbQ(\sqrt{-15}), -31/48)$, there is at most one quadratic pair $(K,c)$ that simultaneously admits points of period 2 and period 4. Moreover, such a pair must have $c \in \bbQ$.
\end{thm}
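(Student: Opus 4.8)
The plan is to follow the template from the proof of Theorem~\ref{thm:1and4}, transporting it from periods $1,4$ to periods $2,4$. The ``moreover'' clause comes almost for free: any quadratic pair $(K,c)$ admitting a $K$-rational point of period $4$ has $c \in \bbQ$ by Proposition~\ref{prop:4cycle_rat_c}. That same proposition is the crucial engine, since it shows that for every non-obvious quadratic point on the period-$4$ model of Proposition~\ref{prop:4curve} the parameter $u$ must already lie in $\bbQ$; this is exactly the fact exploited in Theorem~\ref{thm:1and4quad_pts}, and it will do the analogous work here.

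First I would construct a model for $U_1(2,4)$ as a fiber product over the $c$-line. Proposition~\ref{prop:4curve} parametrizes a period-$4$ point through $v^2 = -u(u^2+1)(u^2-2u-1)$ together with its formula for $c$ in terms of $u$, while Proposition~\ref{prop:1or2or3}(B) parametrizes a period-$2$ point through $\alpha = -1/2 + s$, $c = -3/4 - s^2$. Equating the two expressions for $c$ and clearing denominators yields, after the substitution $z = 2s\,u(u-1)(u+1)$, a second relation $z^2 = g(u)$ with $g(u) \in \bbZ[u]$ an explicit degree-$7$ polynomial divisible by $u$ --- the exact analogue of the $w^2$-equation in Proposition~\ref{prop:1and4curve}. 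Thus $U_1(2,4)$ has an affine model $Y \subset \bbA^3$ cut out by
	\[ v^2 = -u(u^2+1)(u^2-2u-1), \qquad z^2 = g(u), \]
with cusp locus $(u-1)(u+1)v = 0$; as in the periods $1,4$ case I expect $Y$ to have genus $9$, and the order-$4$ automorphism $(u,v) \mapsto (-1/u, v/u^3)$ of Remark~\ref{rem:4aut} to lift to $(u,v,z) \mapsto (-1/u, v/u^3, z/u^4)$.

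With the model in place I would bound $Y(\bbQ,2)$ exactly as in Theorem~\ref{thm:1and4quad_pts}. The rational points are found directly; they occur only on the cusp locus. For a non-obvious quadratic point, $c \in \bbQ$ forces $u \in \bbQ$ by Proposition~\ref{prop:4cycle_rat_c}, after which $v,z \notin \bbQ$ but $v^2,z^2 \in \bbQ$, so $v$ and $z$ lie in a common quadratic field and $vz \in \bbQ$. Setting $y = vz/u$ lands such a point on the degree-two quotient
	\[ C:\quad y^2 = -(u^2+1)(u^2-2u-1)\,g(u)/u, \]
a genus-$4$ hyperelliptic curve, and the lifted automorphism organizes the quadratic points of $Y$ into orbits of size eight, mirroring \eqref{eq:1and4auto}. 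As in Lemma~\ref{lem:1and4quotients}, the complementary genus-$3$ quotient $z^2 = g(u)$ controls when $z$ is or is not rational. I would then attempt to bound $C(\bbQ)$ via Chabauty--Coleman in the form of Theorem~\ref{thm:stoll}: compute $\rk J_C(\bbQ)$ by two-descent (Magma's \texttt{RankBound}), verify that it is smaller than the genus, pick a prime $p$ of good reduction, count $C(\bbF_p)$, and read off the bound.

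The hard part will be precisely the non-sharpness flagged in the discussion before Theorem~\ref{thm:1and4}: the Stoll bound on $\#C(\bbQ)$ is unlikely to match the count of known rational points, so it will not determine $C(\bbQ)$ outright. What it should still deliver, after removing the known rational points and accounting for the degree-two cover and the size-eight automorphism orbits, is that beyond the single realization furnished by the known pair $(\bbQ(\sqrt{-15}),-31/48)$ there is room for at most one further orbit of non-obvious quadratic points on $Y$, hence at most one additional quadratic pair, necessarily with $c \in \bbQ$. Everything thus hinges on verifying that $\rk J_C(\bbQ)$ is less than the genus together with a good-reduction point count that leaves exactly this slack; should the rank hypothesis fail on $C$, one would fall back on the genus-$3$ quotient or a different pairing of $u$ with $v$ or $z$, as in Lemma~\ref{lem:1and4quotients}.
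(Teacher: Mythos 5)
Your plan is correct and is essentially the argument the paper relies on: Theorem~\ref{thm:2and4} is not proved in this article but imported from \cite[\textsection 3.17]{doyle/faber/krumm:2014}, whose proof is precisely this transport of the periods-$(1,4)$ template --- your fiber-product model is right (the second equation works out to $z^2=-u(u^6-3u^4-16u^3+3u^2-1)$, the genus is indeed $9$, and the automorphism lifts to $(u,v,z)\mapsto(-1/u,v/u^3,z/u^4)$ as you say), the period-$4$ analysis forces $u,c\in\bbQ$, the two cases (the $2$-cycle rational over $\bbQ$ or generating $K$) land on the genus-$3$ and genus-$4$ hyperelliptic quotients, and Chabauty--Stoll bounds plus orbit-of-four counting give the result, with the known pair $(\bbQ(\sqrt{-15}),-31/48)$ sitting at $u=3$ on the genus-$4$ quotient. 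The one point to state more carefully is that the genus-$3$ quotient's rational points must be determined \emph{exactly}, not merely bounded --- any slack there would add pairs with a rational $2$-cycle on top of the single spare orbit you allow on the genus-$4$ quotient, breaking the ``at most one'' count --- and this, together with the rank bounds below the genus and the favorable $\bbF_p$ point counts you defer to Magma, is exactly what the cited reference carries out successfully.
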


\subsubsection[Periods 3 and 4]{Periods 3 and 4}
In this section, we prove the following result:

\begin{thm}\label{thm:3and4}
Let $K$ be a quadratic field, and let $c \in K$. The map $f_c$ cannot simultaneously admit $K$-rational points of periods 3 and 4.
\end{thm}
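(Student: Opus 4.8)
The plan is to follow the same blueprint as the earlier cycle-structure arguments, but to exploit the rigidity results already established for periods 3 and 4 in order to collapse the problem down to rational points over $\bbQ$. Suppose $(K,c)$ is a quadratic pair for which $f_c$ admits a $K$-rational point $\alpha$ of period 3 and a $K$-rational point $\beta$ of period 4. The first move is to trivialize the field of definition of $c$: by Proposition~\ref{prop:4cycle_rat_c}, the mere existence of $\beta$ forces $c \in \bbQ$. Once $c \in \bbQ$, Corollary~\ref{cor:3cyclesQ} tells us that $f_c$ has no quadratic points of period 3, so $\alpha$ is in fact rational, and hence its entire 3-cycle is rational. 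Likewise, the proof of Proposition~\ref{prop:4cycle_rat_c} shows that the parameter $u$ attached to $\beta$ via Proposition~\ref{prop:4curve} must lie in $\bbQ$ (the only quadratic points on the period-4 curve with $u \notin \bbQ$ are the excluded Weierstrass points with $v = 0$, since $(u,v) \in Y(K)$ with $[K:\bbQ]\le 2$). Thus every hypothesized quadratic pair produces a pair of \emph{rational} parameters, and the theorem will follow from the nonexistence of such a pair.

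Concretely, I would set $t := \alpha + f_c(\alpha) \in \bbQ$ as in Remark~\ref{rem:3aut}, so that by Proposition~\ref{prop:1or2or3}(C)
\[ c = -\frac{t^6 + 2t^5 + 4t^4 + 8t^3 + 9t^2 + 4t + 1}{4t^2(t+1)^2} =: C(t), \]
and take $u \in \bbQ$ as above, so that by Proposition~\ref{prop:4curve}
\[ c = \frac{(u^2 - 4u - 1)(u^4 + u^3 + 2u^2 - u + 1)}{4u(u+1)^2(u-1)^2} =: D(u). \]
Equating the two expressions and clearing denominators defines an affine curve $Z \subset \bbA^2$ over $\bbQ$ by $C(t) = D(u)$, and by construction a counterexample to the theorem yields a \emph{rational} point of $Z$ lying in the open locus where $\alpha$ genuinely has period 3 and $\beta$ genuinely has period 4 (in particular $t(t+1)(t^2+t+1) \neq 0$, $(u-1)(u+1)\neq 0$, and $-u(u^2+1)(u^2-2u-1)\neq 0$ so that $v \neq 0$). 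It therefore suffices to determine $Z(\bbQ)$ and to verify that no rational point survives these nondegeneracy conditions.

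To make $Z$ tractable I would quotient by its natural automorphisms rather than attack it directly, since a smooth model of $Z$ has large genus ($C$ and $D$ are each degree-six maps to the $c$-line). The order-3 automorphism $t \mapsto -\frac{t+1}{t}$ of Remark~\ref{rem:3aut} fixes $C(t)$, and the involution $u \mapsto -1/u$ of Remark~\ref{rem:4aut} fixes $D(u)$; together they generate a group of order 6 acting on $Z$ and preserving $c$. Passing to the quotient replaces the two degree-six covers of the $c$-line by the degree-2 cover $X_0(3) \to \bbP^1_c$ and a degree-3 cover $\bbP^1_s \to \bbP^1_c$ coming from the $u$-side (with $s = u - 1/u$), so that the quotient $Z'$ is the fiber product of a degree-2 and a degree-3 cover of $\bbP^1_c$; a Riemann--Hurwitz count on $Z' \to \bbP^1_s$ shows its genus is at most 2, and I expect it to be exactly 2. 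At that point the machinery of Section~\ref{sec:curves} applies: compute $\rk J(\bbQ)$ for the Jacobian $J$ of $Z'$ in Magma, and either invoke Lemma~\ref{lem:rank0jac} (if the rank is 0) or Theorem~\ref{thm:stoll} (if the rank is positive but still less than 2) to pin down $Z'(\bbQ)$ exactly. Pulling these finitely many points back along $Z \to Z'$, I would check that each lies on the removed locus (a dynamical cusp, a degenerate 3-cycle with $t^2+t+1=0$, or a point with $v = 0$), exactly as in the proof of Theorem~\ref{thm:3and3_quad_pts}.

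The hard part will be the final computational step: the Jacobian rank and point count for $Z'$ are genuine computations whose outcome is not guaranteed in advance. If $\rk J(\bbQ) \geq 2$, the Chabauty--Coleman bound of Theorem~\ref{thm:stoll} does not apply to $Z'$ directly, and one would have to locate a further quotient or an auxiliary curve (as was necessary for the genus-9 curve in the period-$(1,4)$ analysis) or rule out the offending points by the pullback-degree argument used in Lemma~\ref{lem:3and3_Ypts}. Verifying that every rational point of the genus-2 quotient pulls back only to excluded, degenerate points on $Z$ — and hence that no value of $c$ supporting a rational 3-cycle can simultaneously support a quadratic 4-cycle — is where the real content of the proof lies.
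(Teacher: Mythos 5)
Your proposal is essentially the paper's proof. The reduction you describe --- $c \in \bbQ$ from Proposition~\ref{prop:4cycle_rat_c}, rationality of the $3$-cycle from Corollary~\ref{cor:3cyclesQ}, rationality of the parameter $u$ from the proof of Proposition~\ref{prop:4cycle_rat_c}, then equating the two expressions for $c$ and quotienting by the order-$6$ automorphism group --- is exactly what the paper does: its intermediate curve $Y''$ is your $Z$, and its quotient $Y'$ is your $Z'$, namely $X_0(3,4)$. The one place your expectations miss is the genus of the quotient: in terms of the invariants $T = \frac{t^3-3t-1}{t(t+1)}$ and $U = u - \frac{1}{u}$, the quotient has model $-(T^2+2T+8)U^2 = (U^2+U+4)(U-4)$, which has genus $1$, not $2$, and is birational to $\Xell_1(11)$ (Proposition~\ref{prop:X0(3,4)}). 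Since $\Xell_1(11)$ has exactly five rational points, no Jacobian rank computation, Chabauty--Coleman bound, or auxiliary quotient is needed, so the step you flag as the ``hard part'' is in fact immediate. The endgame then works exactly as you hope: by Lemma~\ref{lem:X0(3,4)} the only finite rational points on the quotient are $(0,-4)$ and $(-2,-4)$, both with $U = -4$, so a surviving rational point of $Z$ would force $u - 1/u = -4$, i.e.\ $u^2 + 4u - 1 = 0$, contradicting $u \in \bbQ$. That contradiction is precisely the paper's concluding step in Theorem~\ref{thm:3and4curve_pts}, and it completes your outline.
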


The appropriate dynamical modular curve to study in this case is
	\[ U_1(3,4) = \{(\alpha,\beta,c) \in \bbA^3 : \mbox{ $\alpha$ has period 3 and $\beta$ has period 4 for $f_c$} \}. \]

\begin{prop}\label{prop:3and4}
Let $Y$ be the affine curve of genus 49 defined by
	\begin{equation}\label{eq:3and4curve}
		\left\{
			\begin{aligned}
			-(t^6 + 2t^5 + 4t^4 + 8t^3 + 9t^2 &+ 4t + 1)u(u+1)^2(u-1)^2\\
				&= (u^2 - 4u - 1)(u^4 + u^3 + 2u^2 - u + 1)t^2(t + 1)^2\\
			v^2 &= -u(u^2 + 1)(u^2 - 2u - 1),
			\end{aligned}
			\right.
	\end{equation}
and let $U$ be the open subset of $Y$ defined by
	\begin{equation}\label{eq:3and4cusps}
		t(t+1)(t^2 + t + 1)(u-1)(u+1)v \ne 0.
	\end{equation}	
Consider the morphism $\Phi: U \to \bbA^3$, $(t,u,v) \mapsto (\alpha,\beta,c)$, given by
	\begin{equation}\label{eq:3and4param}
	\begin{gathered}
	\alpha = \frac{t^3+2 t^2+t+1}{2 t (t+1)} , \ \beta = 
	\frac{u-1}{2(u+1)} + \frac{v}{2u(u-1)} , \
	c = -\frac{t^6+2 t^5+4 t^4+8 t^3+9 t^2+4 t+1}{4 t^2 (t+1)^2}.
	\end{gathered}
	\end{equation}
Then $\Phi$ maps $U$ isomorphically onto $U_1(3,4)$, with the inverse map given by
	\begin{equation}\label{eq:3and4inverse}
	t = \alpha + f_c(\alpha) , \ u = -\frac{f_c^2(\beta) + \beta + 1}{f_c^2(\beta) + \beta - 1} , \ v = \frac{u(u-1)(2\beta u + 2\beta - u + 1)}{u+1}.
	\end{equation}
\end{prop}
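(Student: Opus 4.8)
The plan is to follow the template of the proofs of Propositions~\ref{prop:3and3curve} and \ref{prop:1and4curve}, viewing $Y$ as the fiber product, over the $c$-line, of the period-3 curve (parametrized by $t$ via Proposition~\ref{prop:1or2or3}(C)) and the period-4 curve of Proposition~\ref{prop:4curve} (parametrized by $(u,v)$). First I would check that $\Phi$ is well-defined on $U$. The cusp condition \eqref{eq:3and4cusps} directly forces $t(t+1)\ne 0$ and $(u-1)(u+1)\ne 0$; moreover $u \ne 0$, since $v \ne 0$ together with the relation $v^2 = -u(u^2+1)(u^2-2u-1)$ rules out $u = 0$. These are precisely the nonvanishing conditions needed for the three rational expressions in \eqref{eq:3and4param} to make sense. (The stated genus of $49$ is not needed for the isomorphism claim and would be confirmed by a separate Magma computation.)

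Next I would verify that $\Phi(U) \subseteq U_1(3,4)$. The formula for $\alpha$ together with the first expression for $c$ is exactly the period-3 parametrization of Proposition~\ref{prop:1or2or3}(C), so $\alpha$ has exact period $3$ for $f_c$ (using $t(t+1)(t^2+t+1)\ne 0$). The formulas for $\beta$ and $c$ match the period-4 parametrization of Proposition~\ref{prop:4curve}, \emph{provided} the period-4 value of $c$ agrees with the period-3 value; but this agreement is precisely the content of the first defining equation of $Y$ in \eqref{eq:3and4curve}, which is the cleared-denominator form of equating the two expressions for $c$. Hence on $Y$ the single parameter $c$ simultaneously realizes $\alpha$ as a point of period $3$ and $\beta$ as a point of period $4$ for the same map $f_c$. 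Because points of period $3$ and points of period $4$ lie in orbits of different sizes, $\alpha$ and $\beta$ automatically have disjoint orbits, so no further condition is required and $(\alpha,\beta,c) \in U_1(3,4)$. This is the step that is genuinely simpler here than in the $(3,3)$ case of Proposition~\ref{prop:3and3curve}, where disjointness of orbits had to be ruled out case by case.

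For injectivity, I would confirm by a direct Magma computation that \eqref{eq:3and4inverse} is a left inverse of $\Phi$: the identity $t = \alpha + f_c(\alpha)$ recovers $t$ from the period-3 point by Remark~\ref{rem:3aut}, while the formulas for $u$ and $v$ recover the period-4 parameters from $\beta$ exactly as in the inverse map \eqref{eq:4inverse} of Proposition~\ref{prop:4curve}. Finally, for surjectivity I would begin with an arbitrary $(\alpha,\beta,c) \in U_1(3,4)$: Proposition~\ref{prop:1or2or3}(C) supplies a parameter $t$ with $t(t+1)(t^2+t+1)\ne 0$ producing $\alpha$ and $c$, and Proposition~\ref{prop:4curve} supplies $(u,v)$ with $v^2 = -u(u^2+1)(u^2-2u-1)$ and $v(u-1)(u+1)\ne 0$ producing $\beta$ and the same $c$. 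Equality of the two $c$-expressions then shows $(t,u,v)$ satisfies the first equation of \eqref{eq:3and4curve}, so $(t,u,v)$ lies on $Y$, and indeed in $U$ by the nonvanishing just recorded; by construction $\Phi(t,u,v) = (\alpha,\beta,c)$.

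All of the computations are routine symbolic manipulations; the only real bookkeeping hurdle is confirming that the first defining equation of $Y$ is exactly the cleared-denominator form of the equality of the two $c$-expressions, with no spurious extraneous factors introduced in the clearing of denominators. I would verify this in Magma, precisely as the analogous parametrization identities were checked in the proofs of Propositions~\ref{prop:3and3curve} and \ref{prop:1and4curve}.
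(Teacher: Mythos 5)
Your proposal is correct and takes essentially the same approach as the paper: the paper's (very terse) proof likewise derives the result directly from Propositions~\ref{prop:1or2or3}(C) and \ref{prop:4curve}, observing that the first equation of \eqref{eq:3and4curve} is the cleared-denominator form of equating the two expressions for $c$ and that the second comes from \eqref{eq:4curve}. Your write-up merely makes explicit the routine verifications the paper leaves implicit --- well-definedness on $U$, the automatic disjointness of $3$- and $4$-orbits, injectivity via the left inverse \eqref{eq:3and4inverse}, and surjectivity --- all of which are sound.
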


\begin{proof}
This follows from Propositions \ref{prop:1or2or3}(C) and \ref{prop:4curve}, noting that the first equation in \eqref{eq:3and4curve} comes from equating the two expressions for $c$ given in those two propositions (and clearing denominators), while the second equation comes directly from \eqref{eq:4curve}.
\end{proof}

In order to find all quadratic points on $U_1(3,4)$, it will be beneficial for us to find the \emph{rational} points on the quotient $U_0(3,4)$. Recall from \textsection\ref{sub:per_dyn} that there are automorphisms $\sigma_3$ and $\sigma_4$ on $U_1(3,4)$ given by
\[ \sigma_3(\alpha,\beta,c) = (f_c(\alpha),\beta,c) \ \mbox{ and } \ \sigma_4(\alpha,\beta,c) = (\alpha,f_c(\beta),c). \]
The curve $U_0(3,4)$ is the quotient of $U_1(3,4)$ by the group of automorphisms generated by $\sigma_3$ and $\sigma_4$. Abusing notation, we will also denote by $\sigma_3$ and $\sigma_4$ the corresponding automorphisms on the curve $Y$ defined in \eqref{eq:3and4curve}, which is birational to $U_1(3,4)$, and we let $Y'$ denote the quotient of $Y$ by $\langle \sigma_3, \sigma_4 \rangle$ (hence $Y'$ is birational to $U_0(3,4)$). By using the relations given in \eqref{eq:3and4param} and \eqref{eq:3and4inverse} (cf. Remarks~\ref{rem:3aut} and \ref{rem:4aut}), we see that $\sigma_3$ and $\sigma_4$ act on $Y$ as follows:
	\begin{align*}
		\sigma_3(t,u,v) &= \left(-\frac{t+1}{t},u,v\right)\\
		\sigma_4(t,u,v) &= \left(t,-\frac{1}{u},\frac{v}{u^3} \right).
	\end{align*}
	
Since $\sigma_3$ and $\sigma_4$ have orders 3 and 4, respectively, the group $\langle \sigma_3,\sigma_4 \rangle$ is cyclic of order 12, generated by the automorphism $\sigma := \sigma_3 \circ \sigma_4$ defined by
	\[ \sigma(t,u,v) = \left(-\frac{t+1}{t},-\frac{1}{u},\frac{v}{u^3}\right). \]
Hence $Y'$ is the quotient of $Y$ by $\sigma$. It will be simpler, however, to first consider the quotient $Y''$ of $Y$ by the automorphism $\sigma^6$, and then take the quotient of $Y''$ by the automorphism $\tau \in \Aut(Y'')$ that descends from $\sigma \in \Aut(Y)$. Since
	\[ \sigma^6(t,u,v) = (t,u,-v), \]
the curve $Y''$ is simply given by the first equation from \eqref{eq:3and4curve}, which we rewrite for convenience as
	\begin{equation}\label{eq:3and4quotient1}
	-\frac{t^6 + 2t^5 + 4t^4 + 8t^3 + 9t^2 + 4t + 1}{4t^2(t + 1)^2} = \frac{(u^2 - 4u - 1)(u^4 + u^3 + 2u^2 - u + 1)}{4u(u+1)^2(u-1)^2}.
	\end{equation}
Note that this is precisely the equation obtained by equating the expressions for $c$ found in Theorems~\ref{thm:unique3cycle} and \ref{thm:unique4cycle}. We now seek the quotient of $Y''$ by the automorphism $\tau$, which is given by
	\[ \tau(t,u) := \left(-\frac{t+1}{t},-\frac{1}{u}\right). \]
Consider the following invariants of $\tau$:
	\begin{equation}\label{eq:X0(3,4)TU}
	T := t - \frac{t+1}{t} - \frac{1}{t+1} = \frac{t^3 - 3t - 1}{t(t+1)} , \ U := u - \frac{1}{u} = \frac{(u-1)(u+1)}{u}.
	\end{equation}
To find the quotient of $Y''$ by $\tau$, we rewrite \eqref{eq:3and4quotient1} in terms of $T$ and $U$, which yields
	\[ -(T^2 + 2T + 8) = \frac{(U^2 + U + 4)(U - 4)}{U^2}. \]
Clearing denominators, we see that $Y'$ has the model
	\begin{equation}\label{eq:X0(3,4)}
	-(T^2 + 2T + 8)U^2 = (U^2 + U + 4)(U - 4).
	\end{equation}
This curve has genus one and is birational to $\Xell_1(11)$ (see \cite{reichert:1984}), which is labeled 11A3 in \cite{cremona:1997} and given by
	\[ y^2 + y = x^3 - x^2, \]
via the inverse birational maps
	\begin{align}
	\label{eq:X0(3,4)birational} (x,y) &\mapsto \left(-\frac{x - 2y - 1}{x},-4x\right)\\
	\notag (T,U) &\mapsto \left(-\frac{U}{4},-\frac{TU + U + 4}{8}\right).
	\end{align}
We have just shown the following:
\begin{prop}\label{prop:X0(3,4)}
The curve $X_0(3,4)$ is isomorphic to the elliptic curve $\Xell_1(11)$.
\end{prop}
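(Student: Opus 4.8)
The plan is to realize $X_0(3,4)$ as the quotient of the smooth projective model $X_1(3,4)$ by the automorphism group $\Gamma = \langle \sigma_3, \sigma_4 \rangle$, and to compute this quotient explicitly on the birational model $Y$ of \eqref{eq:3and4curve}. Since $\sigma_3$ and $\sigma_4$ commute and have orders $3$ and $4$, the group $\Gamma$ is cyclic of order $12$, generated by $\sigma = \sigma_3 \sigma_4$, whose action on $Y$ is already recorded. Rather than quotient by $\sigma$ in one step, I would first pass to the intermediate curve $Y'' := Y / \langle \sigma^6 \rangle$. Because $\sigma^6 = \sigma_4^2$ acts by $(t,u,v) \mapsto (t,u,-v)$, this quotient simply discards the coordinate $v$, and $Y''$ is exactly the $(t,u)$-curve cut out by \eqref{eq:3and4quotient1}. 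It then remains to divide $Y''$ by the induced automorphism $\tau(t,u) = (-(t+1)/t,\,-1/u)$, which has order $6$.

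The structural feature to exploit is that $\tau$ acts \emph{separately} on the two coordinates: $t \mapsto -(t+1)/t$ generates a cyclic group of order $3$, and $u \mapsto -1/u$ one of order $2$. Accordingly I would introduce the invariants $T$ and $U$ of \eqref{eq:X0(3,4)TU}, which are nothing but the traces (sums over the $\tau$-orbits) of $t$ and of $u$; each is invariant for that reason, and as a rational map each has the expected degree ($T$ of degree $3$, $U$ of degree $2$), so $T$ generates $\bbQ(t)^{\langle t \mapsto -(t+1)/t \rangle}$ and $U$ generates $\bbQ(u)^{\langle u \mapsto -1/u \rangle}$.

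The key point is that the defining equation \eqref{eq:3and4quotient1} already separates: its left-hand side equals the $\sigma_3$-invariant parameter $c$ and is therefore a function of $T$ alone, while its right-hand side is the $\sigma_4$-invariant expression for $c$ and is a function of $U$ alone. Rewriting \eqref{eq:3and4quotient1} accordingly gives $-(T^2 + 2T + 8) = (U^2 + U + 4)(U - 4)/U^2$, i.e.\ the model \eqref{eq:X0(3,4)}; this is the only place a short Magma computation is needed. This separation also shows the map $(t,u) \mapsto (T,U)$ has degree exactly $6$: over a general point of \eqref{eq:X0(3,4)} one may choose any of the $3$ preimages of $T$ and any of the $2$ preimages of $U$ independently, and every resulting pair automatically satisfies \eqref{eq:3and4quotient1}. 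Hence $\bbQ(T,U) = \bbQ(Y'')^{\langle\tau\rangle}$ and $Y' := Y''/\langle\tau\rangle$ is precisely \eqref{eq:X0(3,4)}.

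Finally I would verify that \eqref{eq:X0(3,4)} has genus one (a direct computation) and carries a rational point coming from a cusp, so that it is an elliptic curve, and then identify it with $\Xell_1(11)$. For this last step I would either cite Reichert's model or, self-containedly, check that the maps \eqref{eq:X0(3,4)birational} to $y^2 + y = x^3 - x^2$ are mutually inverse; since both curves are smooth, projective, and of genus one, a birational map between them is an isomorphism. The main obstacle is the quotient bookkeeping of the previous paragraph --- being certain that $T$ and $U$ generate the \emph{full} invariant field and that \eqref{eq:X0(3,4)} is the exact image rather than a proper cover. It is precisely the separation of \eqref{eq:3and4quotient1} into a function of $T$ equated to a function of $U$, together with the coprimality of the orbit sizes $3$ and $2$, that makes the degree count come out to $6$ and resolves this difficulty; the subsequent genus-one identification is then routine.
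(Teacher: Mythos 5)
Your proposal is correct and follows essentially the same route as the paper: the same two-step quotient (first by $\sigma^6$ to eliminate $v$, then by $\tau$ via the invariants $T$ and $U$ of \eqref{eq:X0(3,4)TU}), the same separated rewriting of \eqref{eq:3and4quotient1} into the model \eqref{eq:X0(3,4)}, and the same identification with $\Xell_1(11)$ through the maps \eqref{eq:X0(3,4)birational}. The only difference is that you make explicit the field-theoretic degree count (using the coprimality of the orbit sizes $3$ and $2$) showing $\bbQ(T,U)$ is the full invariant field, a point the paper leaves implicit.
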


In order to determine all quadratic points on the curve $Y$, we require knowing all rational points on its quotient $Y'$:

\begin{lem}\label{lem:X0(3,4)}
Let $Y'$ be the affine curve of genus one defined by \eqref{eq:X0(3,4)}. Then
	\[
		Y'(\bbQ) = \{ (0,-4), (-2,-4) \}.
	\]
\end{lem}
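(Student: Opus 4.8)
The plan is to transport the problem to the elliptic curve $E \colon y^2 + y = x^3 - x^2$ (curve 11A3 in \cite{cremona:1997}) using Proposition~\ref{prop:X0(3,4)} and the explicit birational maps recorded in \eqref{eq:X0(3,4)birational}. Since $Y'$ has genus one and is birational to $E$, its smooth projective model is isomorphic to $E$, so the rational points of $Y'$ (together with its points at infinity) are in bijection with $E(\bbQ)$. First I would recall the classical fact that $E$ has rank zero with $E(\bbQ) \cong \bbZ/5\bbZ$; explicitly,
\[
	E(\bbQ) = \{\calO,\ (0,0),\ (0,-1),\ (1,0),\ (1,-1)\}.
\]
This is the one nontrivial input to the argument, but it is standard (it reflects the fact that $\Xell_1(11)$ has five rational cusps and, by Mazur, no non-cuspidal rational points) and can be reconfirmed with a quick descent in Magma.

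Next I would push each of these five points forward through the map $(x,y) \mapsto \bigl(-(x-2y-1)/x,\, -4x\bigr)$ from \eqref{eq:X0(3,4)birational} and record the image on the $(T,U)$-model \eqref{eq:X0(3,4)}. The points $(1,0)$ and $(1,-1)$ have $x = 1 \ne 0$ and map respectively to the affine points $(0,-4)$ and $(-2,-4)$, each of which one checks directly to satisfy \eqref{eq:X0(3,4)}. The remaining three points lie on the boundary: $\calO$ has $U = -4x \to \infty$, while $(0,0)$ and $(0,-1)$ both have $x = 0$, forcing $U = 0$ and a pole in $T$. Substituting $U = 0$ into \eqref{eq:X0(3,4)} yields the contradiction $0 = -16$, so none of these three points corresponds to an affine solution. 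Hence exactly two of the five rational points on the projective model descend to $Y'(\bbQ)$, namely $(0,-4)$ and $(-2,-4)$.

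The hard part is purely bookkeeping at the boundary rather than arithmetic: I must ensure that no affine rational point of $Y'$ is lost through indeterminacy of the birational map, and that the three points sent to infinity genuinely fail to lie on the affine chart \eqref{eq:X0(3,4)}. Both concerns are resolved by working with the smooth projective models, on which the birational map of Proposition~\ref{prop:X0(3,4)} is an isomorphism (so the count of five rational points is exact), together with the direct verification that $U = 0$ is impossible on \eqref{eq:X0(3,4)}. This gives $Y'(\bbQ) = \{(0,-4),(-2,-4)\}$, as claimed.
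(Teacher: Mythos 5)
Your strategy is the same as the paper's: transport the problem to $\Xell_1(11)$ (curve 11A3) via the maps in \eqref{eq:X0(3,4)birational}, use the fact that it has exactly five rational points, push them forward, and sort out which images are affine. Your image computations and your observation that $U=0$ is impossible on \eqref{eq:X0(3,4)} are all correct. However, there is a genuine gap at the one step you assert rather than prove: the claim that, because $Y'$ has genus one and is birational to $E$, ``the rational points of $Y'$ (together with its points at infinity) are in bijection with $E(\bbQ)$.'' Birational curves need \emph{not} have the same rational points; the bijection can fail exactly at singular points of the singular model. A rational singular point of $Y'$ --- for instance a node whose two branches are conjugate over a quadratic field --- would be a point of $Y'(\bbQ)$ that is the image of no rational point of $E$, and your procedure of pushing forward the five points of $E(\bbQ)$ would never detect it. Appealing to ``the smooth projective models'' does not repair this, since the issue is precisely the passage between rational points of the possibly singular curve $Y'$ and rational points of its normalization.

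The paper closes exactly this gap: it passes to the projective closure $\tilde{Y'}$, checks that $[1:0:0]$ (a point at infinity) is the \emph{only} singular point, and then argues that any other rational point, being nonsingular, must be the image under $\phi$ of a rational point of $\Xell_1(11)$, because the inverse rational map is automatically defined at nonsingular points of a curve. You need some version of this verification. Alternatively, you can bypass the singularity analysis entirely by using the explicit inverse recorded beneath \eqref{eq:X0(3,4)birational}: the map $\psi(T,U) = \left(-U/4,\,-(TU+U+4)/8\right)$ is polynomial, hence defined at \emph{every} affine point of $Y'$, so each $P \in Y'(\bbQ)$ gives a point $\psi(P) \in E(\bbQ)$ whose $x$-coordinate $-U/4$ is nonzero (you already showed $U \ne 0$ on $Y'$); this forces $\psi(P) \in \{(1,0),(1,-1)\}$, and since $\phi$ is defined where $x \ne 0$ and $\phi \circ \psi$ is the identity wherever defined, $P = \phi(\psi(P)) \in \{(0,-4),(-2,-4)\}$. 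Either repair makes your argument complete; without one of them, the proof as written does not rule out extra rational points on $Y'$.
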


\begin{proof}
Let $\tilde{Y'}$ denote the projective closure of $Y'$; that is, $\tilde{Y'}$ is the curve in $\bbP^2$ given by the homogeneous equation
	\begin{equation}\label{eq:X0(3,4)proj_model}
	-(T^2 + 2TZ + 8V^2)U^2 = (U^2 + UV + 4V^2)(U - 4V)V.
	\end{equation}
A quick search reveals the four rational points
	\[ [T:U:V] \in \{ [0:-4:1],[-2:-4:1],[1:0:0],[0:1:0] \}. \]
We claim that this is the complete list of rational points on $\tilde{Y'}$, from which the lemma follows.

Let $\phi : \Xell_1(11) \to Y'$ be the rational map given by \eqref{eq:X0(3,4)birational}. In projective coordinates, we have
	\[ \phi\left([x:y:z]\right) = [(x - 2y - z)z : 4x^2 : -xz]. \]
It is well known that the curve $\Xell_1(11)$ has only five rational points, so
	\[ \{ [0:0:1] , [0:-1:1], [1:-1:1], [1:0:1], [0:1:0] \} \]
is the full set of rational points. The rational points map to $\tilde{Y'}$ as follows:
	\begin{align*}
	[0:0:1],[0:-1:1] &\mapsto [1:0:0]\\
	[1:-1:1] &\mapsto [-2:-4:1]\\
	[1:0:1] &\mapsto [0:-4:1]\\
	[0:1:0] &\mapsto [0:1:0].
	\end{align*}
The point $[1:0:0]$ is the only singular point on $\tilde{Y'}$, so if $\tilde{Y'}$ had another (necessarily nonsingular) rational point $P$, then $P$ would be the image under $\phi$ of a rational point on $\Xell_1(11)$. Since we have already accounted for the images of all five rational points on $\Xell_1(11)$, $\tilde{Y'}$ cannot have any additional rational points.
\end{proof}

We now show that the curve $Y$ defined by \eqref{eq:3and4curve} has no quadratic points.

\begin{thm}\label{thm:3and4curve_pts}
Let $Y$ be the genus 49 affine curve defined by \eqref{eq:3and4curve}. Then 
\[ Y(\bbQ,2) = Y(\bbQ) = \{(0,0,0),(0,\pm 1,\pm 2),(-1,0,0),(-1,\pm 1, \pm 2)\}. \]
\end{thm}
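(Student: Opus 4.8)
Theorem~\ref{thm:3and4curve_pts} asserts that the genus-49 curve $Y$ defined by~\eqref{eq:3and4curve} has no quadratic points beyond the eight ``cusp'' points listed (those lying outside $U$, where $(u-1)(u+1)v = 0$). The plan is to leverage the quotient analysis already set up: any quadratic point on $Y$ maps, under the projection to the quotients, to a point whose field of definition has degree at most two over $\bbQ$, and we have just pinned down the rational points on the genus-one quotient $Y'$ in Lemma~\ref{lem:X0(3,4)}. The overall strategy mirrors the proof of Theorem~\ref{thm:3and3_quad_pts}: push a hypothetical quadratic point through the dynamical parametrization and derive a contradiction using the rigidity results on $3$-cycles and $4$-cycles over quadratic fields.

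First I would dispose of the rational points. The points with $(u-1)(u+1)v = 0$ reduce to solving the first equation of~\eqref{eq:3and4curve} with $v=0$ (forcing $u(u^2+1)(u^2-2u-1)=0$) or with $u = \pm 1$; a direct check shows these yield exactly the eight listed points $(0,0,0),(0,\pm1,\pm2),(-1,0,0),(-1,\pm1,\pm2)$, and that no further rational solutions of the genus-49 system exist (this can be confirmed by a Magma computation, or by noting that a rational point on $Y$ gives a rational point on the quotient $Y''$ and on $Y'$, which Lemma~\ref{lem:X0(3,4)} constrains severely). So $Y(\bbQ)$ is exactly this eight-element set.

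The heart of the argument is ruling out \emph{genuinely} quadratic points. Suppose $(t,u,v)$ is a quadratic point on $Y$, defined over a quadratic field $K$. If it lies outside $U$, it is one of the known cusps, so assume it lies in $U$, i.e.\ $t(t+1)(t^2+t+1)(u-1)(u+1)v \neq 0$. Then by Proposition~\ref{prop:3and4} the image $(\alpha,\beta,c) = \Phi(t,u,v)$ lies in $U_1(3,4)(K)$, so $f_c$ admits a $K$-rational point $\alpha$ of period $3$ \emph{and} a $K$-rational point $\beta$ of period $4$. Now I invoke the two rigidity theorems proved earlier. By Theorem~\ref{thm:unique4cycle} (specifically Proposition~\ref{prop:4cycle_rat_c}), the existence of a period-$4$ point over a quadratic field forces $c \in \bbQ$. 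But then Corollary~\ref{cor:3cyclesQ} applies: for $c \in \bbQ$, the map $f_c$ admits \emph{no} quadratic points of period $3$, and at most three rational points of period $3$. Thus $\alpha \in \bbQ$, whence $t = \alpha + f_c(\alpha) \in \bbQ$ by~\eqref{eq:3and4inverse}. Since $c \in \bbQ$ and $t \in \bbQ$, the triple lies over a rational base point of the $3$-cycle parametrization; the remaining data $(u,v)$ is then governed purely by the period-$4$ structure with $c \in \bbQ$, and by the analysis in the proof of Proposition~\ref{prop:4cycle_rat_c} this forces $u \in \bbQ$ (the non-obvious quadratic points on the period-$4$ curve all have $v = 0$, excluded here). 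With $t,u,c \in \bbQ$, the only quadratic freedom is in $v$, with $v^2 = -u(u^2+1)(u^2-2u-1) \in \bbQ$; but then $(t,u,v)$ is an \emph{obvious} quadratic point lying over a rational point $(t,u)$ of $Y''$, and descends to a rational point of $Y'$. Comparing against the two rational points of $Y'$ from Lemma~\ref{lem:X0(3,4)}, one checks (via~\eqref{eq:X0(3,4)TU} and the cusp conditions) that these correspond only to the excluded locus, giving a contradiction.

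The step I expect to be the main obstacle is the last reduction: carefully verifying that once $t,u,c \in \bbQ$ with $v$ quadratic, the resulting point actually lies in the cuspidal locus rather than producing a new valid quadratic point in $U$. This requires tracing the invariants $T,U$ of~\eqref{eq:X0(3,4)TU} for the two rational points $(0,-4)$ and $(-2,-4)$ of $Y'$ back through the degree-$12$ covering $Y'' \to Y'$ and confirming that every preimage has $t(t+1)(u-1)(u+1) = 0$ or $t^2+t+1 = 0$ (i.e.\ lands among the cusps or degenerate fixed-point cases), so that no admissible $(3,4)$-configuration survives. This is ultimately a finite, explicit check — best carried out in Magma by computing the fibers of $Y'' \to Y'$ over the two rational points and confirming each preimage point has degree exceeding two over $\bbQ$ or violates~\eqref{eq:3and4cusps} — but it is the place where the bookkeeping is delicate and where one must be sure not to miss a stray solution. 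Concluding, $Y$ has no quadratic points in $U$, so $Y(\bbQ,2) = Y(\bbQ)$ equals the eight listed points, which establishes Theorem~\ref{thm:3and4curve_pts} and, with it, Theorem~\ref{thm:3and4}.
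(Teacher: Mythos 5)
Your proposal is correct and follows essentially the same path as the paper's proof: pass through Proposition~\ref{prop:3and4} to turn a hypothetical quadratic point of $U$ into a quadratic pair admitting points of period $3$ and $4$, use Proposition~\ref{prop:4cycle_rat_c} to get $c \in \bbQ$ and (via the exclusion of non-obvious quadratic points with $v=0$) $u \in \bbQ$, use Corollary~\ref{cor:3cyclesQ} to get $t \in \bbQ$, and then invoke Lemma~\ref{lem:X0(3,4)} to force $U = u - 1/u = -4$. The one difference is your ending: since you have already established $u \in \bbQ$, the relation $u^2 + 4u - 1 = 0$ has no rational root and the contradiction is immediate, so the fiber computation for $Y'' \to Y'$ that you flag as the main delicate step is unnecessary --- this short-circuit is exactly how the paper concludes.
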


Assuming this theorem for the moment, we complete the proof of Theorem~\ref{thm:3and4}. Theorem~\ref{thm:3and4curve_pts} implies that all points of degree at most 2 on $Y$ fail to satisfy \eqref{eq:3and4cusps}. It follows that $U_1(3,4)(\bbQ,2)$ is empty. Therefore, there are no quadratic pairs $(K,c)$ that simultaneously admit points of periods 3 and 4. \qed

\begin{proof}[Proof of Theorem~\ref{thm:3and4curve_pts}]
The only rational solutions to the equation
	\[ v^2 = -u(u^2 + 1)(u^2 - 2u - 1) \]
satisfy $u \in \{0,\pm 1\}$, so the rational points listed above must be all rational points on $Y$. The rational points all fail to satisfy condition \eqref{eq:3and4cusps}, and one can verify that all \emph{other} points failing condition \eqref{eq:3and4cusps} have degree strictly greater than two over $\bbQ$. Thus any quadratic point $(t,u,v)$ on $Y$ actually lies on the open subset $U \subset Y$ defined in Proposition~\ref{prop:3and4}, hence $u \in \bbQ$. It follows that if we let $K$ be the field of definition of $(t,u,v)$ and set
	\[ c = -\frac{t^6 + 2t^5 + 4t^4 + 8t^3 + 9t^2 + 4t + 1}{4t^2(t + 1)^2} = \frac{(u^2 - 4u - 1)(u^4 + u^3 + 2u^2 - u + 1)}{4u(u+1)^2(u-1)^2} \in K, \]
then the map $f_c$ admits $K$-rational points $\alpha$ and $\beta$ of periods 3 and 4, respectively.

Since $\beta$ has period 4 for $f_c$, Proposition~\ref{prop:4cycle_rat_c} says that $c \in \bbQ$ and that the 4-cycle containing $\beta$ is rational. Since $c \in \bbQ$, it follows from Corollary~\ref{cor:3cyclesQ} that $\alpha$ --- and therefore the 3-cycle containing $\alpha$ --- is rational as well. Therefore the image $(T,U)$ of $(t,u,v)$ under the quotient map $Y \to Y'$ is a rational point. Since we are assuming that the point $(t,u,v)$ satisfies the open condition \eqref{eq:3and4cusps}, then in particular $t(t+1)u \ne 0$, so $T$ and $U$ are finite, hence $U = -4$ by Lemma~\ref{lem:X0(3,4)}. Therefore $u - 1/u = -4$, which implies that $u \not \in \bbQ$, a contradiction.
\end{proof}

\subsubsection[Periods 1, 2, and 3]{Periods 1, 2, and 3} For points of period 1, 2, and 3, we have the following:

\begin{thm}\label{thm:1and2and3}
Let $K$ be a quadratic field, and let $c \in K$. The map $f_c$ cannot simultaneously admit $K$-rational points of periods 1, 2, and 3.
\end{thm}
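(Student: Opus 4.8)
The plan is to reduce, via the period pairs already understood, to the rational points on a single elliptic curve. Suppose for contradiction that a quadratic pair $(K,c)$ admits $K$-rational points of periods $1$, $2$, and $3$ simultaneously. Since $f_c$ then admits points of periods $1$ and $3$, Proposition~\ref{prop:1and3/2and3} forces $c \in \bbQ$, makes the $3$-cycle rational, and makes the fixed points \emph{genuinely} quadratic (they lie in $K \setminus \bbQ$). Applying the same proposition to periods $2$ and $3$ shows likewise that the two points of period $2$ lie in $K \setminus \bbQ$. By Proposition~\ref{prop:1or2or3} the fixed points are $1/2 \pm r$ with $r^2 = 1/4 - c \in \bbQ$, and the points of period $2$ are $-1/2 \pm s$ with $s^2 = -3/4 - c \in \bbQ$; since $r,s \in K \setminus \bbQ$, each of $r$ and $s$ generates $K$, so $\bbQ(\sqrt{1/4-c}) = K = \bbQ(\sqrt{-3/4-c})$. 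Equivalently, $1/4 - c$ and $-3/4 - c$ have the same squarefree part, so their product $(1/4-c)(-3/4-c)$ is a nonzero square in $\bbQ$.

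Next I would translate this into a statement about the rational $3$-cycle parameter. Because the $3$-cycle is rational, Proposition~\ref{prop:1or2or3}(C) provides $t \in \bbQ$ with $t(t+1) \neq 0$ and $c = -\bigl(t^6 + 2t^5 + 4t^4 + 8t^3 + 9t^2 + 4t + 1\bigr)/\bigl(4t^2(t+1)^2\bigr)$. A direct computation then shows
\[
	1/4 - c = \frac{P(t)}{4t^2(t+1)^2}, \qquad -3/4 - c = \frac{Q(t)}{4t^2(t+1)^2},
\]
where $P(t)$ and $Q(t)$ are exactly the sextics appearing in \eqref{eq:1and3curve} and \eqref{eq:2and3curve} (note $P - Q = 4t^2(t+1)^2$, recording the identity $r^2 - s^2 = 1$). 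Thus the product above is a square precisely when $P(t)Q(t)$ is a square in $\bbQ$. Lemma~\ref{lem:1and3/2and3curves} confirms that neither $P$ nor $Q$ takes a nonzero square value at a non-cusp rational $t$ (consistent with the fixed and period-$2$ points being genuinely quadratic); the content of the theorem is that $P(t)Q(t)$ is nonetheless never a square at such $t$, i.e. that these points can never be forced into the \emph{same} quadratic field.

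The key simplification is to exploit the order-$3$ symmetry of the $3$-cycle. Using the invariant $T = (t^3 - 3t - 1)/(t(t+1))$ of the automorphism $t \mapsto -(t+1)/t$ from \eqref{eq:X0(3,4)TU} and Remark~\ref{rem:3aut}, one checks that $P(t)/(t^2(t+1)^2) = T^2 + 2T + 9$ and $Q(t)/(t^2(t+1)^2) = T^2 + 2T + 5$. Hence $P(t)Q(t)$ is a square exactly when $(T^2+2T+9)(T^2+2T+5)$ is, and after the shift $\theta = T+1$ this becomes $(\theta^2+8)(\theta^2+4) = \theta^4 + 12\theta^2 + 32$. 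Therefore a quadratic pair admitting periods $1$, $2$, and $3$ would produce a rational point on the genus-one curve
\[
	\calE : W^2 = \theta^4 + 12\theta^2 + 32,
\]
with $\theta \in \bbQ$ arising from a non-cusp rational value of $t$.

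To finish, I would determine $\calE(\bbQ)$ and trace its points back through the $3$-to-$1$ map $t \mapsto T$. The curve $\calE$ maps to $z^2 = x(x+4)(x+8)$ via $(\theta,W) \mapsto (\theta^2, \theta W)$, and the latter has full rational $2$-torsion; a descent computation (in Magma) should show that it, and hence $\calE$, has rank $0$, so $\calE(\bbQ)$ is finite. The two points at infinity of $\calE$ correspond to $\theta = \infty$, i.e. to $t(t+1) = 0$, which are cusps. For each remaining point $(\theta,W) \in \calE(\bbQ)$ I would examine the cubic $t^3 - T t^2 - (T+3)t - 1 = 0$ obtained from $T = (t^3 - 3t - 1)/(t(t+1))$; since its three roots form a single orbit under the $\bbQ$-rational Möbius map $t \mapsto -(t+1)/t$, this cubic is either irreducible over $\bbQ$ or splits completely, and a valid pair requires it to split into non-cusp rational roots. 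Verifying that no point of $\calE(\bbQ)$ produces such a splitting completes the proof. The main obstacle is this rank computation together with the back-substitution: one must be certain $\calE(\bbQ)$ has been found exactly (rank $0$ makes this routine, whereas a positive rank would force a Chabauty-style argument on the original genus-$5$ model $W^2 = P(t)Q(t)$ instead) and that each rational point either is a cusp or fails to lift to a non-cusp rational $t$.
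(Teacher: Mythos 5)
Your proposal is correct, and it takes a genuinely different route from the paper's. The paper proves the stronger statement Theorem~\ref{thm:1and2and3curve_pts}: it works directly with the genus-9 fiber product \eqref{eq:1and2and3curve}, shows via Lemma~\ref{lem:1and3/2and3curves} that any quadratic point has $t \in \bbQ$ with $y,z$ generating the same field $\bbQ(\sqrt{d})$, pins down $d$ by a resultant computation ($\Res(f,g) = 2^{12}$, so $d \in \{-1,\pm 2\}$) together with positivity of $f$ (forcing $d = 2$), and then kills the two twisted genus-2 curves $2u^2 = f(t)$, $2v^2 = g(t)$ by showing their Jacobians have trivial Mordell--Weil group. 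You instead start from Proposition~\ref{prop:1and3/2and3} (so $c \in \bbQ$, the $3$-cycle is rational with rational parameter $t$, and $r,s \in K \setminus \bbQ$ both generate $K$), translate the same-field condition into ``$P(t)Q(t)$ is a nonzero rational square,'' and then --- this is the real novelty --- push down along the order-3 symmetry $t \mapsto -(t+1)/t$ using the invariant $T$: your identities $P(t) = (T^2+2T+9)\,t^2(t+1)^2$ and $Q(t) = (T^2+2T+5)\,t^2(t+1)^2$ check out, so everything reduces to rational points on the genus-one quartic $W^2 = (\theta^2+4)(\theta^2+8)$ with $\theta = T+1$. This is the same quotient-by-dynamical-symmetry device the paper itself uses for $X_0(3,4)$ (Lemma~\ref{lem:X0(3,4)}) and in Lemmas~\ref{lem:1_2and3pts} and \ref{lem:2_2and3pts}, applied where the paper chose a twist analysis instead. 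Your one conditional step resolves affirmatively: $V^2 = U(U+4)(U+8)$ becomes, after $U = 4X$, $V = 8Z$ and a shift, the curve $Z^2 = X^3 - X$ of conductor 32, which has rank $0$ and full $2$-torsion, so its only rational points are $\infty$ and $U \in \{0,-4,-8\}$; since $U = \theta^2 \ge 0$ and $\theta = 0$ would force $W^2 = 32$, the quartic has no affine rational points at all, and its two points at infinity correspond to the cusps $t(t+1) = 0$. In particular your final back-substitution through the cubic $t^3 - Tt^2 - (T+3)t - 1$ is vacuous --- no point survives to be checked. As for what each approach buys: yours replaces the local-solubility/resultant analysis and two genus-2 Jacobian rank computations with a single classical rank-0 elliptic curve, which is computationally lighter and conceptually tied to the dynamics; the paper's yields the complete determination of $Y(\bbQ,2)$ (including points outside the open subset $U$, which your argument does not address) and stays uniform with the twist-based machinery used throughout Sections~\ref{sec:periodic} and \ref{sec:preper_pts}.
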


To prove Theorem~\ref{thm:1and2and3}, we must find all rational and quadratic points on the curve
	\[ U_1(1,2,3) = \{(\alpha_1,\alpha_2,\alpha_3,c) \in \bbA^4 : \mbox{ $\alpha_i$ has period $i$ for $f_c$ for each $i \in \{1,2,3\}$} \}. \]

The following is an immediate consequence of Proposition~\ref{prop:1and2etc}.

\begin{prop}
Let $Y$ be the affine curve of genus 9 defined by the equation
	\begin{equation}\label{eq:1and2and3curve}
		\begin{cases}
			y^2 &= t^6 + 2t^5 + 5t^4 + 10t^3 + 10t^2 + 4t + 1\\
			z^2 &= t^6 + 2t^5 + t^4 + 2t^3 + 6t^2 + 4t + 1,
		\end{cases}
	\end{equation}
and let $U$ be the open subset of $Y$ defined by
	\begin{equation}\label{eq:1and2and3cusps}
		t(t+1)(t^2 + t + 1)z \ne 0.
	\end{equation}
Consider the morphism $\Phi: U \to \bbA^4$, $(t,y,z) \mapsto (\alpha_1,\alpha_2,\alpha_3,c)$, given by
	\begin{gather*}
	\alpha_1 = \frac{t^2 + t + y}{2t(t+1)} , \ \alpha_2 = -\frac{t^2 + t - z}{2t(t+1)} , \ \alpha_3 = \frac{t^3 + 2t^2 + t + 1}{2t(t+1)} , \\
	c = -\frac{t^6+2 t^5+4 t^4+8 t^3+9 t^2+4 t+1}{4 t^2 (t+1)^2}.
	\end{gather*}
Then $\Phi$ maps $U$ isomorphically onto $U_1(1,2,3)$, with the inverse map given by
	\begin{equation}\label{eq:1and2and3inverse}
		t = \alpha_3 + f_c(\alpha_3) , \ y = (2\alpha_1 - 1)t(t+1) , \ z = (2\alpha_2 + 1)t(t+1).
	\end{equation}
\end{prop}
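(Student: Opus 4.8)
The plan is to realize $U_1(1,2,3)$ as the fiber product of $U_1(1,3)$ and $U_1(2,3)$ over the period-$3$ curve $U_1(3)$, which is exactly what the two simultaneous equations in \eqref{eq:1and2and3curve} express. The crucial observation is that in both Proposition~\ref{prop:1and2etc}(B) and Proposition~\ref{prop:1and2etc}(C) the uniformizing parameter is the \emph{same} quantity $t = \beta + f_c(\beta)$ attached to the period-$3$ point $\beta = \alpha_3$ (cf.\ Remark~\ref{rem:3aut}), and this single $t$ determines both $c$ and $\alpha_3$ through Proposition~\ref{prop:1or2or3}(C). Thus once $t$ is fixed, the fixed point $\alpha_1$ is governed by the first sextic relation from \eqref{eq:1and3curve} and the period-$2$ point $\alpha_2$ by the second sextic relation from \eqref{eq:2and3curve}; gluing these two double covers of the $t$-line along their common base is precisely the curve $Y$.

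With this structure in place, each required property of $\Phi$ reduces immediately to the corresponding statement in Proposition~\ref{prop:1and2etc}. First I would check that $\Phi$ is well defined on $U$: the condition \eqref{eq:1and2and3cusps} forces $t(t+1) \ne 0$, so the denominators in the formulas for $\alpha_1,\alpha_2,\alpha_3,c$ do not vanish. To see that $\Phi$ lands in $U_1(1,2,3)$, I would invoke Proposition~\ref{prop:1and2etc}(B) to conclude that $\alpha_1$ is a fixed point, Proposition~\ref{prop:1and2etc}(C) together with $z \ne 0$ to conclude that $\alpha_2$ has exact period $2$, and Proposition~\ref{prop:1or2or3}(C) (using $t(t+1)(t^2+t+1)\ne 0$) to conclude that $\alpha_3$ has period $3$. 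No disjointness hypothesis needs separate verification, since three points of distinct periods $1,2,3$ automatically lie in pairwise disjoint orbits.

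Injectivity and the formula \eqref{eq:1and2and3inverse} for the inverse are a short direct computation: solving $\alpha_1 = (t^2+t+y)/(2t(t+1))$ for $y$ gives $y=(2\alpha_1-1)t(t+1)$, solving $\alpha_2 = -(t^2+t-z)/(2t(t+1))$ for $z$ gives $z=(2\alpha_2+1)t(t+1)$, and $t = \alpha_3 + f_c(\alpha_3)$ recovers $t$ from $\alpha_3$ by Remark~\ref{rem:3aut}; I would simply verify that this triple is a left inverse to $\Phi$, which in particular re-proves injectivity. For surjectivity, given any $(\alpha_1,\alpha_2,\alpha_3,c)\in U_1(1,2,3)$, the pair $(\alpha_1,\alpha_3,c)$ lies on $U_1(1,3)$ and the pair $(\alpha_2,\alpha_3,c)$ lies on $U_1(2,3)$, so Proposition~\ref{prop:1and2etc}(B),(C) produce a \emph{common} $t=\alpha_3+f_c(\alpha_3)$ together with $y$ and $z$ satisfying \eqref{eq:1and2and3curve}, and $(t,y,z)$ lies in $U$ because $\alpha_2$ having exact period $2$ forces $z\ne 0$.

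The construction leaves essentially no obstacle, but I would flag two points requiring care. The first is the asymmetry in the cusp condition \eqref{eq:1and2and3cusps}: the factor $z\ne 0$ must be included because $z=0$ forces $\alpha_2=-1/2$, i.e.\ the degenerate value $s=0$ of Proposition~\ref{prop:1or2or3}(B) at which $\alpha_2$ fails to have exact period $2$, whereas no analogous $y\ne 0$ factor is needed since $y=0$ merely yields the genuine (repeated) fixed point $\alpha_1=1/2$. The second is the genus assertion, which is not forced by Proposition~\ref{prop:1and2etc} and would be confirmed separately, either by a Riemann--Hurwitz computation for the $(\bbZ/2\bbZ)^2$-cover $Y\to\bbP^1_t$ branched over the roots of the two sextics, or more simply by a direct Magma calculation.
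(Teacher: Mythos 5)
Your proposal is correct and takes essentially the same route as the paper, which dispenses with this proposition in one line as ``an immediate consequence of Proposition~\ref{prop:1and2etc}'' --- that is, exactly your gluing of the parametrizations of $U_1(1,3)$ and $U_1(2,3)$ from parts (B) and (C) along the common parameter $t = \alpha_3 + f_c(\alpha_3)$. Your added verifications (the left inverse, the asymmetric cusp condition on $z$, automatic orbit disjointness, and the genus computation) are precisely the details the paper leaves implicit.
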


Theorem~\ref{thm:1and2and3} follows immediately from the following theorem, which says that $Y(\bbQ,2)$ does not intersect the open set $U$ defined by \eqref{eq:1and2and3cusps}.

\begin{thm}\label{thm:1and2and3curve_pts}
Let $Y$ be the genus 9 affine curve defined by \eqref{eq:1and2and3curve}.
Then 
\[ Y(\bbQ,2) = Y(\bbQ) = \{(0, \pm 1, \pm 1) , (-1, \pm 1, \pm 1)\}. \]
\end{thm}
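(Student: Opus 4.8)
The plan is to exploit that the genus-9 curve $Y$ is the fibre product over the $t$-line of the two genus-2 curves $C$ and $C'$ defined by \eqref{eq:1and3curve} and \eqref{eq:2and3curve}, whose rational and quadratic points are completely described in Lemma~\ref{lem:1and3/2and3curves}. Write $g_1(t) = t^6+2t^5+5t^4+10t^3+10t^2+4t+1$ and $g_2(t) = t^6+2t^5+t^4+2t^3+6t^2+4t+1$, so that $Y$ is cut out by $y^2 = g_1(t)$ and $z^2 = g_2(t)$, and let $\pi\colon Y \to C$ and $\pi'\colon Y \to C'$ be the two coordinate projections.

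First I would reduce to the case $t \in \bbQ$. If $P = (t,y,z) \in Y(\bbQ,2)$ has $t \notin \bbQ$, then $\bbQ(t) = \bbQ(P)$ is quadratic, so $\pi'(P) = (t,z)$ is a quadratic point on $C'$ with $t \notin \bbQ$, that is, a non-obvious quadratic point. But Lemma~\ref{lem:1and3/2and3curves}(2) asserts that $C'$ has \emph{no} non-obvious quadratic points, a contradiction. It is essential here that we project to $C'$ rather than to $C$, since $C$ does carry non-obvious quadratic points, namely those with $t^2+t+1=0$. Hence every point of $Y(\bbQ,2)$ has $t \in \bbQ$.

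Next, for $t \in \bbQ$ I would analyze square classes. Since $g_1(t), g_2(t) \in \bbQ$, the point $(t,y,z)$ lies in the compositum $\bbQ(\sqrt{g_1(t)},\sqrt{g_2(t)})$, which has degree $\le 2$ over $\bbQ$ precisely when $g_1(t)$ and $g_2(t)$ represent the same class in $\bbQ^\times/(\bbQ^\times)^2$. If $g_1(t)$ is a square then $(t,y) \in C(\bbQ)$, forcing $t \in \{-1,0\}$ by Lemma~\ref{lem:1and3/2and3curves}(1); at these two values $g_1(t) = g_2(t) = 1$, and one recovers exactly the eight rational points in the statement. The remaining possibility is that $g_1(t)$ and $g_2(t)$ are non-squares in a common class, i.e.\ $g_1(t)g_2(t)$ is a nonzero square. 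Setting $w = yz$, this yields a rational point $(t,w)$ with $t \notin \{-1,0\}$ and $w \ne 0$ on the hyperelliptic curve
\[
	D\colon \quad w^2 = g_1(t)\,g_2(t).
\]
Since $g_1 - g_2 = 4t^2(t+1)^2$ and $g_1(0) = g_1(-1) = 1$, the sextics $g_1,g_2$ are coprime and squarefree, so $D$ has genus $5$. It therefore remains to show that $D$ has no rational point with $t \notin \{-1,0,\infty\}$.

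Determining $D(\bbQ)$ is the main obstacle, and here I would use the order-$3$ automorphism
\[
	\rho(t,w) = \left( -\frac{t+1}{t}, \ \frac{w}{t^6} \right),
\]
which is defined over $\bbQ$: since $c(t)$ is invariant under $t \mapsto -(t+1)/t$ (Remark~\ref{rem:3aut}), and $1/4 - c = g_1/(4t^2(t+1)^2)$, $-3/4 - c = g_2/(4t^2(t+1)^2)$, one checks $g_i(-(t+1)/t) = g_i(t)/t^6$, so $\rho$ preserves $D$. Its only fixed points lie over $t^2+t+1 = 0$, so $\rho$ acts freely on $D(\bbQ)$, and Riemann--Hurwitz gives that the quotient $E := D/\langle\rho\rangle$ has genus one. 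As $E$ carries the image of a known rational point such as $(0,1)$, it is an elliptic curve over $\bbQ$, and I would compute $E(\bbQ)$ explicitly via a Weierstrass model and a Mordell--Weil computation. Pulling each point of $E(\bbQ)$ back along the degree-$3$ map $D \to E$ and keeping only those with $\bbQ$-rational fibre then recovers the $t$-coordinates occurring on $D(\bbQ)$; the expected outcome is that $D(\bbQ)$ consists only of the single $\rho$-orbit $\{t = 0,-1,\infty\}$, with no point satisfying $t \notin \{-1,0\}$ and $w \ne 0$, completing the proof. The hardest part is precisely this Mordell--Weil step: it is clean if $E$ has rank $0$, in which case $E(\bbQ)$ is a short list of torsion points; should the rank be positive, I would instead apply Stoll's bound (Theorem~\ref{thm:stoll}) directly to $D$ at a prime of good reduction, after bounding $\rk \Jac(D)(\bbQ) < 5$ by descent, and match the resulting bound against the six evident rational points of $D$.
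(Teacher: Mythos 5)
Your proposal is correct, and after the (identical) opening reduction it takes a genuinely different route from the paper's. The paper also forces $t \in \bbQ$ by projecting to $z^2 = g_2(t)$ and citing Lemma~\ref{lem:1and3/2and3curves}(B); but it never forms your curve $D$. Instead it writes $y = u\sqrt{d}$, $z = v\sqrt{d}$ for a common squarefree $d$, shows by an integrality argument that any prime $p \mid d$ satisfies $g_1(t) \equiv g_2(t) \equiv 0 \bmod p$, hence $p \mid \Res(g_1,g_2) = 2^{12}$, rules out $d \in \{-1,-2\}$ by positivity of $g_1$ on $\bbR$, and kills the one surviving twist $d=2$ by checking that the Jacobians of $2u^2 = g_1(t)$ and $2v^2 = g_2(t)$ have trivial Mordell--Weil group, so those genus-2 twists have no rational points at all. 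Your alternative --- packaging the common-square-class condition as a rational point on $D\colon w^2 = g_1(t)g_2(t)$ and quotienting by the order-3 automorphism $\rho$ --- does close, and the rank-0 outcome you left as an ``expected outcome'' is in fact true: with $T = (t^3-3t-1)/(t(t+1))$ one has $-4c = T^2+2T+8$, so your $E$ is the genus-one curve $W^2 = (T^2+2T+5)(T^2+2T+9)$; setting $S = T+1$, the map $(S,W) \mapsto (S^2, SW)$ sends it two-to-one onto $y^2 = x(x+4)(x+8) \cong y^2 = x^3 - x$ (Cremona 32a1, rank 0), so $E$ has rank 0, and a short torsion computation gives $E(\bbQ) \cong \bbZ/2\bbZ$, consisting exactly of the two points at infinity. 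Their fibres under the degree-3 map $D \to E$ are the two $\rho$-orbits (not one, as you wrote --- the single orbit is of $t$-values) formed by the six evident points, so $D(\bbQ)$ is exactly that set and the theorem follows. Two cautions are worth recording. First, your whole argument rests on that rank computation, which you did not carry out; it holds, but it had to be checked. Second, your fallback is not viable as stated: Stoll's bound applied directly to the genus-5 curve $D$ gives $\#D(\bbQ) \le \#D(\bbF_p) + 2\rk \Jac(D)(\bbQ)$, and merely ``matching'' such a bound against six known points proves nothing unless the bound is below 7; to make it work one would need to add the divisibility trick the paper uses for its own genus-5 norm curves (Lemmas~\ref{lem:1_2and3pts} and \ref{lem:2_2and3pts}), namely that the order-6 group generated by $\rho$ and the hyperelliptic involution acts freely on $D(\bbQ)$, so $6 \mid \#D(\bbQ)$ --- and even then one needs luck with $\#D(\bbF_p)$ and the rank bound. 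In sum, the paper's method buys robustness from purely local arguments plus two routine rank bounds; yours buys structure, exploiting the dynamical order-3 symmetry of Remark~\ref{rem:3aut} to reduce everything to a single classical rank-zero elliptic curve.
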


\begin{proof} 
Let
	\begin{align*}
	f(t) &:= t^6 + 2t^5 + 5t^4 + 10t^3 + 10t^2 + 4t + 1;\\
	g(t) &:= t^6 + 2t^5 + t^4 + 2t^3 + 6t^2 + 4t + 1.
	\end{align*}
By Lemma~\ref{lem:1and3/2and3curves}, the only rational points on $y^2 = f(t)$ and $z^2 = g(t)$ are those points with $t \in \{-1,0\}$, so the eight points listed in the theorem are the only rational points. It now remains to show that the curve $Y$ has no quadratic points.

Suppose to the contrary that $(t,y,z)$ is a quadratic point on $X$. From Lemma~\ref{lem:1and3/2and3curves}, we know that $t \in \bbQ$; furthermore, if $t \not \in \{0,-1\}$, then neither $y$ nor $z$ is in $\bbQ$. Thus $y$ and $z$ generate the same quadratic extension $K = \bbQ(\sqrt{d})$, with $d \ne 1$ a squarefree integer. Since $y^2 , z^2 \in \bbQ$, we may write $y = u\sqrt{d}$, $z = v\sqrt{d}$ for some $u,v \in \bbQ$. This allows us to rewrite \eqref{eq:1and2and3curve} as
	\begin{equation}\label{eq:1and2and3twists}
		\begin{cases}
			du^2 = f(t)\\
			dv^2 = g(t),
		\end{cases}
	\end{equation}
where $t,u,v \in \bbQ$. We first show that if $p \in \bbZ$ is a prime dividing $d$, then $t$, $u$, and $v$ are integral at $p$. Indeed, if $\ord_p(t) < 0$, then $\ord_p(f(t)) = \ord_p(t^6)$ is even, but $\ord_p(du^2) = 2\ord_p(u) + 1$ is odd. Therefore $\ord_p(t) \ge 0$, and it can easily be seen now that $\ord_p(u),\ord_p(v) \ge 0$ as well. We may therefore reduce \eqref{eq:1and2and3twists} modulo $p$, yielding
	\[ f(t) \equiv g(t) \equiv 0 \mod p. \]
Hence $p$ divides the resultant $\Res(f,g) = 2^{12}$, so $p = 2$, and therefore $d \in \{-1, \pm 2\}$. However, $f(t)$ is strictly positive for all $t \in \bbR$, so we must have $d = 2$.

Let $C_f^{(2)}$ and $C_g^{(2)}$ be the curves defined by $2u^2 = f(t)$ and $2v^2 = g(t)$, respectively, and let $J_f^{(2)}$ and $J_g^{(2)}$ be their Jacobians. Each of $J_f^{(2)}$ and $J_g^{(2)}$ has a trivial Mordell-Weil group over $\bbQ$, so in particular neither curve has rational points. (We are also using the fact that, since $f$ and $g$ are irreducible over $\bbQ$ and have even degree, the curves $C_f^{(2)}$ and $C_g^{(2)}$ have no rational Weierstrass points.) We conclude that $Y$ has no quadratic points.
\end{proof}

\section{Larger strongly admissible graphs}\label{sec:preper_pts}

We now consider {\it strongly} admissible graphs that are larger than those generated solely by periodic points. We begin with the observation that a point $\alpha$ has type $m_1$ for $f_c$ if and only if $-\alpha$ is a nonzero point of period $m$. Indeed, if $\alpha$ is a point of type $m_1$, then $\beta := f_c(\alpha)$ is a point of period $m$. Every periodic point has precisely one periodic preimage, so there exists a periodic point $\alpha'$ for which $f_c(\alpha') = \beta = f_c(\alpha)$, hence $\alpha' = -\alpha$. Conversely, if $\alpha'$ is a nonzero point of period $m$, then $\alpha := -\alpha' \ne \alpha'$ satisfies $f_c(\alpha) = f_c(\alpha')$, which is also a point of period $m$. Since $f_c(\alpha')$ has precisely one periodic preimage, $\alpha$ cannot itself be periodic, so $\alpha$ is a point of type $m_1$.

From this discussion, it follows that if $f_c$ admits a nonzero $K$-rational point of period $m$, then $f_c$ automatically admits a $K$-rational point of type $m_1$. We therefore focus on preperiodic points with preperiod at least two.

The strongly admissible graphs studied in this section are grouped according to the their cycle structures. For example, the first subsection discusses those strongly admissible graphs whose only periodic points are fixed points; i.e., those graphs with cycle structure (1,1). The cycle structures appearing in this section are those with cycles of length at most 4 that were not ruled out by the results of \textsection \ref{sec:periodic}. In other words, we will only consider the following cycle structures in this section:
	\[ (1,1), (2), (3), (4), (1,1,2), (1,1,3), (2,3). \]
(We are using the fact, following from Definition~\ref{defn:admissible} and mentioned in Remark~\ref{rem:redundant}, that a strongly admissible graph cannot have just one fixed point.) We have not listed the structures $(1,1,4)$ and $(2,4)$, since we have already shown that we have found all quadratic pairs $(K,c)$ simultaneously admitting points of period 1 and 4 (resp., period 2 and 4), with at most one exception in each case.

In \textsection \ref{sec:main_thm}, we summarize the results of this and the previous section to complete the proof of Theorem~\ref{thm:main_thm}. For the remainder of this section, we will refer to the graphs appearing in Appendix~\ref{app:graphs} by the labels given there.

\subsection{Period 1}

\begin{figure}[h]
\centering
	\begin{overpic}[scale=.6]{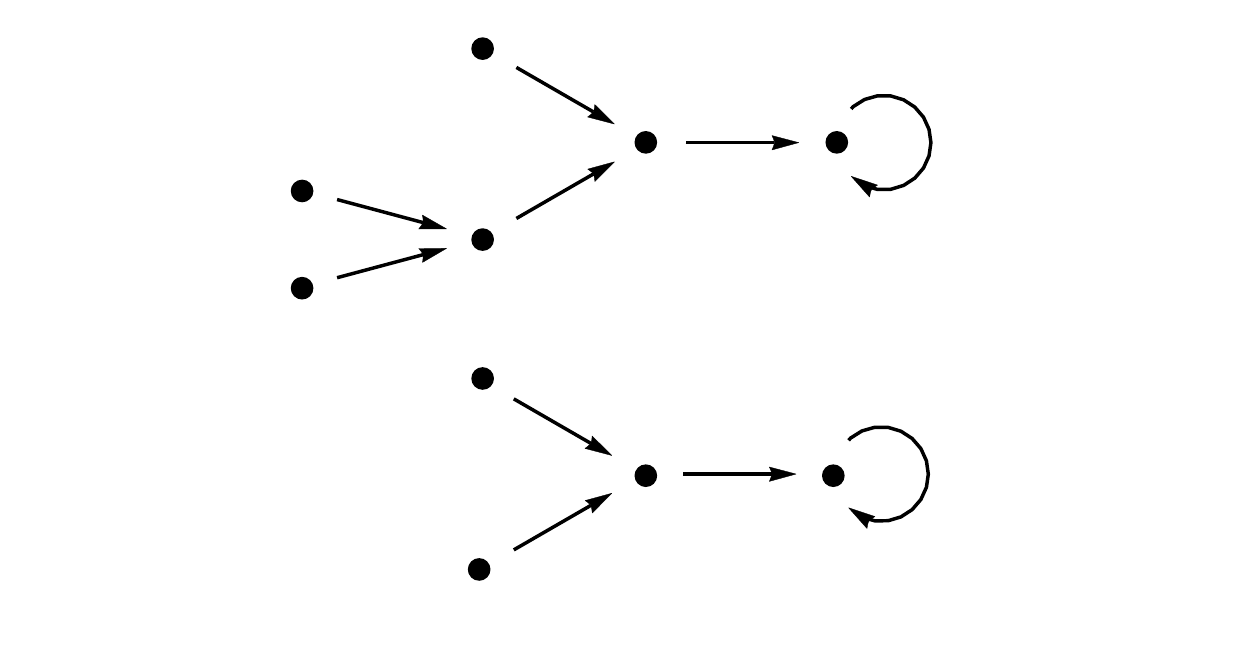}
		\put(42,82){$\alpha$}
		\put(74,16){$\beta$}
	\end{overpic}
	\caption{The graph $G_1$ generated by a point $\alpha$ of type $1_3$ and a point $\beta$ of type $1_2$ with disjoint orbits}
	\label{fig:1_3and_all1_2}
\end{figure}

Among admissible graphs with cycle structure (1,1), only 4(1,1) has four\footnote{Here and throughout the remainder of the article, we will use the fact that an admissible graph necessarily has an even number of vertices.} vertices, only 6(1,1) has six vertices, only 8(1,1)a/b have eight vertices, and only 10(1,1)a/b and $G_1$ have ten vertices, where $G_1$ is the graph appearing in Figure~\ref{fig:1_3and_all1_2}. Therefore, any unknown admissible graph $G(f_c,K)$ of cycle structure (1,1) must contain $G_1$ or \emph{properly} contain 10(1,1)a/b. We now show that the only possibility is that $G(f_c,K)$ properly contains 10(1,1)b.

\begin{prop}\label{prop:(1)}
Let $(K,c)$ be a quadratic pair such that $G(f_c,K)$ is admissible with cycle structure (1,1). If $G(f_c,K)$ does not appear in Appendix~\ref{app:graphs}, then $G(f_c,K)$ properly contains 10(1,1)b.
\end{prop}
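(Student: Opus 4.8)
The plan is to combine a purely combinatorial reduction with a single non-existence statement for a dynamical modular curve. By the discussion preceding the proposition, an admissible graph of cycle structure $(1,1)$ not appearing in Appendix~\ref{app:graphs} must contain $G_1$ or properly contain one of $10(1,1)a$, $10(1,1)b$, so it suffices to eliminate $G_1$ outright and then to show combinatorially that every remaining possibility properly contains $10(1,1)b$. To set up the combinatorics: since $G(f_c,K)$ is admissible with cycle structure $(1,1)$ we have $0 \notin \PrePer(f_c,K)$, the fixed points $\alpha,\alpha'$ are distinct and $K$-rational, and the strictly preperiodic vertices form two binary trees, rooted at the type $1_1$ vertices $-\alpha$ and $-\alpha'$, in which every vertex has in-degree $0$ or $2$; the trees meet only through the self-loops at $\alpha$ and $\alpha'$. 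A tree over a fixed point has depth $\ge k$ exactly when it contains a type $1_k$ vertex, and an easy induction shows each tree has an odd number of vertices. In this language $G_1$ is the graph whose two trees have sizes $3$ and $5$: it is generated by a type $1_2$ vertex over one fixed point together with a type $1_3$ vertex over the other, in disjoint orbits (Figure~\ref{fig:1_3and_all1_2}).

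The crux is to show that no quadratic pair realizes $G_1$. Parametrizing the fixed points by Proposition~\ref{prop:1or2or3}(A), write $c = 1/4 - r^2$, $\alpha = 1/2 + r$, $\alpha' = 1/2 - r$. A type $1_2$ vertex $\delta$ over $\alpha$ exists if and only if $\delta^2 = r^2 - r - 3/4$; a type $1_3$ vertex $\gamma$ over $\alpha$ (a preimage of such a $\delta$) exists if and only if $\gamma^2 = \delta + r^2 - 1/4$; and a type $1_2$ vertex $\beta$ over $\alpha'$ exists if and only if $\beta^2 = r^2 + r - 3/4$. These three equations give an affine model of $U_1(G_1)$ in the coordinates $(r,\delta,\gamma,\beta)$, and a quadratic pair realizing $G_1$ produces a point of $U_1(G_1)(\bbQ,2)$ outside the locus where two marked points collide. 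This curve has large genus, so rather than treat it directly I would use the in-degree-two symmetries $\gamma \mapsto -\gamma$ and $\beta \mapsto -\beta$, the fixed-point swap $r \mapsto -r$, and the $f_c$-action on the marked points to descend to quotient curves of low genus: first the curve $\{\delta^2 = r^2 - r - 3/4,\ \beta^2 = r^2 + r - 3/4\}$ underlying the subgraph $8(1,1)$, and then the double cover recording $\gamma$. On each quotient I would determine all rational and quadratic points as in \textsection\ref{sec:periodic}: compute the Mordell--Weil rank of the Jacobian, apply Lemma~\ref{lem:rank0jac} when the rank is $0$ (or Theorem~\ref{thm:stoll} when Chabauty applies), and read off the finitely many quadratic points. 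Pulling these back to $U_1(G_1)$ and checking that each forces two marked points into a common orbit, I would conclude that $U_1(G_1)$ has no quadratic point corresponding to a genuine copy of $G_1$; hence $G_1$ is not realized.

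With $G_1$ excluded, $G(f_c,K)$ cannot contain a type $1_3$ vertex over one fixed point together with a type $1_2$ vertex over the other. Hence if both trees had depth $\ge 2$, then neither could have depth $\ge 3$, forcing both trees to have size $3$ and $G(f_c,K) \cong 8(1,1)$, which lies in Appendix~\ref{app:graphs}, contradicting that the graph is unknown. Therefore one tree is a single vertex, so $G(f_c,K)$ has tree sizes $\{1,n\}$, and being unknown forces $n \ge 9$ (the cases $n \le 7$ give $4(1,1)$, $6(1,1)$, $8(1,1)$, and $10(1,1)a/b$). A tree with $n \ge 9 > 1 + 2 + 4$ vertices cannot have depth $\le 3$, so it contains a type $1_4$ vertex; tracing back from this vertex through its sibling, its parent, its parent's sibling, and so on to the root yields a subtree of shape $1 + 2 + 2 + 2$. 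Together with the lone vertex over the other fixed point and the two fixed points, this subtree is a copy of the unique $10$-vertex graph of cycle structure $(1,1)$ whose nontrivial tree has depth $4$, namely $10(1,1)b$; it is proper since $n \ge 9 > 7$. This is the asserted conclusion.

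I expect the decisive difficulty to be the middle step: exhibiting a model of $U_1(G_1)$ whose quotients are simple enough that the rank computations terminate and Lemma~\ref{lem:rank0jac} or Theorem~\ref{thm:stoll} applies, and then verifying that each of the finitely many surviving quadratic points on those quotients pulls back only to degenerate (cuspidal or orbit-collapsing) configurations rather than to an honest realization of $G_1$.
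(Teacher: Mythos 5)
Your skeleton matches the paper's, and your combinatorial endgame is not only correct but in one respect tidier than the paper's own argument: the paper disposes of graphs properly containing 10(1,1)a by citing \cite[\textsection 3.8]{doyle/faber/krumm:2014} (the unique pair realizing 10(1,1)a realizes it with equality), whereas your depth argument --- once $G_1$ is excluded, either both trees have depth exactly $2$ (giving a known eight-vertex graph) or the tree sizes are $\{1,n\}$ with $n\ge 9$, forcing a type $1_4$ vertex and hence a proper copy of the caterpillar-shaped graph 10(1,1)b --- sidesteps 10(1,1)a entirely. Your model of $U_1(G_1)$ in the coordinates $(r,\delta,\gamma,\beta)$ is also correct and birational to the paper's model in Proposition~\ref{prop:1_3and_all1_2curve}.

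The genuine gap is in the crux, the non-existence statement for $U_1(G_1)$ (the paper's Theorem~\ref{thm:1_3and_all1_2}), and it is not just a matter of computational difficulty: the method you describe cannot work as stated. The two quotients you name --- the curve $\{\delta^2=r^2-r-3/4,\ \beta^2=r^2+r-3/4\}$ underlying 8(1,1), and the double cover recording $\gamma$ (in the paper's coordinates, $y^2=-(x^2-3)(x^2+1)$ and $z^2=-2(x^3-x^2-x-1)$) --- have genus one, and each carries \emph{infinitely many} quadratic points (all the obvious ones over rational $x$; this is unavoidable, since 8(1,1) is realized by infinitely many quadratic pairs). So there is no finite list of quadratic points on these quotients to ``read off'' and pull back. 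Moreover, the tools you invoke do not apply to them: Lemma~\ref{lem:rank0jac} is a statement about genus-two curves with a rational point and rank-zero Jacobian, and Theorem~\ref{thm:stoll} bounds \emph{rational} points only; also, the ``fixed-point swap'' $r\mapsto -r$ is not an automorphism of $U_1(G_1)$ (the graph $G_1$ is asymmetric in the two fixed points), so it cannot be used for descent. What is actually required --- and what the paper does in Theorem~\ref{thm:1_2and1_3curve_pts} --- is a case division on whether the base coordinate $x$ is rational: for $x\in\bbQ$ one writes $y=u\sqrt{d}$, $z=v\sqrt{d}$ and eliminates $d$ via the resultant $\Res(f,g)=-2^8$, local solvability at $2$, and the fact that the twist of $z^2=-2(x^3-x^2-x-1)$ by $-1$ (curve 176B1) has no affine rational points; for $x\notin\bbQ$ one applies Lemma~\ref{lem:ECquad_pts} to cubic models of \emph{both} genus-one quotients simultaneously, equates the two resulting expressions for the minimal polynomial of $x$, and solves the resulting zero-dimensional systems. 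Nothing in your plan plays the role of either half; in particular the non-obvious quadratic points (those with irrational base coordinate) are untouched by any quotient-and-Chabauty argument of the kind you propose.
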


In \cite[\textsection 3.8]{doyle/faber/krumm:2014}, we show that there is precisely one quadratic pair, namely $(K,c) = (\bbQ(\sqrt{-7}), 3/16)$, for which $G(f_c,K)$ contains (in fact, is equal to) 10(1,1)a. Therefore, to prove Proposition~\ref{prop:(1)}, it remains to prove the following:

\begin{thm}\label{thm:1_3and_all1_2}
Let $K$ be a quadratic field and let $c \in K$. Then $G(f_c,K)$ does not contain a subgraph isomorphic to $G_1$.
\end{thm}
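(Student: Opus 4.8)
The plan is to realize the dynamical modular curve $X_1(G_1)$ explicitly as a tower of double covers over the $c$-line and then bound its quadratic points. By Proposition~\ref{prop:1or2or3}(A) a fixed point of $f_c$ has the form $p = 1/2 + r$ with $c = 1/4 - r^2$, and the observation opening this section shows that the type $1_1$ preimage of $p$ is $-p$. Reading $G_1$ off Figure~\ref{fig:1_3and_all1_2}, the point $\beta$ of type $1_2$ lies above the second fixed point $p' = 1/2 - r$, while the type $1_3$ point $\alpha$ lies above $p$; pulling back repeatedly by $f_c$ (each preimage step being a square root $z \mapsto \pm\sqrt{z-c}$) yields the model
\begin{equation*}
\delta^2 = r^2 - r - \tfrac34, \qquad \alpha^2 = \delta + r^2 - \tfrac14, \qquad \beta^2 = r^2 + r - \tfrac34,
\end{equation*}
in coordinates $(r,\delta,\alpha,\beta)$, where $\delta = f_c(\alpha)$. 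I would record the cusp locus to be excised (where the marked points collide, coincide with a fixed point, or fail to have the correct type) and verify via the recursion that this curve is birational to $X_1(G_1)$ with $c = 1/4 - r^2$; a Riemann--Hurwitz count shows it has genus $5$.

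First I would pass to the two elliptic quotients obtained by forgetting one marked point: $C_1$ (genus $1$), defined by $\delta^2 = r^2 - r - 3/4$ together with $\alpha^2 = \delta + r^2 - 1/4$, which is the curve $X_1$ for the subgraph generated by $\alpha$ alone; and $C_2$ (genus $1$), defined by $\delta^2 = r^2 - r - 3/4$ and $\beta^2 = r^2 + r - 3/4$, which (using $r = (\beta^2-\delta^2)/2$) maps to the genus-one curve $y^2 = r^4 - \tfrac52 r^2 + \tfrac{9}{16}$ via $y = \delta\beta$. A quadratic point of $X_1(G_1)$ maps to points of degree at most two, over the same quadratic field $K$, on both $C_1$ and $C_2$. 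I would compute $\rk \Jac(C_i)(\bbQ)$ in Magma, expecting rank $0$, and then apply Lemma~\ref{lem:ECquad_pts} to each quotient: every quadratic point with $r \notin \bbQ$ has $r$-coordinate satisfying an explicit quadratic whose coefficients are determined by a rational point of the quotient and a slope parameter $v_i \in \bbQ$.

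The decisive step is to combine the two parametrizations. Since the same $r$ must arise from the $C_1$-family and the $C_2$-family, equating the two quadratics for $r$ yields a relation among the finitely many rational base points and the slopes; I expect this to cut the a priori two-parameter family of non-obvious quadratic points down to the rational points of an auxiliary curve, which should be an elliptic or genus-$2$ curve to which Theorem~\ref{thm:stoll} or Lemma~\ref{lem:rank0jac} applies, forcing $r \in \bbQ$ (equivalently $c \in \bbQ$). Once $r \in \bbQ$ is in hand, the three displayed equations become a twist problem: writing $K = \bbQ(\sqrt d)$ and clearing the common $\sqrt d$ gives relations $d\cdot(\,\cdot\,)^2 = (\text{rational})$, and --- exactly as in the proof of Theorem~\ref{thm:1and2and3curve_pts} --- any prime dividing $d$ must divide the resultant of the relevant defining polynomials, pinning $d$ to a short explicit list of twists whose curves one shows carry no admissible rational points (trivial or rank-$0$ Jacobians), with the surviving solutions falling into the excised cusp locus.

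The main obstacle will be this combination step: each genus-$1$ quotient individually carries infinitely many non-obvious quadratic points, so neither alone forces $r \in \bbQ$, and the entire difficulty is to exploit the \emph{simultaneity} of the three square-root conditions --- equivalently, to control the non-obvious quadratic points of the genus-$5$ curve $X_1(G_1)$ itself --- in order to descend to the case $r \in \bbQ$ and reach the twist argument. A secondary technical nuisance is that the $\alpha$-equation is a genuine second layer of the tower (it depends on $\delta$, not merely on $r$), so eliminating $\delta$ to obtain the standard twisted hyperelliptic curves in the final step must be done by hand rather than by quoting a clean fiber-product model.
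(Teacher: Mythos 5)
Your model and your two quotients are correct, and they are in fact the same objects the paper uses: your $(r,\delta,\alpha,\beta)$-tower is birational to the paper's genus-$5$ curve, and your $C_1$ (the type-$1_3$ curve) and $C_2$ (the two type-$1_2$ arms) are birational to the rank-zero elliptic curves 11A3 and 24A4 that the paper works with, so your rank expectations and your general plan (Lemma~\ref{lem:ECquad_pts} on both quotients plus a twist-and-resultant argument) mirror the paper's proof. The genuine gap is where you cut between the two cases. The paper's base coordinate is Poonen's parameter $x$ for the type-$1_2$ point $\delta = f_c(\alpha)$, i.e.\ a coordinate on the conic $\delta^2 = r^2 - r - \tfrac34$ (explicitly $x = \delta/(r+\tfrac12)$), so its ``rational base'' case means that $r$ \emph{and} $\delta$ are both rational. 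You cut at ``$r \in \bbQ$,'' and your twist argument cannot close the sub-case $r \in \bbQ$, $\delta \notin \bbQ$: there $K = \bbQ(\delta) = \bbQ(\sqrt{d})$ and $\alpha^2 = \delta + r^2 - \tfrac14$ is irrational, so $\alpha = a + b\sqrt{d}$ with $ab \neq 0$. Thus $\alpha$ is not a pure multiple of $\sqrt{d}$, ``clearing the common $\sqrt{d}$'' is impossible for the $\alpha$-equation, and the resultant trick is left with only one honest twist equation (the one for $\beta$), which pins down nothing.

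Worse, the natural elimination in that sub-case fails for a structural reason, not a bookkeeping one. Taking norms of $\alpha^2 = \delta + r^2 - \tfrac14$ gives, with $w = N_{K/\bbQ}(\alpha) \in \bbQ$,
\[
w^2 \;=\; \left(r^2 - \tfrac14\right)^2 - \left(r^2 - r - \tfrac34\right) \;=\; r^4 - \tfrac32 r^2 + r + \tfrac{13}{16},
\]
and this genus-one quartic is birational over $\bbQ$ (substitute $r = -\tfrac12 + \tfrac1t$ and rescale) to the elliptic curve $y^2 = x^3 - 4x + 4$, which has \emph{positive} rank: the point $(1,1)$ doubles to the non-integral point $\left(-\tfrac74,-\tfrac{19}{8}\right)$, hence is non-torsion by Nagell--Lutz. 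So the auxiliary curve your final step would need to control has infinitely many rational points, and neither Lemma~\ref{lem:rank0jac} nor Theorem~\ref{thm:stoll} is available (rank is not less than genus); the finiteness you need evaporates exactly at the step you flagged as ``to be done by hand.'' The repair is the paper's organization: draw the case boundary at the conic coordinate rather than at $r$, so that every point with $\delta \notin \bbQ$ --- whether or not $r$ is rational --- is routed through the Lemma~\ref{lem:ECquad_pts} machinery, applied to cubic models of \emph{both} quotients, equating the two resulting minimal polynomials of the conic coordinate $x$ (equating minimal polynomials of $r$ alone also discards the constraint that the two quotient points lie over the same $\delta$, not merely the same $r$) and solving the resulting zero-dimensional systems in Magma.
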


To prove Theorem~\ref{thm:1_3and_all1_2}, we show that the curve
	\[ U_1(G_1) = \{(\alpha,\beta,c) : \mbox{ $\alpha$ is of type $1_3$ and $\beta$ is of type $1_2$ for $f_c$, with $\beta \ne \pm f_c(\alpha)$} \} \]
has no quadratic points.

\begin{prop}\label{prop:1_3and_all1_2curve}
Let $Y$ be the affine curve of genus 5 defined by the equation
	\begin{equation}\label{eq:1_3and_all1_2curve}
		\begin{cases}
		y^2 &= -(x^2 - 3)(x^2 + 1)\\
		z^2 &= -2(x^3 - x^2 - x - 1),
		\end{cases}
	\end{equation}
and let $U$ be the open subset of $Y$ defined by
	\begin{equation}\label{eq:1_3and_all1_2cusps}
	(x-1)(x+1)(x^2 + 1)(x^2 + 3) \ne 0.
	\end{equation}
Consider the morphism $\Phi : U \to \bbA^3$, $(x,y,z) \mapsto (\alpha,\beta,c)$, given by
	\[ \alpha = \frac{z}{(x-1)(x+1)}, \ \beta = \frac{y}{(x-1)(x+1)} , \ c = \frac{-2(x^2 + 1)}{(x - 1)^2(x+1)^2}.\]
Then $\Phi$ maps $U$ isomorphically onto $U_1(G_1)$, with the inverse map given by
	\begin{equation}\label{eq:1_3and_all1_2inverse}
	x = -\frac{f_c(\alpha)}{f_c^2(\alpha)} , \ y = \beta(x-1)(x+1) , \ z = \alpha(x-1)(x+1).
	\end{equation}
\end{prop}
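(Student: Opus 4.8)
The plan is to verify the four conditions that make $\Phi$ an isomorphism onto $U_1(G_1)$, following the template of Propositions~\ref{prop:3and3curve} and~\ref{prop:1and4curve}: that $\Phi$ is well-defined on $U$, that $\Phi(U) \subseteq U_1(G_1)$, that $\Phi$ is injective via the displayed left inverse, and that $\Phi$ is surjective. Well-definedness is immediate, since the open condition forces $(x-1)(x+1) = x^2 - 1 \ne 0$, so every denominator in the formulas for $\alpha$, $\beta$, and $c$ is nonzero on $U$.

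The key is to read the coordinates dynamically. Writing $(\alpha,\beta,c) = \Phi(x,y,z)$ and setting $P := -2/(x^2-1)$, one checks that the formula for $c$ is exactly $c = P - P^2$, so by Proposition~\ref{prop:1or2or3}(A) both $P$ and $P' := 1 - P$ are fixed points of $f_c$. I would then show that the two defining equations of $Y$ encode preimage relations: $z^2 = -2(x^3 - x^2 - x - 1)$ is equivalent to $\alpha^2 + c = xP$, whence $f_c(\alpha) = xP$, $f_c^2(\alpha) = -P$, and $f_c^3(\alpha) = P$; while $y^2 = -(x^2-3)(x^2+1)$ is equivalent to $\beta^2 + c = -P'$, whence $f_c(\beta) = -P'$ and $f_c^2(\beta) = P'$. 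A direct (Magma-assisted) computation then confirms that $\alpha$ has type exactly $1_3$ and $\beta$ type exactly $1_2$. The open conditions $x^2 + 1 \ne 0$ and $x^2 + 3 \ne 0$ enter precisely here: they are equivalent to $c \ne 0$ and $c \ne 1/4$ (one verifies $c = 1/4 \iff (x^2+3)^2 = 0$), and they rule out the degenerate parameters at which $0$ is periodic or the two fixed points collide, either of which would spoil the type assertions. Finally, $\beta \ne \pm f_c(\alpha)$ holds because $f_c(\beta) = -P'$ whereas $f_c(\alpha)$ lies above $-P$; since $P \ne P'$ and any point mapping to $-P$ equals $\pm f_c(\alpha)$, the point $\beta$ is distinct from both.

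Injectivity follows by confirming that the displayed formulas give a genuine left inverse: from $f_c(\alpha) = xP$ and $f_c^2(\alpha) = -P$ we recover $-f_c(\alpha)/f_c^2(\alpha) = x$, and then $y = \beta(x^2-1)$ and $z = \alpha(x^2-1)$. For surjectivity I would start from an arbitrary $(\alpha,\beta,c) \in U_1(G_1)$, let $P := f_c^3(\alpha)$ be the fixed point terminating the orbit of $\alpha$ (so $f_c^2(\alpha) = -P$), and define $x := -f_c(\alpha)/f_c^2(\alpha)$, $y := \beta(x^2-1)$, and $z := \alpha(x^2-1)$. Combining $f_c(\alpha)^2 + c = -P$ with $c = P - P^2$ gives $P[(x^2-1)P + 2] = 0$; since $P \ne 0$ this yields $P = -2/(x^2-1)$ and the claimed formula for $c$, and one checks $x^2 \ne 1$ because $f_c(\alpha) \notin \{P, -P\}$. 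The relation $\alpha^2 + c = xP$ then forces $z^2 = -2(x^3 - x^2 - x - 1)$, and --- crucially --- the hypothesis $\beta \ne \pm f_c(\alpha)$ pins $\beta$ to the branch $f_c(\beta) = -P'$, so that $\beta^2 + c = P - 1$, which rearranges to $y^2 = -(x^2-3)(x^2+1)$. Thus $(x,y,z) \in U$ and $\Phi(x,y,z) = (\alpha,\beta,c)$.

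The main obstacle I anticipate is the surjectivity bookkeeping: correctly tracking which of the two fixed points each orbit terminates at, and recognizing that $\beta \ne \pm f_c(\alpha)$ is exactly what places $\beta$ on the $-P'$ branch and thereby produces the quartic $y$-equation rather than the cubic one would obtain on the $-P$ branch. A smaller point requiring care is ruling out the degenerate parameters $c \in \{0, 1/4\}$ for a genuine $G_1$-configuration; this can be argued directly as above or shortcut via Lemma~\ref{lem:admissible}, since $G_1$ is strongly admissible.
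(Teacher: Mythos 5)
Your proof is correct, and it follows the same four-step template as the paper's proof (well-definedness, injectivity via the stated left inverse, $\Phi(U) \subseteq U_1(G_1)$, surjectivity), but the two substantive steps are executed differently, and your route is the more self-contained of the two. For $\Phi(U) \subseteq U_1(G_1)$, the paper simply verifies by Magma that $f_c^4(\alpha) = f_c^3(\alpha)$ with $f_c^3(\alpha) - f_c^2(\alpha) = -4/((x-1)(x+1))$, that $f_c^3(\beta) = f_c^2(\beta)$ with $f_c^2(\beta) - f_c(\beta) = 2(x^2+1)/((x-1)(x+1))$, and that $(f_c(\alpha)-\beta)(f_c(\alpha)+\beta) = (x^2+3)/((x-1)(x+1))$, reading the type and disjointness conditions off the nonvanishing of these quantities on $U$. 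Your fixed-point bookkeeping ($P = -2/(x^2-1)$, $c = P - P^2$, $f_c(\alpha) = xP$, $f_c^2(\alpha) = -P$, $f_c^3(\alpha) = P$, $f_c(\beta) = -P'$, $f_c^2(\beta) = P'$) establishes the same facts --- the algebra checks out --- and additionally explains the cusps: $x^2+1 \ne 0$ iff $P' \ne 0$ (so $\beta$ has type exactly $1_2$), and $x^2+3 \ne 0$ iff $P \ne P'$ (which is what forces $\beta \ne \pm f_c(\alpha)$). The real divergence is surjectivity: the paper does not derive the parametrization at all, but cites Poonen \cite[p.~19]{poonen:1998} for the configuration of two type-$1_2$ points $f_c(\alpha) \ne \pm\beta$, obtaining $c$, $f_c(\alpha) = -2x/((x-1)(x+1))$, and $\beta^2$ in terms of $x$ at a stroke and then deriving the $z$-equation from $f_c(\alpha) = \alpha^2 + c$; you instead re-derive that parametrization from first principles ($P := f_c^3(\alpha)$, $f_c^2(\alpha) = -P$ by the unique-periodic-preimage observation opening Section~\ref{sec:preper_pts}, $x := f_c(\alpha)/P$, then $P[(x^2-1)P+2] = 0$ with $P \ne 0$). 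The citation buys brevity; your derivation buys self-containedness and makes the branch analysis explicit --- that $\beta \ne \pm f_c(\alpha)$ is exactly what places $f_c(\beta)$ over $-P'$ rather than $-P$, producing the quartic $y$-equation instead of the cubic $z$-equation. One caveat: your optional shortcut via Lemma~\ref{lem:admissible} is the weak link, since that lemma concerns the full graph $G(f_c,K)$, and containing a subgraph isomorphic to $G_1$ does not by itself give $0 \notin \PrePer(f_c,K)$; but this costs you nothing, because your direct argument (type $1_2$ fails when $P' = 0$, disjointness fails when $P = P'$) already rules out both degenerate values of $c$.
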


\begin{proof}
The condition $(x-1)(x+1) \ne 0$ implies that $\Phi$ is well-defined, and one can check that \eqref{eq:1_3and_all1_2inverse} provides a left inverse for $\Phi$, hence $\Phi$ is injective.

Let $(x,y,z)$ be a point on $U$, and let $(\alpha,\beta,c) = \Phi(x,y,z)$. A Magma computation shows that $f_c^4(\alpha) = f_c^3(\alpha)$ and
	\[
		f_c^3(\alpha) - f_c^2(\alpha) = -\frac{4}{(x-1)(x+1)} \ne 0,
	\]
so $\alpha$ is a point of type $1_3$ for $f_c$. Similarly, we find that $f_c^3(\beta) = f_c^2(\beta)$ and
	\[
		f_c^2(\beta) - f_c(\beta) = \frac{2(x^2 + 1)}{(x-1)(x+1)},
	\]
which is nonzero by hypothesis, so $\beta$ is a point of type $1_2$ for $f_c$. Finally, we check that
	\[
		(f_c(\alpha) - \beta)(f_c(\alpha) + \beta) = \frac{x^2 + 3}{(x-1)(x+1)},
	\] 
which is also nonzero by hypothesis. Hence $\beta \ne \pm f_c(\alpha)$, so $\Phi$ maps $U$ into $U_1(G_1)$. It remains now to show that $\Phi$ maps $U$ onto $U_1(G_1)$.

Let $(\alpha,\beta,c)$ be a point on $U_1(G_1)$. By \cite[p. 19]{poonen:1998}, since $f_c(\alpha) \ne \pm \beta$ is a point of type $1_2$, there exists $x \not \in \{\pm 1\}$ such that
	\begin{equation}\label{eq:1_2and1_3params}
	c = \frac{-2(x^2 + 1)}{(x - 1)^2(x+1)^2} , \ f_c(\alpha) = -\frac{2x}{(x-1)(x+1)} , \ \beta^2 = -\frac{(x^2 - 3)(x^2 + 1)}{(x - 1)^2(x+1)^2}.
	\end{equation}
Setting $y = \beta(x-1)(x+1)$ yields the first equation in \eqref{eq:1_3and_all1_2curve}; writing $f_c(\alpha) = \alpha^2 + c$ and using the relations in \eqref{eq:1_2and1_3params} gives us
	\[ \alpha^2 = -\frac{2(x^3 - x^2 - x - 1)}{(x-1)^2(x+1)^2}, \]
and setting $z = \alpha(x-1)(x+1)$ gives us the second equation in \eqref{eq:1_3and_all1_2curve}.
\end{proof}

\begin{thm}\label{thm:1_2and1_3curve_pts}
Let $Y$ be the genus 5 affine curve defined by \eqref{eq:1_3and_all1_2curve}. Then
	\[ Y(\bbQ,2) = Y(\bbQ) = \{ (\pm 1, \pm 2, \pm 2) \}. \]
\end{thm}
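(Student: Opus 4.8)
The plan is to regard $Y$ as the fiber product over the $x$-line of the two genus one curves
\[ C_1 : y^2 = -(x^2-3)(x^2+1) \qquad\text{and}\qquad C_2 : z^2 = -2(x^3-x^2-x-1), \]
each of which has the rational points with $x=\pm1$ and is therefore an elliptic curve. Writing $f(x) = -(x^2-3)(x^2+1)$ and $g(x) = -2(x^3-x^2-x-1)$, I will follow the two-stage pattern of the proof of Theorem~\ref{thm:1and2and3curve_pts}: first determine $Y(\bbQ)$, then rule out non-obvious quadratic points (those with $x\notin\bbQ$), and finally rule out obvious quadratic points (those with $x\in\bbQ$) by a twisting argument.

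For the rational points I would project to $C_2$ (and likewise to $C_1$). A Magma computation should show that these elliptic curves have Mordell--Weil rank $0$; enumerating their rational points shows that $x=\pm1$ are the only rational $x$-coordinates, and then $f(\pm1)=g(\pm1)=4$ recovers exactly the eight points $(\pm1,\pm2,\pm2)$, so $Y(\bbQ)=\{(\pm1,\pm2,\pm2)\}$. In particular, for $x\in\bbQ\setminus\{\pm1\}$ both $f(x)$ and $g(x)$ are non-squares.

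I expect the main obstacle to be showing that a quadratic point $(x,y,z)\in Y(\bbQ,2)$ must have $x\in\bbQ$. Because both components have genus one rather than two, the rank-zero machinery of Lemma~\ref{lem:rank0jac} that drives the period $3$ and period $4$ arguments does not apply, and $Y$ admits no genus two quotient on which to use it. Instead, a point with $x\notin\bbQ$ and field of definition $K=\bbQ(x)$ projects to non-obvious quadratic points on both $C_1$ and $C_2$ with a common $x$. I would apply Lemma~\ref{lem:ECquad_pts} to $C_2$ to write $x$ and its conjugate as the roots of an explicit quadratic $x^2-Tx+P$ attached to a rational point of $C_2$ and a slope $v\in\bbQ$, and then impose the $C_1$-condition that $f(x)$ be a square in $K$. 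Concretely this is a Weil restriction computation: writing $y=a+bx$ and $z=e+hx$ with $a,b,e,h\in\bbQ$ and reducing the two defining equations modulo $x^2-Tx+P$ produces a system that cuts the one-parameter family down to an auxiliary curve, whose rational points I would compute in Magma and check in order to force the quadratic to be reducible, i.e.\ $x\in\bbQ$. This step is the crux precisely because neither elliptic condition alone constrains $x$; the two must be combined.

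Once $x\in\bbQ$ is known, the endgame mirrors Theorem~\ref{thm:1and2and3curve_pts}. A genuine (obvious) quadratic point has $x\in\bbQ\setminus\{\pm1\}$, so $f(x),g(x)$ are non-squares and $y=u\sqrt d$, $z=v\sqrt d$ for some squarefree $d\neq1$ and $u,v\in\bbQ$, giving $du^2=f(x)$ and $dv^2=g(x)$. Since $f$ has even degree, a valuation argument at any odd prime $p\mid d$ forces $\ord_p(x)\ge0$ and then $f(x)\equiv g(x)\equiv0 \pmod p$, so $p\mid \Res(f,g)$, which is a power of $2$; hence $d\in\{-2,-1,2\}$. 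The signs of $f(x)$ (positive exactly when $x^2<3$) and of $g(x)$ eliminate some of these, and for each surviving $d$ it suffices to check that at least one of the quadratic twists $du^2=f(x)$, $dv^2=g(x)$ has no rational points with $x\notin\{\pm1\}$, which I would confirm by a Mordell--Weil computation in Magma. This contradiction shows there are no quadratic points, giving $Y(\bbQ,2)=Y(\bbQ)$.
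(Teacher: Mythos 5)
Your overall architecture --- determine $Y(\bbQ)$ via the two genus-one quotients, then split the quadratic points into the cases $x\in\bbQ$ and $x\notin\bbQ$ --- is the same as the paper's, and your treatment of the case $x\in\bbQ$ is essentially identical to it: the paper obtains $d\in\{-1,\pm2\}$ from the same resultant/valuation argument, eliminates $d=\pm2$ by $2$-adic insolvability of $\pm2u^2=f(x)$ (note that a twist with no rational point has no Mordell--Weil group to compute, so this step must be a local-solvability or descent check rather than a rank computation), and eliminates $d=-1$ because the corresponding twist of $C_2$ is birational to the curve 176B1, whose unique rational point is the one at infinity. Your sign considerations do not by themselves remove any of the three values of $d$, so in practice you would be doing exactly these computations.

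The genuine gap is in the case $x\notin\bbQ$, which you rightly call the crux. Applying Lemma~\ref{lem:ECquad_pts} to $C_2$ alone produces a one-parameter family of candidate minimal polynomials $x^2-Tx+P$ (parameter $v$, for each of the finitely many base points in $C_2(\bbQ)$), and your Weil-restriction condition --- write $y=a+bx$ and reduce $y^2=f(x)$ modulo $x^2-Tx+P$ --- gives only two equations in the three unknowns $(a,b,v)$. As you acknowledge, this cuts out a curve, not a zero-dimensional scheme; but you then assert that its rational points can be computed in Magma, and this is unjustified and is where the plan breaks down. There is no general algorithm for rational points on curves; your auxiliary curve is, over $\QQbar$, a (twisted) component of the fiber product of $Y\to\bbP^1_v$ with itself away from the diagonal, modulo the swap involution --- a degree-four cover of $\bbP^1_v$ whose defining equations have coefficients of degree up to six in $v$ --- and nothing suggests it has low genus, a rank-zero Jacobian, or any other usable handle; determining its rational points is essentially the original problem re-encoded. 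The idea you are missing is the paper's \emph{second} application of Lemma~\ref{lem:ECquad_pts}, this time to $C_1$: since the lemma requires a cubic model, one first passes from the quartic $y^2=-(x^2-3)(x^2+1)$ to a Weierstrass model $E\colon Y^2=X(X^2-X+1)$, and then (splitting into the sub-cases where the resulting coordinate $X$ is rational or quadratic) obtains a second one-parameter expression for the minimal polynomial of $x$. Equating the two monic quadratics coefficient by coefficient gives two equations in two unknowns --- a zero-dimensional scheme for each choice of base points on $C_2(\bbQ)$ and $E(\bbQ)$ --- whose rational points Magma provably enumerates, and every solution yields a reducible quadratic, forcing $x\in\bbQ$. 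Without this second parametrization, your way of combining the two elliptic conditions never reduces to a finite computation.
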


Note that Theorem~\ref{thm:1_3and_all1_2} is an immediate consequence of Theorem~\ref{thm:1_2and1_3curve_pts}, since $Y(\bbQ,2)$ does not intersect the open set $U$ defined by \eqref{eq:1_3and_all1_2cusps}.

\begin{proof}[Proof of Theorem~\ref{thm:1_2and1_3curve_pts}]
Let
	\begin{align*}
		f(x) &:= -(x^2 - 3)(x^2 + 1),\\
		g(x) &:= -2(x^3 - x^2 - x - 1),
	\end{align*}
and let $C_1$ and $C_2$ be the affine curves defined by $y^2 = f(x)$ and $z^2 = g(x)$, respectively. As indicated in \cite[p. 19]{poonen:1998}, the only rational points on $C_1$ are the four points $(\pm 1, \pm 2)$. Setting $x = \pm 1$ gives $z = \pm 2$ on $C_2$, so the only rational points on $Y$ are those listed in the theorem. To show that $Y$ has no quadratic points, we must consider separately the cases $x \in \bbQ$ and $x \not \in \bbQ$.

\textbf{Case 1:} $x \in \bbQ$. A computation in Magma verifies that the curve $C_1$ (resp., $C_2$) is birational to the elliptic curve 24A4 (resp., 11A3) from \cite{cremona:1997}, which has only four (resp., five) rational points. (The curves 11A3 and 24A4 are isomorphic to the modular curves $\Xell_1(11)$ and $\Xell_1(2,12)$, respectively.) Hence the only rational points on $C_1$ are $(\pm 1, \pm 2)$, and the only rational points on $C_2$ are $(\pm 1, \pm 2)$ (plus the rational point at infinity). Therefore, if $x \not \in \{ \pm 1\}$, then $y,z \not \in \bbQ$, so $y$ and $z$ generate the same quadratic extension $K = \bbQ(\sqrt{d})$ with $d \ne 1$ squarefree. Since $y^2,z^2 \in \bbQ$, we may write $y = u\sqrt{d}$ and $z = v\sqrt{d}$ for some $u,v \in \bbQ$. We may therefore rewrite \eqref{eq:1_3and_all1_2curve} as
	\[
		\begin{cases}
			du^2 = f(x)\\
			dv^2 = g(x).
		\end{cases}
	\]
If $p \in \bbZ$ is a prime dividing $d$, then the equation $du^2 = -(x^2 - 3)(x^2 + 1)$ implies that $\ord_p(x) \ge 0$, and therefore $\ord_p(u)$ and $\ord_p(v)$ are nonnegative as well. We reduce modulo $p$ to get
	\[ f(x) \equiv g(x) \equiv 0 \mod p, \]
so $p$ divides $\Res(f,g) = -2^8$. We therefore find that $d \in \{-1, \pm 2\}$. However, the curves $\pm 2u^2 = f(x)$ do not have points over $\bbQ_2$, so we must have $d = -1$. 

Let $C_2^{(-1)}$ denote the twist of $C_2$ by $d = -1$. Magma verifies that $C_2^{(-1)}$ is birational to elliptic curve 176B1 in \cite{cremona:1997}, which has only a single rational point. The curve $C_2^{(-1)}$ has a rational point at infinity and therefore has no affine rational points. We conclude that $Y$ has no quadratic points with $x \in \bbQ$.

\textbf{Case 2:} $x \not \in \bbQ$. We would like to apply Lemma~\ref{lem:ECquad_pts} to the curves $C_1$ and $C_2$, so we require a cubic model for the curve $C_1$. We find that $C_1$ is birational to the elliptic curve $E$ given by
	\[ Y^2 = X(X^2 - X + 1), \]
with
	\begin{align}
	\label{1_2and1_3a} X = -\frac{y-2}{(x+1)^2} \ , \ Y &= \frac{2y - (x^3 + x^2 - x + 3)}{(x+1)^3} \ ; \\
	\label{1_2and1_3b} x = - \frac{Y + 2X - 1}{Y + 1} \ , \ y &= -X(x+1)^2 + 2.
	\end{align}
Since the curve $C_1$ has only four rational points, the only rational points on $E$ are $(0,0)$, $(1,\pm 1)$, and the point at infinity.

By Lemma~\ref{lem:ECquad_pts}, there exist $(x_0,z_0) \in C_2(\bbQ)$ and $v \in \bbQ$ such that the minimal polynomial of $x$ is given by
	\begin{equation}\label{1_2and1_3min_poly}
		p(t) = t^2 + \frac{2x_0 + v^2 - 2}{2}t + \frac{2x_0^2 - v^2x_0 - 2x_0 + 2z_0v - 2}{2}.
	\end{equation}
We now use the curve $E$ and the relations in \eqref{1_2and1_3a} and \eqref{1_2and1_3b} to find another expression for the minimal polynomial of $x$. We handle the cases $X \in \bbQ$ and $X \not \in \bbQ$ separately.

\textbf{Case 2a:} $X \in \bbQ$. Substituting $y = -X(x+1)^2 + 2$ into the equation $y^2 = f(x)$ yields
	\[ (x+1)^2\Big( (X^2 + 1)x^2 + 2(X+1)(X-1)x + (X^2 - 4X + 1) \Big) = 0. \]
Since $x \not \in \bbQ$, we have $(x+1) \ne 0$, so $x$ must have minimal polynomial
	\begin{equation}\label{eq:1_2and1_3min_poly_2}
		p(t) = t^2 + \frac{2(X+1)(X-1)}{X^2 + 1}t + \frac{X^2 - 4X + 1}{X^2 + 1}.
	\end{equation}
Equating the coefficients of \eqref{1_2and1_3min_poly} and \eqref{eq:1_2and1_3min_poly_2} yields the system
	\[
		\begin{cases}
			\hfill (2x_0 + v^2 - 2)(X^2 + 1) &= 4(X+1)(X-1)\\
			(2x_0^2 - v^2x_0 - 2x_0 + 2z_0v - 2)(X^2 + 1) &= 2(X^2 - 4X + 1).
		\end{cases}
	\]
For each pair $(x_0,z_0) \in \{ (\pm 1, \pm 2) \}$, this system defines a zero-dimensional scheme for which Magma can compute the set of all rational points $(X,v)$. For $(x_0,z_0) = (1,2)$, the only rational point is $(X,v) = (0,2)$, which yields $p(t) = (t-1)^2$. Similarly, for $(x_0, z_0) = (1,-2)$, the only rational point is $(X,v) = (0,-2)$, which again yields $p(t) = (t-1)^2$. For $(x_0, z_0) \in \{ (-1, \pm 2) \}$ the scheme has no rational points. Since the only quadratic polynomials $p(t)$ which have arisen in this way are reducible, we conclude that there are no quadratic points with $x \not \in \bbQ$ and $X \in \bbQ$.

\textbf{Case 2b:} $X \not \in \bbQ$. Again using Lemma~\ref{lem:ECquad_pts}, there exist $(X_0,Y_0) \in E(\bbQ)$ and $w \in \bbQ$ such that
	\begin{equation}\label{1_2and1_3min_poly2}
		\begin{gathered}
		X^2 + (X_0 - w^2 - 1)X + (X_0^2 + w^2X_0 - X_0 - 2Y_0w + 1) = 0 , \\
		Y = Y_0 + w(X - X_0).
		\end{gathered}
	\end{equation}
For each of the points $(X_0,Y_0) \in \{ (0,0), (1,\pm 1) \}$, we combine \eqref{1_2and1_3min_poly2} with \eqref{1_2and1_3b} to find another expression for the minimal polynomial of $x$.

For $(X_0,Y_0) = (0,0)$, \eqref{1_2and1_3min_poly2} becomes
	\[ X^2 - (w^2 + 1)X + 1 = 0  , \ Y = wX. \]
Combining this with \eqref{1_2and1_3b} tells us that
	\[ (w+1)\left( (w^2 + 1)x^2 + 4wx - (w^2 - 3) \right) = 0. \]
We cannot have $w = -1$, since \eqref{1_2and1_3min_poly2} would then imply that $X^2 - 2X + 1 = 0$, contradicting the assumption that $X \not \in \bbQ$. Therefore, the minimal polynomial of $x$ is given by
	\[ p(t) = t^2 + \frac{4w}{w^2 + 1}t - \frac{w^2 - 3}{w^2 + 1} = 0. \]
Equating coefficients of this polynomial with those in \eqref{1_2and1_3min_poly} yields the following system:
	\[
		\begin{cases}
			\hfill 8w &= (2x_0 + v^2 - 2)(w^2 + 1)\\
			-2(w^2 - 3) &= (2x_0^2 - v^2x_0 - 2x_0 + 2z_0v - 2)(w^2 + 1),
		\end{cases}
	\]
where $(x_0,z_0) \in \{(\pm 1, \pm 2)\}$. For each pair $(x_0, z_0)$, the system defines a zero-dimensional scheme, and again Magma can compute the set of rational points on each scheme. In each case, there is exactly one rational point. For $(x_0,z_0) = (1, \pm 2)$, we have $(v,w) = (\pm 2, 1)$, and each of $(x_0,z_0) = (-1, \pm 2)$ yields $(v,w) = (0,-1)$. In any case, the corresponding polynomial $p(t)$ equals $t^2 \pm 2t + 1$ and is therefore reducible over $\bbQ$.

We perform a similar computation for $(X_0,Y_0) = (1,1)$, and in this case we find only the polynomial $p(t) = t^2 - 1$, which is again reducible over $\bbQ$. For $(X_0,Y_0) = (1,-1)$, \eqref{1_2and1_3b} and \eqref{1_2and1_3min_poly2} combine to give the equation $w + 1 = 0$, which we have already ruled out. Therefore there are no quadratic points on $X$ with $x \not \in \bbQ$, which concludes the proof.
\end{proof}

\subsection{Period 2}

\begin{figure}[h]
	\includegraphics[scale=.55]{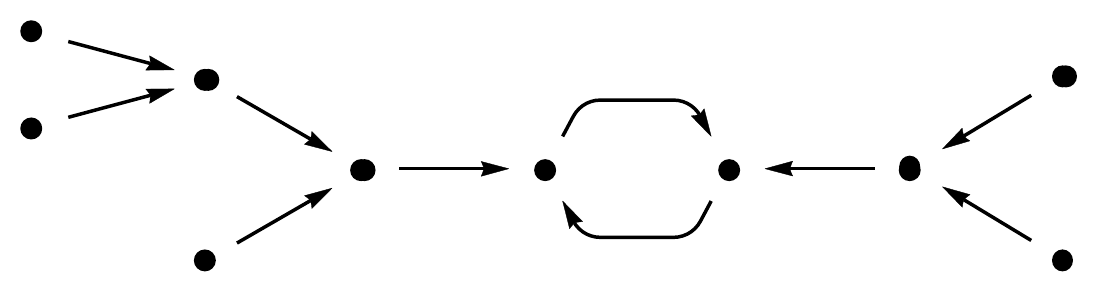}
\caption{The graph $G_2$}
\label{fig:graph12_2_minus2pts}
\end{figure}

\begin{figure}[h]
	\includegraphics[scale=.5]{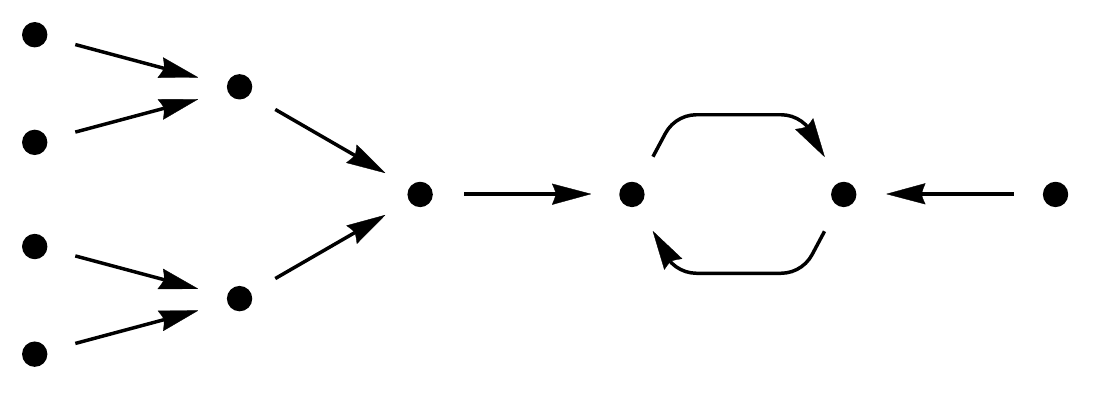}
\caption{The graph $G_3$}
\label{fig:subgraph_of_14_211}
\end{figure}

Among admissible graphs with cycle structure (2), only 4(2) has four vertices, only 6(2) has six vertices, only 8(2)a/b have eight vertices, and only 10(2), $G_2$, and $G_3$ have ten vertices, where $G_2$ and $G_3$ are shown in Figures~\ref{fig:graph12_2_minus2pts} and \ref{fig:subgraph_of_14_211}, respectively.

It follows, then, that if $G(f_c,K)$ is an admissible graph with cycle structure (2) which does not appear in Appendix~\ref{app:graphs}, then $G(f_c,K)$ either contains $G_2$ or $G_3$ or properly contains 10(2).

In \cite[\textsection 3.13]{doyle/faber/krumm:2014}, we show that in addition to the two known pairs $(K,c) = (\bbQ(\sqrt{2}), -15/8)$ and $(K,c) = (\bbQ(\sqrt{57}),-55/48))$, there are at most four additional quadratic pairs $(K,c)$ for which $G(f_c,K)$ contains the graph 12(2), and for each such pair we have $c \in \bbQ$. However, the proof actually involved showing that there were at most four additional quadratic pairs $(K,c)$ --- each with $c \in \bbQ$ --- for which $G(f_c,K)$ contains $G_2$, which is a subgraph of 12(2).

In a similar way, the analysis of the graph 14(2,1,1) in \cite{doyle/faber/krumm:2014} was actually done by studying the graph $G_3$, a subgraph of 14(2,1,1). We showed in \cite[\textsection 3.18]{doyle/faber/krumm:2014} that there is precisely one quadratic pair $(K,c) = (\bbQ(\sqrt{17}),-21/16)$ for which $G(f_c,K)$ contains $G_3$, and in this case $G(f_c,K)$ is equal to 14(2,1,1).

We therefore have the following:

\begin{prop}\label{prop:(2)}
Let $(K,c)$ be a quadratic pair such that $G(f_c,K)$ is admissible with cycle structure (2). If $G(f_c,K)$ does not appear in Appendix~\ref{app:graphs}, then $G(f_c,K)$ contains $G_2$ or properly contains 10(2). Moreover, there are at most four quadratic pairs $(K,c)$ for which $G(f_c,K)$ contains $G_2$, and for each such pair we must have $c \in \bbQ$.
\end{prop}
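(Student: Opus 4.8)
Proposition~\ref{prop:(2)} is a summary statement that stitches together results already established elsewhere, so my proof will be a bookkeeping argument rather than a new computation. The first task is purely combinatorial: I would enumerate the admissible graphs with cycle structure $(2)$ according to their number of vertices, exactly as in the paragraph preceding the proposition. Since an admissible graph has an even number of vertices (every vertex has in-degree $0$ or $2$, so vertices come in ``sibling pairs'' above the single $2$-cycle), I would argue that the candidates are $4(2)$, $6(2)$, $8(2)\mathrm{a/b}$, and then the ten-vertex graphs $10(2)$, $G_2$, $G_3$. The key combinatorial claim is that these exhaust the admissible graphs of cycle structure $(2)$ through ten vertices, so that \emph{any} admissible $G(f_c,K)$ with this cycle structure not appearing in Appendix~\ref{app:graphs} must contain a graph on at most ten vertices that is itself not in the appendix --- forcing it to contain $G_2$ or $G_3$, or to properly contain $10(2)$.

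**Reducing $G_3$ to a known result.** The next step is to dispose of the $G_3$ case using the previous paper. As recorded in the excerpt, \cite[\textsection 3.18]{doyle/faber/krumm:2014} shows that there is exactly one quadratic pair, namely $(\bbQ(\sqrt{17}),-21/16)$, for which $G(f_c,K)$ contains $G_3$, and for that pair $G(f_c,K)$ equals $14(2,1,1)$, which \emph{is} in Appendix~\ref{app:graphs}. Therefore if $G(f_c,K)$ is not in the appendix, it cannot contain $G_3$; this lets me delete $G_3$ from the trichotomy and conclude that $G(f_c,K)$ must contain $G_2$ or properly contain $10(2)$, which is the qualitative half of the proposition. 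I would phrase this carefully: containing $G_3$ is not impossible, but it forces $G(f_c,K)$ into the appendix, contradicting the hypothesis.

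**The quantitative claim for $G_2$.** The final clause --- at most four pairs with $G_2$, each having $c\in\bbQ$ --- is again imported directly from \cite{doyle/faber/krumm:2014}. The subtlety I want to flag is that the cited section there nominally analyzes the graph $12(2)$, but (as the excerpt explicitly notes) the actual argument proceeds by studying the subgraph $G_2$ of $12(2)$. So I would cite \cite[\textsection 3.13]{doyle/faber/krumm:2014} and emphasize that the bound of four additional pairs, and the conclusion $c\in\bbQ$, are proved there \emph{at the level of $G_2$}, not merely $12(2)$; this is precisely why the bound transfers to the present setting, where we care about any graph containing $G_2$ rather than the specific graph $12(2)$.

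**Main obstacle.** The only genuinely non-mechanical step is the combinatorial enumeration at the start --- verifying that the list $4(2),6(2),8(2)\mathrm{a/b},10(2),G_2,G_3$ really is complete through ten vertices, and in particular that there are no other admissible ten-vertex graphs of cycle structure $(2)$ lurking. This requires a careful tree-growth analysis: starting from the $2$-cycle, one attaches preimage pairs subject to admissibility, and one must check all ways of doing so up to ten vertices while accounting for the graph isomorphisms. The arithmetic-geometry content (rational points on curves, ranks of Jacobians) has all been discharged in the prior work, so no curve computation is needed here; the risk is purely that the enumeration silently omits a graph, which would leave a gap in the ``contains $G_2$ or properly contains $10(2)$'' dichotomy.
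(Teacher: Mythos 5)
Your proposal is correct and follows essentially the same route as the paper's own argument: the vertex-count enumeration of admissible graphs with cycle structure $(2)$ (namely $4(2)$, $6(2)$, $8(2)$a/b, then $10(2)$, $G_2$, $G_3$ at ten vertices), the elimination of $G_3$ via \cite[\textsection 3.18]{doyle/faber/krumm:2014} (the unique pair containing $G_3$ has $G(f_c,K) \cong 14(2,1,1)$, which lies in Appendix~\ref{app:graphs}), and the bound of four pairs with $c \in \bbQ$ imported from \cite[\textsection 3.13]{doyle/faber/krumm:2014}, where --- as you rightly emphasize --- the analysis was carried out at the level of $G_2$ rather than $12(2)$. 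One phrasing slip worth fixing: a graph outside the appendix need not contain a $\le 10$-vertex admissible subgraph that is itself outside the appendix (it may merely properly contain $10(2)$); the correct statement is the trichotomy you in fact use --- either $G(f_c,K)$ has at most ten vertices, hence equals $G_2$ or $G_3$, or it properly contains one of the ten-vertex graphs $10(2)$, $G_2$, $G_3$.
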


\subsection{Period 3}

Among admissible graphs with cycle structure (3), only 6(3) has six vertices, only 8(3) has eight vertices, and only 10(3)a/b have ten vertices. We are therefore able to make the following statement.

\begin{prop}\label{prop:(3)}
Let $(K,c)$ be a quadratic pair such that $G(f_c,K)$ is admissible with cycle structure (3). If $G(f_c,K)$ does not appear in Appendix~\ref{app:graphs}, then $G(f_c,K)$ properly contains 10(3)a or 10(3)b.
\end{prop}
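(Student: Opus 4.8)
The plan is to prove Proposition~\ref{prop:(3)} by a purely combinatorial argument, in contrast to the preceding period-$1$ and period-$2$ cases. The discussion immediately before the statement asserts that the only admissible graphs with cycle structure $(3)$ on at most ten vertices are 6(3), 8(3), 10(3)a, and 10(3)b, all of which appear in Appendix~\ref{app:graphs}. The crucial feature here is that, unlike the situations that produced the auxiliary graphs $G_1$, $G_2$, and $G_3$, there is \emph{no} additional ten-vertex graph with cycle structure $(3)$ to be eliminated by a quadratic-point computation. Thus the entire proposition reduces to this enumeration together with a short pruning argument, and no dynamical modular curve (and hence no Chabauty-type or Jacobian calculation, as in \textsection\ref{subsub:1and4}) needs to be analyzed.

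First I would make the enumeration explicit, since it is the essential input. In any admissible graph every periodic vertex has in-degree $2$ by Definition~\ref{defn:admissible}(a), so each of the three points of the $3$-cycle carries exactly one non-periodic preimage; this forces the minimal admissible graph of cycle structure $(3)$ to be 6(3). Enlarging such a graph means repeatedly adjoining to some present vertex its two preimages at once, so that every in-degree stays equal to $0$ or $2$. Carrying this out through eight and then ten vertices --- and using the order-$3$ symmetry of the cycle induced by $\alpha \mapsto f_c(\alpha)$ to identify graphs that differ only by which cycle point a given tail is attached to --- yields exactly 8(3) on eight vertices and exactly 10(3)a and 10(3)b on ten vertices, which is precisely the content of the sentence preceding the proposition.

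Granting the enumeration, the proof concludes as follows. Suppose $G(f_c,K)$ is admissible with cycle structure $(3)$ and does not appear in Appendix~\ref{app:graphs}; then it is none of 6(3), 8(3), 10(3)a, 10(3)b, and since an admissible graph has an even number of vertices it must have at least twelve. I would then show that any admissible graph of cycle structure $(3)$ with at least twelve vertices contains an admissible subgraph on exactly ten vertices with the same cycle structure: choosing a vertex $v$ at maximal distance from the cycle, this distance is at least two (6(3) being the unique such graph of smaller depth), so $f_c(v)$ is non-periodic and its two preimages are both leaves at maximal distance; deleting this sibling pair removes two vertices while preserving admissibility and leaving the $3$-cycle, and hence the cycle structure, untouched. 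Iterating brings the vertex count down to exactly ten, and by the enumeration the resulting subgraph is 10(3)a or 10(3)b. As $G(f_c,K)$ has strictly more than ten vertices, the containment is proper.

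The only step demanding genuine care is the enumeration itself, namely the assurance that 10(3)a and 10(3)b are the \emph{only} ten-vertex admissible graphs with cycle structure $(3)$ --- that is, that the ``extra-graph'' phenomenon seen for cycle structures $(1,1)$ and $(2)$ does not recur here. Because this amounts to a finite enumeration of rooted binary preimage-trees hanging off the $3$-cycle, taken modulo the cyclic symmetry of order three, it is entirely mechanical and carries no arithmetic obstruction; once it is in hand, the pruning argument makes the proposition immediate.
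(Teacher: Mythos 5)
Your proposal is correct and takes essentially the same route as the paper: the paper derives Proposition~\ref{prop:(3)} immediately from the observation that 6(3), 8(3), and 10(3)a/b are the only admissible graphs with cycle structure (3) on at most ten vertices, with no dynamical modular curve or arithmetic computation needed. The only difference is one of exposition --- you make explicit the preimage-pair enumeration modulo the cyclic symmetry and the pruning argument that extracts a ten-vertex admissible subgraph from any larger graph, both of which the paper leaves implicit.
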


\subsection{Period 4}

Among admissible graphs with cycle structure (4), only 8(4) has eight vertices. Recall from Proposition~\ref{prop:4cycle_rat_c} that if $(K,c)$ is a quadratic pair for which $G(f_c,K)$ contains a 4-cycle, then $c \in \bbQ$ and $\sigma(\alpha) = f_c^2(\alpha)$ for each point $\alpha$ of period 4, where $\sigma$ is the nontrivial element of $\Gal(K/\bbQ)$. Now suppose $G(f_c,K)$ contains a point $\beta$ of type $4_2$, with $f_c^2(\beta) = \alpha$ a point of period 4. Then $\sigma(\beta)$ must necessarily be a point of type $4_2$, with $f_c^2(\sigma(\beta)) = \sigma(f_c^2(\beta)) = \sigma(\alpha) = f_c^2(\alpha)$. Therefore if $G(f_c,K)$ contains 8(4) with just one additional vertex, then, by admissibility and the action of $\sigma$, $G(f_c,K)$ must contain 12(4). In \cite[Cor. 3.44]{doyle/faber/krumm:2014}, we showed that there are at most five unknown quadratic pairs $(K,c)$ for which $G(f_c,K)$ contains 12(4), and that for each such pair we must have $c \in \bbQ$. We therefore have the following:

\begin{prop}\label{prop:(4)}
Let $(K,c)$ be a quadratic pair such that $G(f_c,K)$ is admissible with cycle structure (4). If $G(f_c,K)$ does not appear in Appendix~\ref{app:graphs}, then $G(f_c,K)$ properly contains 12(4). Moreover, there are at most five such quadratic pairs, and each such quadratic pair must have $c \in \bbQ$.
\end{prop}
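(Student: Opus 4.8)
The plan is to reduce the statement to the structural fact that any admissible graph of cycle structure $(4)$ properly containing 8(4) must already contain 12(4), and then to import the counting result from \cite{doyle/faber/krumm:2014}.

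First I would record that, among admissible graphs of cycle structure $(4)$, the graph 8(4) --- a $4$-cycle together with the four off-cycle (type-$4_1$) preimages of the cycle points --- is the unique one on eight vertices, and it appears in Appendix~\ref{app:graphs}. Since an admissible graph has an even number of vertices, any $G(f_c,K)$ of cycle structure $(4)$ not appearing in the appendix has at least ten vertices and hence properly contains 8(4). As the periodic points and the type-$4_1$ points already exhaust 8(4), the additional vertices have preperiod at least two; in particular $G(f_c,K)$ contains a point $\beta$ of type $4_2$, say with $f_c^2(\beta) = \alpha$ of period $4$.

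The key step is to show that a single such $\beta$ forces all of 12(4). By Proposition~\ref{prop:4cycle_rat_c} the $4$-cycle forces $c \in \bbQ$ and $\sigma(\alpha') = f_c^2(\alpha')$ for every period-$4$ point $\alpha'$, where $\sigma$ generates $\Gal(K/\bbQ)$. Since $c \in \bbQ$, the map $f_c$ commutes with $\sigma$, so $G(f_c,K)$ is $\sigma$-stable; and since $\beta$ is a preimage of $f_c(\beta)$ under $f_c$, admissibility forces the second preimage $-\beta$ into the graph as well. Applying $\sigma$ shows that $\sigma(\beta)$ and $-\sigma(\beta)$ also lie in $G(f_c,K)$, again of type $4_2$, with $f_c^2(\sigma(\beta)) = \sigma(\alpha) = f_c^2(\alpha)$. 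I would then verify that $\beta, -\beta, \sigma(\beta), -\sigma(\beta)$ are distinct: from Proposition~\ref{prop:4cycle_rat_c} we have $\sigma(\alpha) = f_c^2(\alpha) \ne \alpha$ (as $\alpha$ has exact period $4$), so $\alpha \notin \bbQ$, which together with $\alpha = f_c^2(\beta)$ and $c \in \bbQ$ forces $\beta \notin \bbQ$ and hence $\sigma(\beta) \ne \beta$; and $\sigma(\beta) = -\beta$ is impossible, since it would give $f_c^2(\alpha) = f_c^2(\sigma(\beta)) = f_c^2(-\beta) = f_c^2(\beta) = \alpha$. These four type-$4_2$ vertices, attached over the Galois-conjugate pair $\alpha, f_c^2(\alpha)$ of cycle points, together with 8(4) constitute exactly 12(4). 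Thus $G(f_c,K)$ contains 12(4); as 12(4) lies in the appendix while $G(f_c,K)$ does not, the containment is proper.

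Finally, the quantitative assertion follows from \cite[Cor. 3.44]{doyle/faber/krumm:2014}, which establishes that there are at most five quadratic pairs $(K,c)$ --- each necessarily with $c \in \bbQ$ --- for which $G(f_c,K)$ contains 12(4). The genuinely hard content therefore does not reside in the present proof but in that cited corollary, whose proof rests on a determination of quadratic points on the dynamical modular curve attached to 12(4). Within the present argument the only delicate point is the Galois-stability and distinctness bookkeeping above, dispatched by the short case analysis indicated.
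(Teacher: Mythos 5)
Your proposal is correct and takes essentially the same route as the paper: identify 8(4) as the unique eight-vertex admissible graph of cycle structure (4), use Proposition~\ref{prop:4cycle_rat_c} together with the Galois action and admissibility to show that any type-$4_2$ point forces all four extra vertices of 12(4), and then import the count from \cite[Cor. 3.44]{doyle/faber/krumm:2014}. Your explicit distinctness checks for $\beta$, $-\beta$, $\sigma(\beta)$, $-\sigma(\beta)$ simply spell out what the paper compresses into the phrase ``by admissibility and the action of $\sigma$.''
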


\subsection{Periods 1 and 2}

Among admissible graphs with cycle structure (1,1,2), only 8(2,1,1) has eight vertices, only 10(2,1,1)a/b have ten vertices, and only 12(2,1,1)a/b, $G_4$, $G_5$, and $G_6$ have twelve vertices, where $G_4$, $G_5$, and $G_6$ are shown in Figures~\ref{fig:1and2_3}, \ref{fig:2and_all1_2}, and \ref{fig:1_2and2_2}, respectively. Therefore, a new admissible graph with cycle structure (1,1,2) must necessarily contain $G_4$, $G_5$, or $G_6$ or properly contain 12(2,1,1)a/b. However, we can say more:

\begin{prop}\label{prop:(1,2)}
Let $(K,c)$ be a quadratic pair such that $G(f_c,K)$ is admissible with cycle structure (1,1,2). If $G(f_c,K)$ does not appear in Appendix~\ref{app:graphs}, then $G(f_c,K)$ either contains $G_4$ or properly contains 12(2,1,1)b. Moreover, there are at most three (resp., four) such quadratic pairs for which $G(f_c,K)$ contains $G_4$ (resp., properly contains 12(2,1,1)b), and for each such pair we must have $c \in \bbQ$.
\end{prop}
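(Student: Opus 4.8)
The enumeration in the paragraph preceding the proposition shows that a new admissible graph $G(f_c,K)$ of cycle structure $(1,1,2)$ must contain $G_4$, $G_5$, or $G_6$, or else properly contain 12(2,1,1)a or 12(2,1,1)b. The plan is to discard the cases involving $G_5$, $G_6$, and the proper containment of 12(2,1,1)a, leaving exactly the two advertised possibilities. As in the treatment of $G_3$ in Proposition~\ref{prop:(2)}, ``discard'' will mean one of two things: either the relevant dynamical modular curve has no quadratic points on the open locus $U_1(\,\cdot\,)$, so the graph is never realized; or the only such quadratic points force $G(f_c,K)$ to equal a graph already listed in Appendix~\ref{app:graphs}, hence not a new graph.

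\textbf{Eliminating $G_5$ and $G_6$.} For each $i \in \{5,6\}$ I would build an explicit model for $U_1(G_i)$ following Section~\ref{sec:DMC}: parametrize the two fixed points and the $2$-cycle by the genus-zero base of Proposition~\ref{prop:1and2etc}(A), and adjoin, for each additional preperiodic generator, one or two square-root coordinates exactly as in the construction of $U_1(G_1)$ in Proposition~\ref{prop:1_3and_all1_2curve}. This yields an affine curve $Y_i$, with smooth projective model $X_1(G_i)$, together with a cusp locus cut out by the usual non-degeneracy conditions. I would then determine $Y_i(\bbQ,2)$ with the machinery of Section~\ref{sec:curves}: compute $\rk J(\bbQ)$ for the Jacobian of a convenient quotient of $X_1(G_i)$, apply Lemma~\ref{lem:rank0jac} when a genus-two quotient has rank-zero Jacobian, and otherwise apply Stoll's bound (Theorem~\ref{thm:stoll}) to a quotient whose rank is less than its genus. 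The goal is to show that every point of $Y_i(\bbQ,2)$ violates the open condition defining $U_1(G_i)$, so that $U_1(G_i)(\bbQ,2) = \emptyset$ and $G_i$ is never realized.

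\textbf{The graph 12(2,1,1)a and the quantitative bounds.} Proper containment of 12(2,1,1)a I would rule out combinatorially: an admissible graph of cycle structure $(1,1,2)$ properly containing 12(2,1,1)a has at least fourteen vertices, and --- using that in-degrees lie in $\{0,2\}$, that vertex counts are even, and that (by the Galois symmetry of Proposition~\ref{prop:4cycle_rat_c}, applicable whenever $c \notin \bbQ$) any non-rational preperiodic vertex forces a conjugate vertex of the same type --- every way of attaching the two new generators already exhibits a copy of $G_4$ or a proper extension of 12(2,1,1)b. For the surviving cases, the bound of four pairs (each with $c \in \bbQ$) for proper containment of 12(2,1,1)b is supplied by \cite{doyle/faber/krumm:2014}, in the same manner that the 12(4) bound was imported in Proposition~\ref{prop:(4)}. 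For $G_4$, I would analyze $U_1(G_4)$ directly, reducing the determination of its quadratic points --- by the quotient and pullback strategy of Section~\ref{subsub:1and4} --- to rational points on lower-genus quotients; the resulting non-obvious points are governed by a rank-zero Jacobian via Lemma~\ref{lem:rank0jac}, which forces $c \in \bbQ$ exactly as in Proposition~\ref{prop:4cycle_rat_c}, and a finite check then bounds the number of resulting pairs by three.

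\textbf{Main obstacle.} The crux is the quadratic-point determination for $G_4$, $G_5$, and $G_6$, whose models have large genus, so that neither Lemma~\ref{lem:rank0jac} (which requires genus two and rank zero) nor Theorem~\ref{thm:stoll} (which requires $\rk J(\bbQ) < g$) applies directly. The work lies in finding intermediate quotients --- by the order-two automorphism swapping the two $2$-cycle points, by the fixed-point swap, and by the hyperelliptic involutions of the adjoined square-root extensions --- whose Jacobians are tractable, and then lifting the rational-point data back up to $Y_i$. As in the proof of Theorem~\ref{thm:1and4quad_pts}, the decisive observation is that a single non-obvious quadratic point on $Y_i$ would generate an entire automorphism orbit of such points; verifying that the corresponding points on each quotient have preimages of degree strictly greater than two --- so that no such orbit can exist --- is the delicate computational step, to be carried out in Magma.
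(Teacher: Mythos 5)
Your skeleton coincides with the paper's: the same enumeration of twelve-vertex admissible graphs with cycle structure $(1,1,2)$, elimination of $G_5$ and $G_6$ by showing the corresponding dynamical modular curves have no quadratic points in the relevant open locus (Theorems~\ref{thm:2and_all1_2} and~\ref{thm:1_2and2_2}), the bound for 12(2,1,1)b imported from \cite{doyle/faber/krumm:2014}, and a quadratic-point analysis of $U_1(G_4)$ (Theorem~\ref{thm:1and2_3}). However, two of your steps have genuine problems. Your combinatorial disposal of 12(2,1,1)a is false as stated: 12(2,1,1)a consists of the two fixed points, their type $1_1$ preimages, the $2$-cycle, its type $2_1$ preimages, and all four type $2_2$ points, so a graph properly containing it must adjoin either a pair of type $2_3$ points --- which does produce $G_4$ --- or a pair of type $1_2$ points, which produces a copy of $G_6$ and not $G_4$ nor any extension of 12(2,1,1)b (the latter requires type $1_3$ vertices, absent here). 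The argument is repairable, since $G_6$ is eliminated anyway, but the dichotomy you assert is wrong; and the ``Galois symmetry'' you cite is doubly misplaced, since Proposition~\ref{prop:4cycle_rat_c} concerns only period-four points, and Galois stability of $G(f_c,K)$ holds when $c \in \bbQ$, not ``whenever $c \notin \bbQ$'' (for $c \notin \bbQ$, conjugation carries preperiodic points of $f_c$ to preperiodic points of $f_{\sigma(c)}$). The paper instead quotes \cite[Cor.~3.36]{doyle/faber/krumm:2014}: the unique quadratic pair whose graph contains 12(2,1,1)a has graph \emph{equal} to 12(2,1,1)a, so proper containment never occurs.

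The second problem is your plan for $G_4$. You claim its non-obvious quadratic points are ``governed by a rank-zero Jacobian via Lemma~\ref{lem:rank0jac}'', and in your final paragraph that the delicate step is to verify that ``no such orbit can exist.'' Both claims fail here. Lemma~\ref{lem:rank0jac} applies only to genus-two curves, whereas the quotient actually available (Lemma~\ref{lem:1and2_3aux}) is a genus-\emph{three} hyperelliptic curve whose Jacobian has rank one, so the paper must instead invoke Stoll's bound (Theorem~\ref{thm:stoll}) together with a parity count of rational non-Weierstrass points. More importantly, $U_1(G_4)$ \emph{does} possess quadratic points with $x \notin \bbQ$ --- those with $x^2 - 8x - 1 = 0$, which produce the pair $(\bbQ(\sqrt{17}), -21/16)$ with $G(f_c,K) \cong 14(2,1,1)$ --- so no argument can show such points are absent. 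In the paper these are pinned down by the elliptic-curve parametrization of Lemma~\ref{lem:ECquad_pts}, while the bound of three further pairs, each with $c \in \bbQ$, comes from the complementary case $x \in \bbQ$: at most six unknown rational points on the genus-three quotient pull back through the degree-two map to at most twelve quadratic points on the curve, and these fall four to a value of $c$.
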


It is shown in \cite[Cor. 3.36]{doyle/faber/krumm:2014} that there is only one quadratic pair $(K,c) = (\bbQ(\sqrt{-7}),-5/16)$ for which $G(f_c,K)$ contains (in fact, is equal to) 12(2,1,1)a. Also, \cite[Cor. 3.40]{doyle/faber/krumm:2014} says that there are at most four unknown quadratic pairs $(K,c)$ for which $G(f_c,K)$ contains 12(2,1,1)b, and for each such pair we must have $c \in \bbQ$. To prove Proposition~\ref{prop:(1,2)}, it therefore remains to show that $G(f_c,K)$ cannot contain $G_5$ or $G_6$, and that there are at most three quadratic pairs $(K,c)$ --- each with $c \in \bbQ$ --- such that $G(f_c,K)$ contains $G_4$ but is not equal to 14(2,1,1), the only graph appearing in Appendix~\ref{app:graphs} that contains $G_4$. This is the content of Theorems~\ref{thm:1and2_3}, \ref{thm:2and_all1_2}, and \ref{thm:1_2and2_2} below.

\subsubsection{The graph $G_4$}\label{sub:1and2_3}

\begin{figure}[h]
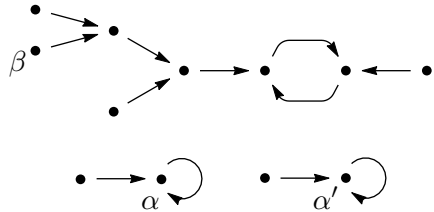

\centering
\begin{overpic}[scale=.5]{graph1and2_3}
	\put(45,0){$\alpha$}
	\put(110,0){$\alpha'$}
	\put(-5,52){$\beta$}
\end{overpic}
\caption{The graph $G_4$ generated by fixed points $\alpha$ and $\alpha'$ and a point $\beta$ of type $2_3$}
\label{fig:1and2_3}
\end{figure}
Let $G_4$ be the graph shown in Figure~\ref{fig:1and2_3}. Then $G_4$ is a subgraph of the graph 14(2,1,1) appearing in Appendix~\ref{app:graphs}. In \cite[\textsection 3.18]{doyle/faber/krumm:2014}, we showed that 14(2,1,1) occurs as a subgraph of $G(f_c,K)$ for a single quadratic pair $(K,c) = (\bbQ(\sqrt{17}),-21/16)$, in which case we actually have $G(f_c,K) \cong 14(2,1,1)$. Since $G_4$ is a subgraph of 14(2,1,1), this gives one quadratic pair $(K,c)$ for which $G_4$ is a subgraph of $G(f_c,K)$. In this section, we show that there are at most three additional quadratic pairs with this property:

\begin{thm}\label{thm:1and2_3}
In addition to the pair $(K,c) = (\bbQ(\sqrt{17}),-21/16)$, which satisfies $G(f_c,K) \cong 14(2,1,1)$ and is the unique quadratic pair satisfying $G(f_c,K) \supseteq 14(2,1,1) \supsetneq G_4$, there are at most three quadratic pairs $(K,c)$ for which $G(f_c,K)$ contains a subgraph isomorphic to $G_4$. For each such pair, we must have $c \in \bbQ$.
\end{thm}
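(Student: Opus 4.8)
The plan is to follow the template of the earlier sections: first produce an explicit model for $U_1(G_4)$, then bound its quadratic points. By Remark~\ref{rem:redundant} it suffices to work with the graph $G_4'$ generated by a single fixed point together with the type-$2_3$ point $\beta$, so that $U_1(G_4')(K) = \{(\alpha,\beta,c) : \alpha \text{ is a fixed point and } \beta \text{ has type } 2_3 \text{ for } f_c\}$. Starting from the parametrization of the $(1,1,2)$ locus by the coordinate $q$ in Proposition~\ref{prop:1and2etc}(A) (together with the $2$-cycle coordinates of Proposition~\ref{prop:1or2or3}(B)), I would build the curve as a tower of two double covers of the $q$-line. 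Writing the $2$-cycle as $\{P,Q\}$ with $f_c(Q)=P$, a type-$2_3$ point $\beta$ forces $f_c^2(\beta) = -Q$, so that $\gamma := f_c(\beta)$ satisfies $\gamma^2 = -Q - c$ and then $\beta^2 = \gamma - c$. Substituting the formulas for $c$ and $Q$ as functions of $q$ turns the first relation into a genus-one model of the form $C_1 : w^2 = 5q^4 + 8q^3 + 6q^2 - 8q + 5$ (with $\gamma = w/2(q^2-1)$), and the second into a further double cover of $C_1$; I would record all of this in a proposition modeled on Proposition~\ref{prop:1and4curve}, with the inverse map recovering $(q,\gamma,\beta)$ from $(\alpha,\beta,c)$.

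The central observation is that the quotient of $U_1(G_4')$ by the sign involution $\iota_\beta : \beta \mapsto -\beta$ (the only nontrivial automorphism of $G_4'$, since in $G_4$ the points $\gamma$ and $-\gamma$ have different in-degrees) is exactly the genus-one curve $C_1$, which is the dynamical modular curve attached to a type-$2_2$ point. So I would first show, by a Magma rank computation, that the elliptic curve underlying $C_1$ has Mordell--Weil rank $0$ over $\bbQ$, and enumerate $C_1(\bbQ)$. For each non-cuspidal rational point $(q,\gamma)\in C_1(\bbQ)$ the value $c = c(q)$ and the entire backbone of $G_4$ (fixed points and $2$-cycle) are rational, while $\beta = \sqrt{\gamma - c}$ is generically quadratic; since $G_4$ has twelve vertices it cannot occur inside $G(f_c,\bbQ)$ by Theorem~\ref{thm:poonen_bound}, so $\gamma - c$ is never a rational square and each such point yields exactly one quadratic pair $(K,c)$ with $c\in\bbQ$ and $K = \bbQ(\sqrt{\gamma - c})$. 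Matching the known pair $(\bbQ(\sqrt{17}),-21/16)$ against this finite list should leave at most three further pairs.

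It remains to rule out the quadratic points of $U_1(G_4')$ that do not arise this way --- equivalently, to show that a quadratic point with $q \notin \bbQ$, or with $q \in \bbQ$ but $\gamma \notin \bbQ$, produces no additional pair and in particular never gives $c \notin \bbQ$. For the non-obvious points I would apply Lemma~\ref{lem:ECquad_pts} to $C_1$ to parametrize the quadratic points $(q,\gamma)$ by a rational point of $C_1$ together with an auxiliary rational parameter, and then impose the second covering condition $\beta^2 = \gamma - c$; this cuts the locus down to a zero-dimensional scheme whose rational points Magma can list, exactly as in the proof of Theorem~\ref{thm:1_2and1_3curve_pts}. I expect this last step to be the main obstacle: because $Y_0(G_4') = C_1$ already has genus one, the problem cannot be collapsed to a single rank-zero genus-two Jacobian via Lemma~\ref{lem:rank0jac}, and one must instead control both layers of the tower simultaneously --- establishing finiteness of $C_1(\bbQ)$ and then tracking the quadratic extension generated by $\beta$ --- in order both to force $c \in \bbQ$ and to sharpen the count to the stated three additional pairs.
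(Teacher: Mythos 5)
Your tower is set up correctly --- with $Q = -1/2 - s$ one indeed gets $\gamma^2 = s^2 + s + 5/4$, hence $C_1 \colon w^2 = 5q^4+8q^3+6q^2-8q+5$ with $\gamma = w/(2(q^2-1))$ --- but your picture of where the relevant points sit on this tower is inverted, and this is fatal to the plan. As the paper notes in the proof of Theorem~\ref{thm:1and2_3quad_pts}, the quartic $5q^4+8q^3+6q^2-8q+5$ is birational to the elliptic curve 17A4, whose only rational points correspond to $(q,w)=(\pm 1,\pm 4)$; since $q=\pm 1$ are cusps of the parametrization, your finite list of non-cuspidal points of $C_1(\bbQ)$ is \emph{empty}. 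In particular it does not contain the known pair: for $(K,c)=(\bbQ(\sqrt{17}),-21/16)$ the fixed points $7/4,-3/4$ and the $2$-cycle $1/4,-5/4$ are all rational, so (in the notation of Proposition~\ref{prop:1or2or3}) $r=5/4$, $s=-3/4$ and $q=(1-r)/s=1/3\in\bbQ$, while $\gamma^2=17/16$, so $\gamma=\pm\sqrt{17}/4\notin\bbQ$: the known pair is an \emph{obvious} quadratic point of your tower, lying exactly in the locus you propose to rule out. The potential additional pairs lie in the other such locus: on the paper's model one computes $f_c(\beta)=-(x^2+1)/(x^2-1)$, so for the points of the set $\calS$ in Theorem~\ref{thm:1and2_3quad_pts} (where $x\in\bbQ$) the type-$2_2$ point $\gamma$ and the $2$-cycle are rational while the two fixed points are quadratic conjugates, whence $r\notin\bbQ$, $s\in\bbQ$, and $q=(1-r)/s\notin\bbQ$. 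A completed version of your argument would therefore ``show'' that no pair exists at all, contradicting the known example; neither the bound of three nor the rationality of $c$ can be read off from $C_1(\bbQ)$.

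Beyond this, the tools you name cannot process the residual cases. Lemma~\ref{lem:ECquad_pts} parametrizes only \emph{non-obvious} quadratic points, so it says nothing about the infinitely many obvious quadratic points of $C_1$ (Hilbert irreducibility), which is precisely where the known pair lives; controlling these requires a norm or product construction landing on an auxiliary hyperelliptic curve over the $q$-line, whose rational points must then be bounded by Chabauty--Stoll. In the paper this role is played by Lemma~\ref{lem:1and2_3aux}: the genus-$3$ curve $C_3\colon w^2 = 2(x^3+x^2-x+1)(5x^4+8x^3+6x^2-8x+5)$ has rank-one Jacobian, and Theorem~\ref{thm:stoll} plus a parity argument gives at most six unknown points, hence at most twelve points in $\calS$ and at most three pairs with $c\in\bbQ$ --- that is the actual source of the count. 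Moreover, even on the non-obvious locus, imposing ``$\gamma - c$ is a square in $K$'' amounts, after taking norms, to one equation in the two rational parameters of the Lemma~\ref{lem:ECquad_pts} parametrization: it cuts out a curve, not a zero-dimensional scheme. The paper's zero-dimensional schemes arise only because its model is a fiber product of \emph{two} genus-one fibrations over the same line (Poonen's type-$2_3$ parameter $x$), so a non-obvious point yields two independent parametrized expressions for the same minimal polynomial; your tower $Y\to C_1\to\bbP^1_q$ has no second fibration to play against the first. (A smaller error: excluding rational $\beta$ via Theorem~\ref{thm:poonen_bound} is not legitimate, since that theorem is conditional on $f_c$ having no rational points of period greater than three.)
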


As mentioned when we discussed this example following Remark~\ref{rem:redundant}, it suffices for our purposes to replace $G_4$ with the subgraph $G_4'$ generated by $\alpha$ and $\beta$, so we consider the curve
	\[
		U_1(G_4') = \{(\alpha,\beta,c) \in \bbA^3 : \mbox{ $\alpha$ is a fixed point and $\beta$ has type $2_3$ for $f_c$} \}.
	\]
\begin{prop}\label{prop:1and2_3}
Let $Y$ be the affine curve of genus 5 defined by the equation
	\begin{equation}\label{eq:1and2_3curve}
		\begin{cases}
			y^2 &= 2(x^3 + x^2 - x + 1)\\
			z^2 &= 5x^4 + 8x^3 + 6x^2 - 8x + 5,
		\end{cases}
	\end{equation}
and let $U$ be the open subset of $Y$ defined by
	\begin{equation}\label{eq:1and2_3cusps}
		x(x-1)(x+1)(x^2 + 4x - 1) \ne 0.
	\end{equation}
Consider the morphism $\Phi : U \to \bbA^3$, $(x,y,z) \mapsto (\alpha,\beta,c)$, given by
	\[
		\alpha = \frac{x^2 - 1 + z}{2(x-1)(x+1)} , \ \beta = \frac{y}{(x-1)(x+1)} , \ c = -\frac{x^4 + 2x^3 + 2x^2 - 2x + 1}{(x-1)^2(x+1)^2}.
	\]
Then $\Phi$ maps $U$ isomorphically onto $U_1(G_4')$, with the inverse map given by
	\begin{equation}\label{eq:1and2_3inverse}
		x = \frac{f_c(\beta) - 1}{f_c^2(\beta)} , \ y = \beta(x-1)(x+1)  , \ z = \frac{2\alpha - 1}{(x-1)(x+1)}.
	\end{equation}
\end{prop}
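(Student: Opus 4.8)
The plan is to show that $\Phi$ and the map \eqref{eq:1and2_3inverse} are mutually inverse isomorphisms between $U$ and $U_1(G_4')$, following the template of Propositions~\ref{prop:3and3curve} and \ref{prop:1and4curve}. First I would check that $\Phi$ is well defined on $U$: condition \eqref{eq:1and2_3cusps} forces $(x-1)(x+1) \ne 0$, so the denominators occurring in $\alpha$, $\beta$, and $c$ do not vanish. Next, for $(x,y,z) \in U$ with $(\alpha,\beta,c) = \Phi(x,y,z)$, a direct (Magma-assisted) computation should confirm that $f_c(\alpha) = \alpha$, so $\alpha$ is a fixed point, and that $f_c^5(\beta) = f_c^3(\beta)$. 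To pin down the type of $\beta$ as exactly $2_3$, I would compute $f_c^4(\beta) - f_c^3(\beta)$ and $f_c^4(\beta) - f_c^2(\beta)$ and check, using the curve equations \eqref{eq:1and2_3curve}, that each is a unit times one of the factors appearing in \eqref{eq:1and2_3cusps}; the open condition then guarantees that $f_c^3(\beta)$ has exact period $2$ (not $1$) and that $\beta$ has preperiod exactly $3$. Verifying that \eqref{eq:1and2_3inverse} is a left inverse of $\Phi$ shows that $\Phi$ is injective.

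The substantive step is surjectivity. Given $(\alpha,\beta,c) \in U_1(G_4')$, the image $f_c(\beta)$ is a point of type $2_2$, and the associated dynamical modular curve is rational; as in Poonen's treatment of the $1_2$ case (cf. the proof of Theorem~\ref{thm:1_2and1_3curve_pts}), I would choose a rational coordinate $x$ on it so that
\[ c = -\frac{x^4 + 2x^3 + 2x^2 - 2x + 1}{(x-1)^2(x+1)^2} \]
with $f_c(\beta)$ and $f_c^2(\beta)$ explicit rational functions of $x$. Since $\beta$ is a preimage of $f_c(\beta)$ it satisfies $\beta^2 = f_c(\beta) - c$, and clearing denominators via $y = \beta(x-1)(x+1)$ yields the first equation $y^2 = 2(x^3 + x^2 - x + 1)$ of \eqref{eq:1and2_3curve}. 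Independently, since $\alpha$ is a fixed point, Proposition~\ref{prop:1or2or3}(A) supplies $r$ with $\alpha = 1/2 + r$ and $c = 1/4 - r^2$; equating this with the displayed expression for $c$ and setting $z = (2\alpha - 1)(x-1)(x+1)$ gives the second equation $z^2 = 5x^4 + 8x^3 + 6x^2 - 8x + 5$. One then verifies that the resulting triple $(x,y,z)$ satisfies \eqref{eq:1and2_3cusps}, so that $(x,y,z) \in U$ and $\Phi(x,y,z) = (\alpha,\beta,c)$; the genus of $Y$ can be confirmed directly in Magma (alternatively, $Y$ is a $(\bbZ/2\bbZ)^2$-cover of the rational $x$-line whose three intermediate double covers have genera $1$, $1$, and $3$).

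The main obstacle I anticipate is the bookkeeping in the surjectivity step: one must check that the rational parameter $x$ attached to the type-$2_2$ point $f_c(\beta)$ is recovered correctly from $(\alpha,\beta,c)$ (this is what \eqref{eq:1and2_3inverse} records) and that \emph{every} factor of the cusp locus \eqref{eq:1and2_3cusps} corresponds to a genuine degeneracy --- a collision in the forward orbit of $\beta$ that would lower its preperiod, the vanishing of the period-$2$ discriminant, or $\alpha$ coinciding with a point of the cycle. The iterate identities and the genus computation are routine Magma verifications; the conceptual content lies in matching these non-degeneracy conditions precisely with the open conditions cutting out $U$.
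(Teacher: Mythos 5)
Your proposal is correct and follows essentially the same path as the paper: well-definedness and the left inverse \eqref{eq:1and2_3inverse} give injectivity, Magma-checked iterate identities (with the differences $f_c^5(\beta)-f_c^4(\beta)$ and $f_c^4(\beta)-f_c^2(\beta)$ matching the factors in \eqref{eq:1and2_3cusps}) give that the image lies in $U_1(G_4')$, and surjectivity comes from Poonen's parametrization of the type-$2_3$ locus together with the fixed-point condition (your $r$ with $c = 1/4 - r^2$ is exactly the paper's discriminant $\Delta$, and your $z = (2\alpha-1)(x-1)(x+1)$ is the correct relation, the displayed inverse in the proposition being a typo). The only cosmetic difference is that the paper simply cites \cite[p. 23]{poonen:1998} for the identities $\beta^2 = 2(x^3+x^2-x+1)/((x-1)^2(x+1)^2)$ and the expression for $c$, whereas you re-derive them from the rational type-$2_2$ curve via $\beta^2 = f_c(\beta) - c$ --- the same argument, unpacked.
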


\begin{proof}
The condition that $(x-1)(x+1) \ne 0$ ensures that $\Phi$ is a well-defined morphism to $\bbA^3$, and it is not difficult to show that \eqref{eq:1and2_3inverse} provides a left inverse for $\Phi$, so that $\Phi$ is injective.

Now let $(x,y,z) \in Y$ and $(\alpha,\beta,c) = \Phi(x,y,z)$. A computation in Magma shows that $f_c(\alpha) = \alpha$, so that $\alpha$ is a fixed point for $f_c$. Also, a computation shows that $f_c^5(\beta) = f_c^3(\beta)$, and that
	\[
		f_c^5(\beta) - f_c^4(\beta) = -\frac{x^2 + 4x - 1}{(x-1)(x+1)} , \ f_c^4(\beta) - f_c^2(\beta) = \frac{4x}{(x-1)(x+1)}.
	\]
Since the above expressions are nonzero by \eqref{eq:1and2_3cusps}, it follows that $\beta$ is a point of type $2_3$. Therefore $\Phi$ maps $U$ into $U_1(G_4')$. It remains to prove surjectivity.

Let $(\alpha,\beta,c) \in U_1(G_4')$. Since $\beta$ is a point of type $2_3$, we have from \cite[p. 23]{poonen:1998} that there exists $x$ for which
	\[ \beta^2 = \frac{2(x^3 + x^2 - x + 1)}{(x-1)^2(x+1)^2} , \ c = -\frac{x^4 + 2x^3 + 2x^2 - 2x + 1}{(x-1)^2(x+1)^2}. \]
This gives us the correct expression for $c$, and setting $y = \beta(x^2 - 1)$ gives us the first equation in \eqref{eq:1and2_3curve} and the correct expression for $\beta$. Finally, since $\alpha$ is a fixed point, we have
	\[ 0 = f_c(\alpha) - \alpha = \alpha^2 - \alpha - \frac{x^4 + 2x^3 + 2x^2 - 2x + 1}{(x-1)^2(x+1)^2}. \]
The discriminant of this polynomial (in $\alpha$) is
	\[ \Delta := \frac{5 x^4+8 x^3+6 x^2-8 x+5}{(x-1)^2(x+1)^2}. \]
Thus $\alpha = 1/2 + z/(2(x-1)(x+1))$, where $z$ satisfies
	\[ z^2 = \Delta(x-1)^2(x+1)^2 = 5 x^4+8 x^3+6 x^2-8 x+5, \]
which completes the proof.
\end{proof}

To prove Theorem~\ref{thm:1and2_3}, we require an upper bound for the number of quadratic points on $U_1(G_4)$.

\begin{thm}\label{thm:1and2_3quad_pts}
Let $Y$ be the genus 5 affine curve defined by \eqref{eq:1and2_3curve}. Then
	\begin{align*}
	Y(\bbQ,2) &= Y(\bbQ) \cup \{ (x,\pm 2(2x+1), \pm 20x) : x^2 - 8x - 1 = 0 \} \cup \calS \\
		&= \{ (\pm 1, \pm 2, \pm 4)\} \cup \{ (x,\pm 2(2x+1), \pm 20x) : x^2 - 8x - 1 = 0 \} \cup \calS,
	\end{align*}
where $\#\calS \in \{0,4,8,12\}$. Moreover, if $(x,y,z) \in \calS$, then $x \in \bbQ$, and $(x, \pm y, \pm z)$ are four distinct points in $\calS$.
\end{thm}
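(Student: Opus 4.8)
The plan is to exploit the three natural projections of $Y$: onto the genus-one curves $C_1 : y^2 = f(x)$ and $C_2 : z^2 = g(x)$, where $f(x) = 2(x^3+x^2-x+1)$ and $g(x) = 5x^4+8x^3+6x^2-8x+5$, and onto the genus-three curve $D : w^2 = f(x)g(x)$ coming from the product map $(x,y,z) \mapsto (x,yz)$. First I would pin down $Y(\bbQ)$. A Magma computation should show that $C_1$ and $C_2$ are elliptic curves of rank zero whose only rational points have $x \in \{-1,1\}$ (plus points at infinity); since $g(\pm1) = 16$ is a square, this forces $Y(\bbQ) = \{(\pm1,\pm2,\pm4)\}$. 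I would also record that the cubic factor $x^3+x^2-x+1$ of $f$ is irreducible and that $g$ has no rational roots, so (after a small bookkeeping check) no point of $Y$ of degree at most two has a vanishing coordinate. Any quadratic point $(x,y,z)$ projects to points $(x,y) \in C_1$ and $(x,z) \in C_2$ with a common first coordinate, and I split on whether $x \in \bbQ$.

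For $x \notin \bbQ$ the points $(x,y)$ and $(x,z)$ are non-obvious quadratic points, and I would mirror the proof of Theorem~\ref{thm:1_2and1_3curve_pts}. Applying Lemma~\ref{lem:ECquad_pts} to the plane cubic $C_1$ expresses the minimal polynomial of $x$ over $\bbQ$ in terms of a rational base point $(x_0,y_0) \in C_1(\bbQ)$ and a slope; since $C_1$ has rank zero, only finitely many base points occur. Passing to a Weierstrass model $E_2$ of $C_2$ and applying Lemma~\ref{lem:ECquad_pts} there (splitting according to whether the $E_2$-coordinate lies in $\bbQ$) gives a second expression for the minimal polynomial of $x$. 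Equating the two, each of the finitely many pairs of base points yields a zero-dimensional scheme whose rational points Magma can compute. The expected outcome is that the only quadratic $x$ produced is a root of $x^2-8x-1$; one then verifies directly (as the candidate relations above already confirm) that these give exactly the points $(x,\pm2(2x+1),\pm20x)$ with $x^2-8x-1=0$, all defined over $\bbQ(\sqrt{17})$.

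For $x \in \bbQ$, which constitutes $\calS$, note that a non-rational quadratic point cannot have $y \in \bbQ$ or $z \in \bbQ$: a rational $y$ would place $(x,\pm y)$ in $C_1(\bbQ)$, forcing $x \in \{\pm1\}$ where the point is in fact rational (similarly for $z$). Hence $y,z$ are both irrational, generate the same field $\bbQ(\sqrt d)$, and $f(x)g(x) = (yz)^2$ is a nonzero rational square, so $(x,yz)$ is a finite rational point of $D$ with $yz \neq 0$. Because $f$ and $g$ have no rational roots, the finite rational points of $D$ come in pairs $(x,\pm w)$ with $w \neq 0$; the only ones over $x \in \{\pm1\}$ are the four arising from $Y(\bbQ)$, and with the single rational point at infinity this gives five ``known'' points. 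I would then bound $\#D(\bbQ)$ via Theorem~\ref{thm:stoll}: Magma's \texttt{RankBound} function should return $\rk J_D(\bbQ) \le 2 < 3 = g(D)$, and evaluating Stoll's bound at a suitable prime of good reduction should give $\#D(\bbQ) \le 11$. Subtracting the five known points leaves at most six further points, i.e.\ at most three additional values of $x \in \bbQ$, each producing the four distinct points $(x,\pm y,\pm z) \in \calS$. This yields $\#\calS \in \{0,4,8,12\}$ with every element satisfying $x \in \bbQ$, as claimed.

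The main obstacle is this final step: controlling $D(\bbQ)$. Unlike the elliptic projections, $D$ has no evident involution to quotient by (the substitution $x \mapsto -1/x$ preserves $g$ up to a square but destroys $f$, since $x^3 f(-1/x) \neq f(x)$), so one cannot descend to lower genus, and the whole conclusion hinges on \texttt{RankBound} returning a bound strictly below $3$ and on Stoll's estimate being sharp enough to force at most three new $x$-values. The indeterminacy $\#\calS \in \{0,4,8,12\}$ precisely reflects that the Chabauty--Coleman--Stoll method bounds, but does not locate, these residual points; determining $\calS$ exactly is the finer analysis deferred to \cite{doyle/krumm/wetherell}.
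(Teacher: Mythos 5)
Your proposal is correct and follows essentially the same route as the paper's proof: the same determination of $Y(\bbQ)$ via the rank-zero elliptic quotients $C_1$ (curve 11A3) and $C_2$ (curve 17A4); for $x \notin \bbQ$, the same application of Lemma~\ref{lem:ECquad_pts} to $C_1$ and to a cubic Weierstrass model of $C_2$ (split by rationality of the Weierstrass coordinate), with zero-dimensional schemes producing only reducible minimal polynomials or $t^2-8t-1$; and for $x \in \bbQ$, the same passage via $(x,y,z) \mapsto (x,yz)$ to the genus-3 hyperelliptic curve $w^2 = f(x)g(x)$ (your $D$, the paper's $C_3$) bounded by Theorem~\ref{thm:stoll}. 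The only detail worth flagging: with rank $1$ and $p = 7$ Stoll's bound gives $\#C_3(\bbQ) \le 12$ rather than $\le 11$, and the paper closes this gap with exactly the parity observation you state but do not explicitly invoke --- the model has odd degree with its unique rational Weierstrass point at $\infty$ (the cubic and quartic factors being irreducible), so finite rational points come in pairs, $\#C_3(\bbQ)$ is odd, and the bound drops to $11$, after which your count of at most three new $x$-values and $\#\calS \in \{0,4,8,12\}$ goes through verbatim.
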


We first prove the following lemma:

\begin{lem}\label{lem:1and2_3aux}
Let $C_3$ be the genus 3 hyperelliptic curve defined by
	\begin{equation}\label{eq:1and2_3aux}
		w^2 = 2(x^3 + x^2 - x + 1)(5x^4 + 8x^3 + 6x^2 - 8x + 5).
	\end{equation}
Then
	\[
		C_3(\bbQ) = \{(\pm 1, \pm 8), \infty\} \cup \calS_3,
	\]
where $\#\calS_3 \in \{0, 2, 4, 6 \}$. Moreover, if $\calS_3$ is nonempty and $(x,w) \in \calS_3$, then $(x, \pm w)$ are two distinct points on $C_3$.
\end{lem}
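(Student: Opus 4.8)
The plan is to apply the Chabauty--Coleman--Stoll bound (Theorem~\ref{thm:stoll}) to $C_3$, exactly as in the proof of Lemma~\ref{lem:1and4quotients}, and then to exploit the hyperelliptic involution to pin down the parity of $\#C_3(\bbQ)$. Write $h(x) := 2(x^3+x^2-x+1)(5x^4+8x^3+6x^2-8x+5)$, a polynomial of degree $7$, so that $C_3 \colon w^2 = h(x)$ is hyperelliptic of genus $3$ with a single rational point $\infty$ at infinity, which is a Weierstrass point. First I would record the obvious rational points $(\pm 1, \pm 8)$ and $\infty$, and observe that $C_3$ has no \emph{finite} rational Weierstrass points: the cubic $x^3+x^2-x+1$ is irreducible over $\bbQ$ (it has no rational root), and the quartic $5x^4+8x^3+6x^2-8x+5$ likewise has no rational root, so $h$ has no rational roots at all. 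Consequently the hyperelliptic involution $\iota(x,w)=(x,-w)$ fixes only $\infty$ among the rational points and acts freely on the finite rational points, pairing them into two-element orbits $\{(x,w),(x,-w)\}$ with $w \ne 0$.

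Next I would bound the rank of the Jacobian. A two-descent via Magma's \texttt{RankBound} function should give $r := \rk J_3(\bbQ) \le 2$, so that $r < 3 = g$ and Theorem~\ref{thm:stoll} applies. Choosing a prime $p$ of good reduction (the bad primes divide $2 \cdot 5$ and the discriminant of $h$) with $p > 2(r+1)$ and computing $\#C_3(\bbF_p)$, the theorem yields $\#C_3(\bbQ) \le \#C_3(\bbF_p) + 2r$; I would select $p$ so that this bound is at most $11$.

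Finally, I would combine the parity constraint with the numerical bound. Since $\infty$ is the unique rational fixed point of $\iota$ and every other rational point lies in a two-element orbit $\{(x,\pm w)\}$, the quantity $\#C_3(\bbQ)$ is odd, say $\#C_3(\bbQ) = 1 + 2k$, where $k$ is the number of such orbits. The orbits over $x = 1$ and $x = -1$ show $k \ge 2$, while the Stoll bound gives $1 + 2k \le 11$, hence $2 \le k \le 5$. The set $\calS_3$ consists of the $k - 2$ orbits other than those over $x = \pm 1$, so $\#\calS_3 = 2(k-2) \in \{0,2,4,6\}$; and since each such point has $w \ne 0$, the pair $(x, \pm w)$ indeed consists of two distinct points, as claimed.

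The main obstacle is the interplay between the rank bound and the point count: Theorem~\ref{thm:stoll} is only useful once we know $\rk J_3(\bbQ) < g = 3$, and even then the bound $\#C_3(\bbF_p) + 2r$ must be driven down to $11$ by a fortunate choice of good prime $p$ with few $\bbF_p$-points. Were the naive Stoll bound to exceed $11$, the parity argument above would still narrow the possibilities, but one might then need a Mordell--Weil sieve; the computation recorded in the ancillary Magma file should confirm that a direct application of Theorem~\ref{thm:stoll} suffices here.
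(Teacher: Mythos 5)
Your proposal is correct and follows essentially the same route as the paper: record the five obvious points, bound $\rk J_3(\bbQ)$ with Magma's \texttt{RankBound}, apply Theorem~\ref{thm:stoll} at a good prime, and then use the parity argument coming from the hyperelliptic involution and the absence of finite rational Weierstrass points to force $\#C_3(\bbQ)$ odd and at most $11$. The only difference is numerical: the actual computation gives $\rk J_3(\bbQ) \le 1$ (in fact $=1$, since $[(1,8)-\infty]$ has infinite order), so taking $p = 7$ with $\#C_3(\bbF_7) = 10$ yields the Stoll bound $12$, which your parity step reduces to $11$ exactly as you anticipated.
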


\begin{proof}
A quick search for points on $C_3$ yields the five points listed in the lemma. Now let $J_3$ be the Jacobian of $C_3$. Applying Magma's \texttt{RankBound} function to $J_3$ shows that $\rk J(\bbQ) \le 1$. (Since the point $[(1,8) - \infty] \in J_3(\bbQ)$ has infinite order, we actually have $\rk J(\bbQ) = 1$.) We may therefore apply Theorem~\ref{thm:stoll} to bound $\#C_3(\bbQ)$. The prime $p = 7$ is a prime of good reduction for $C_3$, and $\#C_3(\bbF_7) = 10$, so
	\[ \#C_3(\bbQ) \le \#C_3(\bbF_7) + 2 \cdot \rk J_3(\bbQ) = 12. \]
The point $\infty$ is a rational Weierstrass point on $C_3$, since \eqref{eq:1and2_3aux} has odd degree. Also, since the cubic and quartic polynomials appearing in \eqref{eq:1and2_3aux} are irreducible over $\bbQ$, $C_3$ has no finite rational Weierstrass points. Non-Weierstrass points come in pairs $(x, \pm w)$, so $\#C_3(\bbQ)$ must be odd, and therefore $\#C_3(\bbQ) \le 11$. Since we already have five rational points on $C_3$, there are at most six additional points.
\end{proof}

\begin{proof}[Proof of Theorem~\ref{thm:1and2_3quad_pts}]
Let
	\begin{align*}
		f(x) &:= 2(x^3 + x^2 - x + 1);\\
		g(x) &:= 5x^4 + 8x^3 + 6x^2 - 8x + 5;
	\end{align*}
and let $C_1$ and $C_2$ be the curves $y^2 = f(x)$ and $z^2 = g(x)$, respectively. Magma verifies that $C_1$ (resp., $C_2$) is birational to elliptic curve 11A3 (resp., 17A4) in \cite{cremona:1997}, which has only five (resp., four) rational points. (As mentioned previously, the curve 11A3 is isomorphic to $X_1^{\Ell}(11)$.) Hence $C_1(\bbQ)$ consists of the four points $(\pm 1, \pm 2)$ (plus the rational point at infinity), and $C_2(\bbQ)$ consists of the four points $(\pm 1, \pm 4)$. It follows that the only rational points on $Y$ are those listed in the theorem. We now suppose $(x,y,z)$ is a quadratic point on $X$, and we consider separately the cases $x \in \bbQ$ and $x \not \in \bbQ$.

\textbf{Case 1:} $x \in \bbQ$. If $x \in \{\pm 1\}$, then we already know that $(x,y,z)$ is a rational point. On the other hand, if $x \in \bbQ \setminus \{\pm 1\}$, then it follows from the previous paragraph that $y,z \not \in \bbQ$. Since $y^2,z^2 \in \bbQ$, we find that setting $w := yz$ yields a finite rational point $(x,w)$ on the curve $C_3$ defined by \eqref{eq:1and2_3aux}. Since $x \not \in \{\pm 1\}$, the point $(x,w)$ must lie in $\calS_3$; in particular, there are at most six such points by Lemma~\ref{lem:1and2_3aux}. Since the map $Y \to C_3$ given by $(x,y,z) \mapsto (x,yz)$ has degree two, this implies that there are at most 12 additional quadratic points on $Y$ with $x \in \bbQ$. Finally, since the polynomials $f$ and $g$ are irreducible over $\bbQ$, we have $y,z \ne 0$, and therefore $(x,\pm y, \pm z)$ are four distinct points on $Y$.

\textbf{Case 2:} $x \not \in \bbQ$. We would like to apply Lemma~\ref{lem:ECquad_pts} to the curve $C_1$ and $C_2$, just as we did to prove Theorem~\ref{thm:1_2and1_3curve_pts}. We therefore require a cubic model for $C_2$, and we find that $C_2$ is birational to the elliptic curve $E$ defined by
	\[
		Y^2 = (X - 4)(X^2 + X - 4),
	\]
with
	\begin{align}
		X = \frac{2(x^2 + 3 - z)}{(x + 1)^2} \ &, \ Y = \frac{2(3x^3 + 3x^2 + 9x - 7 - (x-3)z)}{(x+1)^3} \label{eq:1and2_3X} \\
		x = \frac{3X - 4 + Y}{X + 4 - Y} \ &, \ z = -\frac{1}{2}X(x+1)^2 + x^2 + 3. \label{eq:1and2_3x}
	\end{align}
Since $C_2$ has only four rational points, we have $E(\bbQ) = \{(0,\pm 4), (4,0) , \infty\}$.

By Lemma~\ref{lem:ECquad_pts}, there exist $(x_0, y_0) \in C_1(\bbQ)$ and $v \in \bbQ$ such that the minimal polynomial of $x$ is given by
	\begin{equation}\label{eq:1and2_3min_poly_a}
		p(t) = t^2 + \frac{2x_0 - v^2 + 2}{2}t + \frac{2x_0^2 + v^2x_0 + 2x_0 - 2y_0v - 2}{2}.
	\end{equation}
We now use the curve $E$ and the relations in \eqref{eq:1and2_3X} and \eqref{eq:1and2_3x} to find another expression for the minimal polynomial of $x$. We handle the cases $X \in \bbQ$ and $X \not \in \bbQ$ separately.

\textbf{Case 2a:} $X \in \bbQ$. Substituting $z = -\frac{1}{2}X(x+1)^2 + x^2 + 3$ into the equation $z^2 = g(x)$ yields
	\[
		\frac{1}{4}(x+1)^2\Big( (X^2 - 4X - 16)x^2 + 2X^2 x + (X^2 - 12X + 16) \Big) = 0.
	\]
Since $x \not \in \bbQ$, $(x+1) \ne 0$, so $x$ must have minimal polynomial
	\begin{equation}\label{eq:1and2_3min_poly_b}
		p(t) = t^2 + \frac{2X^2}{X^2 - 4X - 16}t + \frac{X^2 - 12X + 16}{X^2 - 4X - 16}.
	\end{equation}
Equating the coefficients of \eqref{eq:1and2_3min_poly_a} and \eqref{eq:1and2_3min_poly_b} yields the system
	\[
		\begin{cases}
			\hfill (2x_0 - v^2 + 2)(X^2 - 4X - 16) &= 4X^2\\
			(2x_0^2 + v^2x_0 + 2x_0 - 2y_0v - 2)(X^2 - 4X - 16) &= 2(X^2 - 12X + 16)
		\end{cases}
	\]
For each pair $(x_0,y_0) \in \{ (\pm 1, \pm 2) \}$, Magma computes all rational solutions to the given system. Just as in case 2a of the proof of Theorem~\ref{thm:1_2and1_3curve_pts}, we find that for each such rational solution, the corresponding polynomial $p(t)$ is reducible over $\bbQ$. Therefore there are no quadratic points with $x \not \in \bbQ$ and $X \in \bbQ$.

\textbf{Case 2b:} $X \not \in \bbQ$. Again using Lemma~\ref{lem:ECquad_pts}, there exist $(X_0,Y_0) \in E(\bbQ)$ and $w \in \bbQ$ such that
	\begin{equation}\label{eq:1and2_3min_poly_X} 
		\begin{gathered}
			X^2 + (X_0 - w^2 - 3)X + (X_0^2 + w^2X_0 - 3X_0 - 2Y_0w - 8) = 0, \\
			Y = Y_0 + w(X - X_0) .
		\end{gathered}
	\end{equation}
For each of the points $(X_0,Y_0) \in \{ (0, \pm 4), (4,0) \}$, we combine \eqref{eq:1and2_3min_poly_X} with \eqref{eq:1and2_3x} to find another expression for the minimal polynomial of $x$. More precisely, for each $(x_0,y_0) \in \{ (\pm 1, \pm 2) \}$, and for each $(X_0,Y_0) \in \{ (0, \pm 4), (4,0)\}$, we arrive at two expressions for $p(t)$; equating coefficients of these expressions yields a system of equations all of whose rational solutions may be found, either by hand or in Magma.

Consider $(x_0,y_0) = (-1,2)$ and $(X_0,Y_0) = (0, -4)$. We first recall that, by \eqref{eq:1and2_3min_poly_a}, one expression for $p(t)$ is given by
	\[
		p(t) = t^2 - \frac{v^2}{2}t - \frac{v^2 + 4v + 2}{2}.
	\]
On the other hand, we may write \eqref{eq:1and2_3min_poly_X} as
	\[
		X^2 - (w^2 + 3)X + 8(w - 1) \ , \ Y = wX - 4;
	\]
combining this with \eqref{eq:1and2_3x}, then clearing denominators appropriately, yields
	\[
		p(t) = t^2 + \frac{8(w - 1)}{w^2 - 5}t - 1.
	\]
Equating the coefficients of these two expressions for $p(t)$ yields the system
	\[
		\begin{cases}
			\hfill v^2(w^2 - 5) &= -16(w - 1)\\
			v^2 + 4v + 2 &= 2.
		\end{cases}
	\]
One can verify by hand that the only solutions are $(v,w) \in \{(0,1),(-4,-3),(-4,2)\}$. The pair $(0,1)$ yields $p(t) = t^2 - 1$, which is reducible over $\bbQ$; on the other hand, the pairs with $v = -4$ yield $p(t) = t^2 - 8t - 1$, which gives us precisely the known quadratic points listed in the statement of the theorem.

For every other choice of $(x_0,y_0)$ and $(X_0,Y_0)$, a similar computation finds either that $p(t)$ is reducible over $\bbQ$ or that $p(t) = t^2 - 8t - 1$. It therefore follows that the only quadratic points on the curve $Y$ with $x \not \in \bbQ$ are those listed in the statement of the theorem, concluding the proof.
\end{proof}

We now complete the proof of Theorem~\ref{thm:1and2_3}.

\begin{proof}[Proof of Theorem~\ref{thm:1and2_3}]
Observe that, by Theorem~\ref{thm:1and2_3quad_pts}, all rational points on $Y$ lie outside the open subset $U$ defined in Proposition~\ref{prop:1and2_3}. Therefore, every point in $U(\bbQ,2)$ must either satisfy $x^2 - 8x - 1 = 0$ or lie in $\calS$. If $x^2 - 8x - 1 = 0$, then we recover the quadratic pair $(\bbQ(\sqrt{17}),-21/16)$, which was shown in \cite[Cor. 3.52]{doyle/faber/krumm:2014} to be the unique quadratic pair $(K,c)$ for which $G(f_c,K)$ is isomorphic to 14(2,1,1). Note that $G_4$ is a proper subgraph of 14(2,1,1).

Now suppose $(K,c)$ is another quadratic pair whose preperiodic graph contains a subgraph isomorphic to $G_4$. Let $\alpha$ be a $K$-rational fixed point for $f_c$, and let $\beta$ be a $K$-rational point of type $2_3$ for $f_c$. Then there is a point $(x,y,z) \in Y(K)$ such that
	\[
		\alpha = \frac{x^2 - 1 + z}{2(x^2 - 1)} , \ \beta = \frac{y}{x^2 - 1} , \ c = -\frac{x^4 + 2x^3 + 2x^2 - 2x + 1}{(x^2 - 1)^2}.
	\]
Moreover, this point $(x,y,z)$ --- in fact, all four points $(x,\pm y, \pm z)$ --- must lie in $\calS$. In particular, $x \in \bbQ$. Since $c$ is defined only in terms of $x$, it follows that $c \in \bbQ$ and that all four points $(x, \pm y, \pm z)$ yield the same value of $c$. Since there are at most twelve points in $\calS$ from Theorem~\ref{thm:1and2_3quad_pts}, and since four such points yield the same value of $c$, it follows that there are at most three quadratic pairs $(K,c)$ --- each with $c \in \bbQ$ --- for which $G_4 \subseteq G(f_c,K)$.
\end{proof}

\subsubsection{The graph $G_5$}\label{sub:2and_all1_2}

\begin{figure}[h]
\centering
	\begin{overpic}[scale = .5]{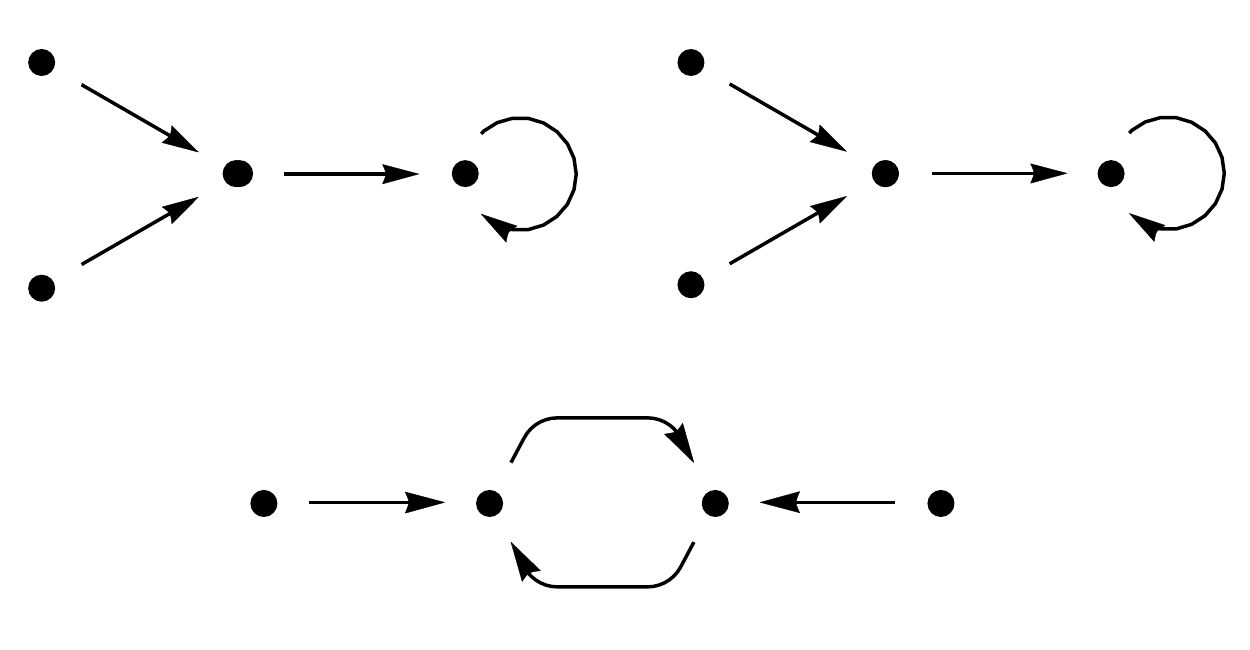}
		\put(-8,90){$\alpha_1$}
		\put(86,90){$\alpha_2$}
		\put(62,30){$\beta$}
	\end{overpic}
\caption{The graph $G_5$ generated by two points $\alpha_1$ and $\alpha_2$ of type $1_2$ with disjoint orbits and a point $\beta$ of period 2}
\label{fig:2and_all1_2}
\end{figure}

For the graph $G_5$ shown in Figure~\ref{fig:2and_all1_2}, we have the following:

\begin{thm}\label{thm:2and_all1_2}
Let $K$ be a quadratic field and let $c \in K$. Then $G(f_c,K)$ does not contain a subgraph isomorphic to $G_5$.
\end{thm}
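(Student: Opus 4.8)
The plan is to mirror the analysis of the graph $G_1$ carried out in Theorem~\ref{thm:1_2and1_3curve_pts}. First I would build a workable model for $U_1(G_5)$. The graph $G_5$ is minimally generated by the two points $\alpha_1,\alpha_2$ of type $1_2$ (lying in the backward orbits of the two distinct fixed points) together with the period-$2$ point $\beta$; since the two fixed points $\tfrac12\pm r$ of $f_c$ are interchanged by $r\mapsto -r$ and determine one another, the essential datum is the parameter $r$ of Proposition~\ref{prop:1or2or3}(A), with $c=\tfrac14-r^2$. Writing $f_c(\alpha_i)=-(\tfrac12\pm r)$ and invoking the $2$-cycle of Proposition~\ref{prop:1or2or3}(B) translates the realizability of $G_5$ into the three conic conditions
$$\alpha_1^2 = r^2-r-\tfrac34,\qquad \alpha_2^2=r^2+r-\tfrac34,\qquad s^2=r^2-1.$$
The last condition is already rationally parametrized by the variable $q$ of Proposition~\ref{prop:1and2etc}(A) via $r=-\tfrac{q^2+1}{q^2-1}$, so I would pass to the $q$-line, where the two remaining conditions become
$$y_1^2 = 5q^4+14q^2-3=(5q^2-1)(q^2+3),\qquad y_2^2=-3q^4+14q^2+5=-(3q^2+1)(q^2-5),$$
defining (after removing the cusps $q\in\{0,\pm1,\infty\}$ and the finitely many points where some $\alpha_i$ or $s$ vanishes) a curve $Y$ of genus $5$ birational to $U_1(G_5)$. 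It then suffices to show $Y(\bbQ,2)$ meets only the cusps.

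Following the template of Theorem~\ref{thm:1_2and1_3curve_pts}, I would split into the cases $q\in\bbQ$ and $q\notin\bbQ$. When $q\in\bbQ$, a quadratic point forces $y_1,y_2$ into a common $K=\bbQ(\sqrt d)$, so $y_i=u_i\sqrt d$ with $5q^4+14q^2-3=d\,u_1^2$ and $-3q^4+14q^2+5=d\,u_2^2$; reducing modulo a prime $p\mid d$ and using $\Res_q(5q^4+14q^2-3,\,-3q^4+14q^2+5)=2^{24}\cdot 3^2$ confines the relevant primes to $\{2,3\}$ (with $p=5$, dividing a leading coefficient, dispatched by a direct valuation check). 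A real-place argument is decisive here: the two quartics have opposite signs unless $\tfrac15<q^2<5$, which forces $d>0$ and hence $d\in\{1,2,3,6\}$. I then expect local conditions at $2$ and $3$ to eliminate $d\in\{2,3,6\}$, and a finite check on the two genus-one curves $y_i^2=\cdots$ (whose Mordell–Weil groups are computed in Magma) to show that the surviving $d=1$ solutions all have $q\in\{0,\pm1\}$, i.e.\ are cusps.

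For $q\notin\bbQ$, the curve $y_1^2=5q^4+14q^2-3$ is an elliptic curve with the rational point $(1,4)$, so I would put it in cubic form and apply Lemma~\ref{lem:ECquad_pts} to express the minimal polynomial of the quadratic number $q$ in terms of a rational point and a slope on that model. Combining this with the second condition $y_2^2=-3q^4+14q^2+5$, and splitting according to whether the auxiliary $X$-coordinate is rational (exactly as in Cases 2a and 2b of Theorem~\ref{thm:1_2and1_3curve_pts}), reduces the problem to computing rational points on a finite collection of zero-dimensional schemes in Magma; I expect each to yield only reducible quadratic polynomials $p(t)$, i.e.\ no genuine quadratic value of $q$. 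Assembling the two cases shows $Y(\bbQ,2)$ consists entirely of cusps, whence $U_1(G_5)(\bbQ,2)=\emptyset$ and no quadratic pair realizes $G_5$.

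The hardest step should be the case $q\notin\bbQ$: as in the $G_1$ argument, success hinges on every scheme produced by Lemma~\ref{lem:ECquad_pts} yielding only reducible minimal polynomials, and there is no a priori guarantee of this — an unexpected rational point on one of those schemes (or on one of the twisted genus-one curves in the case $q\in\bbQ$) would produce a genuine quadratic pair and demand a more delicate treatment. Should the direct elliptic descent prove unwieldy, a natural fallback is to push everything onto the genus-two quotient $w^2=(r^2-r-\tfrac34)(r^2+r-\tfrac34)(r^2-1)$ obtained by setting $w=\alpha_1\alpha_2 s$, apply Lemma~\ref{lem:rank0jac} after verifying (in Magma) that its Jacobian has rank $0$, and then pull the resulting finite list of points back along the degree-four map to $Y$, checking that no point lifts to a quadratic point outside the cusps.
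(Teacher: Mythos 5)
Your proposal is correct and follows essentially the same route as the paper: the model you derive is exactly the curve of Proposition~\ref{prop:2and_all1_2curve} (the quartics $(5q^2-1)(q^2+3)$ and $-(3q^2+1)(q^2-5)$ fibered over the $q$-line of Proposition~\ref{prop:1and2etc}(A)), and your two-case analysis --- the resultant $2^{24}\cdot 3^2$ with local obstructions when $q\in\bbQ$, and Lemma~\ref{lem:ECquad_pts} applied to cubic models with rational-point computations on the resulting zero-dimensional schemes when $q\notin\bbQ$ --- is precisely the proof of Theorem~\ref{thm:2all1_2}. The only deviations are cosmetic: your real-place argument replaces the paper's $2$-adic elimination together with the $d=-3$ twist (curve 45A1 of \cite{cremona:1997}), and your genus-two fallback never needs to be invoked (as stated it would in any case control only points whose $r$-coordinate is irrational, since obvious quadratic points on that quotient are not captured by Lemma~\ref{lem:rank0jac}).
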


To prove the theorem, we show that
	\begin{align*}
	U_1(G_5) = \{(\alpha_1,\alpha_2,\beta,c) \in \bbA^4 : \ &\alpha_1,\alpha_2 \mbox{ are points of type $1_2$ and}\\
		&\beta \mbox{ is a point of period 2 for $f_c$, with $f_c(\alpha_1) \ne f_c(\alpha_2)$} \}.
	\end{align*}
has no quadratic points.
	
\begin{prop}\label{prop:2and_all1_2curve}
Let $Y$ be the affine curve of genus 5 defined by the equation
	\begin{equation}\label{eq:2all1_2curve}
		\begin{cases}
			y^2 &= (5x^2 - 1)(x^2 + 3)\\
			z^2 &= -(3x^2 + 1)(x^2 - 5),
		\end{cases}
	\end{equation}
and let $U$ be the open subset of $Y$ defined by
	\begin{equation}\label{eq:2all1_2cusps}
		x(x - 1)(x + 1)(x^2 + 1)(x^2 + 3)(3x^2 + 1) \ne 0.
	\end{equation}
Consider the morphism $\Phi : Y \to \bbA^4$, $(x,y,z) \mapsto (\alpha_1,\alpha_2,\beta,c)$, given by
	\[ \alpha_1 = \frac{y}{2(x-1)(x+1)} , \ \alpha_2 = \frac{z}{2(x-1)(x+1)} , \ \beta = -\frac{x^2 - 4x - 1}{2(x-1)(x+1)} , \ c = -\frac{(x^2 + 3)(3x^2 + 1)}{4(x-1)^2(x+1)^2}. \]
Then $\Phi$ maps $U$ isomorphically onto $U_1(G_5)$, with the inverse map given by
	\begin{equation}\label{eq:2all1_2inverse}
	x = \frac{2f_c(\alpha_1) + 3}{2\beta + 1} , \ y = 2\alpha_1(x-1)(x+1)  , \ z = 2\alpha_2(x-1)(x+1).
	\end{equation}
\end{prop}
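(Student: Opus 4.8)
The plan is to follow the template established in Propositions~\ref{prop:1_3and_all1_2curve} and \ref{prop:1and2_3}: verify in turn that $\Phi$ is well-defined, that it is injective, that its image lies in $U_1(G_5)$, and that it is surjective onto $U_1(G_5)$. Well-definedness is immediate, since the factor $(x-1)(x+1)$ in every denominator is nonzero on $U$. For injectivity I would substitute the formulas for $\alpha_1,\alpha_2,\beta,c$ into \eqref{eq:2all1_2inverse} and check, by a routine Magma computation, that the result is the identity on $U$; producing a left inverse forces $\Phi$ to be injective.

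To see that $\Phi$ maps $U$ into $U_1(G_5)$, write $(\alpha_1,\alpha_2,\beta,c)=\Phi(x,y,z)$ and verify the orbit relations $f_c^3(\alpha_i)=f_c^2(\alpha_i)$ and $f_c^2(\beta)=\beta$, together with the inequalities $f_c^2(\alpha_i)\ne f_c(\alpha_i)$, $f_c(\beta)\ne\beta$, and $f_c(\alpha_1)\ne f_c(\alpha_2)$. After clearing $(x^2-1)$, each such difference should reduce to a nonzero constant multiple of one of the factors $x$, $x\pm 1$, $x^2+1$, $x^2+3$, $3x^2+1$ appearing in the cusp condition \eqref{eq:2all1_2cusps}; this is exactly the claim that $\alpha_1,\alpha_2$ have exact type $1_2$ and $\beta$ has period $2$, with $\alpha_1$ and $\alpha_2$ lying in disjoint orbits.

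Surjectivity is the heart of the argument. Given $(\alpha_1,\alpha_2,\beta,c)\in U_1(G_5)$, the point $f_c^2(\alpha_1)$ is a $K$-rational fixed point and $\beta$ is a $K$-rational point of period $2$, so Proposition~\ref{prop:1and2etc}(A) produces a parameter $x\in K$ with $x(x-1)(x+1)\ne 0$ realizing this $(1,2)$-configuration. One checks that the resulting value of $c$ is
\[ c = -\frac{(x^2+3)(3x^2+1)}{4(x-1)^2(x+1)^2}, \]
that (after replacing $x$ by $-x$ if necessary) $\beta=-\tfrac{x^2-4x-1}{2(x-1)(x+1)}$, and that the two fixed points are $\tfrac12\pm r$ with $r=-\tfrac{x^2+1}{(x-1)(x+1)}$, by Proposition~\ref{prop:1or2or3}. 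Since $\alpha_i$ has type $1_2$, we have $\alpha_i^2+c=f_c(\alpha_i)=-f_c^2(\alpha_i)$, i.e. $\alpha_i^2=-(\tfrac12\pm r)-c$; substituting and setting $y=2\alpha_1(x-1)(x+1)$, $z=2\alpha_2(x-1)(x+1)$ converts these two square relations into precisely the defining equations \eqref{eq:2all1_2curve}. The hypothesis $f_c(\alpha_1)\ne f_c(\alpha_2)$ prevents $\alpha_1,\alpha_2$ from being the two preimages $\pm\gamma$ of a single fixed point and so forces them to lie over the \emph{distinct} fixed points $\tfrac12+r$ and $\tfrac12-r$; in particular $r\ne 0$, giving $x^2+1\ne 0$, while the remaining cusp conditions in \eqref{eq:2all1_2cusps} are forced by the exactness of the types (equivalently, by $\alpha_1,\alpha_2\ne 0$ and $\beta$ having a genuine $2$-cycle) together with strong admissibility (Lemma~\ref{lem:admissible}). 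Thus the reconstructed $(x,y,z)$ lies in $U$ and maps to the given tuple.

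I expect the only genuinely delicate step to be the bookkeeping inside the surjectivity argument: matching each type-$1_2$ point $\alpha_i$ to its fixed point $\tfrac12\pm r$ and to the correct sign of the square root $y$ or $z$, and confirming that the \emph{particular} period-$2$ point $\beta$ (rather than its partner $-\tfrac12-s$ in the $2$-cycle) is consistent with the parameter $x$ supplied by Proposition~\ref{prop:1and2etc}(A). The symmetry $x\mapsto -x$, which fixes $c$ and $r$ while swapping the two period-$2$ points, is what renders this labeling coherent; verifying that the explicit inverse \eqref{eq:2all1_2inverse} pins down the correct value of $x$ is where I would need to be careful, though the underlying computation is mechanical.
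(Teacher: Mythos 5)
Your overall architecture (well-definedness, injectivity via the explicit left inverse, forward inclusion via the same four difference computations, surjectivity via the period-$(1,2)$ parametrization) is the same as the paper's, but your surjectivity step has a genuine gap, located exactly at the point you flagged as delicate. The four substitutions $x \mapsto x,\ -x,\ 1/x,\ -1/x$ all fix $c$, and they act on the parameters of Proposition~\ref{prop:1or2or3} by $(r,s) \mapsto (r,s)$, $(r,-s)$, $(-r,-s)$, $(-r,s)$ respectively. Your proposed repair, replacing $x$ by $-x$, therefore fixes $r$: it can correct the labeling of $\beta$ within its $2$-cycle, but it can never correct the assignment of $\alpha_1,\alpha_2$ to the two fixed points $\tfrac12 \pm r$. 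Concretely, if the parameter $x$ obtained from Proposition~\ref{prop:1and2etc}(A) happens to satisfy $f_c(\alpha_1) = -\left(\tfrac12 - r(x)\right)$ rather than $-\left(\tfrac12 + r(x)\right)$, then setting $y = 2\alpha_1(x-1)(x+1)$ and $z = 2\alpha_2(x-1)(x+1)$ produces $y^2 = -(3x^2+1)(x^2-5)$ and $z^2 = (5x^2-1)(x^2+3)$, i.e.\ the two equations of \eqref{eq:2all1_2curve} interchanged, so $(x,y,z)$ does not lie on $Y$ at all. This failure is unavoidable in your setup: both orderings $(\alpha_1,\alpha_2,\beta,c)$ and $(\alpha_2,\alpha_1,\beta,c)$ are points of $U_1(G_5)$, and with a single choice of $x$ exactly one of them satisfies the correctly-labeled system, so your argument as written misses the other.

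The fix is either to enlarge your symmetry group to all four substitutions $\pm x^{\pm 1}$ (using $x \mapsto -1/x$, which sends $(r,s)$ to $(-r,s)$, to repair the fixed-point assignment), or, more cleanly, to reverse the order of the construction as the paper does: first define $r$ and $s$ from the labeled data by $f_c(\alpha_1) = -\left(\tfrac12+r\right)$, $f_c(\alpha_2) = -\left(\tfrac12-r\right)$, $\beta = -\tfrac12 + s$, which Proposition~\ref{prop:1or2or3} permits because $f_c(\alpha_1) \ne f_c(\alpha_2)$ forces these images to be negatives of the two distinct fixed points; then set $x := (1-r)/s$ following Poonen's computation. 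With this order of operations the correct labels are built in from the start, and this $x$ is exactly what the inverse formula \eqref{eq:2all1_2inverse} computes, since $\frac{2f_c(\alpha_1)+3}{2\beta+1} = \frac{2(1-r)}{2s} = \frac{1-r}{s}$. The remainder of your outline (well-definedness, the Magma left inverse, and the four nonvanishing differences matching the factors in \eqref{eq:2all1_2cusps}) agrees with the paper's proof and is sound.
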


\begin{proof}
The condition $(x - 1)(x + 1) \ne 0$ implies that $\Phi$ defines a morphism to $\bbA^4$. We verify using Magma that \eqref{eq:2all1_2inverse} provides a left inverse for $\Phi$, so $\Phi$ is injective.

Suppose $(x,y,z) \in U$ and $(\alpha_1,\alpha_2,\beta,c) = \Phi(x,y,z)$. It is easy to check that $f_c^2(\beta) = \beta$ and $f_c^3(\alpha_i) = f_c^2(\alpha_i)$ for each $i \in \{1,2\}$, and that
	\begin{align*}
		f_c^2(\alpha_1) - f_c(\alpha_1) &= -\frac{x^2 + 3}{(x-1)(x+1)}, \\
		f_c^2(\alpha_2) - f_c(\alpha_2) &= \frac{3x^2 + 1}{(x-1)(x+1)}, \\
		f_c(\beta) - \beta &= -\frac{4x}{(x-1)(x+1)},
	\end{align*}
all of which are nonzero by \eqref{eq:2all1_2cusps}. Hence $\alpha_1$ and $\alpha_2$ are points of type $1_2$ and $\beta$ is a point of period 2 for $f_c$. Finally, we see that
	\[ f_c(\alpha_1) - f_c(\alpha_2) = \frac{2(x^2 + 1)}{(x-1)(x+1)} \ne 0, \]
so $\alpha_1$ and $\alpha_2$ have disjoint orbits under $f_c$. Therefore $\Phi$ maps $U$ into $U_1(G_5)$.

It remains to show that $\Phi$ maps $U$ onto $U_1(G_5)$. Let $\alpha_1$ and $\alpha_2$ be points of type $1_2$ for $f_c$ with disjoint orbits, and let $\beta$ be a point of period 2 for $f_c$. Since $f_c(\alpha_1)$ and $f_c(\alpha_2)$ are distinct points of type $1_1$, we know that $f_c(\alpha_1)$ and $f_c(\alpha_2)$ must be the negatives of distinct fixed points for $f_c$. Therefore, by Proposition~\ref{prop:1or2or3}, there exist $r$ and $s$ such that
	\begin{equation}\label{eq:2all1_2curve_proof}
	f_c(\alpha_1) = -\left(\frac{1}{2} + r\right) , \ f_c(\alpha_2) = -\left(\frac{1}{2} - r\right) , \ \beta = -\frac{1}{2} + s , \ c = \frac{1}{4} - r^2 = -\frac{3}{4} - s^2.
	\end{equation}
Following the proof of \cite[Thm. 2]{poonen:1998}, we see that setting $x = (1 - r)/s$ yields
	\[ r = -\frac{x^2 + 1}{(x-1)(x+1)}  , \ s = \frac{2x}{(x-1)(x+1)} , \ c = -\frac{(x^2 + 3)(3x^2 + 1)}{4(x-1)^2(x+1)^2}. \]
This gives the correct expression for $c$, and combining this with \eqref{eq:2all1_2curve_proof} yields
	\[ \alpha_1^2 = \frac{(5x^2 - 1)(x^2 + 3)}{4(x-1)^2(x+1)^2} , \ \alpha_2^2 = -\frac{(3x^2 + 1)(x^2 - 5)}{4(x-1)^2(x+1)^2} , \ \beta = -\frac{x^2 - 4x - 1}{2(x-1)(x+1)}. \]
Setting $y = 2\alpha_1(x-1)(x+1)$ and $z = 2\alpha_2(x-1)(x+1)$ gives a point $(x,y,z) \in U$ for which $(\alpha_1,\alpha_2,\beta,c) = \Phi(x,y,z)$, completing the proof.
\end{proof}

We now show that the curve $U_1(G_5)$ has no quadratic points. Theorem~\ref{thm:2and_all1_2} is an immediate consequence of the following result, which shows that the only points in $Y(\bbQ,2)$ lie outside the open set $U$ defined by \eqref{eq:2all1_2cusps}.

\begin{thm}\label{thm:2all1_2}
Let $Y$ be the affine curve of genus 5 defined by \eqref{eq:2all1_2curve}. Then
	\[ Y(\bbQ,2) = Y(\bbQ) = \{ (\pm 1, \pm 4, \pm 4) \}. \]
\end{thm}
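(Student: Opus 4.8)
The plan is to follow the strategy used for Theorems~\ref{thm:1_2and1_3curve_pts} and \ref{thm:1and2_3quad_pts}. Write $f(x) := (5x^2-1)(x^2+3)$ and $g(x) := -(3x^2+1)(x^2-5)$, and let $C_1 : y^2 = f(x)$ and $C_2 : z^2 = g(x)$ be the two genus one quartic curves onto which $Y$ projects. First I would pin down $Y(\bbQ)$: using the rational point $(1,4)$ on each, Magma transforms $C_1$ and $C_2$ into Weierstrass form and identifies them with elliptic curves of small Mordell--Weil group in \cite{cremona:1997}; since $f$ and $g$ have no rational roots, the only affine rational points on $C_1$ and $C_2$ have $x \in \{\pm 1\}$, whence $Y(\bbQ) = \{(\pm1,\pm4,\pm4)\}$. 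It then remains to show that $Y$ has no quadratic point, and, as in the cited proofs, I split into the cases $x \in \bbQ$ and $x \notin \bbQ$.

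For $x \in \bbQ$ with $x \notin \{\pm 1\}$, neither $f(x)$ nor $g(x)$ is a rational square, so $y$ and $z$ generate the same quadratic field $\bbQ(\sqrt d)$ and we may write $y = u\sqrt d$, $z = v\sqrt d$ with $u,v \in \bbQ$ and $d \ne 1$ squarefree, giving
\[
	du^2 = f(x), \qquad dv^2 = g(x).
\]
A prime $p \mid d$ forces, via the $f$-equation (whose leading coefficient $5$ is a unit at $2$ and $3$), that $x,u,v$ are $p$-integral, and reducing shows $\bar f$ and $\bar g$ share a root modulo $p$; hence $p \mid \Res(f,g)$, whose only prime divisors are $2$ and $3$. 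Thus $d \in \{-1,\pm 2, \pm 3, \pm 6\}$. The real place eliminates every negative $d$: since $f(x) < 0$ forces $x^2 < 1/5$ while $g(x) < 0$ forces $x^2 > 5$, the two equations cannot both have negative right-hand side. For $d = 2$ and $d = 6$, reduction of $f(x) = 5x^4 + 14x^2 - 3$ modulo $2$ forces $x$ odd, and then $f(x) \equiv 16 \pmod{32}$, so $\ord_2 f(x) = 4$ is even while $\ord_2(du^2)$ is odd---a contradiction. For $d = 3$, reduction modulo $3$ forces $3 \mid x$; writing $x = 3x'$ gives $u^2 = 135x'^4 + 42x'^2 - 1 \equiv 2 \pmod 3$, which is impossible. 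Hence there are no quadratic points with $x \in \bbQ$.

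The case $x \notin \bbQ$ is the crux and is handled, as in the proofs of Theorems~\ref{thm:1_2and1_3curve_pts} and \ref{thm:1and2_3quad_pts}, by means of Lemma~\ref{lem:ECquad_pts}. The essential complication here---and the step I expect to be the main obstacle---is that, unlike in those two proofs, \emph{both} $C_1$ and $C_2$ are quartic models rather than cubics, so I must first produce explicit Weierstrass models $E_1$ and $E_2$ together with the birational maps expressing $x$ as a rational function of the respective Weierstrass coordinates, and then track two independent coordinate changes. A quadratic point $(x,y,z)$ on $Y$ with $x \notin \bbQ$ gives quadratic points on $E_1$ and on $E_2$; applying Lemma~\ref{lem:ECquad_pts} to each (splitting, exactly as in Cases 2a/2b of the earlier proofs, according to whether the relevant Weierstrass coordinate lies in $\bbQ$) yields two expressions for the minimal polynomial of $x$, each parametrized by one of the finitely many rational points of $E_1$ or $E_2$ together with an auxiliary rational parameter.

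Equating coefficients of the two minimal polynomials produces, for each finite choice of rational points and sub-case, a zero-dimensional scheme whose rational points Magma can enumerate. I expect that in every case the resulting quadratic polynomial $p(t)$ factors over $\bbQ$, contradicting $x \notin \bbQ$; this would show that $Y$ has no quadratic point with $x \notin \bbQ$ and complete the proof. The bookkeeping is heavier than before, since the product of the (few) rational points on $E_1$ and $E_2$ together with the rationality sub-cases yields many systems to verify, but each individual system is elementary and falls to Magma exactly as in the analogous computations already carried out.
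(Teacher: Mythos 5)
Your proposal is correct, and its overall skeleton matches the paper's proof: determine $Y(\bbQ)$ via the Cremona identifications of $C_1$ and $C_2$, then split the quadratic points into the cases $x \in \bbQ$ and $x \notin \bbQ$, handling the first by writing $du^2 = f(x)$, $dv^2 = g(x)$ and bounding $d$ by the resultant, and the second by matching minimal polynomials of $x$ via Lemma~\ref{lem:ECquad_pts}. Where you genuinely diverge is in the elimination of the possible twists $d \in \{-1,\pm 2,\pm 3,\pm 6\}$, and your route is more elementary. The paper rules out $d \in \{-1,\pm 2, 3, \pm 6\}$ by checking that the system has no $\bbQ_2$-points, and must then treat $d=-3$ separately by computing that the twist $C_1^{(-3)}$ is birational to the rank-zero curve 45A1 in \cite{cremona:1997} and has no relevant rational points. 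Your observation that the real place already kills \emph{every} negative $d$ --- since $f(x)<0$ forces $x^2 < 1/5$, $g(x)<0$ forces $x^2>5$, and $u=0$ or $v=0$ would force $x^2 \in \{1/5,\,5\}$, impossible for $x \in \bbQ$ --- disposes of $d=-3$ with no twist computation at all; your $2$-adic argument for $d \in \{2,6\}$ (odd $x$ gives $\ord_2 f(x) = 4$, even, against the odd valuation of $du^2$) and your mod-$3$ argument for $d=3$ are also correct, so Case 1 closes by hand where the paper needs one more Mordell--Weil computation. In the case $x \notin \bbQ$ your plan coincides with the paper's, with two remarks worth making. First, the paper exploits the birational equivalence $C_1 \to C_2$, $(x,y)\mapsto (1/x, y/x^4)$, so that a single cubic model $E: Y^2 = X^3 + \tfrac{5}{4}X^2 + \tfrac{1}{2}X + \tfrac{1}{4}$ (birational to $\Xell_1(15)$, with exactly four rational points) serves both projections; this roughly halves the bookkeeping you anticipate from carrying two independent Weierstrass models. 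Second, your expectation that every coefficient-matching system is zero-dimensional is almost, but not quite, borne out: in the paper's computation exactly one of these systems degenerates to a single equation cutting out a genus-one curve, which however has Mordell--Weil rank zero and small conductor, so its rational points are still easily enumerated and again yield only reducible $p(t)$. So your argument goes through as planned, provided you are prepared to handle that one positive-dimensional case by a rank-zero genus-one computation rather than a point count on a zero-dimensional scheme.
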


\begin{proof}
Let
	\begin{align*}
		f(x) &:= (5x^2 - 1)(x^2 + 3),\\
		g(x) &:= -(3x^2 + 1)(x^2 - 5),
	\end{align*}
and let $C_1$ and $C_2$ be the affine curves defined by $y^2 = f(x)$ and $z^2 = g(x)$, respectively. We observe that $C_1$ is birational to $C_2$ via the map $(x,y) \mapsto (1/x,y/x^4)$, and a computation in Magma verifies that both curves are birational to $X_1^{\Ell}(15)$ (labeled 15A8 in \cite{cremona:1997}), which has only four rational points. Hence $(\pm 1, \pm 4)$ are the only four rational points on $C_1$ (resp., $C_2$), and therefore $Y(\bbQ)$ is as stated in the theorem.

For the sake of contradiction, suppose $(x,y,z)$ is a quadratic point on $Y$. We consider separately the cases $x \in \bbQ$ and $x \not \in \bbQ$.

\textbf{Case 1:} $x \in \bbQ$. First, we note that $x$ cannot be equal to $\pm 1$, since $(x,y,z)$ would then be a rational point on $Y$. Moreover, if $x \ne \pm 1$, then $y,z \not \in \bbQ$, though certainly $y^2,z^2 \in \bbQ$. We may therefore write $y = u\sqrt{d}, z = v\sqrt{d}$ for some $u,v \in \bbQ$ and $d \ne 1$ squarefree. We then rewrite \eqref{eq:2all1_2curve} as
	\begin{equation}\label{eq:2all1_2twists}
		\begin{cases}
			du^2 &= f(x)\\
			dv^2 &= g(x).
		\end{cases}
	\end{equation}
If $p$ is a prime dividing $d$, then arguing in the usual way we find that $\ord_p(x)$, $\ord_p(y)$, and $\ord_p(z)$ are nonnegative, so we may reduce modulo $p$ to find
	\[ f(x) \equiv g(x) \equiv 0 \mod p. \]
Hence $p$ divides $\Res(f,g) = 2^{24} \cdot 3^2$, so we have $p \in \{2,3\}$, and therefore $d \in \{-1, \pm 2, \pm 3 , \pm 6\}$. The system \eqref{eq:2all1_2twists} has no 2-adic solutions for $d \in \{-1,\pm 2, 3, \pm 6\}$, so we need only consider the case $d = -3$.

Let $C_1^{(-3)}$ denote the twist of $C_1$ by $-3$. This curve is birational to curve 45A1 in \cite{cremona:1997}, which has only two rational points. Therefore, the only rational points on $C_1^{(-3)}$ are the points $(0,\pm 1)$. However, when $x = 0$, we have $-3v^2 = 5$, which implies $v \not \in \bbQ$. We conclude, therefore, that $Y$ has no quadratic points with $x \in \bbQ$.

\textbf{Case 2:} $x \not \in \bbQ$. In order to apply Lemma~\ref{lem:ECquad_pts}, we require cubic models for each of genus one curves $C_1$ and $C_2$. As mentioned above, $C_1$ is birational to $C_2$, so we will use the same cubic model for both. We find that $C_1$ and $C_2$ are birational to the elliptic curve $E$ given by the equation
	\[ Y^2 = X^3 + \frac{5}{4}X^2 + \frac{1}{2}X + \frac{1}{4}.\]
Since $E$ is birational to $X^{\Ell}_1(15)$, $E$ has only four rational points --- $\infty$, $(0,\pm 1/2)$, and $(-1,0)$.
The birational maps between $C_1$ and $E$ are given by

	\begin{align}
		X = \frac{x^2 - 4x - 1 - y}{2(x+1)^2}  &, \ Y = \frac{5x^3 - 7x^2 + 7x + 3 - 3xy + y}{4(x+1)^3} \label{eq:2all1_2cubic1} \ ;\\
		x = \frac{X - 1 + 2Y}{3X + 1 - 2Y}  &, \ y = -2(x+1)^2X + x^2 - 4x - 1, \label{eq:2all1_2cubic2}
	\end{align}
and the birational maps between $C_2$ and $E$ are given by
	\begin{align}
		X' = -\frac{x^2 - 4x - 1 - z}{2(x-1)^2}  &, \ Y' = \frac{3x^3 - 7x^2 - 7x - 5 - xz - 3z}{4(x-1)^3} \label{eq:2all1_2cubic3} \ ;\\
		x = -\frac{3X' + 1 - 2Y'}{X' - 1 + 2Y'}  &, \ z = 2(x-1)^2X' + x^2 - 4x - 1. \label{eq:2all1_2cubic4}
	\end{align}
	
We must consider separately the different cases depending on whether $X$ and $X'$ are rational. The method is the same as that used in the proofs of Theorems~\ref{thm:1_2and1_3curve_pts} and \ref{thm:1and2_3quad_pts}, so we only summarize the main steps and omit the details of the computations, which may be found in \cite[\textsection 5.2]{doyle:thesis}.

If $X \in \bbQ$, then substituting $y = -2(x+1)^2X + x^2 - 4x - 1$ into the equation $y^2 - f(x)$ yields the following minimal polynomial of $x$:
	\[
		p_1(t) = t^2 + \frac{2X(X+2)}{X^2 - X - 1}t + \frac{X^2 + X + 1}{X^2 - X - 1}.
	\]
On the other hand, if $X \not \in \bbQ$, then Lemma~\ref{lem:ECquad_pts} says that there exists $(X_0,Y_0) \in E(\bbQ)$ and $v \in \bbQ$ for which 
	\[
		\begin{gathered}
			X^2 + (X_0 - v^2 + 5/4)X + (X_0^2 + v^2X_0 + 5/4X_0 - 2Y_0v + 1/2),\\
			Y = Y_0 + v(X - X_0).
		\end{gathered}
	\]
For each point $(X_0,Y_0)$, combining these equations with \eqref{eq:2all1_2cubic2} yields an expression for the minimal polynomial $p_1(t)$ of $x$ in terms of $v$.

One performs a similar computation in the cases $X' \in \bbQ$ and $X' \not \in \bbQ$, and doing so yields a short list of polynomials $p_2(t)$ among which the minimal polynomial of $x$ must be found. For each pair of polynomials $(p_1(t),p_2(t))$ arising in this way, one equates coefficients of $p_1$ and $p_2$ to obtain a system of equations. In all cases but one, the resulting system of equations defines a zero-dimensional scheme, and each rational point on the given scheme yields a polynomial $p_1(t) = p_2(t)$ that is reducible over $\bbQ$. In the remaining case, the system consists of a single equation that defines a genus one curve with Mordell-Weil rank zero (and small conductor), so all rational points can easily be found; once again, all such rational points yield an expression for $p_1(t) = p_2(t)$ that is reducible over $\bbQ$. It follows that $Y$ has no quadratic points, completing the proof of the theorem.
\end{proof}

\subsubsection{The graph $G_6$}

\begin{figure}[h]
\centering
	\begin{overpic}[scale=.5]{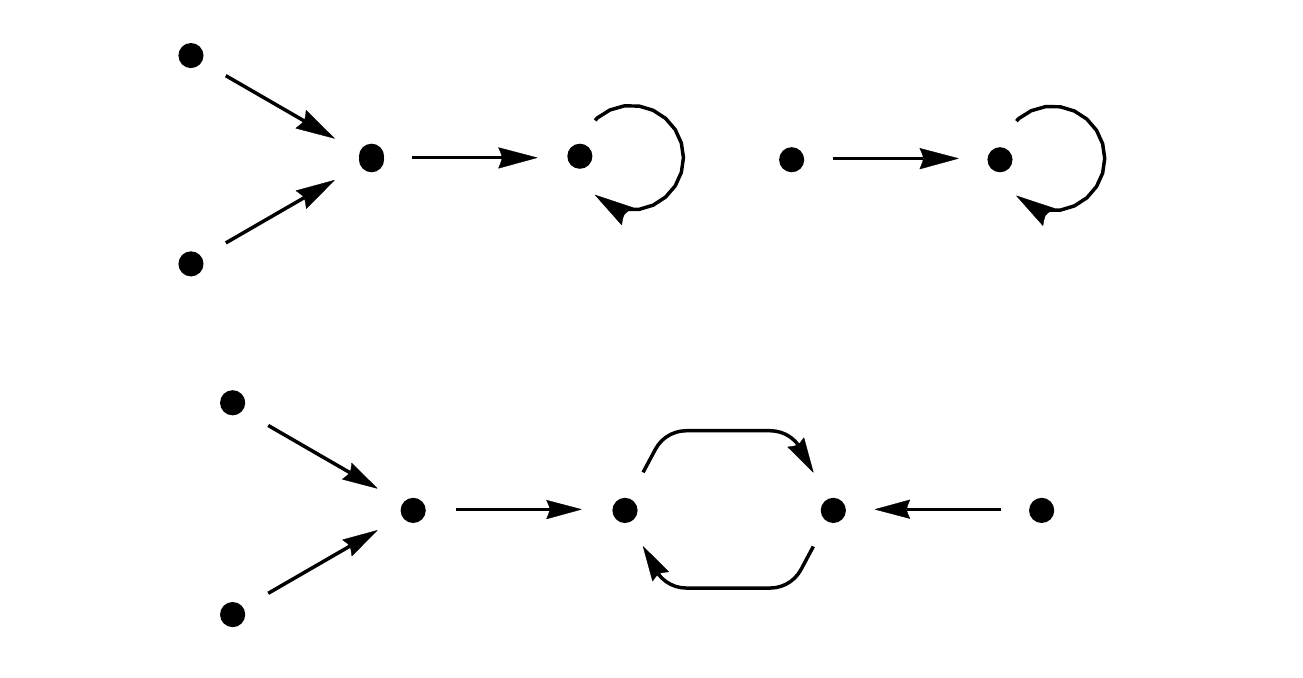}
		\put(17,88){$\alpha$}
		\put(135,79){$\alpha'$}
		\put(23,37){$\beta$}
	\end{overpic}
\caption{The graph $G_6$ minimally generated by a point $\alpha$ of type $1_2$, a fixed point $\alpha'$, and a point $\beta$ of type $2_2$}
\label{fig:1_2and2_2}
\end{figure}

Let $G_6$ be the graph appearing in Figure~\ref{fig:1_2and2_2}. The main result of this section is the following:

\begin{thm}\label{thm:1_2and2_2}
Let $K$ be a quadratic field and let $c \in K$. Then $G(f_c,K)$ does not contain a subgraph isomorphic to $G_6$.
\end{thm}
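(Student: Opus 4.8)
The plan is to follow the template established in the proofs of Theorems~\ref{thm:1_3and_all1_2}, \ref{thm:1and2_3}, and \ref{thm:2and_all1_2}. First I would invoke Remark~\ref{rem:redundant} to discard the redundant fixed point $\alpha'$ from the generating set, replacing $G_6$ by the subgraph $G_6'$ minimally generated by the type-$1_2$ point $\alpha$ and the type-$2_2$ point $\beta$, so that it suffices to prove that
\[
U_1(G_6') = \{(\alpha,\beta,c) \in \bbA^3 : \alpha \text{ has type } 1_2 \text{ and } \beta \text{ has type } 2_2 \text{ for } f_c \}
\]
has no quadratic points. The goal of the first step is an explicit model of the form $y^2 = f(x)$, $z^2 = g(x)$ together with a parametrization $\Phi$ and a left inverse, in the exact style of Proposition~\ref{prop:2and_all1_2curve}. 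Here $x$ should parametrize $c$ together with the type-$1_1$ image $f_c(\alpha)$, whose negative is a nonzero fixed point by the discussion opening \textsection\ref{sec:preper_pts}; using Poonen's parametrizations of type-$1_2$ and type-$2_2$ points (cf.\ \cite{poonen:1998}, pp.\ 19, 23) together with Proposition~\ref{prop:1or2or3}(A),(B), the square condition forcing $\alpha$ to have type $1_2$ produces $f$, and the square condition forcing $\beta$ to have type $2_2$ (i.e.\ $-f_c(\beta)$ a point of period $2$ two steps down) produces $g$. I expect the resulting curve $Y$ again to have genus $5$, with the cusps cut out by an open condition \eqref{eq:2all1_2cusps}-style inequality.

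Next I would determine $Y(\bbQ,2)$ by the usual two-case split. To pin down $Y(\bbQ)$, I would identify the two genus-one curves $C_1 : y^2 = f(x)$ and $C_2 : z^2 = g(x)$ in Magma with elliptic curves of small conductor in Cremona's tables and read off their (presumably small) Mordell--Weil groups, exactly as was done for the $\Xell_1(15)$ model in Theorem~\ref{thm:2all1_2}. For \textbf{Case 1} ($x \in \bbQ$), I would write $y = u\sqrt{d}$, $z = v\sqrt{d}$ with $d$ squarefree, reduce to the twisted system $du^2 = f(x)$, $dv^2 = g(x)$, bound the admissible primes dividing $d$ by the primes dividing $\Res(f,g)$, and eliminate each remaining value of $d$ either by a $p$-adic obstruction (no solutions over $\bbQ_2$ or $\bbQ_3$) or, for the surviving twists, by computing the rank of the quadratic twist and checking it has no affine rational point — mirroring the $d = -3$ and $d = -1$ subcases of Theorems~\ref{thm:2all1_2} and \ref{thm:1_2and1_3curve_pts}.

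For \textbf{Case 2} ($x \not\in \bbQ$) I would apply Lemma~\ref{lem:ECquad_pts} to cubic models of $C_1$ and $C_2$, splitting further according to whether the $X$-coordinate on the elliptic curve is rational. In each branch I would express the minimal polynomial $p(t)$ of $x$ in two independent ways, equate coefficients, and reduce to a zero-dimensional scheme whose rational points all yield a reducible $p(t)$, forcing the desired contradiction. The main obstacle will be controlling Cases 1 and 2 simultaneously: the argument closes cleanly only if both $C_1$ and $C_2$ turn out to have finite Mordell--Weil groups that I can compute, and if every admissible twist in Case 1 either fails a local test or has rank zero; should one of the relevant twists or auxiliary schemes have positive rank, I would have to fall back on Theorem~\ref{thm:stoll} (Chabauty--Coleman) or a Mordell--Weil sieve, as in the quotient arguments of \textsection\ref{subsub:1and4}. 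Assuming the elliptic bookkeeping behaves as in the $G_5$ case, the conclusion is that $Y(\bbQ,2)$ is disjoint from the open set $U$, whence $U_1(G_6')(\bbQ,2) = \varnothing$ and $G(f_c,K)$ cannot contain $G_6$.
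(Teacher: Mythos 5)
Your proposal follows exactly the paper's own route: reduce via Remark~\ref{rem:redundant} to the subgraph $G_6'$ generated by $\alpha$ and $\beta$, build the genus-5 model $y^2 = (5q^2-1)(q^2+3)$, $z^2 = 5q^4-8q^3+6q^2+8q+5$ from the fixed-point/2-cycle parametrization, identify the two genus-one quotients with curves 15A8 and 17A4, and then run the two-case analysis (twists bounded by $\Res(f,g) = 2^{24}\cdot 5$ with local and rank obstructions for $q \in \bbQ$; Lemma~\ref{lem:ECquad_pts} with cubic models and coefficient-matching for $q \notin \bbQ$). The computations you defer do close exactly as you anticipate (the only surviving twist, $d=5$, is eliminated because $C_1^{(5)}$ is birational to 75B1 and has no affine rational points), so your plan is correct and essentially identical to the paper's proof.
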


Because the generator $\alpha'$ is a fixed point, it suffices by Remark~\ref{rem:redundant} to work with the curve
	\[ U_1(G_6') = \{ (\alpha,\beta,c) \in \bbA^3 : \mbox{$\alpha$ is of type $1_2$ and $\beta$ is of type $2_2$ for $f_c$} \}, \] 
where $G_6'$ is the admissible subgraph of $G_6$ generated by $\alpha$ and $\beta$.

\begin{prop}\label{prop:1_2and2_2curve}
Let $Y$ be the affine curve of genus 5 defined by the equation
	\begin{equation}\label{eq:1_2and2_2curve}
		\begin{cases}
		y^2 &= (5q^2 - 1)(q^2 + 3)\\
		z^2 &= 5q^4 - 8q^3 + 6q^2 + 8q + 5,
		\end{cases}
	\end{equation}
and let $U$ be the open subset of $Y$ defined by
	\begin{equation}\label{eq:1_2and2_2cusps}
	q(q-1)(q+1)(q^2 + 3)(q^2 - 4q - 1) \ne 0.
	\end{equation}
Consider the morphism $\Phi : U \to \bbA^3$, $(q,y,z) \mapsto (\alpha,\beta,c)$, given by
	\[ \alpha = \frac{y}{2(q-1)(q+1)} , \ \beta = \frac{z}{2(q-1)(q+1)} , \ c = -\frac{(q^2 + 3)(3q^2 + 1)}{4(q-1)^2(q+1)^2}.\]
Then $\Phi$ maps $U$ isomorphically onto $U_1(G_6')$, with the inverse map given by
	\begin{equation}\label{eq:1_2and2_2inverse}
	q = -\frac{3 + 2f_c(\alpha)}{1 + 2f_c^2(\beta)} , \ y = 2\alpha(q-1)(q+1) , \ z = 2\beta(q-1)(q+1).
	\end{equation}
\end{prop}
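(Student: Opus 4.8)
The plan is to prove this in the same way as the models for $G_5$ and $G_4'$ in Propositions~\ref{prop:2and_all1_2curve} and \ref{prop:1and2_3}: I will check that $\Phi$ is a well-defined morphism, that \eqref{eq:1_2and2_2inverse} is a left inverse (hence $\Phi$ is injective), that $\Phi$ carries $U$ into $U_1(G_6')$, and finally that $\Phi$ is surjective. The first two steps are immediate. Since $(q-1)(q+1) \ne 0$ on $U$ by \eqref{eq:1_2and2_2cusps}, every expression in the definition of $\Phi$ has nonzero denominator, so $\Phi$ is a morphism to $\bbA^3$. For injectivity it suffices to verify (by a Magma computation, as in the earlier propositions) that substituting $(\alpha,\beta,c)=\Phi(q,y,z)$ into \eqref{eq:1_2and2_2inverse} returns $(q,y,z)$. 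I expect the key identities behind the formula for $q$ to be $3+2f_c(\alpha)=2(1-r)$ and $1+2f_c^2(\beta)=-2s$, where $r,s$ are the parameters of Proposition~\ref{prop:1or2or3}; these give $q=(1-r)/s$, which is exactly Poonen's substitution used in the $G_5$ case.

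For the inclusion $\Phi(U)\subseteq U_1(G_6')$, I would set $(\alpha,\beta,c)=\Phi(q,y,z)$ for $(q,y,z)\in U$ and verify in Magma that $f_c^3(\alpha)=f_c^2(\alpha)$ while $f_c^2(\alpha)-f_c(\alpha)\ne 0$, which forces $\alpha$ to have type exactly $1_2$; and that $f_c^4(\beta)=f_c^2(\beta)$ with both $f_c^4(\beta)-f_c^3(\beta)\ne 0$ (so $f_c^2(\beta)$ has exact period $2$) and $f_c^3(\beta)-f_c(\beta)\ne 0$ (so $f_c(\beta)$ is not yet periodic, making the preperiod exactly $2$), which forces $\beta$ to have type exactly $2_2$. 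I expect the nonvanishing of these iterate-differences to factor precisely through the cusp polynomial: the factor $q^2+3$ should govern the type-$1_2$ condition on $\alpha$ (just as in Proposition~\ref{prop:2and_all1_2curve}), while $q$ and $q^2-4q-1$ should govern the two nondegeneracy conditions for $\beta$. Matching these factors to \eqref{eq:1_2and2_2cusps} is bookkeeping, but it is where the details must be handled carefully.

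For surjectivity, given $(\alpha,\beta,c)\in U_1(G_6')(K)$ I note that $f_c^2(\alpha)\in K$ is a fixed point and $f_c^2(\beta)\in K$ is a point of period $2$, so $f_c$ admits $K$-rational points of both period $1$ and period $2$. Proposition~\ref{prop:1and2etc}(A) then supplies $q\in K$ with $q(q-1)(q+1)\ne 0$ such that $c=-\tfrac{(q^2+3)(3q^2+1)}{4(q-1)^2(q+1)^2}$, $r=-\tfrac{q^2+1}{(q-1)(q+1)}$, and $s=\tfrac{2q}{(q-1)(q+1)}$. Using the discussion opening \textsection\ref{sec:preper_pts}, $f_c(\alpha)$ is of type $1_1$ and hence the negative of a fixed point, so $f_c(\alpha)=-(\tfrac12+r)$ and $\alpha^2=f_c(\alpha)-c$ yields the first equation of \eqref{eq:1_2and2_2curve} upon setting $y=2\alpha(q-1)(q+1)$. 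Likewise $f_c(\beta)$ is of type $2_1$ and hence the negative of a period-$2$ point, so $f_c(\beta)\in\{\tfrac12-s,\tfrac12+s\}$; the correct branch should give $\beta^2=f_c(\beta)-c=\tfrac54-s+s^2$, and clearing denominators with $z=2\beta(q-1)(q+1)$ should produce $z^2=5q^4-8q^3+6q^2+8q+5$ exactly.

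The main obstacle I anticipate is the surjectivity branch choice for $\beta$: a priori $f_c(\beta)$ could equal either $\tfrac12-s$ or $\tfrac12+s$, and only one of these reproduces the stated second equation rather than its $s\mapsto-s$ conjugate. I would resolve this by observing that the two choices correspond to relabeling the two points of the $2$-cycle (equivalently to the sign of $s$, fixed by $q$), and checking directly that $f_c(\beta)=\tfrac12-s$ with $f_c^2(\beta)=-\tfrac12-s$ is the consistent labeling, so that the resulting $(q,y,z)$ lands in $U$ and maps back to $(\alpha,\beta,c)$ under $\Phi$. Apart from this sign bookkeeping and the routine Magma verifications of the iterate identities, the argument is entirely parallel to the $G_5$ case.
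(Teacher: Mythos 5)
Your proposal is correct and follows essentially the same route as the paper's proof: well-definedness from $(q-1)(q+1)\ne 0$, injectivity via the stated left inverse, the same iterate identities (with the factors $q^2+3$, $4q$, and $q^2-4q-1$ governing exactly the nondegeneracy conditions you predict), and surjectivity via Proposition~\ref{prop:1and2etc}(A) together with the observation that $f_c(\alpha)$ and $f_c(\beta)$ are negatives of points of period $1$ and $2$. The branch issue you flag is handled in the paper by a one-line ``without loss of generality'' (justified because replacing $q$ by $-q$, $1/q$, or $-1/q$ preserves $c$ while realizing all four sign choices of $(r,s)$), so your sign bookkeeping is a correct elaboration of that step.
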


\begin{proof}
The condition $(q-1)(q+1) \ne 0$ shows that $\Phi$ is well-defined, and since \eqref{eq:1_2and2_2inverse} provides a left inverse to $\Phi$, $\Phi$ must be injective.

Now let $(\alpha,\beta,c) = \Phi(q,y,z)$ for some point $(q,y,z)$ on $U$. Then a simple computation verifies that $f_c^3(\alpha) = f_c^2(\alpha)$, so $f_c^2(\alpha)$ is a fixed point, and
	
	\[ f_c^2(\alpha) - f_c(\alpha) = -\frac{q^2 + 3}{(q-1)(q+1)} \ne 0, \]
so $f_c(\alpha)$ is not a fixed point. Hence $\alpha$ is of type $1_2$.

One can also verify that $f_c^4(\beta) = f_c^2(\beta)$ and
	\[ f_c^3(\beta) - f_c^2(\beta) = \frac{4q}{(q-1)(q+1)} \ne 0, \]
so $f_c^2(\beta)$ is a point of period 2, and
	\[ f_c^3(\beta) - f_c(\beta) = - \frac{q^2 - 4q - 1}{(q-1)(q+1)} \ne 0,\]
so $f_c(\beta)$ is not a point of period 2. Therefore $\beta$ is of type $2_2$. It follows that $\Phi$ maps $U$ into $U_1(G_6')$.

Finally, suppose $(\alpha,\beta,c)$ lies on $U_1(G_6')$. By Proposition~\ref{prop:1and2etc}(A), we may write
	\begin{equation}\label{eq:c,r,s_by_q}
		c = -\frac{(q^2 + 3)(3q^2 + 1)}{4(q-1)^2(q+1)^2} , \ r = -\frac{q^2 + 1}{(q-1)(q+1)} , \ s = \frac{2q}{(q-1)(q+1)},
	\end{equation}
where $r$ and $s$ are the parameters appearing in Proposition~\ref{prop:1or2or3}. Since $\alpha$ and $\beta$ are of types $1_2$ and $2_2$, respectively, for $f_c$, then $f_c(\alpha)$ (resp. $f_c(\beta)$) is a point of type $1_1$ (resp. $2_1$). This means that $f_c(\alpha)$ is the negative of a fixed point, and $f_c(\beta)$ is the negative of a point of period 2. We may therefore assume without loss of generality that
	\[ \alpha^2 + c = -\left( \frac{1}{2} + r\right) , \ \beta^2 + c = -\left( -\frac{1}{2} + s\right). \]
Substituting the expressions for $c$, $r$, and $s$ from \eqref{eq:c,r,s_by_q} gives the equations
	\[
		\begin{cases}
		\alpha^2 &= \dfrac{\left(5 q^2-1\right)\left(q^2+3\right)}{4 (q-1)^2 (q+1)^2}\\
		\\
		\beta^2 &= \dfrac{5 q^4-8 q^3+6 q^2+8 q+5}{4 (q-1)^2 (q+1)^2}.
		\end{cases}
	\]
Finally, setting $y = 2\alpha(q-1)(q+1)$ and $z = 2\beta(q-1)(q+1)$ gives us \eqref{eq:1_2and2_2curve}.
\end{proof}

Theorem~\ref{thm:1_2and2_2} is an immediate consequence of the following result, which says that all elements of $Y(\bbQ,2)$ lie outside the open subset $U$ defined by \eqref{eq:1_2and2_2cusps}.

\begin{thm}\label{thm:1_2and2_2curve_pts}
Let $Y$ be the affine curve of genus 5 defined by \eqref{eq:1_2and2_2curve}. Then
	\[ Y(\bbQ,2) = Y(\bbQ) = \{ (\pm 1, \pm 4, \pm 4) \}. \]
\end{thm}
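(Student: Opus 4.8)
The plan is to follow verbatim the template established in the proofs of Theorems~\ref{thm:1_2and1_3curve_pts}, \ref{thm:1and2_3quad_pts}, and especially \ref{thm:2all1_2}, exploiting the fact that the two factors of the fiber product defining $Y$ have already been analyzed elsewhere in the paper. Write $f(q) := (5q^2-1)(q^2+3) = 5q^4 + 14q^2 - 3$ and $g(q) := 5q^4 - 8q^3 + 6q^2 + 8q + 5$, and let $C_1 : y^2 = f(q)$ and $C_2 : z^2 = g(q)$ be the corresponding genus one curves, so that $Y$ is their fiber product over the $q$-line. The curve $C_1$ is \emph{identical} to the curve $y^2 = (5x^2-1)(x^2+3)$ appearing in the analysis of $G_5$ (Theorem~\ref{thm:2all1_2}), which is birational to $X_1^{\Ell}(15)$ and satisfies $C_1(\bbQ) = \{(\pm 1, \pm 4)\}$ (the leading coefficient $5$ is not a square, so there are no rational points at infinity). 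The curve $C_2$ becomes, under the substitution $q \mapsto -q$, exactly the curve $z^2 = 5x^4 + 8x^3 + 6x^2 - 8x + 5$ studied for $G_4$ in Theorem~\ref{thm:1and2_3quad_pts}, which is birational to the curve 17A4 of \cite{cremona:1997} and has $C_2(\bbQ) = \{(\pm 1, \pm 4)\}$. Hence the only rational values of $q$ for which both $f(q)$ and $g(q)$ are squares are $q = \pm 1$, and reading off the admissible signs of $y$ and $z$ gives $Y(\bbQ) = \{(\pm 1, \pm 4, \pm 4)\}$. It remains to show that $Y$ has no quadratic points, and I would split according to whether the $q$-coordinate is rational.

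\textbf{Rational $q$-coordinate.} If $q \in \bbQ$ and $q \ne \pm 1$, then by the point counts above neither $y$ nor $z$ lies in $\bbQ$, so both generate the same quadratic field $K = \bbQ(\sqrt d)$ with $d \ne 1$ squarefree; writing $y = u\sqrt d$ and $z = v\sqrt d$ with $u,v \in \bbQ$ turns \eqref{eq:1_2and2_2curve} into the twisted system $du^2 = f(q)$, $dv^2 = g(q)$. A valuation argument at each prime $p \mid d$ — identical to the one in the proof of Theorem~\ref{thm:1_2and1_3curve_pts} — forces $q$, $u$, $v$ to be $p$-integral and hence $f(q) \equiv g(q) \equiv 0 \pmod p$, so that $p \mid \Res(f,g)$. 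A Magma computation of this resultant restricts $d$ to a short explicit list of squarefree integers; local (typically $2$-adic and $3$-adic) solvability of the system eliminates all but at most one or two twists, and each surviving twist of $C_1$ (or $C_2$) is identified with a rank-zero, small-conductor elliptic curve in \cite{cremona:1997} whose finitely many rational points are checked not to produce a valid $(q,u,v)$. This rules out quadratic points with $q \in \bbQ$.

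\textbf{Irrational $q$-coordinate.} This is the step I expect to be the main obstacle. Here I would invoke Lemma~\ref{lem:ECquad_pts} on cubic Weierstrass models of $C_1$ and $C_2$; conveniently, both models and their birational maps are already recorded in the paper — for $C_1$ the model $Y^2 = X^3 + \tfrac54 X^2 + \tfrac12 X + \tfrac14$ together with \eqref{eq:2all1_2cubic1}--\eqref{eq:2all1_2cubic2}, and for $C_2$ the $G_4$ model $Y^2 = (X-4)(X^2 + X - 4)$ with \eqref{eq:1and2_3X}--\eqref{eq:1and2_3x} after $x \mapsto -q$. Each of these elliptic curves has exactly four rational points. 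Applying Lemma~\ref{lem:ECquad_pts} to $C_1$, a quadratic point yields a point on the model whose $X$-coordinate is either rational or quadratic; in each subcase one obtains an explicit expression for the minimal polynomial $p(t)$ of $q$, parametrized by a base point and an auxiliary slope. Carrying out the same procedure on $C_2$ gives a second family of expressions for $p(t)$. Equating the two families and matching coefficients produces, for each admissible pair of base points, a system of polynomial equations; as in the proof of Theorem~\ref{thm:2all1_2}, these systems generically cut out zero-dimensional schemes all of whose rational points yield a reducible $p(t)$, with one or two exceptional systems instead defining a genus one curve of Mordell--Weil rank zero whose rational points can be enumerated and likewise give only reducible $p(t)$. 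Since no irreducible quadratic $p(t)$ survives, there are no quadratic points with $q \notin \bbQ$, completing the proof. The genuine difficulties are the bookkeeping across the four combined subcases and the possible appearance of an exceptional rank-zero genus one curve, both of which are handled by the Magma computations in the ancillary file (cf. \cite[\textsection 5.2]{doyle:thesis}).
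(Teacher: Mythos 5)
Your proposal follows essentially the same route as the paper's proof: the same identification of $C_1$ and $C_2$ with the rank-zero elliptic curves 15A8 (i.e., $\Xell_1(15)$) and 17A4, the same resultant-plus-local-solvability twisting argument when $q \in \bbQ$ (the paper computes $\Res(f,g) = 2^{24}\cdot 5$, narrows to the twist $d = 5$, and eliminates it via curve 75B1 of \cite{cremona:1997}), and the same Lemma~\ref{lem:ECquad_pts} minimal-polynomial matching for $q \notin \bbQ$, which the paper itself only sketches by reference to \cite[\textsection 5.5]{doyle:thesis}. One small caution: because $f$ and $g$ have leading coefficient $5$, the $p$-integrality argument you import from Theorem~\ref{thm:1_2and1_3curve_pts} breaks down at $p = 5$ (the paper accordingly restricts it to $p \ne 5$), but this is harmless since $5 \mid \Res(f,g)$ and so $\pm 5, \pm 10$ land in your candidate list of twists regardless.
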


\begin{proof}
Let
	\begin{align*}
		f(q) &:= (5q^2 - 1)(q^2 + 3)\\
		g(q) &:= 5q^4 - 8q^3 + 6q^2 + 8q + 5,
	\end{align*}
and let $C_1$ and $C_2$ be the affine curves defined by $y^2 = f(q)$ and $z^2 = g(q)$, respectively. As noted in \cite[pp. 21--22]{poonen:1998}, the curves $C_1$ and $C_2$ are birational to elliptic curves 15A8 and 17A4, respectively, in \cite{cremona:1997}. (As mentioned previously, curve 15A8 is isomorphic to $\Xell_1(15)$.) Each of these two elliptic curves has four rational points, so it follows that the only rational points on $C_1$ (resp., $C_2$) are $(\pm 1, \pm 4)$. Therefore the only rational points on $Y$ are the eight points listed in the theorem. It remains to show that $Y$ admits no quadratic points. As usual, we separate the cases $q \in \bbQ$ and $q \not \in \bbQ$.

\textbf{Case 1:} $q \in \bbQ$. We already know by the previous paragraph that we cannot have $q = \pm 1$; moreover, if $q \in \bbQ \setminus \{\pm 1\}$, then $y,z \not \in \bbQ$ and $y^2,z^2 \in \bbQ$. Thus $y$ and $z$ generate the same quadratic field $K = \bbQ(\sqrt{d})$ with $d \ne 1$ squarefree. Writing $y = u\sqrt{d}$ and $z = v\sqrt{d}$ with $u,v \in \bbQ$, we rewrite \eqref{eq:1_2and2_2curve} as
	\begin{equation}\label{eq:1_2and2_2twist}
	\begin{cases}
		du^2 &= f(q)\\
		dv^2 &= g(q).
	\end{cases}
	\end{equation}
If $p \ne 5$ is a prime dividing $d$, then each of $q$, $u$, and $v$ is integral at $p$. For such $p$, we find that
	\[ f(q) \equiv g(q) \equiv 0 \mod p. \]
Therefore $p$ divides $\Res(f,g)  = 2^{24} \cdot 5$; hence the only primes that may divide $d$ are $2$ and $5$, so $d \in \{-1, \pm 2, \pm 5, \pm 10\}$. Since $g(q)$ is positive for all real $q$, we must have $d \in \{2,5,10\}$. Furthermore, for $d \in \{2,10\}$, \eqref{eq:1_2and2_2twist} has no 2-adic solutions. We are left, therefore, with the case $d = 5$.

Let $C_1^{(5)}$ denote the twist of $C_1$ by $d = 5$. A Magma computation verifies that $C_1^{(5)}$ is birational to the elliptic curve labeled 75B1 in \cite{cremona:1997}, which has only two rational points. The projective closure of $C_1^{(5)}$ has a singular point at infinity which splits into two rational points upon desingularization, so the affine curve $C_1^{(5)}$ has no rational points. We conclude that $Y$ has no quadratic points with $q \in \bbQ$.

\textbf{Case 2:} $q \not \in \bbQ$. This case is handled in the same way as case 2 of the proof of Theorem~\ref{thm:2all1_2}, and we refer the reader to \cite[\textsection 5.5]{doyle:thesis} for the details of the computation. The conclusion in this case is also the same as in the proof of Theorem~\ref{thm:2all1_2}: the curve $Y$ has no quadratic points $(q,y,z)$ with $q \not \in \bbQ$, and therefore $Y$ has no quadratic points.
\end{proof}

\subsection{Periods 1 and 3}

\begin{figure}[h]
	\includegraphics[scale=.58]{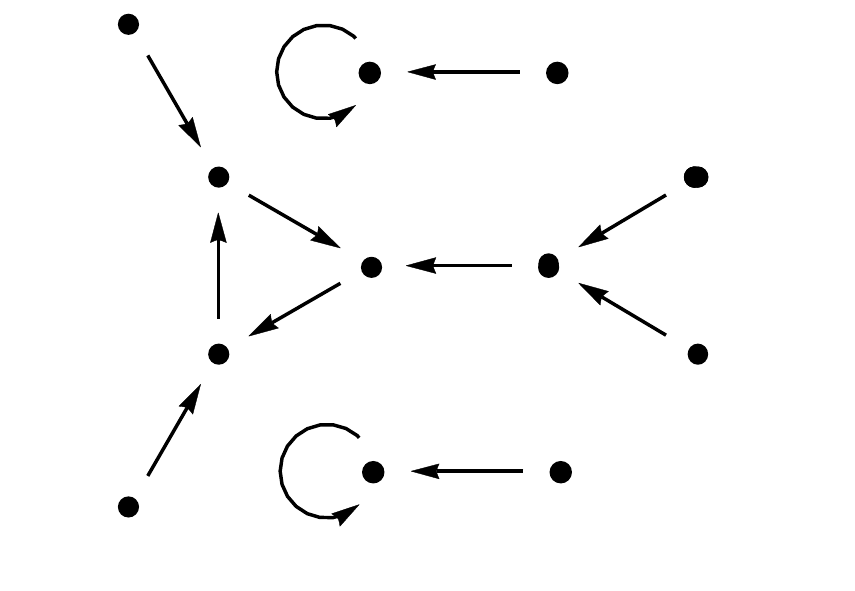}
\caption{The graph $G_7$}
\label{fig:1and3_2}
\end{figure}

\begin{figure}[h]
\centering
	\begin{overpic}[scale=.5]{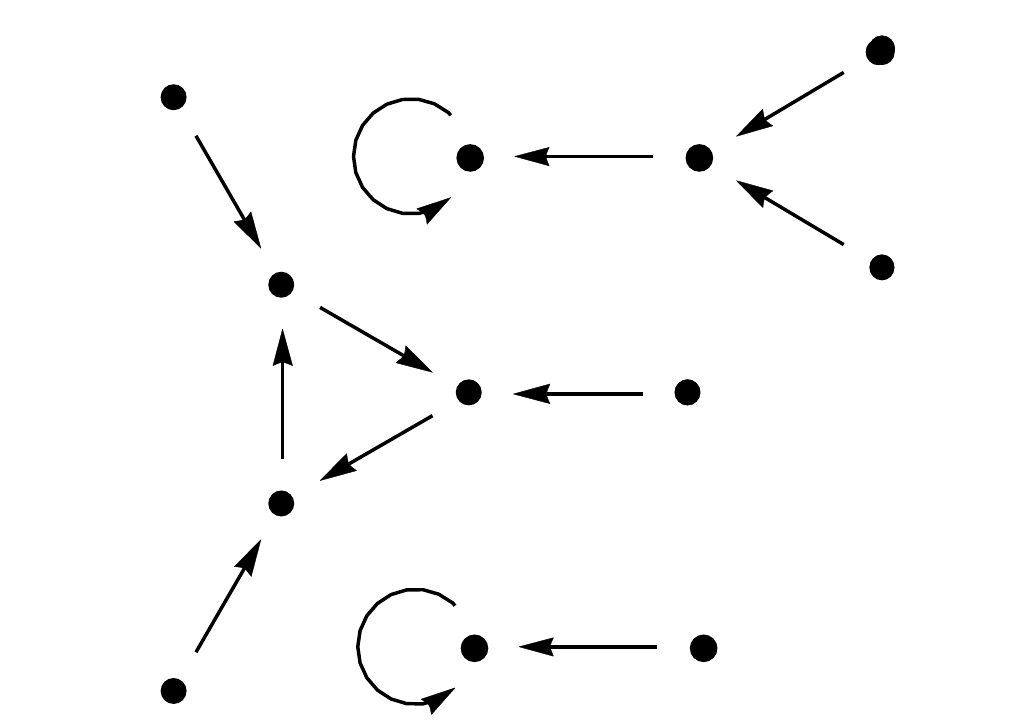}
		\put(133,100){$\alpha$}
		\put(70,17){$\alpha'$}
		\put(30,30){$\beta$}
	\end{overpic}
\caption{The graph $G_8$ minimally generated by a point $\alpha$ of type $1_2$, a fixed point $\alpha'$, and a point $\beta$ of period 3}
\label{fig:1_2and3}
\end{figure}

Among admissible graphs with cycle structure (1,1,3), only 10(3,1,1) has ten vertices, and only $G_7$ and $G_8$ have twelve vertices, where $G_7$ and $G_8$ are shown in Figures~\ref{fig:1and3_2} and \ref{fig:1_2and3}, respectively.

\begin{prop}\label{prop:(1,3)}
Let $(K,c)$ be a quadratic pair such that $G(f_c,K)$ is admissible with cycle structure (1,1,3). Then $G(f_c,K)$ appears in Appendix~\ref{app:graphs}.
\end{prop}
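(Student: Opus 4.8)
The plan is to use the enumeration of admissible graphs to reduce the statement to controlling the two twelve-vertex graphs $G_7$ and $G_8$. Since 10(3,1,1) is the unique ten-vertex admissible graph of cycle structure $(1,1,3)$ and lies in the appendix, while $G_7$ and $G_8$ are the only twelve-vertex ones, any admissible $G(f_c,K)$ of cycle structure $(1,1,3)$ failing to appear in the appendix must properly contain 10(3,1,1), hence must contain a minimal enlargement, i.e. a subgraph isomorphic to $G_7$ or $G_8$. Thus it suffices to control these two configurations. The crucial leverage --- and the reason this proposition admits a clean conclusion with no exceptional pairs, in contrast to Propositions~\ref{prop:(1)}--\ref{prop:(1,2)} --- is Proposition~\ref{prop:1and3/2and3}: because $G(f_c,K)$ simultaneously carries a fixed point and a $3$-cycle, we must have $c \in \bbQ$, the $3$-cycle is rational, and the two fixed points are genuinely quadratic, generating $K = \bbQ(\sqrt{1-4c})$.

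With $c \in \bbQ$ in hand, I would first set up the arithmetic dictionary governing which $K$-rational preperiodic points can occur. Since the cycle structure over $K \supseteq \bbQ$ is $(1,1,3)$, there are no rational points of period greater than three, so Theorem~\ref{thm:poonen_bound} applies; as the fixed points are irrational, $G(f_c,\bbQ)$ has cycle structure $(3)$ and is therefore either 6(3) or 8(3). Every $K$-rational preperiodic point is a preimage tower over this rational skeleton, and a rational point $R \in G(f_c,\bbQ)$ acquires a new pair of $K$-rational preimages precisely when $R - c$ is a square in $K$, i.e. when $R - c \in (\bbQ^\times)^2$ (a rational pair) or $(R-c)(1-4c) \in (\bbQ^\times)^2$ (a Galois-conjugate quadratic pair). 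This collapses the whole question to squareness conditions on explicit rational functions of the $3$-cycle parameter $t$ of Proposition~\ref{prop:1or2or3}(C), hence to rational points on curves over $\bbQ$.

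For $G_8$, which requires a $K$-rational point of type $1_2$, I would build the dynamical modular curve $U_1(G_8')$ for the subgraph $G_8'$ generated by the type-$1_2$ point and one fixed point (using Remark~\ref{rem:redundant}), produce a fiber-product model as in Propositions~\ref{prop:1_3and_all1_2curve} and \ref{prop:1and2_3}, and show it has no quadratic points outside the cusps. The governing condition is that $-\alpha - c$ be a square in $K$, where $\alpha$ is a fixed point; combining this with the $(1,3)$-parametrization and exploiting $c\in\bbQ$ should reduce the problem to rational points on a curve over $\bbQ$, to be handled by Theorem~\ref{thm:stoll} together with the elliptic quadratic-point method of Lemma~\ref{lem:ECquad_pts}, exactly as in the proofs of Theorems~\ref{thm:1_2and1_3curve_pts} and \ref{thm:2all1_2}. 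The expected outcome is that $G_8$ is simply unrealizable.

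The graph $G_7$ is the genuinely delicate case, and I expect it to be the main obstacle. Unlike $G_8$ (or $G_5,G_6$ of the previous cases), the configuration $G_7$ --- a type-$3_2$ point hanging off the $3$-cycle --- \emph{is} realizable as a subgraph: for instance at $t=-2$, i.e. $c = -29/16$, over $K=\bbQ(\sqrt{33})$, so one cannot hope to prove the associated curve has no quadratic points. Instead the plan is to determine \emph{all} realizations. The rational-$3_2$ condition is squareness of a degree-six polynomial in $t$, cutting out a genus-two curve over $\bbQ$ whose rational points I would find by Chabauty--Coleman (Theorem~\ref{thm:stoll}), while the quadratic-$3_2$ condition $(-P_i-c)(1-4c)\in(\bbQ^\times)^2$ yields a finite set of twists to be analyzed the same way; by the order-three automorphism $t \mapsto -(t+1)/t$ of Remark~\ref{rem:3aut} the three choices of $3_1$-point are interchanged, so a single model suffices. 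For each of the finitely many resulting pairs $(K,c)$ I would then observe that the forced field $K=\bbQ(\sqrt{1-4c})$ makes a \emph{second} value $-P_j - c$ a square in $K$ --- this is precisely what occurs at $c=-29/16$, where $-P_1 - c = 33/16$ becomes a square over $\bbQ(\sqrt{33})$ --- so that $G(f_c,K)$ is not $G_7$ itself but the fourteen-vertex graph of cycle structure $(1,1,3)$ in the appendix. Verifying that this enlargement is always forced, and that the enlarged graph is exactly the appendix graph with no further quadratic points (no type-$3_3$ and, by the $G_8$ analysis, no type-$1_2$ points), is the crux; it is a finite but careful check once the rational points on the governing genus-two curves and their twists are in hand.
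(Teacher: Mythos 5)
Your proposal is correct and follows essentially the same route as the paper: the enumeration of admissible graphs with cycle structure $(1,1,3)$ reduces everything to the two twelve-vertex graphs $G_7$ and $G_8$; the graph $G_8$ is shown to be unrealizable by proving that the curve $U_1(G_8')$ has no rational or quadratic points outside the cusps (the paper does this in Theorem~\ref{thm:1_2and3}, via Lemma~\ref{lem:1and3/2and3curves}, a norm argument, and Chabauty on an auxiliary genus-five curve); and $G_7$ is handled by pinning down its unique realization $(\bbQ(\sqrt{33}),-29/16)$, for which the forced field $\bbQ(\sqrt{1-4c})$ enlarges the graph to $14(3,1,1)$ --- exactly the mechanism you identify. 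The only differences are ones of packaging: the paper simply cites \cite[\textsection 3.19]{doyle/faber/krumm:2014} for the $G_7$ analysis that you propose to carry out from scratch, and its quadratic-point computation for the $G_8$ curve uses the norm trick and Theorem~\ref{thm:stoll} rather than the twist and Lemma~\ref{lem:ECquad_pts} machinery you suggest (which, note, would need adapting, since the relevant quotients here have genus two rather than one).
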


In \cite[\textsection 3.19]{doyle/faber/krumm:2014}, we show that $(K,c) = (\bbQ(\sqrt{33}),-29/16)$ is the only quadratic pair for which $G(f_c,K)$ contains $G_7$, and for that pair we actually have $G(f_c,K) \cong 14(3,1,1)$. Therefore, to prove Proposition~\ref{prop:(1,3)} it remains to prove the following:

\begin{thm}\label{thm:1_2and3}
Let $K$ be a quadratic field and let $c \in K$. Then $G(f_c,K)$ does not contain a subgraph isomorphic to $G_8$.
\end{thm}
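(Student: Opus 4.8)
The plan is to follow the template established for the graphs $G_1$, $G_5$, and $G_6$: construct a model for the relevant dynamical modular curve over $\bbQ$ and show it has no quadratic points off the cusps. Since the generator $\alpha'$ of $G_8$ is a fixed point, Remark~\ref{rem:redundant} lets me replace $G_8$ by the subgraph $G_8'$ generated by the type-$1_2$ point $\alpha$ and the period-$3$ point $\beta$, so it suffices to study
\[ U_1(G_8') = \{(\alpha,\beta,c) \in \bbA^3 : \alpha \text{ has type } 1_2 \text{ and } \beta \text{ has period } 3 \text{ for } f_c\}. \]
A point of $U_1(G_8')$ records, in particular, a fixed point (the image $f_c^2(\alpha)$) together with a period-$3$ point, so $U_1(G_8')$ fibers over the genus-$2$ curve \eqref{eq:1and3curve} parametrizing $U_1(1,3)$. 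First I would build the model as the double cover of \eqref{eq:1and3curve} obtained by adjoining the square root needed for the type-$1_2$ preimage: writing the fixed point as $\alpha_{\mathrm{fix}} = (t^2+t+y)/(2t(t+1))$ and $c = -p(t)/(4t^2(t+1)^2)$ with $p(t) = t^6+2t^5+4t^4+8t^3+9t^2+4t+1$, the condition $\alpha^2 = -\alpha_{\mathrm{fix}} - c$ becomes, after clearing denominators, $w^2 = g_0(t) - 2t(t+1)y$ with $g_0(t) = t^6+2t^5+2t^4+4t^3+7t^2+4t+1$. This presents $U_1(G_8')$ as (the smooth model of) the genus-$5$ curve $Y/\bbQ$ defined by $y^2 = f(t)$ and $w^2 = g_0(t) - 2t(t+1)y$, where $f$ is the sextic in \eqref{eq:1and3curve}.

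The crucial simplification is that a point of $G_8'$ forces $f_c$ to have $K$-rational points of periods $1$ and $3$ simultaneously, so Proposition~\ref{prop:1and3/2and3} applies: any quadratic pair realizing $G_8'$ has $c \in \bbQ$, a rational $3$-cycle, and quadratic (Galois-conjugate) fixed points $1/2 \pm r$. Combined with Lemma~\ref{lem:1and3/2and3curves}, this shows that every quadratic point on $Y$ lying off the cusps \eqref{eq:1and3cusps} has $t \in \bbQ$, with $K = \bbQ(y) = \bbQ(\sqrt{f(t)})$ and $r^2 = f(t)/(4t^2(t+1)^2) \in \bbQ$. Thus, unlike the proofs for $G_1$, $G_5$, and $G_6$, I never need the harder ``$t \notin \bbQ$'' case; only the single square condition for $\alpha$ remains, now over the fixed quadratic field $K$.

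To handle that condition I would pass to norms. Writing $\alpha^2 = A - r$ with $A = -(1/2+c) = g_0(t)/(4t^2(t+1)^2) \in \bbQ$ and $\sigma(r) = -r$, the element $\alpha\,\sigma(\alpha) \in \bbQ$ satisfies $(\alpha\,\sigma(\alpha))^2 = A^2 - r^2$, so $A^2 - r^2$ must be a rational square. Using the identity $f(t) = p(t) + t^2(t+1)^2$, this collapses cleanly: $A^2 - r^2$ is a square in $\bbQ$ if and only if $t$ lifts to a rational point on the hyperelliptic curve
\[ C : s^2 = p(t)\bigl(p(t) - 8t^2(t+1)^2\bigr) \]
of genus $5$. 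To determine $C(\bbQ)$ I would exploit the order-$3$ automorphism $\tau\colon t \mapsto -(t+1)/t$ coming from the period-$3$ structure (Remark~\ref{rem:3aut}), under which $p(t)/t^2(t+1)^2$ is invariant; quotienting $C$ by $\tau$ (as in the proofs of Theorems~\ref{thm:3and3_quad_pts} and \ref{thm:3and4curve_pts}) should reduce the problem to a curve of genus $1$ or $2$ whose rational points can be found by a Mordell--Weil rank computation and, if needed, Chabauty's method (Theorem~\ref{thm:stoll}), likely identifying the quotient with a small-conductor modular curve as happened repeatedly above. Pulling back and discarding the cusps $t \in \{0,-1\}$ and $t^2+t+1=0$ should leave no admissible $t$.

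I expect the genuine obstacle to be twofold. First, the rational-point determination on $C$ (or its quotient) is the computational heart, and the automorphism quotient must be carried out explicitly to bring the genus low enough for Theorem~\ref{thm:stoll} to be decisive. Second, and more subtly, the norm condition $A^2 - r^2 = \square$ is only \emph{necessary} for $\alpha \in K$; a surviving rational point on $C$ gives $\alpha\,\sigma(\alpha) \in \bbQ$ but not yet $\alpha \in K$, so for each such point I would still have to verify that $A - r$ is an actual square in $K$ (equivalently that one of $\tfrac{1}{2}\bigl(A \pm s/(4t^2(t+1)^2)\bigr)$ is a rational square) and show this fails. As a structural check, note that by the Galois action $\sigma(\alpha)$ is a type-$1_2$ point over the conjugate fixed point, so a realization of $G_8'$ would in fact attach type-$1_2$ tails to \emph{both} fixed points; this is consistent with $G_8$ sitting inside a larger $14$-vertex graph and explains why the second fixed point appears in $G_8$ as an independent generator rather than carrying its own tail.
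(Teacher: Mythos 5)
Your proposal is correct and follows essentially the same path as the paper: the reduction from $G_8$ to $G_8'$ via Remark~\ref{rem:redundant}, the model of $U_1(G_8')$ as the double cover $z^2 = g_0(t) - 2t(t+1)y$ of the curve \eqref{eq:1and3curve} (this is exactly Proposition~\ref{prop:1_2and3curve}), the deduction that any quadratic point off the cusps has $t \in \bbQ$ with $y$ generating $K$, and the norm argument are all identical to the paper's. In particular, your curve $C : s^2 = p(t)\bigl(p(t)-8t^2(t+1)^2\bigr)$ \emph{is} the paper's curve \eqref{eq:1_2and3norms}, since $p(t) - 8t^2(t+1)^2 = (t^3-3t-1)(t^3+2t^2-t-1)$; your derivation via $f = p + t^2(t+1)^2$ checks out. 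The one point of divergence is the determination of $C(\bbQ)$: the paper does \emph{not} pass to a quotient, but instead applies Theorem~\ref{thm:stoll} directly to $C$ (Magma gives $\rk J(\bbQ) \le 2 < 5 = g$, and $\#C(\bbF_5)=6$ yields $\#C(\bbQ)\le 11$), then uses the order-six automorphism $\sigma(t,w) = \bigl(-\tfrac{t+1}{t}, -\tfrac{w}{t^6}\bigr)$ to show no rational point is fixed by any power of $\sigma$, so $6 \mid \#C(\bbQ)$ and hence $\#C(\bbQ) = 6$. Your quotient-by-$\tau$ plan is plausible (the quotient by the order-$3$ lift has genus $1$ by Riemann--Hurwitz), but it is left unexecuted and carries a risk the paper's route avoids: if that elliptic quotient has positive Mordell--Weil rank, enumeration of its rational points fails and you would have to fall back on Chabauty applied to $C$ itself, which is precisely what the paper does. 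Two minor corrections: the curve $Y$ has genus $9$, not $5$ (genus $5$ is the auxiliary curve $C$); and your closing worry that the norm condition is only necessary is moot, since the whole argument is a proof by contradiction --- once $C(\bbQ)$ is shown to contain no point with $t \notin \{0,-1,\infty\}$, there are no ``surviving'' points for which sufficiency would need to be checked.
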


Just as we did for $G_4$ and $G_6$, we can effectively ignore the fixed point $\alpha'$, looking instead at the graph $G_8'$ generated by $\alpha$ and $\beta$. We therefore consider
	\begin{align*}
	U_1(G_8') = \{(\alpha,\beta,c) \in \bbA^3 : \mbox{ $\alpha$ is of type $1_2$ and $\beta$ has period 3 for $f_c$} \}.
	\end{align*}

\begin{prop}\label{prop:1_2and3curve}
Let $Y$ be the affine curve of genus 9 defined by the equation
	\begin{equation}\label{eq:1_2and3curve}
	\begin{cases}
		y^2 &= t^6 + 2t^5 + 5t^4 + 10t^3 + 10t^2 + 4t + 1\\
		z^2 &= t^6+2 t^5+2 t^4+4 t^3+7 t^2+4 t+1 - 2t(t+1)y,
	\end{cases}
	\end{equation}
and let $U$ be the open subset of $Y$ defined by
	\begin{equation}\label{eq:1_2and3cusps}
		t(t+1)(t^2 + t + 1)(y + t^2 + t) \ne 0.
	\end{equation}
Consider the morphism $\Phi: U \to \bbA^3$, $(t,y,z) \mapsto (\alpha,\beta,c)$, given by
	\[ \alpha = \frac{z}{2 t (t+1)} , \ \beta = \frac{t^3+2 t^2+t+1}{2 t (t+1)} , \ c = -\frac{t^6+2 t^5+4 t^4+8 t^3+9 t^2+4 t+1}{4 t^2 (t+1)^2}.\]
Then $\Phi$ maps $U$ isomorphically onto $U_1(G_8')$, with the inverse map given by
	\begin{equation}\label{eq:1_2and3inverse}
		t = \beta + f_c(\beta) , \ y = -t(t+1)(2f_c(\alpha) + 1) , \ z = 2t(t+1)\alpha.
	\end{equation}
\end{prop}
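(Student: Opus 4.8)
The plan is to follow the same template used for the earlier dynamical modular curves (e.g., Propositions~\ref{prop:1_3and_all1_2curve} and \ref{prop:1_2and2_2curve}): first establish that $\Phi$ is well-defined and injective via the stated inverse, and then prove the two containments $\Phi(U) \subseteq U_1(G_8')$ and $U_1(G_8') \subseteq \Phi(U)$ separately. Well-definedness is immediate from \eqref{eq:1_2and3cusps}, since $t(t+1) \ne 0$ guarantees that the denominators in the formulas for $\alpha$, $\beta$, and $c$ are nonzero; injectivity then follows by verifying (a routine Magma computation) that \eqref{eq:1_2and3inverse} is a left inverse for $\Phi$.

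For the forward containment, I would take $(t,y,z) \in U$ and set $(\alpha,\beta,c) = \Phi(t,y,z)$. The point $\beta$ and the parameter $c$ are exactly those appearing in Proposition~\ref{prop:1or2or3}(C), so $\beta$ has period $3$ as long as $t(t+1)(t^2+t+1) \ne 0$, which holds on $U$. For $\alpha$, I would compute directly that $f_c(\alpha) = \alpha^2 + c = -\tfrac{t^2+t+y}{2t(t+1)}$, using the defining equation for $z$, and then that $f_c^2(\alpha) = \tfrac{t^2+t+y}{2t(t+1)}$, using the defining equation for $y$ (which is precisely \eqref{eq:1and3curve}) to recognize the latter as a fixed point. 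Since
\[
	f_c^2(\alpha) - f_c(\alpha) = \frac{t^2+t+y}{t(t+1)},
\]
the remaining cusp condition $y + t^2 + t \ne 0$ forces $f_c(\alpha)$ not to be a fixed point, so $\alpha$ has type exactly $1_2$ and $\Phi(U) \subseteq U_1(G_8')$.

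For surjectivity, suppose $(\alpha,\beta,c) \in U_1(G_8')$. The key structural observation is that $f_c^2(\alpha)$ is a fixed point while $\beta$ has period $3$, so the pair $(f_c^2(\alpha),\beta)$ realizes the cycle structure $(1,3)$. I would therefore invoke Proposition~\ref{prop:1and2etc}(B) to produce a point $(t,y)$ on the curve \eqref{eq:1and3curve} with $t(t+1)(t^2+t+1) \ne 0$ for which $f_c^2(\alpha) = \tfrac{t^2+t+y}{2t(t+1)}$, $\beta = \tfrac{t^3+2t^2+t+1}{2t(t+1)}$, and $c$ as stated. To recover $z$, I use the fact from the opening discussion of Section~\ref{sec:preper_pts} that $f_c(\alpha)$, being of type $1_1$, is the negative of a fixed point, whence $f_c(\alpha) = -f_c^2(\alpha)$; this gives $\alpha^2 = -\tfrac{t^2+t+y}{2t(t+1)} - c$, and setting $z = 2t(t+1)\alpha$ yields exactly the second equation of \eqref{eq:1_2and3curve}. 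Finally, $y + t^2 + t \ne 0$ precisely because $f_c(\alpha)$ is not a fixed point, placing $(t,y,z)$ in $U$.

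I expect the only genuinely delicate point to be the bookkeeping behind the identity $f_c(\alpha) = -f_c^2(\alpha)$ together with the check that the resulting quadratic relation for $\alpha^2$ collapses to precisely the stated defining equation for $z$; everything else is a mechanical verification of polynomial identities best delegated to Magma. The genus assertion ($g = 9$) is a separate normalization computation that I would simply record rather than belabor.
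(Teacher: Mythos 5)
Your proposal is correct and follows essentially the same route as the paper's own proof: well-definedness and injectivity via the stated left inverse, the forward containment via the parametrizations in Propositions~\ref{prop:1or2or3}(C) and \ref{prop:1and2etc}(B), and surjectivity by applying Proposition~\ref{prop:1and2etc}(B) to the pair consisting of $\beta$ and the fixed point (which the paper writes as $-f_c(\alpha)$ and you write as $f_c^2(\alpha)$ --- the same point, by the type-$1_1$ observation), then setting $z = 2t(t+1)\alpha$. The only differences are cosmetic, and in one respect you are slightly more careful than the paper: you explicitly note that the cusp condition $y + t^2 + t \ne 0$ holds for the constructed point because $f_c(\alpha)$ is not fixed, a verification the paper leaves implicit.
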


\begin{proof}
The condition $t(t+1) \ne 0$ implies that $\Phi$ is well-defined, and one can verify that the relations in \eqref{eq:1_2and3inverse} provide a left inverse to $\Phi$, so $\Phi$ is injective.

Let $(t,y,z)$ lie in $U$. One can check that $f_c^3(\alpha) = f_c^2(\alpha)$ and $f_c^3(\beta) = \beta$; furthermore,
	\[ f_c(\alpha) - f_c^2(\alpha) = -\frac{y + t^2 + t}{t(t+1)} , \ f_c(\beta) - \beta = \frac{t^2 + t + 1}{t(t+1)}, \]
which are nonzero by hypothesis, so $\alpha$ and $\beta$ are points of type $1_2$ and period 3, respectively. Hence $\Phi$ maps $U$ into $U_1(G_8')$.

To prove surjectivity, suppose $(\alpha,\beta,c)$ lies on $U_1(G_8')$. Since $\alpha$ is of type $1_2$, $f_c(\alpha)$ is of type $1_1$, which means that $-f_c(\alpha)$ is a fixed point for $f_c$. By Proposition~\ref{prop:1and2etc}(B), there exists $(t,y)$ satisfying the first equation in \eqref{eq:1_2and3curve} for which
	\[
		-f_c(\alpha) = \frac{t^2+t+y}{2 t (t+1)} , \ \beta = \frac{t^3+2 t^2+t+1}{2 t (t+1)} , \ c = -\frac{t^6+2 t^5+4 t^4+8 t^3+9 t^2+4 t+1}{4 t^2 (t+1)^2}.
	\]
Writing $f_c(\alpha) = \alpha^2 + c$ yields
	\begin{align*}
	\alpha^2 &= -\frac{t^2+t+y}{2 t (t+1)} + \frac{t^6+2 t^5+4 t^4+8 t^3+9 t^2+4 t+1}{4 t^2 (t+1)^2} \\
		&= \frac{t^6+2 t^5+2 t^4+4 t^3+7 t^2+4 t+1 - 2t(t+1)y}{4 t^2 (t+1)^2}.
	\end{align*}
Setting $z := 2t(t+1)\alpha$ completes the proof.
\end{proof}

\begin{thm}\label{thm:1_2and3pts}
Let $Y$ be the affine curve of genus 9 defined by \eqref{eq:1_2and3curve}.
Then
	\[ Y(\bbQ,2) = Y(\bbQ) = \{ (0,\pm 1,\pm 1) , (-1, \pm 1, \pm 1 )\}. \]
\end{thm}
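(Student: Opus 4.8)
The plan is to reduce everything to the known arithmetic of the genus two curve $C_1\colon y^2 = f(t)$, where $f(t) = t^6+2t^5+5t^4+10t^3+10t^2+4t+1$ is the curve of Lemma~\ref{lem:1and3/2and3curves}(A), exploiting that the projection $(t,y,z)\mapsto(t,y)$ maps $Y$ two-to-one onto $C_1$. First I would dispose of the rational points: any point of $Y(\bbQ)$ projects into $C_1(\bbQ) = \{(0,\pm1),(-1,\pm1)\}$, forcing $t\in\{0,-1\}$; since $t(t+1)=0$ reduces the second equation to $z^2 = g(t)$ with $g(0)=g(-1)=1$, this yields exactly the eight listed points. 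For a genuine quadratic point $(t,y,z)$, its image $(t,y)$ has degree at most two on $C_1$, so by Lemma~\ref{lem:1and3/2and3curves}(A) either $t^2+t+1=0$ (a non-obvious quadratic point) or $t\in\bbQ$ (an obvious one). These are the two cases to rule out.

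For the non-obvious case $K = \bbQ(t) = \bbQ(\sqrt{-3})$ and $y = \pm(t-1)$. Using $t^3=1$ one computes $t(t+1) = -1$ and $g(t) = -3(t+1)$, so the defining relation becomes $z^2 = -3(t+1)\pm 2(t-1)$, i.e. $z^2 = -5-t$ or $z^2 = -1-5t$. In both cases one finds that the norm from $K$ to $\bbQ$ of the right-hand side equals $21$, which is not a rational square; since the norm of a square in $K$ is a rational square, no such $z\in K$ exists, and Case~A is dispatched.

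The substance is the obvious case $t\in\bbQ$ with $f(t)$ a non-square, so that $K = \bbQ(y)$ and $\{1,y\}$ is a $\bbQ$-basis of $K$. Writing $z = a+by$ with $a,b\in\bbQ$ and comparing the rational and $y$-parts of $z^2 = g(t) - 2t(t+1)y$ gives $a^2 + b^2 f(t) = g(t)$ and $ab = -t(t+1)$; hence $a^2$ and $b^2 f(t)$ are the two roots of $X^2 - g(t)X + t^2(t+1)^2 f(t) = 0$, whose discriminant
\[
 h(t) := g(t)^2 - 4t^2(t+1)^2 f(t)
\]
must be a rational square. The key algebraic input is that $f-g = 3t^2(t+1)^2$, which lets one factor
\[
 h = \bigl(f - t^2(t+1)^2\bigr)\bigl(f - 9t^2(t+1)^2\bigr) =: \Psi(t)\,\Theta(t)
\]
into two sextics satisfying $\Psi - \Theta = 8t^2(t+1)^2$ and $\Psi(0)=\Psi(-1)=\Theta(0)=\Theta(-1)=1$. (Dynamically this condition is just that $c(c+2)$ is a rational square, where $c$ is the parameter of $f_c$.)

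From here I would argue exactly as in the proof of Theorem~\ref{thm:1and2and3curve_pts}. Since $\Theta,\Psi$ have no rational roots, a rational $t$ with $h(t)$ square forces $\Theta(t)$ and $\Psi(t)$ to share a squarefree part $d$; any prime dividing $d$ divides $\Res(\Theta,\Psi)$, which the relation $\Psi = \Theta + 8t^2(t+1)^2$ together with $\Theta(0)=\Theta(-1)=1$ evaluates to $\pm 2^{18}$, and positivity of $\Psi$ on $\bbR$ then leaves only $d\in\{1,2\}$. The value $d=2$ is excluded locally at $2$: for every $t\in\bbQ_2$ one checks $v_2(\Psi(t))$ is even while $v_2(2\eta^2)$ is odd, so $2\eta^2 = \Psi(t)$ has no $2$-adic point. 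It therefore remains to show that the genus two curve
\[
 C_\Psi\colon \eta^2 = \Psi(t) = t^6 + 2t^5 + 4t^4 + 8t^3 + 9t^2 + 4t + 1
\]
has only the rational points with $t\in\{0,-1\}$ (besides its two rational points at infinity). This is the main obstacle. I expect to determine $C_\Psi(\bbQ)$ by computing the Mordell--Weil rank of its Jacobian in Magma and applying either Lemma~\ref{lem:rank0jac} (if the rank is $0$) or the Chabauty--Coleman bound of Theorem~\ref{thm:stoll} at a suitable prime (if the rank is $1$). Once $C_\Psi(\bbQ)$ is pinned down, each surviving value $t\in\{0,-1\}$ has $f(t)=1$, contradicting the standing assumption that $f(t)$ is a non-square, and the proof that $Y$ has no quadratic points is complete.
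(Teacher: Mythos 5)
Your reduction is correct, and in fact it rediscovers the paper's key identity: the discriminant $h(t) = g(t)^2 - 4t^2(t+1)^2f(t)$ coming from the coefficients of $z = a+by$ is exactly the square of the norm $N_{K/\bbQ}(z)$ that the paper uses, and your factorization $h = \Psi\Theta$ is the paper's factorization regrouped, with $\Psi$ the sextic $t^6+2t^5+4t^4+8t^3+9t^2+4t+1$ and $\Theta = (t^3-3t-1)(t^3+2t^2-t-1)$. Your treatment of the rational points and of the non-obvious case (the norm-$21$ computation, which neatly justifies what the paper only asserts), as well as the descent step ($\Res(\Psi,\Theta) = 2^{18}$, the $2$-adic exclusion of $d=2$), are all fine. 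Where you genuinely diverge from the paper is the endgame: the paper never splits $h$ into two separate square conditions, but instead treats $w^2 = \Psi(t)\Theta(t)$ as a single genus-$5$ hyperelliptic curve (Lemma~\ref{lem:1_2and3pts}), bounds its Jacobian's rank by $2$, applies Theorem~\ref{thm:stoll} at $p=5$ to get $\#C(\bbQ)\le 11$, and uses the order-$6$ automorphism $(t,w)\mapsto\bigl(-(t+1)/t,\,-w/t^6\bigr)$ to force $6\mid\#C(\bbQ)$, hence exactly six points.

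The gap is that your final claim about $C_\Psi$ is false. One computes $\Psi(3) = 1849 = 43^2$, so $(3,\pm 43)\in C_\Psi(\bbQ)$; moreover $C_\Psi$ inherits the order-$3$ automorphism $\sigma(t,\eta) = \bigl(-(t+1)/t,\ \eta/t^3\bigr)$ (one checks $\Psi(-(t+1)/t) = \Psi(t)/t^6$), and the $\sigma$-orbit of $(3,43)$ yields further rational points $(-4/3,\pm 43/27)$ and $(-1/4,\pm 43/64)$ — at least twelve rational points in all. These are not spurious: $t=3$ corresponds to $c = -1849/576$, for which $f_c$ has a rational $3$-cycle and $-c = (43/24)^2$, which is precisely the moduli interpretation of $C_\Psi$. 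So the conclusion \emph{each surviving $t$ lies in $\{0,-1\}$} fails, and the proposed machinery cannot rescue it as stated: since $\sigma$ has genus-zero quotient, $1+\sigma^*+\sigma^{*2}=0$ on $\Jac(C_\Psi)$, so $\Jac(C_\Psi)(\bbQ)$ is a module over $\bbZ[x]/(x^2+x+1)$ and its Mordell--Weil rank is \emph{even}; hence Theorem~\ref{thm:stoll} (which needs rank $< g = 2$) is available only in the rank-$0$ case, and even then the bound at $p=5$ is $\#C_\Psi(\bbF_5) = 12$, not $6$, so the twelve points above survive any such argument. The repair is within reach of what you have already proved: $d=1$ forces $\Theta(t)$ to be a square as well, and $\Theta(3) = 17\cdot 41 = 697$ is not a square (nor are its $\sigma$-translates $697/729$ and $697/4096$), which eliminates the extra orbit. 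Equivalently, and more cleanly, run your descent on the companion curve $C_\Theta:\eta^2 = (t^3-3t-1)(t^3+2t^2-t-1)$, which has $\#C_\Theta(\bbF_5)=6$ and only the six obvious rational points provided its Jacobian has rank $0$ (by the same parity constraint, a Magma rank bound of $\le 1$ suffices for this); or simply fall back on the paper's single genus-$5$ curve, which sidesteps the issue entirely.
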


Theorem~\ref{thm:1_2and3} now follows from the fact that $Y(\bbQ,2)$ is disjoint from the open subset $U \subset Y$ defined in \eqref{eq:1_2and3cusps}. In order to prove Theorem~\ref{thm:1_2and3pts}, we require the following lemma, which determines the set of rational points on a certain auxiliary curve.

\begin{lem}\label{lem:1_2and3pts}
Let $C$ denote the hyperelliptic curve of genus 5 given by the equation
	\begin{equation}\label{eq:1_2and3norms}
		w^2 = (t^3 - 3t - 1)(t^3 + 2t^2 - t - 1)(t^6 + 2t^5 + 4t^4 + 8t^3 + 9t^2 + 4t + 1).
	\end{equation}
Then
	\[ C(\bbQ) = \{ (0,\pm 1), (-1,\pm 1), \infty^{\pm} \}. \]
\end{lem}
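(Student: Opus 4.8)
The plan is to exploit the order-$3$ symmetry of $C$ coming from the dynamics of $3$-cycles, just as in the proof of Theorem~\ref{thm:3and3_quad_pts}. Recall from Remark~\ref{rem:3aut} that $t \mapsto -\frac{t+1}{t}$ generates the order-$3$ automorphism attached to a $3$-cycle. Writing $P_1(t) = t^3-3t-1$, $P_2(t) = t^3+2t^2-t-1$, and $P_3(t) = t^6+2t^5+4t^4+8t^3+9t^2+4t+1$ for the three factors in \eqref{eq:1_2and3norms}, a direct computation shows that each $P_i$ of degree $d_i$ satisfies
\[ P_i\!\left(-\tfrac{t+1}{t}\right) = \frac{P_i(t)}{t^{d_i}}, \]
so the full product $H(t)$ transforms by $H\!\left(-\tfrac{t+1}{t}\right) = H(t)/t^{12}$. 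Hence
\[ \sigma(t,w) := \left(-\frac{t+1}{t},\ \frac{w}{t^6}\right) \]
is an order-$3$ automorphism of $C$ defined over $\bbQ$. The six points in the statement all have $t$-coordinate in $\{0,-1,\infty\}$, which is precisely one $\sigma$-orbit on $\bbP^1$; these are the ``cusp'' values at which the parametrization of Proposition~\ref{prop:1_2and3curve} degenerates. Since $g(C)=5$ is far too large for the direct application of Theorem~\ref{thm:stoll}, the symmetry is what makes the problem tractable.

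Next I would pass to the quotient $D := C/\langle\sigma\rangle$. The fixed points of $\sigma$ lie above the two values with $t^2+t+1=0$ (where $t^3=1$ forces $w/t^6=w$), and one checks $H(t)\neq 0$ there, giving four fixed points; Riemann--Hurwitz then yields $g(D)=1$. In terms of the $\sigma$-invariant coordinate $T = \frac{t^3-3t-1}{t(t+1)}$ from \eqref{eq:X0(3,4)TU}, the curve $D$ is a double cover of the $T$-line, and is an elliptic curve once an image of a rational point of $C$ is taken as origin. A rational point $(t,w)\in C(\bbQ)$ necessarily has $t\in\bbQ$, whence its entire orbit $\{t,-\tfrac{t+1}{t},-\tfrac{1}{t+1}\}$ is rational; away from the special orbit $\{0,-1,\infty\}$ this forces the cubic $t^3 - Tt^2 - (T+3)t - 1$ to have a rational root.

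This last condition is exactly what elliptic Chabauty is designed to detect, and I would run it over the cyclic cubic field $k = \bbQ(\theta)$ with $\theta^3-3\theta-1=0$ (the real subfield of $\bbQ(\zeta_9)$; note $\disc(P_1)=81$), in which $P_1$ splits. Over $k$ one performs a descent: for $(t,w)\in C(\bbQ)$ the value $t-\theta$ is, up to squares, one of finitely many classes $\delta \in k^\times/(k^\times)^2$ supported at the primes dividing the resultants $\Res(P_1,P_2)$, $\Res(P_1,P_3)$, $\Res(P_2,P_3)$ together with the class-group and unit data of $k$. Each admissible $\delta$ yields a genus-one curve $E_\delta$ over $k$ equipped with the $t$-coordinate map; provided $\rk E_\delta(k) < [k:\bbQ]=3$ (verified by a two-descent in Magma), elliptic Chabauty returns all points with $t\in\bbQ$, and the expectation is that only $t\in\{0,-1,\infty\}$ survive, giving exactly the six listed points.

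The main obstacle is this rank control: the argument collapses unless every $\rk E_\delta(k)\le 2$, and elliptic Chabauty additionally requires the local analysis at an auxiliary prime of $k$ to be conclusive, which is the delicate computational step. Should some $E_\delta$ fail the rank bound, the fallback is to determine $D(\bbQ)$ directly; this is clean when $\rk D(\bbQ)=0$, since then $D(\bbQ)$ is the finite torsion subgroup computed by Magma and one need only check which of the finitely many resulting cubics in $t$ have a rational root. In either case the proof concludes with a routine resultant computation confirming that over $t\in\{0,-1,\infty\}$ the curve $C$ carries precisely the six rational points in the statement.
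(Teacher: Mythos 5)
Your geometric setup is sound---the order-$3$ automorphism $(t,w) \mapsto \left(-\frac{t+1}{t}, \frac{w}{t^6}\right)$ does exist, the six listed points form a single orbit, and your Riemann--Hurwitz count for the quotient is correct---but the proposal has two serious problems. First, its motivating premise is false: genus $5$ is not ``far too large'' for Theorem~\ref{thm:stoll}. That theorem requires only $\rk J(\bbQ) < g$, so a \emph{large} genus makes the Chabauty--Coleman condition easier to satisfy, not harder; the only computational input is a rank bound for $J(\bbQ)$, which Magma's \texttt{RankBound} supplies ($\rk J(\bbQ) \le 2$). This is exactly what the paper does: with $p = 5$ (a prime of good reduction, $\#C(\bbF_5) = 6$), Stoll's bound gives $\#C(\bbQ) \le 6 + 2\cdot 2 + \lfloor 4/3 \rfloor = 11$. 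The paper then uses the \emph{order-six} automorphism $\sigma(t,w) = \left(-\frac{t+1}{t}, -\frac{w}{t^6}\right)$ (your order-$3$ map composed with the hyperelliptic involution): no rational point is fixed by any nontrivial power of $\sigma$, since being fixed forces either $t^2+t+1 = 0$ or $w = 0$, and the cubic and sextic factors in \eqref{eq:1_2and3norms} are irreducible over $\bbQ$. Hence rational points come in orbits of six, so $6 \mid \#C(\bbQ)$, which together with $6 \le \#C(\bbQ) \le 11$ forces $\#C(\bbQ) = 6$.

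Second, and decisively, what you have written is a plan rather than a proof. Every step that actually pins down $C(\bbQ)$ is left conditional and unverified: you need $\rk E_\delta(k) \le 2$ for \emph{every} descent class $\delta$ over the cubic field $k$, you need the elliptic Chabauty local analysis to be conclusive, and your own text concedes that the argument ``collapses'' otherwise; the fallback via $D(\bbQ)$ likewise assumes $\rk D(\bbQ) = 0$, which is never checked (a bound $\rk J(\bbQ) \le 2$ for the full Jacobian does not imply that the elliptic quotient has rank zero). No class-group or unit computation, no rank computation, and no Chabauty computation is carried out or even reported, so none of the hypotheses on which your conclusion rests are established. To complete your route you would have to perform and report all of those computations over a cubic field; note, moreover, that the rank bound you would need to compute anyway already unlocks the far shorter direct argument above.
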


\begin{proof}
Let $J$ denote the Jacobian of $C$. Applying Magma's \texttt{RankBound} function, we find that $\rk J(\bbQ) \le 2$, so we may apply Theorem~\ref{thm:stoll} to bound the number of rational points on $C$. The curve $C$ has good reduction at $p = 5$, and a simple computation verifies that $\#C(\bbF_5) = 6$. By Theorem~\ref{thm:stoll}, we have
	\[ \#C(\bbQ) \le \#C(\bbF_5) + 2 \cdot \rk J(\bbQ) + \left\lfloor \frac{2 \cdot \rk J(\bbQ)}{3} \right\rfloor \le 11. \]
We will now show that $\#C(\bbQ)$ is divisible by six, which will then imply that $\#C(\bbQ) = 6$ and, therefore, $C(\bbQ)$ is precisely the set listed in the lemma.	

Considering the remark following Proposition~\ref{prop:1or2or3}, one might expect an automorphism of $C$ that takes $t \mapsto -(t+1)/t$. We find that
	\[ \sigma(t,w) := \left( -\frac{t+1}{t} , -\frac{w}{t^6} \right) \]
is an automorphism of $C$ of order six, which we verify by describing $\sigma^k$ for each $k = 1, \ldots, 6$:
	\begin{align*}
		\sigma : (t,w) &\mapsto \left( -\frac{t+1}{t} , -\frac{w}{t^6} \right);\\
		\sigma^2 : (t,w) &\mapsto \left( -\frac{1}{t+1} , \frac{w}{(t+1)^6} \right);\\
		\iota = \sigma^3 : (t,w) &\mapsto \left( t, - w \right);\\
		\sigma^4 : (t,w) &\mapsto \left( -\frac{t+1}{t} , \frac{w}{t^6} \right);\\	
		\sigma^5 : (t,w) &\mapsto \left( -\frac{1}{t+1} , -\frac{w}{(t+1)^6} \right);\\		
		\id = \sigma^6 : (t,w) &\mapsto (t,w).
	\end{align*}

We now claim that no rational point may be fixed by $\sigma^k$ for any $k$, which implies that, for a rational point $(t,w)$, the set
	\[ \{\sigma^k(t,w) : k \in \{0,\ldots,5\}\} \]
contains six distinct points. In particular, this would show that rational points come in multiples of six.

If $(t,w)$ is fixed by any of $\sigma$, $\sigma^2$, $\sigma^4$, or $\sigma^5$, then comparing $t$-coordinates shows that $t^2 + t + 1 = 0$, so that $t \not \in \bbQ$. If $(t,w)$ is fixed by $\sigma^3 = \iota$, then $w = 0$. Since the cubic and sextic polynomials appearing in \eqref{eq:1_2and3norms} are irreducible over $\bbQ$, this implies that $t \not \in \bbQ$. Therefore no rational point may be fixed by these automorphisms, completing the proof of the lemma.
\end{proof}

\begin{proof}[Proof of Theorem~\ref{thm:1_2and3pts}]
By Lemma~\ref{lem:1and3/2and3curves}(A), the only rational solutions to the first equation in \eqref{eq:1_2and3curve} satisfy $t \in \{-1,0\}$, so the points listed are the only rational points on $Y$.

A simple calculation shows that if $t^2 + t + 1 = 0$, then $[\bbQ(z):\bbQ] > 2$. It therefore follows from Lemma~\ref{lem:1and3/2and3curves}(A) that if $(t,y,z)$ is a quadratic point on $Y$, then $t \in \bbQ \setminus \{-1,0\}$, $y \not \in \bbQ$, and $y^2 \in \bbQ$. Let $K$ be the quadratic field of definition of the point $(t,x,y)$. If we take norms of both sides of the second equation in \eqref{eq:1_2and3curve} and set $w := N_{K/\bbQ}(z) \in \bbQ$, we get
	\[ w^2 = (t^6+2 t^5+2 t^4+4 t^3+7 t^2+4 t+1)^2 - 4t^2(t+1)^2y^2. \]
Using \eqref{eq:1_2and3curve} to rewrite $y^2$ in terms of $t$, this equation is equivalent to
	\[
		w^2 = (t^3 - 3t - 1)(t^3 + 2t^2 - t - 1)(t^6 + 2t^5 + 4t^4 + 8t^3 + 9t^2 + 4t + 1).
	\]
Thus $(t,w)$ is a rational point on the curve $C$ defined by \eqref{eq:1_2and3norms}. By Lemma~\ref{lem:1_2and3pts}, the only finite rational points on $C$ have $t \in \{-1,0\}$. We have already shown that a quadratic point on $Y$ cannot have $t \in \{-1,0\}$, so $Y$ has no quadratic points.
\end{proof}

\subsection{Periods 2 and 3}

\begin{figure}[h]
	\includegraphics[scale=.55]{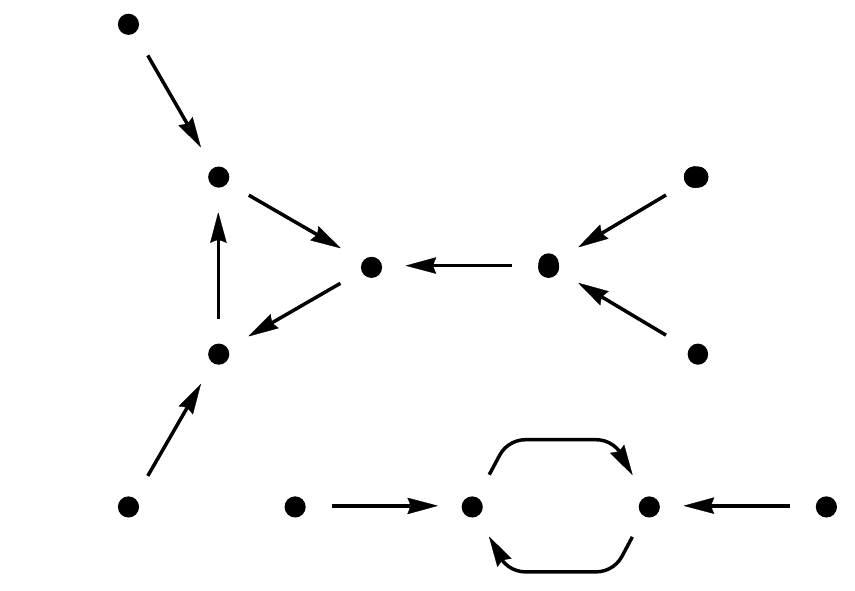}
\caption{The graph $G_9$}
\label{fig:2and3_2}
\end{figure}

\begin{figure}[h]
\centering
	\begin{overpic}[scale=.45]{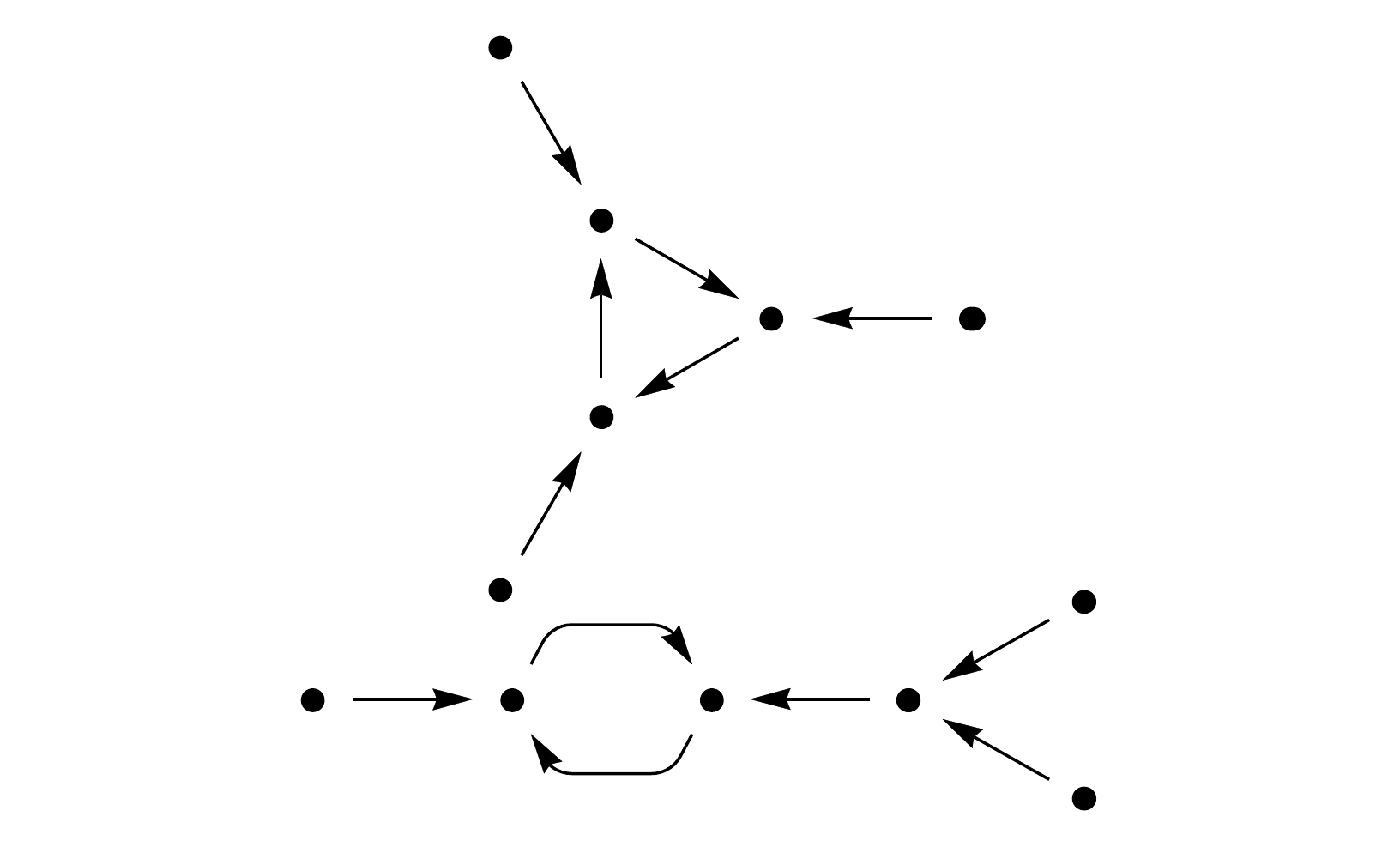}
		\put(168,39){$\alpha$}
		\put(81,66){$\beta$}
	\end{overpic}
\caption{The graph $G_{10}$ generated by a point $\alpha$ of type $2_2$ and a point $\beta$ of period 3}
\label{fig:2_2and3}
\end{figure}

Among admissible graphs with cycle structure (2,3), only 10(3,2) has ten vertices, and only $G_9$ and $G_{10}$ have twelve vertices, where $G_9$ and $G_{10}$ are shown in Figures~\ref{fig:2and3_2} and \ref{fig:2_2and3}, respectively.

\begin{prop}\label{prop:(2,3)}
Let $(K,c)$ be a quadratic pair such that $G(f_c,K)$ is admissible with cycle structure (2,3). Then $G(f_c,K)$ appears in Appendix~\ref{app:graphs}.
\end{prop}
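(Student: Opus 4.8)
The plan is to reduce Proposition~\ref{prop:(2,3)} to statements about the two twelve-vertex graphs $G_9$ and $G_{10}$ and then to dispatch each of them. Since an admissible graph has an even number of vertices, and since the only admissible graph of cycle structure $(2,3)$ on ten vertices is $10(3,2)$ (which appears in Appendix~\ref{app:graphs}), while the only admissible graphs of cycle structure $(2,3)$ on twelve vertices are $G_9$ and $G_{10}$, any admissible $G(f_c,K)$ of cycle structure $(2,3)$ that does not appear in Appendix~\ref{app:graphs} must contain an admissible subgraph isomorphic to $G_9$ or to $G_{10}$. Thus it suffices to prove two things: (i) if $G(f_c,K) \supseteq G_9$, then $G(f_c,K)$ appears in Appendix~\ref{app:graphs}; and (ii) no quadratic pair $(K,c)$ satisfies $G(f_c,K) \supseteq G_{10}$.

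For (i) I would appeal to \cite{doyle/faber/krumm:2014}, exactly as was done for $G_7$ in the cycle structure $(1,1,3)$ case: the graph $G_9$ is a subgraph of the fourteen-vertex appendix graph $14(3,2)$, and one shows there that $G_9$ occurs for a single quadratic pair, for which $G(f_c,K)$ is in fact isomorphic to $14(3,2)$ and hence lies in Appendix~\ref{app:graphs}. This reduces the entire proposition to claim (ii).

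For (ii), note that $G_{10}$ is minimally generated by a point $\alpha$ of type $2_2$ and a point $\beta$ of period $3$, so I would study
\[ U_1(G_{10}) = \{(\alpha,\beta,c) \in \bbA^3 : \text{$\alpha$ has type $2_2$ and $\beta$ has period $3$ for $f_c$}\}. \]
The decisive simplification is Proposition~\ref{prop:1and3/2and3}: any quadratic pair admitting points of periods $2$ and $3$ has $c \in \bbQ$, with the $3$-cycle rational and the $2$-cycle quadratic. Hence the period-$3$ parameter $t$ of Proposition~\ref{prop:1or2or3}(C) lies in $\bbQ$, and $S := s^2 = -3/4 - c \in \bbQ$, where $s$ is the period-$2$ parameter of Proposition~\ref{prop:1or2or3}(B). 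Because $\alpha$ has type $2_2$, the point $f_c(\alpha)$ has type $2_1$, so $-f_c(\alpha)$ is a period-$2$ point; this gives $\alpha^2 = S + 5/4 \mp s$. Writing $\alpha = a + bs$ with $a,b \in \bbQ$ and separating rational and irrational parts forces $2ab = \mp 1$ together with $a^2 + b^2 S = S + 5/4$, and eliminating $a$ yields $4S b^4 - (4S+5)b^2 + 1 = 0$, whose discriminant in $b^2$ is $16S^2 + 24S + 25$. Thus the existence of $\alpha \in K$ is equivalent to $16S^2 + 24S + 25$ being a square over $\bbQ$ (with the resulting root for $b^2$ itself a rational square), and, after substituting $S = -3/4 - c(t)$, this cuts out an auxiliary curve over $\bbQ$ whose \emph{rational} points I would determine.

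The main obstacle is this last step. The curve $U_1(G_{10})$ itself has large genus (as did its analogue $G_8$ in the $(1,1,3)$ case), so the argument must exploit the forced rationality of $c$ to descend to a lower-genus quotient on which either the Chabauty--Coleman bound of Theorem~\ref{thm:stoll} applies or, over the finitely many relevant elliptic quotients, Lemma~\ref{lem:ECquad_pts} can be invoked. Verifying the Mordell--Weil rank hypotheses, carrying out the point count at a suitable prime of good reduction, and checking that every rational point so produced corresponds to a dynamical cusp (where $t(t+1)(t^2+t+1)$ or the relevant type-discriminating factor vanishes, or where $\alpha$ fails to have exact type $2_2$) rather than to an honest realization of $G_{10}$ is where the real work lies. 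I expect the outcome to be that $U_1(G_{10})$ has no quadratic points outside its cusp locus, which establishes (ii) and, with (i), completes the proof.
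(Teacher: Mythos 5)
Your reduction to the twelve-vertex graphs $G_9$ and $G_{10}$, and your disposal of $G_9$ by citing \cite{doyle/faber/krumm:2014}, coincide with the paper's argument. Your algebra for $G_{10}$ is also sound and, in fact, converges to exactly the paper's construction: writing $\alpha = a + bs$ and separating rational and irrational parts is equivalent to the paper's device of taking norms of the second defining equation of its genus-9 model of $U_1(G_{10})$ (Proposition~\ref{prop:2_2and3curve}). Concretely, with $S = g(t)/\bigl(4t^2(t+1)^2\bigr)$, $g(t) = t^6+2t^5+t^4+2t^3+6t^2+4t+1$, and $A(t) = g(t) + 5t^2(t+1)^2$, one has
\[
16S^2 + 24S + 25 \;=\; \frac{A(t)^2 - 4t^2(t+1)^2\,g(t)}{t^4(t+1)^4},
\]
so your discriminant condition is precisely the condition that $(t,w)$ be a rational point on the genus-5 hyperelliptic curve $C\colon w^2 = t^{12}+4t^{11}+\cdots+8t+1$ of Lemma~\ref{lem:2_2and3pts}.

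The genuine gap is that you never determine $C(\bbQ)$, and the tools you propose for that step would not suffice as stated. A two-descent bounds the rank of the Jacobian $J$ of $C$ by $\rk J(\bbQ) \le 2$, and Theorem~\ref{thm:stoll} at the good prime $p = 11$ (where $\#C(\bbF_{11}) = 6$) gives only $\#C(\bbQ) \le 10$; since just six rational points are visible, namely $(0,\pm 1)$, $(-1,\pm 1)$, $\infty^{\pm}$, Chabauty--Coleman alone leaves room for four unknown points and does not close the argument. The missing idea in the paper is an automorphism-counting step: $\sigma(t,w) = \left(-\frac{t+1}{t}, -\frac{w}{t^6}\right)$ is an order-six automorphism of $C$ no nontrivial power of which fixes a rational point (fixed points of $\sigma$, $\sigma^2$, $\sigma^4$, $\sigma^5$ satisfy $t^2+t+1=0$, and $\sigma^3$ is the hyperelliptic involution, whose fixed points are irrational because the degree-12 polynomial is irreducible over $\bbQ$); hence $6 \mid \#C(\bbQ)$, which together with the bound of $10$ forces $\#C(\bbQ) = 6$, all six points lying over $t \in \{0,-1\}$, i.e.\ outside the cusp locus condition. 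Your fallback of Lemma~\ref{lem:ECquad_pts} on ``elliptic quotients'' is likewise unavailable here: the object to be controlled is this single irreducible genus-5 curve, not a fiber product of elliptic curves as in the cycle-structure $(1,1,2)$ cases, and no elliptic quotient of $C$ over $\bbQ$ is exhibited. Without the divisibility argument (or some substitute for it), your expected conclusion that $U_1(G_{10})$ has no quadratic points remains unproved.
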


In \cite[\textsection 3.20]{doyle/faber/krumm:2014}, we show that $(K,c) = (\bbQ(\sqrt{-17}),-29/16)$ is the only quadratic pair for which $G(f_c,K)$ contains $G_9$, and for that pair we actually have $G(f_c,K) \cong 14(3,2)$. Proposition~\ref{prop:(2,3)} is therefore a consequence of the following:

\begin{thm}\label{thm:2_2and3}
Let $K$ be a quadratic field and let $c \in K$. Then $G(f_c,K)$ does not contain a subgraph isomorphic to $G_{10}$.
\end{thm}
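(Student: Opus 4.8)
The plan is to follow the template already used for the graphs $G_4$, $G_6$, and especially $G_8$: reduce Theorem~\ref{thm:2_2and3} to the assertion that the dynamical modular curve $U_1(G_{10})$ has no quadratic points lying on the open ``non-cuspidal'' locus. Since $G_{10}$ is minimally generated by a point $\alpha$ of type $2_2$ and a point $\beta$ of period $3$, I would first produce an explicit model $Y$. I would parametrize the $3$-cycle by $t$ via Proposition~\ref{prop:1or2or3}(C) (which also fixes $c$), encode the coexistence of period-$2$ and period-$3$ points through the $(2,3)$-relation $z^2 = t^6 + 2t^5 + t^4 + 2t^3 + 6t^2 + 4t + 1$ of Proposition~\ref{prop:1and2etc}(C), and capture the type-$2_2$ preimage by one further square-root condition. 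Concretely, if $\delta$ denotes the period-$2$ point $-\tfrac{t^2+t-z}{2t(t+1)}$, then $f_c(\alpha)$ is the type-$2_1$ point $-\delta$, so $\alpha^2 = -\delta - c$; clearing denominators yields the second defining equation $w^2 = h(t,z)$ of $Y$, with cusps given by the vanishing of $t(t+1)(t^2+t+1)$ and of the degeneracy loci of the $2_2$-structure.

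The decisive simplification comes from Proposition~\ref{prop:1and3/2and3}: any quadratic pair $(K,c)$ admitting $K$-rational points of periods $2$ and $3$ has $c \in \bbQ$, with the $3$-cycle \emph{rational} and the $2$-cycle genuinely quadratic. Hence for any quadratic point on $Y$ we automatically get $t \in \bbQ$ and $c \in \bbQ$, while $z \notin \bbQ$ and $z^2 \in \bbQ$, so $K = \bbQ(z)$. Because $c \in \bbQ$, Galois acts on the dynamics and the conjugate $\sigma(\alpha)$ is again a type-$2_2$ point with $f_c(\sigma(\alpha)) = -\sigma(\delta) = -\delta'$, where $\delta,\delta'$ are the two roots of $\Phi_2(x,c) = x^2 + x + (c+1)$. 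I would then take the norm of $\alpha^2 = -\delta - c$: using $\delta + \delta' = -1$ and $\delta\delta' = c+1$ one finds
\[
	N_{K/\bbQ}(\alpha)^2 = (\delta+c)(\delta'+c) = \delta\delta' + c\,\Tr(\delta) + c^2 = c^2 + 1.
\]
Thus $c^2 + 1$ must be a rational square, and substituting $c = -\tfrac{S(t)}{D(t)}$ with $S(t) = t^6+2t^5+4t^4+8t^3+9t^2+4t+1$ and $D(t) = 4t^2(t+1)^2$ produces a rational point on the auxiliary hyperelliptic curve $C : W^2 = S(t)^2 + D(t)^2$, of degree $12$ and hence genus $5$ (parallel to the genus-$5$ curve arising in Lemma~\ref{lem:1_2and3pts}).

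I would finish by computing $C(\bbQ)$ and showing that every rational point has $t$ a cusp value (e.g. $t \in \{0,-1,\infty\}$). For this I would invoke Theorem~\ref{thm:stoll}: a two-descent via Magma's \texttt{RankBound} should give $\rk J(\bbQ) < 5$, after which Chabauty--Coleman at a small prime of good reduction bounds $\#C(\bbQ)$. To make the bound sharp I would exhibit, as in Lemma~\ref{lem:1_2and3pts}, the order-$3$ substitution $t \mapsto -(t+1)/t$ coming from the $3$-cycle (Remark~\ref{rem:3aut}), under which $c$ — and therefore $c^2+1$ — is invariant; combined with the hyperelliptic involution $W \mapsto -W$ this should yield an automorphism of $C$ of order $6$ with no rational fixed points, forcing $\#C(\bbQ)$ to be a multiple of $6$ and pinning it to exactly the six cuspidal points. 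Since the surviving $t$-values are cusps, no admissible quadratic point on $Y$ exists, proving the theorem.

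The main obstacle I anticipate is twofold. First, the norm descent only delivers the \emph{necessary} condition $c^2+1 \in (\bbQ^\ast)^2$, so it succeeds only if $C(\bbQ)$ contains no non-cuspidal points; if the genus-$5$ curve $C$ turns out to have Mordell--Weil rank at least its genus, Theorem~\ref{thm:stoll} does not apply directly and I would instead pass to the quotient of $Y$ by the dynatomic automorphism (mirroring the reduction to $X_0(3,4) \cong \Xell_1(11)$ in the proof of Theorem~\ref{thm:3and4}), working on the far more tractable genus-one or genus-two quotient. Second, one must verify that $S(t)^2 + D(t)^2$ has no repeated roots (so that the genus count and the Weierstrass-point analysis underlying the divisibility argument are valid) and confirm the rank bound numerically; these verifications, together with the point count over $\bbF_p$, are exactly the computations that would be carried out in Magma.
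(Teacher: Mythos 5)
Your proposal follows essentially the same route as the paper: the same model for $U_1(G_{10})$ built from the $(2,3)$-curve plus one extra square-root condition, the same reduction to $t,c\in\bbQ$ with a genuinely quadratic $2$-cycle, a norm argument producing a rational point on an auxiliary genus-$5$ hyperelliptic curve, and a \texttt{RankBound}/Stoll bound sharpened to exactly six points by the order-$6$ automorphism coming from $t \mapsto -(t+1)/t$ and the hyperelliptic involution. In fact your curve $W^2 = S(t)^2 + D(t)^2$ is literally the paper's degree-$12$ curve of Lemma~\ref{lem:2_2and3pts} (expanding gives $t^{12}+4t^{11}+12t^{10}+\cdots+8t+1$), since your identity $N_{K/\bbQ}(\alpha)^2 = c^2+1$ is just the norm of the paper's second defining equation in disguise.
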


Since the graph $G_{10}$ is minimally generated by a point of type $2_2$ and a point of period 3, we write
	\begin{align*}
	U_1(G) = \{(\alpha,\beta,c) \in \bbA^3 : \mbox{ $\alpha$ is of type $2_2$ and $\beta$ has period 3 for $f_c$} \}.
	\end{align*}

\begin{prop}\label{prop:2_2and3curve}
Let $Y$ be the affine curve of genus 9 defined by the equation
	\begin{equation}\label{eq:2_2and3curve}
		\begin{cases}
			y^2 &= t^6 + 2t^5 + t^4 + 2t^3 + 6t^2 + 4t + 1\\
			z^2 &= t^6 + 2t^5 + 6t^4 + 12t^3 + 11t^2 + 4t + 1 - 2t(t+1)y,
		\end{cases}
	\end{equation}
and let $U$ be the open subset of $X$ defined by
	\begin{equation}\label{eq:2_2and3cusps}
		t(t+1)(t^2 + t + 1)y(y - t^2 - t) \ne 0.
	\end{equation}
Consider the morphism $\Phi: U \to \bbA^3$, $(t,y,z) \mapsto (\alpha,\beta,c)$, given by
	\[ \alpha = \frac{z}{2 t (t+1)} , \ \beta = \frac{t^3+2 t^2+t+1}{2 t (t+1)} , \ c = -\frac{t^6+2 t^5+4 t^4+8 t^3+9 t^2+4 t+1}{4 t^2 (t+1)^2}.\]
Then $\Phi$ maps $U$ isomorphically onto $U_1(G_{10})$, with the inverse map given by
	\begin{equation}\label{eq:2_2and3inverse}
		t = \beta + f_c(\beta) , \ y = -t(t+1)(2f_c(\alpha) - 1) , \ z = 2t(t+1)\alpha.
	\end{equation}
\end{prop}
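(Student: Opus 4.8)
The plan is to follow the template of the earlier model propositions (e.g.\ Propositions~\ref{prop:1_2and2_2curve} and \ref{prop:2and_all1_2curve}): first check that $\Phi$ is a well-defined injection, then verify that its image lies in $U_1(G_{10})$ by confirming the dynamical identities directly, and finally prove surjectivity by running the period-$2$/period-$3$ parametrization backwards. Well-definedness is immediate, since the only denominators appearing in the formulas for $\alpha,\beta,c$ are powers of $2t(t+1)$, which is nonzero on $U$ by \eqref{eq:2_2and3cusps}. For injectivity I would substitute the definitions of $\alpha,\beta,c$ into the right-hand sides of \eqref{eq:2_2and3inverse} and check (in Magma, or by hand using Remark~\ref{rem:3aut}) that these recover $t$, $y$, and $z$, so that \eqref{eq:2_2and3inverse} is a genuine left inverse.

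To see that $\Phi(U)\subseteq U_1(G_{10})$, I would have Magma verify the identities $f_c^3(\beta)=\beta$ with $f_c(\beta)-\beta = (t^2+t+1)/(t(t+1))\ne 0$ (so $\beta$ has exact period $3$), together with $f_c^4(\alpha)=f_c^2(\alpha)$ and the two nonvanishing conditions $f_c^3(\alpha)-f_c^2(\alpha)\ne 0$ and $f_c^3(\alpha)-f_c(\alpha)\ne 0$. The first says $f_c^2(\alpha)$ is a genuine point of a $2$-cycle and the second says the preperiod of $\alpha$ is exactly $2$, so that $\alpha$ has type $2_2$. Each of these differences computes to an explicit rational function in $t,y$ whose numerator is, up to units, one of the factors $t$, $t+1$, $t^2+t+1$, $y$, or $y-t^2-t$ appearing in \eqref{eq:2_2and3cusps}; thus the dynamical nondegeneracy is exactly the open condition defining $U$.

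The substance is the surjectivity argument. Given $(\alpha,\beta,c)\in U_1(G_{10})$, the point $\beta$ has period $3$ and $f_c^2(\alpha)$ has period $2$, so $f_c$ simultaneously admits points of periods $2$ and $3$; Proposition~\ref{prop:1and2etc}(C) then supplies $t,y$ with $y^2 = t^6+2t^5+t^4+2t^3+6t^2+4t+1$ --- the first equation of \eqref{eq:2_2and3curve} --- realizing $\beta$ and $c$ by the stated formulas and the parametrized period-$2$ point as $-\tfrac{t^2+t-y}{2t(t+1)}$. The key observation is that the two $f_c$-preimages of $f_c^2(\alpha)$ are $f_c(\alpha)$ and $f_c^3(\alpha)$, which are negatives of one another, so $-f_c(\alpha)=f_c^3(\alpha)$ is precisely a point of the $2$-cycle; matching it against the parametrized point forces $f_c(\alpha)=\tfrac12 - \tfrac{y}{2t(t+1)}$, i.e.\ $y=-t(t+1)(2f_c(\alpha)-1)$. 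Setting $z:=2t(t+1)\alpha$ and using $\alpha^2=f_c(\alpha)-c$ then gives
\[
	z^2 = 4t^2(t+1)^2\bigl(f_c(\alpha)-c\bigr) = t^6+2t^5+6t^4+12t^3+11t^2+4t+1 - 2t(t+1)y,
\]
which is exactly the second equation of \eqref{eq:2_2and3curve}. Hence $(t,y,z)\in U$ and $\Phi(t,y,z)=(\alpha,\beta,c)$, establishing surjectivity.

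I expect no deep obstacle here: unlike the quadratic-point theorems, this proposition is purely the construction of a model, and the only delicate point is the sign bookkeeping in the previous paragraph --- making sure the periodic preimage $-f_c(\alpha)$ is identified with the correct one of the two points parametrized by $\pm y$, so that the second defining equation emerges in the stated form. The remaining verifications (the left-inverse identity and the type-$2_2$ identities) are mechanical Magma computations of the same flavor as those already carried out for $G_6'$ in Proposition~\ref{prop:1_2and2_2curve}.
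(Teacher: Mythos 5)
Your proposal is correct and follows essentially the same route as the paper's proof: well-definedness from $t(t+1)\ne 0$, injectivity via the left inverse \eqref{eq:2_2and3inverse}, verification that the three differences $f_c^2(\alpha)-f_c^3(\alpha)$, $f_c(\alpha)-f_c^3(\alpha)$, $\beta-f_c(\beta)$ are exactly the cusp factors $y$, $y-t^2-t$, $t^2+t+1$ (up to units), and surjectivity by feeding the period-$2$ point $-f_c(\alpha)=f_c^3(\alpha)$ together with $\beta$ into Proposition~\ref{prop:1and2etc}(C) and then computing $z^2=4t^2(t+1)^2(f_c(\alpha)-c)$. The only cosmetic difference is that you justify $f_c^3(\alpha)=-f_c(\alpha)$ by the preimage structure of $f_c^2(\alpha)$, whereas the paper cites the type-$2_1$ discussion at the start of \textsection\ref{sec:preper_pts}; these are the same fact, and your sign bookkeeping is handled automatically by the surjectivity of the parametrization in Proposition~\ref{prop:1and2etc}(C).
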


\begin{proof}
The condition $t(t+1) \ne 0$ implies that $\Phi$ is well-defined. A Magma computation verifies that the relations given in \eqref{eq:2_2and3inverse} serve as a left inverse for $\Phi$, so we have that $\Phi$ is injective.

If $(t,y,z)$ lies on $U$, then one can verify that $f_c^4(\alpha) = f_c^2(\alpha)$ and $f_c^3(\beta) = \beta$, and that
	\[ f_c^2(\alpha) - f_c^3(\alpha) = -\frac{y}{t(t+1)} , \ f_c(\alpha) - f_c^3(\alpha) = -\frac{y - t^2 - t}{t(t+1)} , \ \beta - f_c(\beta) = \frac{t^2 + t + 1}{t(t+1)}, \]
which are all nonzero by hypothesis. Thus $\alpha$ and $\beta$ are points of type $2_2$ and period 3, respectively, for $f_c$, and therefore $\Phi$ maps $Y$ into $U_1(G_{10})$.

Finally, suppose $(\alpha,\beta,c)$ is a point on $U_1(G_{10})$. Since $f_c(\alpha)$ is a point of type $2_1$, we must have that $-f_c(\alpha)$ is a point of period 2. By Proposition~\ref{prop:1and2etc}, there exists a solution $(t,y)$ to the first equation in \eqref{eq:2_2and3curve} for which
	\[ -f_c(\alpha) = -\frac{t^2 + t - y}{2t(t+1)} , \ \beta = \frac{t^3 + 2t^2 + t + 1}{2t(t+1)} , \ c = -\frac{t^6+2 t^5+4 t^4+8 t^3+9 t^2+4 t+1}{4 t^2 (t+1)^2}. \]
Since $f_c(\alpha) = \alpha^2 + c$, we may write
	\begin{align*}
	\alpha^2 &= \frac{t^2 + t - y}{2t(t+1)} + \frac{t^6+2 t^5+4 t^4+8 t^3+9 t^2+4 t+1}{4 t^2 (t+1)^2}\\ 
		&= \frac{t^6+2 t^5+6 t^4+12 t^3+11 t^2+4 t+1 - 2t(t+1)y}{4 t^2 (t+1)^2}.
	\end{align*}
Setting $z := 2t(t+1)\alpha$ completes the proof.
\end{proof}

Theorem~\ref{thm:2_2and3} is now a direct consequence of the following:

\begin{thm}\label{thm:2_2and3curve_pts}
Let $Y$ be the affine curve of genus 9 defined by \eqref{eq:2_2and3curve}. Then
	\[ Y(\bbQ, 2) = Y(\bbQ) = \{ (0,\pm 1, \pm 1), (-1, \pm 1, \pm 1) \}. \]
\end{thm}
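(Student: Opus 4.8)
The plan is to follow closely the proof of Theorem~\ref{thm:1_2and3pts}, which treats the analogous curve for cycle structure $(1,1,3)$. The first equation in \eqref{eq:2_2and3curve} is exactly the curve $Y'$ of Proposition~\ref{prop:1and2etc}(C), defined by \eqref{eq:2and3curve}. By Lemma~\ref{lem:1and3/2and3curves}(B), \emph{every} quadratic point on $Y'$ is obvious, i.e. has $t \in \bbQ$ (note this case, unlike the $(1,1,3)$ case, has no non-obvious quadratic points, so no separate $t^2+t+1=0$ analysis is needed). Since the projection $(t,y,z) \mapsto (t,y)$ carries any quadratic point of $Y$ to a point of $Y'$ of degree at most two, every quadratic point of $Y$ also satisfies $t \in \bbQ$. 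The same lemma gives $Y'(\bbQ) = \{(-1,\pm 1),(0,\pm 1)\}$, and substituting $t \in \{-1,0\}$, $y = \pm 1$ into the second equation of \eqref{eq:2_2and3curve} yields $z^2 = 1$; this produces exactly the eight rational points listed in the statement, so it remains to rule out quadratic points.

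Suppose then that $(t,y,z)$ is a quadratic point on $Y$. By the above $t \in \bbQ$, and since there is no rational point with $t \notin \{-1,0\}$ while $y^2 \in \bbQ$, we must have $t \in \bbQ \setminus \{-1,0\}$ and $y \notin \bbQ$. Writing $K := \bbQ(y)$ for the quadratic field of definition and $\sigma$ for its nontrivial automorphism, we have $\sigma(y) = -y$ because $y^2 \in \bbQ$, and $z \in K$. Setting $w := N_{K/\bbQ}(z) = z\,\sigma(z) \in \bbQ$ and applying $\sigma$ to the second equation of \eqref{eq:2_2and3curve} (using $t \in \bbQ$) gives $\sigma(z)^2 = h(t) + 2t(t+1)y$, where $h(t) = t^6 + 2t^5 + 6t^4 + 12t^3 + 11t^2 + 4t + 1$. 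Multiplying the two expressions for $z^2$ and $\sigma(z)^2$ and then substituting $y^2 = t^6 + 2t^5 + t^4 + 2t^3 + 6t^2 + 4t + 1$ eliminates $y$ and shows that $(t,w)$ is a rational point on the auxiliary hyperelliptic curve
	\[ C : \quad w^2 = h(t)^2 - 4t^2(t+1)^2\bigl(t^6 + 2t^5 + t^4 + 2t^3 + 6t^2 + 4t + 1\bigr), \]
a degree-$12$ model which I expect (after a Magma factorization) to be squarefree, hence of genus $5$.

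The crux is an auxiliary lemma, modeled on Lemma~\ref{lem:1_2and3pts}, determining $C(\bbQ)$. Evaluating at $t \in \{-1,0\}$ already gives $w = \pm 1$, so together with the two points at infinity $C$ has at least six known rational points. To see these are all, I would bound $\rk J(\bbQ)$ for the Jacobian $J$ of $C$ with Magma's \texttt{RankBound}, choose a prime $p$ of good reduction, count $\#C(\bbF_p)$, and apply Theorem~\ref{thm:stoll}. To match the known count one then exhibits the order-$6$ automorphism $\sigma(t,w) = (-(t+1)/t,\,-w/t^6)$ descending from the period-$3$ dynamics (cf. Remark~\ref{rem:3aut}), and checks that no rational point is fixed by any nontrivial power of it (the $t$-coordinate equation forces $t^2+t+1=0$ for $\sigma,\sigma^2,\sigma^4,\sigma^5$, and $w=0$ for $\sigma^3$, both impossible over $\bbQ$ given irreducibility of the factors). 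This forces $\#C(\bbQ)$ to be divisible by six, which combined with the Chabauty--Coleman bound pins it down to exactly the six points with $t \in \{-1,0\}$ and at infinity. Since a quadratic point on $Y$ would yield a rational point on $C$ with $t \in \bbQ \setminus \{-1,0\}$, we obtain a contradiction, proving $Y(\bbQ,2) = Y(\bbQ)$.

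The main obstacle is this final lemma for $C$: the method succeeds only if the bound from \texttt{RankBound} stays strictly below the genus (so that Theorem~\ref{thm:stoll} applies) and if the resulting estimate, after being halved by the divisibility-by-six constraint, leaves no room for extra rational points. Should the first choice of prime and the rank bound fail to force the count down to six, one would try additional primes of good reduction or a sharper descent — precisely the sort of difficulty encountered in the period $(1,4)$ analysis behind Theorem~\ref{thm:1and4quad_pts} — but here the order-$6$ symmetry makes a clean determination of $C(\bbQ)$ likely.
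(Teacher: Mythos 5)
Your proposal is correct and follows essentially the same route as the paper's proof: the norm construction $w = N_{K/\bbQ}(z)$ produces exactly the paper's degree-12, genus-5 auxiliary curve of Lemma~\ref{lem:2_2and3pts}, whose rational points are determined there just as you outline, via Stoll's bound (Theorem~\ref{thm:stoll}) combined with the divisibility-by-six argument from the order-6 automorphism $(t,w)\mapsto\left(-\frac{t+1}{t},-\frac{w}{t^6}\right)$. The paper's specific data (rank bound $\le 2$, $p=11$ with $\#C(\bbF_{11})=6$, hence $\#C(\bbQ)\le 10$) confirm that the numerical obstacle you flag at the end does not arise.
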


As in the previous section, we require a lemma that determines the complete set of rational points on an auxiliary curve.

\begin{lem}\label{lem:2_2and3pts}
Let $C$ denote the hyperelliptic curve of genus 5 given by the equation
	\begin{equation}\label{eq:2_2and3norms}
		w^2 = t^{12}+4 t^{11}+12 t^{10}+32 t^9+82 t^8+172 t^7+250 t^6+244 t^5+169 t^4+88 t^3+34 t^2+8 t+1.
	\end{equation}
Then
	\[ C(\bbQ) = \{ (0,\pm 1), (-1,\pm 1), \infty^{\pm} \}. \]
\end{lem}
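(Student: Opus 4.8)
The plan is to follow the strategy of Lemma~\ref{lem:1_2and3pts} almost verbatim. First I would record the six points in the statement and note that none is a Weierstrass point: the points $\infty^{\pm}$ at infinity exist because the defining polynomial $F(t) := t^{12}+4t^{11}+\cdots+1$ has even degree $12$, and the finite listed points satisfy $F(0)=F(-1)=1$, so $w=\pm 1\neq 0$ there. Next I would bound the Mordell--Weil rank: applying Magma's \texttt{RankBound} to the Jacobian $J$ of $C$ should give $\rk J(\bbQ)\le 2 < 5 = g(C)$, placing us squarely in the range where Theorem~\ref{thm:stoll} applies. Choosing a prime $p$ of good reduction and computing $\#C(\bbF_p)$, the Chabauty--Coleman bound should yield $\#C(\bbQ)\le 11$ — for instance with a prime and point count analogous to the $p=5$, $\#C(\bbF_5)=6$ situation in the proof of Lemma~\ref{lem:1_2and3pts}, where $6 + 2\cdot 2 + \lfloor 4/3\rfloor = 11$.

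The crucial input is an order-six automorphism forcing $\#C(\bbQ)$ to be divisible by six. Motivated by the order-three action $t\mapsto -(t+1)/t$ on the period-three parameter (Remark~\ref{rem:3aut}), I would verify that
	\[ \sigma(t,w) := \left(-\frac{t+1}{t},\,-\frac{w}{t^6}\right) \]
defines an automorphism of $C$; concretely this is the functional equation $t^{12}\,F\!\left(-\tfrac{t+1}{t}\right) = F(t)$, which a Magma computation confirms (the sign of the $w$-transformation is irrelevant to the functional equation, since $w^2$ only sees $t^{-12}$). One then checks that $\sigma^3 = \iota$ is the hyperelliptic involution $(t,w)\mapsto(t,-w)$, so $\sigma$ has order six, with $\sigma$ and $\sigma^4$ inducing $t\mapsto -(t+1)/t$ and $\sigma^2,\sigma^5$ inducing $t\mapsto -1/(t+1)$.

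I would then show no rational point is fixed by a nontrivial power of $\sigma$: a point fixed by $\sigma,\sigma^2,\sigma^4$, or $\sigma^5$ forces $t^2+t+1=0$ (comparing $t$-coordinates), hence $t\notin\bbQ$; a point fixed by $\iota=\sigma^3$ has $w=0$, i.e. $F(t)=0$, but $F$ has no rational roots (its irreducible factors over $\bbQ$ all have degree at least two, as one verifies in Magma), so again $t\notin\bbQ$. Consequently every rational $\sigma$-orbit has exactly six elements, so $6\mid \#C(\bbQ)$. Combined with the bound $\#C(\bbQ)\le 11$ and the six points already exhibited, this forces $\#C(\bbQ)=6$ and gives the claimed equality.

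The main obstacle is not the conceptual structure, which is identical to Lemma~\ref{lem:1_2and3pts}, but ensuring the two quantitative inputs cooperate: the rank bound must land strictly below the genus \emph{and}, together with the chosen reduction prime, produce a Stoll bound below $12$, so that divisibility by six pins down $\#C(\bbQ)$ exactly. If \texttt{RankBound} fails to certify $\rk J(\bbQ)\le 2$, or the resulting bound exceeds $11$, one would need a more favorable prime, a sharper descent, or a Mordell--Weil sieve to close the gap. The verification of the functional equation and of the absence of rational roots of $F$ are routine Magma checks.
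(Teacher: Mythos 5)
Your proposal is correct and follows the paper's proof essentially verbatim: a Magma rank bound of $\rk J(\bbQ)\le 2$, Stoll's Chabauty--Coleman bound at a prime of good reduction (the paper uses $p=11$ with $\#C(\bbF_{11})=6$, giving $\#C(\bbQ)\le 10$, slightly sharper than your anticipated $11$, though either suffices), and then the order-six automorphism $\sigma(t,w)=\left(-\frac{t+1}{t},-\frac{w}{t^6}\right)$ with the fixed-point analysis (using irreducibility of the degree-$12$ polynomial) to force $6\mid\#C(\bbQ)$, hence $\#C(\bbQ)=6$.
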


\begin{proof}
Let $J$ denote the Jacobian of $C$. We apply Magma's \texttt{RankBound} function to see that $\rk J(\bbQ) \le 2$, so we may apply the method of Chabauty and Coleman. The curve $C$ has good reduction at $p = 11$, and we find that $\#C(\bbF_{11}) = 6$, so Theorem~\ref{thm:stoll} implies that
	\[ \#C(\bbQ) \le \#C(\bbF_{11}) + 2 \cdot \rk J(\bbQ) \le 10. \]
Just as in the proof of Lemma~\ref{lem:1_2and3pts}, it will suffice to show that $\#C(\bbQ)$ is a multiple of six; motivated in the same way as in the proof of that lemma, we are led to consider the automorphism $\sigma \in \Aut(C)$ given by
	\[ \sigma(t,w) := \left( -\frac{t+1}{t} , -\frac{w}{t^6} \right). \]
We see that $\sigma$ has order six:
	\begin{align*}
		\sigma : (t,w) &\mapsto \left( -\frac{t+1}{t} , -\frac{w}{t^6} \right);\\
		\sigma^2 : (t,w) &\mapsto \left( -\frac{1}{t+1} , \frac{w}{(t+1)^6} \right);\\
		\iota = \sigma^3 : (t,w) &\mapsto \left( t, - w \right);\\
		\sigma^4 : (t,w) &\mapsto \left( -\frac{t+1}{t} , \frac{w}{t^6} \right);\\
		\sigma^5 : (t,w) &\mapsto \left( -\frac{1}{t+1} , -\frac{w}{(t+1)^6} \right);\\
		\id = \sigma^6 : (t,w) &\mapsto (t,w).
	\end{align*}
Proceeding as in the proof of Lemma~\ref{lem:1_2and3pts}, and noting that the degree 12 polynomial in \eqref{eq:2_2and3norms} is irreducible over $\bbQ$, we find that no rational point may be fixed by $\sigma^k$ for any $k$. Therefore, $6$ divides $\#C(\bbQ)$, completing the proof of the lemma.
\end{proof}

\begin{proof}[Proof of Theorem~\ref{thm:2_2and3curve_pts}]
By Lemma~\ref{lem:1and3/2and3curves}(B), the only rational solutions to the first equation of \eqref{eq:2_2and3curve} are $(-1,\pm 1) $ and $(0, \pm 1)$. Since $t \in \{-1,0\}$ also implies $z \in \{\pm 1\}$, we have found all rational points on $Y$.

It also follows from Lemma~\ref{lem:1and3/2and3curves}(B) that if $(t,y,z)$ is a quadratic point on $Y$, then $t \in \bbQ \setminus \{-1,0\}$, $y \not \in \bbQ$, and $y^2 \in \bbQ$. Let $K$ be the quadratic field generated by $y$. If we take norms of both sides of the second equation in \eqref{eq:2_2and3curve} and set $w := N_{K/\bbQ}(z) \in \bbQ$, we get
	\[ w^2 = (t^6 + 2t^5 + 6t^4 + 12t^3 + 11t^2 + 4t + 1)^2 - 4t^2(t+1)^2y^2. \]
Using \eqref{eq:2_2and3curve} to rewrite $y^2$ in terms of $t$, this equation is equivalent to
	\[
		w^2 = t^{12}+4 t^{11}+12 t^{10}+32 t^9+82 t^8+172 t^7+250 t^6+244 t^5+169 t^4+88 t^3+34 t^2+8 t+1.
	\]
We therefore have a rational point $(t,w)$ on the curve $C$ from Lemma~\ref{lem:2_2and3pts}. However, the only rational points on $C$ have $t \in \{-1,0\}$, and we have already shown that a quadratic point on $Y$ cannot have $t \in \{-1,0\}$. Therefore $Y$ has no quadratic points.
\end{proof}


\subsection{Proof of the main theorem}\label{sec:main_thm}
We can now prove our main result, which we restate here:

\begin{mainthm}
Let $K$ be a quadratic field, and let $c \in K$. Suppose $f_c$ does not admit $K$-rational points of period greater than four, and suppose that $G(f_c,K)$ is not isomorphic to one of the 46 graphs shown in Appendix~\ref{app:graphs}. Then $G(f_c,K)$	\begin{enumerate}
		\item \emph{properly} contains one of the following graphs from Appendix~\ref{app:graphs}:
			\[ \text{10(1,1)b, 10(2), 10(3)a, 10(3)b, 12(2,1,1)b, 12(4), 12(4,2); or} \]
		\item contains one of the following graphs (from Figures \ref{fig:1and4}, \ref{fig:graph12_2_minus2pts}, and \ref{fig:1and2_3}, respectively):
			\[ \text{$G_0$, $G_2$, $G_4$.} \]
	\end{enumerate}
Moreover:
	\renewcommand{\labelenumi}{(\alph{enumi})}
	\begin{enumerate}
		\item There are at most four pairs $(K,c)$ for which $G(f_c,K)$ properly contains the graph 12(2,1,1)b, at most five pairs for which $G(f_c,K)$ properly contains 12(4), and at most one pair for which $G(f_c,K)$ properly contains 12(4,2). For each such pair we must have $c \in \bbQ$.
		\item There is at most one pair $(K,c)$ for which $G(f_c,K)$ contains $G_0$, at most four pairs for which $G(f_c,K)$ contains $G_2$, and at most three pairs for which $G(f_c,K)$ contains $G_4$. For each such pair we must have $c \in \bbQ$.
	\end{enumerate}
	\renewcommand{\labelenumi}{(\Alph{enumi})}
\end{mainthm}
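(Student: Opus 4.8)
The plan is to assemble the theorem from the results of Sections~\ref{sec:periodic} and~\ref{sec:preper_pts} by a case analysis on the cycle structure of $G(f_c,K)$, using the standing hypotheses that $f_c$ has no $K$-rational point of period exceeding four and that $G(f_c,K)$ does not appear in Appendix~\ref{app:graphs}. I would first reduce to the strongly admissible case: by Lemma~\ref{lem:admissible}, $G(f_c,K)$ fails to be strongly admissible only when $0 \in \PrePer(f_c,K)$ or for finitely many additional parameters (among them $c = 1/4$), and one checks that these exceptional parameters produce only graphs already listed in Appendix~\ref{app:graphs}; under our hypothesis they are therefore excluded. If $f_c$ admits no $K$-rational periodic point, then $\PrePer(f_c,K)$ is empty and $G(f_c,K)$ is the graph $0$, again in Appendix~\ref{app:graphs}, so we may assume the cycle structure of $G(f_c,K)$ is nonempty. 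Every cycle then has length in $\{1,2,3,4\}$, and by Corollary~\ref{cor:number_of_cycles} the length $1$ occurs at most twice while each of $2,3,4$ occurs at most once. Eliminating simultaneous $3$- and $4$-cycles via Theorem~\ref{thm:3and4} and simultaneous periods $1,2,3$ via Theorem~\ref{thm:1and2and3}, and recalling from Remark~\ref{rem:redundant} that a fixed point forces a second one, the full cycle structure is forced to lie among
\[ (1,1),\ (2),\ (3),\ (4),\ (1,1,2),\ (1,1,3),\ (2,3),\ (1,1,4),\ (2,4),\ (1,1,2,4). \]

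Each case is then resolved by the matching result proved earlier. The structures $(1,1,3)$ and $(2,3)$ are impossible under our hypothesis, since Propositions~\ref{prop:(1,3)} and~\ref{prop:(2,3)} place the corresponding graphs in Appendix~\ref{app:graphs}. The remaining ``pure'' structures feed directly into the dichotomy: Proposition~\ref{prop:(1)} forces $(1,1)$ to properly contain 10(1,1)b; Proposition~\ref{prop:(2)} forces $(2)$ to contain $G_2$ or properly contain 10(2); Proposition~\ref{prop:(3)} forces $(3)$ to properly contain 10(3)a or 10(3)b; Proposition~\ref{prop:(4)} forces $(4)$ to properly contain 12(4); and Proposition~\ref{prop:(1,2)} forces $(1,1,2)$ to contain $G_4$ or properly contain 12(2,1,1)b. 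For the structures containing a $4$-cycle alongside smaller cycles, I would observe that any graph whose cycle structure contains both a fixed point and a $4$-cycle --- that is, $(1,1,4)$ or $(1,1,2,4)$ --- contains the graph $G_0$ of Figure~\ref{fig:1and4}, so Theorem~\ref{thm:1and4} applies; and any graph of structure $(2,4)$ not already in Appendix~\ref{app:graphs} must properly contain 12(4,2), controlled by Theorem~\ref{thm:2and4}. Collecting these alternatives yields exactly the dichotomy (A)/(B).

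The quantitative statements (a) and~(b) are then read off from the finiteness halves of the same results: the bounds of four pairs properly containing 12(2,1,1)b and of three pairs containing $G_4$ from Proposition~\ref{prop:(1,2)} (ultimately Theorem~\ref{thm:1and2_3}); five pairs properly containing 12(4) from Proposition~\ref{prop:(4)}; one pair properly containing 12(4,2) from Theorem~\ref{thm:2and4}; one pair containing $G_0$ from Theorem~\ref{thm:1and4}; and four pairs containing $G_2$ from Proposition~\ref{prop:(2)}. Each cited result also forces $c \in \bbQ$ in these finite cases, giving the rationality claims. Since the theorem is a synthesis rather than a single new computation, the main obstacle is organizational rather than analytic: I must confirm that the enumeration of cycle structures is genuinely exhaustive, that every admissible structure larger than a minimal forbidden configuration is correctly subsumed (in particular that $(1,1,2,4)$ is absorbed into the $G_0$ case and not double-counted against the $(2,4)$ bound), and --- the most delicate point --- that dropping the admissibility hypothesis under which the results of Section~\ref{sec:preper_pts} are stated introduces no graphs outside Appendix~\ref{app:graphs}.
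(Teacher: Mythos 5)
Your proposal is correct and takes essentially the same approach as the paper: reduce to the strongly admissible case via Lemma~\ref{lem:admissible}, cut the possible cycle structures down to the same ten-element list using Corollary~\ref{cor:number_of_cycles} together with Theorems~\ref{thm:3and4} and~\ref{thm:1and2and3}, and then dispatch each structure through Theorems~\ref{thm:1and4} and~\ref{thm:2and4} and Propositions~\ref{prop:(1)}, \ref{prop:(2)}, \ref{prop:(3)}, \ref{prop:(4)}, \ref{prop:(1,2)}, \ref{prop:(1,3)}, and~\ref{prop:(2,3)}, with $(1,1,2,4)$ absorbed into the cases containing a $4$-cycle. The only step you flag as delicate --- that the non-strongly-admissible pairs ($c=1/4$ or $0$ preperiodic) yield only graphs already in Appendix~\ref{app:graphs} --- is exactly what the paper settles by citing the classification in \cite[\textsection 5]{doyle/faber/krumm:2014}, so your ``one checks'' there is a legitimate appeal to prior work rather than a gap.
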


\begin{proof}
By Lemma~\ref{lem:admissible}, the only way that $G(f_c,K)$ may be fail to be strongly admissible is if $c = 1/4$, in which case $f_c = f_{1/4}$ has a single fixed point, or $0$ is preperiodic for $f_c$. In \cite[\textsection 5]{doyle/faber/krumm:2014}, we found that if $(K,c)$ is a quadratic pair with a unique fixed point, then $G(f_c,K)$ is isomorphic to 2(1), 4(1), or 6(2,1), and if $(K,c)$ is a quadratic pair for which $0$ is preperiodic, then $G(f_c,K)$ is isomorphic to one of the graphs in Appendix~\ref{app:graphs} with an odd number of vertices. Therefore, if $(K,c)$ is a quadratic pair for which $G(f_c,K)$ does not appear in Appendix~\ref{app:graphs}, then $G(f_c,K)$ is strongly admissible.

Now suppose $(K,c)$ is a quadratic pair for which $G(f_c,K)$ is strongly admissible, and suppose further that $f_c$ admits no $K$-rational points of period greater than four. Combining Corollary~\ref{cor:number_of_cycles} and Theorems~\ref{thm:3and4} and \ref{thm:1and2and3}, the cycle structure of $G(f_c,K)$ must lie in the following list:
	\[
		(1,1), (2), (3), (4), (1,1,2), (1,1,3), (1,1,4), (2,3), (2,4), (1,1,2,4).
	\]
For each cycle structure $\tau$ listed above --- with the exception of $\tau = (1,1,2,4)$ --- a detailed study of graphs $G(f_c,K)$ with cycle structure $\tau$ may be found in Sections~\ref{sec:periodic} and \ref{sec:preper_pts}. In particular, for each such $\tau$, we have given necessary conditions for $(K,c)$ to be a quadratic pair for which $G(f_c,K)$ has cycle structure $\tau$: the relevant results are Theorems~\ref{thm:1and4} and \ref{thm:2and4} and Propositions~\ref{prop:(1)}, \ref{prop:(2)}, \ref{prop:(3)}, \ref{prop:(4)}, \ref{prop:(1,2)}, \ref{prop:(1,3)}, and \ref{prop:(2,3)}.  Since any graph with cycle structure (1,1,2,4) contains subgraphs with cycle structures (1,1,4) and (2,4), respectively, the compilation of these results from Sections~\ref{sec:periodic} and \ref{sec:preper_pts} completes the proof of Theorem~\ref{thm:main_thm}.
\end{proof}


\appendix
\section[Preperiodic graphs over quadratic extensions]{Preperiodic graphs over quadratic extensions}\label{app:graphs}

In this appendix, we provide a summary of the data obtained in the search for preperiodic graphs for quadratic polynomials over quadratic fields conducted in \cite{doyle/faber/krumm:2014}.

\subsection[List of known graphs]{List of known graphs}\label{sub:known_graphs}
We list here the 46 preperiodic graphs discovered in \cite{doyle/faber/krumm:2014}. The label of each graph is in the form $N(\ell_1,\ell_2,\ldots)$, where $N$ denotes the number of vertices in the graph and $\ell_1,\ell_2,\ldots$ are the lengths of the directed cycles in the graph in nonincreasing order. If more than one isomorphism class of graphs with this data was observed, we add a lowercase Roman letter to distinguish them. For example, the labels 5(1,1)a and 5(1,1)b correspond to the two isomorphism classes of graphs observed that have five vertices and two fixed points. In all figures below we omit the connected component corresponding to the point at infinity.


\begin{center}

\begin{tabularx}{\textwidth}{|L|L|L|} \hline
0 & 2(1) & 3(1,1)
\end{tabularx} \offinterlineskip
\begin{tabularx}{\textwidth}{|H|H|H|}
 & \pic{graph2_1} & \pic{graph3_11} \\ \hline
\end{tabularx} \offinterlineskip
\begin{tabularx}{\textwidth}{|L|L|L|}
3(2) & 4(1) & 4(1,1)
\end{tabularx} \offinterlineskip
\begin{tabularx}{\textwidth}{|H|H|H|}
\pic{graph3_2} & \pic{graph4_1} & \pic{graph4_11} \\ \hline
\end{tabularx} \offinterlineskip
\newpage
\begin{tabularx}{\textwidth}{|L|L|L|} \hline
4(2) & 5(1,1)a & 5(1,1)b \\
\end{tabularx} \offinterlineskip
\begin{tabularx}{\textwidth}{|H|H|H|}
\pic{graph4_2} & \pic{graph5_11a} & \pic{graph5_11b} \\ \hline
\end{tabularx} \offinterlineskip
\begin{tabularx}{\textwidth}{|M|M|}
5(2)a & 5(2)b
\end{tabularx} \offinterlineskip
\begin{tabularx}{\textwidth}{|W|W|}
\pic{graph5_2a} & \pic{graph5_2b} \\ \hline
\end{tabularx} \offinterlineskip
\begin{tabularx}{\textwidth}{|L|L|L|}
6(1,1) & 6(2) & 6(2,1)
\end{tabularx} \offinterlineskip
\begin{tabularx}{\textwidth}{|H|H|H|}
\pic{graph6_11} & \pic{graph6_2} & \pic{graph6_21} \\ \hline
\end{tabularx} \offinterlineskip
\begin{tabularx}{\textwidth}{|M|M|}
6(3) & 7(1,1)a
\end{tabularx} \offinterlineskip
\begin{tabularx}{\textwidth}{|W|W|}
\pic{graph6_3} & \pic{graph7_11a} \\ \hline
\end{tabularx} \offinterlineskip
\begin{tabularx}{\textwidth}{|M|M|}
7(1,1)b & 7(2,1,1)a
\end{tabularx} \offinterlineskip
\begin{tabularx}{\textwidth}{|W|W|}
\pic{graph7_11b} & \pic{graph7_211a} \\ \hline
\end{tabularx} \offinterlineskip
\begin{tabularx}{\textwidth}{|M|M|}
7(2,1,1)b & 8(1,1)a
\end{tabularx} \offinterlineskip
\begin{tabularx}{\textwidth}{|W|W|}
\pic{graph7_211b} & \pic{graph8_11a} \\ \hline
\end{tabularx} \offinterlineskip
\begin{tabularx}{\textwidth}{|M|M|}
8(1,1)b & 8(2)a
\end{tabularx} \offinterlineskip
\begin{tabularx}{\textwidth}{|W|W|}
\pic{graph8_11b} & \pic{graph8_2a} \\ \hline
\end{tabularx} \offinterlineskip
\begin{tabularx}{\textwidth}{|M|M|}
8(2)b & 8(2,1,1)
\end{tabularx} \offinterlineskip
\begin{tabularx}{\textwidth}{|W|W|}
\pic{graph8_2b} & \pic{graph8_211} \\ \hline
\end{tabularx} \offinterlineskip
\newpage
\begin{tabularx}{\textwidth}{|M|M|} \hline
8(3) & 8(4)
\end{tabularx} \offinterlineskip
\begin{tabularx}{\textwidth}{|W|W|}
\pic{graph8_3} & \pic{graph8_4} \\ \hline
\end{tabularx} \offinterlineskip
\begin{tabularx}{\textwidth}{|M|M|}
9(2,1,1) & 10(1,1)a
\end{tabularx} \offinterlineskip
\begin{tabularx}{\textwidth}{|W|W|}
\pic{graph9_211} & \pic{graph10_11a} \\ \hline
\end{tabularx} \offinterlineskip
\begin{tabularx}{\textwidth}{|M|M|}
10(1,1)b & 10(2)
\end{tabularx} \offinterlineskip
\begin{tabularx}{\textwidth}{|W|W|}
\pic{graph10_11b} & \includegraphics[scale=.58]{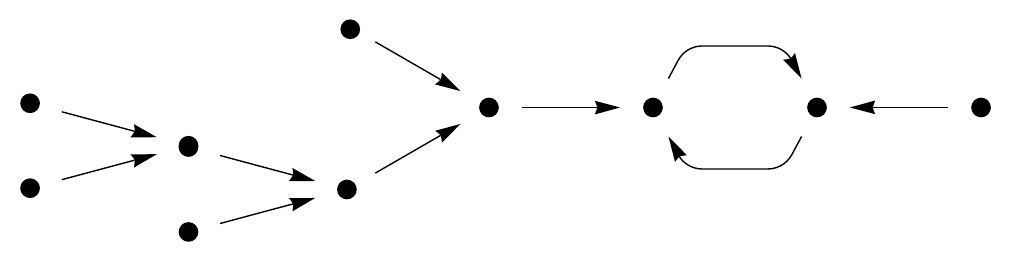} \\ \hline
\end{tabularx} \offinterlineskip
\begin{tabularx}{\textwidth}{|M|M|}
10(2,1,1)a & 10(2,1,1)b
\end{tabularx} \offinterlineskip
\begin{tabularx}{\textwidth}{|W|W|}
\pic{graph10_211a} & \pic{graph10_211b} \\ \hline
\end{tabularx} \offinterlineskip
\begin{tabularx}{\textwidth}{|L|L|L|}
10(3)a & 10(3)b & 10(3,1,1)
\end{tabularx} \offinterlineskip
\begin{tabularx}{\textwidth}{|H|H|H|}
\pic{graph10_3a} & \pic{graph10_3b} & \pic{graph10_311} \\ \hline
\end{tabularx} \offinterlineskip
\begin{tabularx}{\textwidth}{|M|M|}
10(3,2) & 12(2)
\end{tabularx} \offinterlineskip
\begin{tabularx}{\textwidth}{|W|W|}
\pic{graph10_32} & \includegraphics[scale=.58]{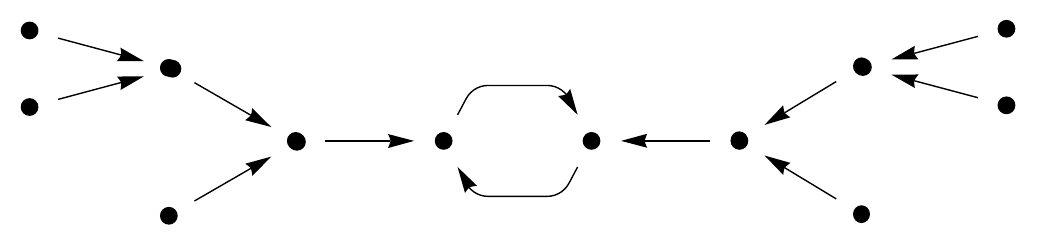} \\ \hline
\end{tabularx} \offinterlineskip
\newpage
\begin{tabularx}{\textwidth}{|M|M|} \hline
12(2,1,1)a & 12(2,1,1)b
\end{tabularx} \offinterlineskip
\begin{tabularx}{\textwidth}{|W|W|}
\pic{graph12_211a} & \pic{graph12_211b} \\ \hline
\end{tabularx} \offinterlineskip
\begin{tabularx}{\textwidth}{|L|L|L|}
12(3) & 12(4) & 12(4,2)
\end{tabularx} \offinterlineskip
\begin{tabularx}{\textwidth}{|H|H|H|}
\pic{graph12_3} & \pic{graph12_4} & \pic{graph12_42} \\ \hline
\end{tabularx} \offinterlineskip
\begin{tabularx}{\textwidth}{|M|M|}
12(6) & 14(2,1,1)
\end{tabularx} \offinterlineskip
\begin{tabularx}{\textwidth}{|W|W|}
\pic{graph12_6} & \pic{graph14_211} \\ \hline
\end{tabularx} \offinterlineskip
\begin{tabularx}{\textwidth}{|M|M|}
14(3,1,1) & 14(3,2)
\end{tabularx} \offinterlineskip
\begin{tabularx}{\textwidth}{|W|W|}
\pic{graph14_311} & \pic{graph14_32} \\ \hline
\end{tabularx} \offinterlineskip

\end{center}

\subsection[Representative data]{Representative data}
We give here a representative set of data for each graph in \textsection \ref{sub:known_graphs}. Each item in the list below includes the following information: 
\[
	K,\ p(t),\ c,\ \PrePer(f_c,K)'.
\]
Here $K = \bbQ(\sqrt{D})$ is a quadratic field over which this preperiodic structure was observed; $p(t)$ is a defining polynomial for $K$ with a root $g \in K$; $c$ is an element of $K$ such that the set $\PrePer(f_c,K) \setminus \{\infty\}$, when endowed with the structure of a directed graph, is isomorphic to the given graph; and $\PrePer(f_c,K)'$ is an abbreviated form of the full set of finite $K$-rational preperiodic points for $f_c$: since $x \in \PrePer(f_c, K)$ if and only if $-x \in \PrePer(f_c,K)$, we list only one of $x$ and $-x$ in the set $\PrePer(f_c,K)'$. We do not make explicit the correspondence between individual elements of this set and vertices of the graph. If a particular graph was observed over both real and imaginary quadratic fields, we give a representative set of data for each case.
\newpage
\begin{longtable}[l]{l l}
{\bf 0:} & $\bbQ(\sqrt{5}), \;\; t^2 - t - 1,\;\; 1, \;\; \emptyset$\\
\vspace{4mm}
	& $\bbQ(\sqrt{-3}), \;\; t^2 - t + 1,\;\; 2, \;\; \emptyset$\\
{\bf 2(1):} & $\bbQ(\sqrt{5}), \;\; t^2 - t - 1,\;\; \frac{1}{4}, \;\; \left\{\frac{1}{2}\right\}$\\
\vspace{4mm}
	& $\bbQ(\sqrt{-7}), \;\; t^2 - t + 2,\;\; \frac{1}{4}, \;\; \left\{\frac{1}{2}\right\}$
\\
{\bf 3(1,1):}
&  $\bbQ(\sqrt{5}), \;\; t^2 - t - 1,\;\; 0, \;\; \left\{0, 1\right\}$ \\
\vspace{4mm}
&  $\bbQ(\sqrt{-7}), \;\; t^2 - t + 2,\;\; 0, \;\; \left\{0, 1\right\}$
\\
{\bf 3(2):} &  $\bbQ(\sqrt{3}), \;\; t^2 - 3,\;\; -1, \;\; \left\{0, 1\right\}$\\
\vspace{4mm}
& $\bbQ(\sqrt{-3}), \;\; t^2 - t + 1,\;\; -1, \;\; \left\{0, 1\right\}$
\\
\vspace{4mm}
{\bf 4(1):} & $\bbQ(\sqrt{-3}), \;\;t^2 - t + 1,\;\; \frac{1}{4}, \;\; \left\{ \frac{1}{2},  g - \frac{1}{2}\right\}$
\\
{\bf 4(1,1):}
&  $\bbQ(\sqrt{5}), \;\; t^2 - t - 1,\;\; \frac{1}{5}, \;\; \left\{\frac{1}{5} g + \frac{2}{5}, \frac{1}{5} g - \frac{3}{5}\right\}$ \\
\vspace{4mm}
&  $\bbQ(\sqrt{-3}), \;\;t^2 - t + 1, \;\;1, \;\; \left\{ g,  g - 1\right\}$
\\
{\bf 4(2):}
&  $\bbQ(\sqrt{5}), \;\; t^2 - t - 1,\;\; -\frac{4}{5}, \;\; \left\{\frac{1}{5} g + \frac{2}{5}, \frac{1}{5} g - \frac{3}{5}\right\}$\\
\vspace{4mm}
&  $\bbQ(\sqrt{-3}), \;\;t^2 - t + 1, \;\;-\frac{2}{3}, \;\; \left\{\frac{1}{3} g - \frac{2}{3}, \frac{1}{3} g + \frac{1}{3}\right\}$
\\
{\bf 5(1,1)a:}
&  $\bbQ(\sqrt{13}), \;\; t^2 - t - 3,\;\; -2, \;\; \left\{0, 2, 1\right\}$ \\
\vspace{4mm}
&  $\bbQ(\sqrt{-3}), \;\; t^2 - t + 1,\;\; -2, \;\; \left\{0, 2, 1\right\}$
\\
\vspace{4mm}
{\bf 5(1,1)b:} & $\bbQ(\sqrt{-1}), \;\;t^2 + 1, \;\;0, \;\; \left\{0,  1,  g\right\}$
\\
\vspace{4mm}
{\bf 5(2)a:} & $\bbQ(\sqrt{-1}), \;\;t^2 + 1, \;\;g, \;\;\left\{0,  g, g - 1\right\}$
\\
\vspace{4mm}
{\bf 5(2)b:} & $\bbQ(\sqrt{2}), \;\;t^2 - 2, \;\;-1, \;\; \left\{0,  1,  g\right\}$
\\
{\bf 6(1,1):}
&  $\bbQ(\sqrt{5}), \;\; t^2 - t - 1,\;\; -\frac{3}{4}, \;\; \left\{\frac{1}{2},  g - \frac{1}{2}, \frac{3}{2}\right\}$ \\
\vspace{4mm}
&  $\bbQ(\sqrt{-3}), \;\; t^2 - t + 1,\;\; -\frac{3}{4}, \;\; \left\{\frac{1}{2}, \frac{3}{2}, g - \frac{1}{2}\right\}$
\\
{\bf 6(2):}
&  $\bbQ(\sqrt{5}), \;\;t^2 - t - 1, \;\;-3, \;\; \left\{ 1,  2, 2 g - 1\right\}$ \\
\vspace{4mm}
&  $\bbQ(\sqrt{-3}), \;\; t^2 - t + 1,\;\; -\frac{13}{9}, \;\; \left\{\frac{1}{3}, \frac{4}{3}, \frac{5}{3}\right\}$
\\
\vspace{4mm}
{\bf 6(2,1):} & $\bbQ(\sqrt{-1}), \;\;t^2 + 1, \;\;\frac{1}{4}, \;\;\left\{ \frac{1}{2},  g - \frac{1}{2}, g + \frac{1}{2}\right\}$
\\
{\bf 6(3):}
&  $\bbQ(\sqrt{33}), \;\; t^2 - t - 8,\;\; -\frac{301}{144}, \;\; \left\{\frac{5}{12}, \frac{19}{12}, \frac{23}{12}\right\}$\\
\vspace{4mm}
&  $\bbQ(\sqrt{-67}), \;\; t^2 - t + 17,\;\; -\frac{301}{144}, \;\; \left\{\frac{5}{12}, \frac{19}{12}, \frac{23}{12}\right\}$
\\
\vspace{4mm}
{\bf 7(1,1)a:} & $\bbQ(\sqrt{2}), \;\;t^2 - 2, \;\;-2, \;\; \left\{0,  1, 2,  g\right\}$
\\
\vspace{4mm}
{\bf 7(1,1)b:} & $\bbQ(\sqrt{3}), \;\;t^2 - 3, \;\;-2, \;\; \left\{0,  1, 2, g\right\}$
\\
\vspace{4mm}
{\bf 7(2,1,1)a:} & $\bbQ(\sqrt{-3}), \;\;t^2 - t + 1,\;\;0, \;\; \left\{0,  1,  g, g - 1\right\}$
\\
\vspace{4mm}
{\bf 7(2,1,1)b:} & $\bbQ(\sqrt{5}), \;\;t^2 - t - 1, \;\;-1, \;\; \left\{0,  1,  g, g - 1\right\}$
\\
{\bf 8(1,1)a:}
&  $\bbQ(\sqrt{13}), \;\; t^2 - t - 3,\;\; -\frac{289}{144}, \;\; \left\{\frac{5}{6} g + \frac{1}{12}, \frac{1}{2} g - \frac{13}{12}, \frac{1}{2} g + \frac{7}{12}, \frac{5}{6} g - \frac{11}{12}\right\}$\\
\vspace{4mm}
&  $\bbQ(\sqrt{-15}), \;\; t^2 - t + 4,\;\; -\frac{5}{16}, \;\; \left\{\frac{1}{4}, \frac{3}{4}, \frac{5}{4}, \frac{1}{2} g - \frac{1}{4}\right\}$
\\
{\bf 8(1,1)b:}
&  $\bbQ(\sqrt{13}), \;\; t^2 - t - 3,\;\; -\frac{40}{9}, \;\; \left\{\frac{4}{3}, \frac{8}{3}, \frac{5}{3}, \frac{4}{3} g - \frac{2}{3}\right\}$\\
\vspace{4mm}
&  $\bbQ(\sqrt{-2}), \;\; t^2 + 2,\;\; -\frac{10}{9}, \;\; \left\{\frac{2}{3}, \frac{1}{3} g, \frac{4}{3}, \frac{5}{3}\right\}$
\\
{\bf 8(2)a:}
&  $\bbQ(\sqrt{10}), \;\; t^2 - 10,\;\; -\frac{13}{9}, \;\; \left\{\frac{1}{3}, \frac{1}{3} g, \frac{4}{3}, \frac{5}{3}\right\}$\\
\vspace{4mm}
&  $\bbQ(\sqrt{-3}), \;\;t^2 - t + 1,\;\; -\frac{5}{12}, \;\; \left\{ \frac{2}{3} g - \frac{5}{6},  \frac{2}{3} g + \frac{1}{6}, \frac{1}{3} g  + \frac{5}{6},   \frac{1}{3} g - \frac{7}{6} \right\}$
\\
{\bf 8(2)b:}
&  $\bbQ(\sqrt{13}), \;\; t^2 - t - 3,\;\; -\frac{37}{9}, \;\; \left\{\frac{4}{3}, \frac{5}{3}, \frac{7}{3}, \frac{4}{3} g - \frac{2}{3}\right\}$\\
\vspace{4mm}
&  $\bbQ(\sqrt{-7}), \;\; t^2 -t + 2,\;\; -\frac{13}{16}, \;\;  \left\{\frac{1}{4}, \frac{3}{4}, \frac{1}{2} g - \frac{1}{4}, \frac{5}{4}\right\}$
\\
{\bf 8(2,1,1):}
&  $\bbQ(\sqrt{5}), \;\; t^2 - t - 1,\;\; -12, \;\; \left\{3, 3 g - 1, 3 g - 2, 4\right\}$\\
\vspace{4mm}
&  $\bbQ(\sqrt{-3}), \;\; t^2 - t + 1,\;\; \frac{7}{12}, \;\; \left\{\frac{2}{3} g + \frac{1}{6}, \frac{2}{3} g - \frac{5}{6}, \frac{4}{3} g - \frac{7}{6}, \frac{4}{3} g - \frac{1}{6}\right\}$
\\
{\bf 8(3):}
&  $\bbQ(\sqrt{5}), \;\; t^2 - t - 1,\;\; -\frac{29}{16}, \;\; \left\{\frac{1}{4}, \frac{5}{4}, \frac{3}{4}, \frac{7}{4}\right\}$\\
\vspace{4mm}
&  $\bbQ(\sqrt{-3}), \;\; t^2 -t + 1,\;\; -\frac{29}{16}, \;\; \left\{\frac{1}{4}, \frac{5}{4}, \frac{3}{4}, \frac{7}{4}\right\}$
\\
{\bf 8(4):}
&  $\bbQ(\sqrt{10}), \;\;t^2 - 10,\;\; -\frac{155}{72}, \;\; \left\{ \frac{1}{4} g - \frac{1}{6},   \frac{1}{4} g + \frac{1}{6},  \frac{1}{12} g - \frac{3}{2},   \frac{1}{12} g + \frac{3}{2}\right\}$\\
\vspace{4mm}
&  $\bbQ(\sqrt{-455}), \;\;t^2 - t + 114,\;\; \frac{199}{720},\;\;\left\{ \frac{1}{10}g+\frac{17}{60}, \frac{1}{15}g - \frac{47}{60},\frac{1}{10}g-\frac{23}{60},\frac{1}{15}g+\frac{43}{60}\right\}$
\\
\vspace{4mm}
{\bf 9(2,1,1):} & $\bbQ(\sqrt{5}), \;\;t^2 - t - 1, \;\;-2, \;\; \left\{0,  1, 2,  g,  g - 1\right\}$
\\
\vspace{4mm}
{\bf 10(1,1)a:} & $\bbQ(\sqrt{-7}), \;\;t^2 - t + 2,\;\; \frac{3}{16}, \;\; \left\{\frac{1}{4}, \frac{1}{2} g + \frac{1}{4}, \frac{1}{2} g - \frac{1}{4}, \frac{1}{2} g - \frac{3}{4}, \frac{3}{4}\right\}$
\\
\vspace{4mm}
{\bf 10(1,1)b:} & $\bbQ(\sqrt{17}), \;\;t^2 - t - 4, \;\;-\frac{1}{2} g - \frac{13}{16}, \;\; \left\{\frac{1}{4}, \frac{1}{2} g + \frac{3}{4}, \frac{3}{4}, \frac{1}{2} g - \frac{1}{4}, \frac{1}{2} g + \frac{1}{4}\right\}$
\\
{\bf 10(2):}
&  $\bbQ(\sqrt{73}), \;\;t^2 - t - 18,\;\; \frac{1}{9} g - \frac{205}{144},$\\
& \hspace{10mm} $\;\; \left\{\frac{1}{6} g + \frac{1}{12}, \frac{1}{6} g - \frac{11}{12}, \frac{1}{6} g + \frac{7}{12}, \frac{1}{3} g - \frac{7}{12}, \frac{1}{3} g - \frac{1}{12} \right\}$\\
\vspace{4mm}
&  $\bbQ(\sqrt{-7}), \;\;t^2 - t + 2,\;\; -\frac{1}{2} g - \frac{5}{16}, \;\; \left\{\frac{1}{4}, \frac{1}{2} g - \frac{1}{4}, \frac{1}{2} g + \frac{1}{4}, \frac{3}{4}, \frac{1}{2} g + \frac{3}{4}\right\}$
\\
{\bf 10(2,1,1)a:}
&  $\bbQ(\sqrt{17}), \;\; t^2 - t - 4,\;\; -\frac{273}{64}, \;\; \left\{\frac{11}{8}, \frac{13}{8}, \frac{19}{8}, \frac{5}{4} g - \frac{5}{8}, \frac{21}{8}\right\}$\\
\vspace{4mm}
&  $\bbQ(\sqrt{-1}), \;\;t^2 + 1,\;\; \frac{3}{8} g - \frac{1}{4}, \;\; \left\{\frac{3}{4} g + \frac{1}{4},  \frac{3}{4} g - \frac{3}{4},   \frac{1}{4} g - \frac{1}{4},  \frac{1}{4} g + \frac{3}{4},   \frac{1}{4} g - \frac{5}{4}\right\}$
\\
{\bf 10(2,1,1)b:}
&  $\bbQ(\sqrt{13}), \;\;t^2 - t - 3, \;\;-\frac{10}{9}, \;\; \left\{\frac{2}{3}, \frac{4}{3}, \frac{5}{3}, \frac{1}{3} g - \frac{2}{3}, \frac{1}{3} g + \frac{1}{3} \right\}$\\
\vspace{4mm}
&  $\bbQ(\sqrt{-7}), \;\; t^2 -t + 2,\;\; -\frac{21}{16}, \;\; \left\{\frac{1}{4}, \frac{7}{4}, \frac{1}{2} g - \frac{1}{4}, \frac{3}{4}, \frac{5}{4}\right\}$
\\
\vspace{4mm}
{\bf 10(3)a:} & $\bbQ(\sqrt{41}), \;\;t^2 - t - 10,\;\; -\frac{29}{16}, \;\; \left\{\frac{1}{4}, \frac{5}{4}, \frac{3}{4}, \frac{1}{2} g - \frac{1}{4}, \frac{7}{4}\right\}$
\\
\vspace{4mm}
{\bf 10(3)b:} & $\bbQ(\sqrt{57}), \;\;t^2 - t - 14, \;\;-\frac{29}{16}, \;\; \left\{  \frac{1}{4}, \frac{3}{4}, \frac{5}{4},  \frac{7}{4},  \frac{1}{2} g - \frac{1}{4}\right\}$
\\
\vspace{4mm}
{\bf 10(3,1,1)} & $\bbQ(\sqrt{337}), \;\;t^2 - t - 84, \;\;-\frac{301}{144}, \;\; \left\{\frac{5}{12}, \frac{19}{12}, \frac{23}{12}, \frac{1}{6}g + \frac{5}{12}, \frac{1}{6}g - \frac{7}{12} \right\}$
\\
\vspace{4mm}
{\bf 10(3,2):} & $\bbQ(\sqrt{193}), \;\;t^2 - t - 48,\;\; -\frac{301}{144}, \;\; \left\{\frac{5}{12}, \frac{19}{12}, \frac{23}{12}, \frac{1}{6} g + \frac{5}{12}, \frac{1}{6} g - \frac{7}{12} \right\}$
\\
\vspace{4mm}
{\bf 12(2):} & $\bbQ(\sqrt{2}), \;\;t^2 - 2, \;\;-\frac{15}{8}, \;\; \left\{\frac{3}{4} g + \frac{1}{2}, \frac{3}{4} g - \frac{1}{2}, \frac{1}{4} g + \frac{1}{2}, \frac{1}{4} g - \frac{3}{2}, \frac{1}{4} g - \frac{1}{2}, \frac{1}{4} g + \frac{3}{2}\right\}$
\\
\vspace{4mm}
{\bf 12(2,1,1)a:} & $\bbQ(\sqrt{17}), \;\;t^2 - t - 4, \;\;-\frac{13}{16}, \;\; \left\{\frac{1}{4}, \frac{3}{4}, \frac{5}{4},  \frac{1}{2} g + \frac{1}{4}, \frac{1}{2} g - \frac{3}{4}, \frac{1}{2} g - \frac{1}{4}\right\}$
\\
{\bf 12(2,1,1)b:}
&  $\bbQ(\sqrt{33}), \;\; t^2 - t - 8,\;\; -\frac{45}{16}, \;\; \left\{\frac{3}{4}, \frac{9}{4},  \frac{5}{4}, \frac{1}{2} g - \frac{3}{4}, \frac{1}{2} g + \frac{1}{4}, \frac{1}{2} g - \frac{1}{4}\right\}$\\
\vspace{4mm}
&  $\bbQ(\sqrt{-7}), \;\;t^2 - t + 2, \;\;-\frac{5}{16}, \;\; \left\{\frac{1}{4}, \frac{3}{4}, \frac{5}{4},  \frac{1}{2} g + \frac{1}{4}, \frac{1}{2} g - \frac{3}{4}, \frac{1}{2} g - \frac{1}{4}\right\}$
\\
\vspace{4mm}
{\bf 12(3):} & $\bbQ(\sqrt{73}), \;\;t^2 - t - 18, \;\;-\frac{301}{144}, \;\; \left\{\frac{1}{6} g - \frac{1}{12},  \frac{5}{12}, \frac{19}{12}, \frac{1}{3} g + \frac{1}{12}, \frac{1}{3} g - \frac{5}{12}, \frac{23}{12}\right\}$
\\
{\bf 12(4):} & $\bbQ(\sqrt{105}), \;\;t^2 - t - 26,\;\; -\frac{95}{48}$, \\
\vspace{4mm}
& \hspace{10mm} $ \;\; \left\{\frac{1}{6} g - \frac{13}{12}, \frac{1}{6} g + \frac{11}{12}, \frac{1}{3} g - \frac{5}{12}, \frac{1}{6} g + \frac{5}{12}, \frac{1}{6} g - \frac{7}{12}, \frac{1}{3} g + \frac{1}{12}\right\}$
\\
{\bf 12(4,2):} & $\bbQ(\sqrt{-15}), \;\;t^2 - t + 4,\;\; -\frac{31}{48}$,\\
\vspace{4mm}
& \hspace{10mm} $ \;\; \left\{ \frac{1}{3} g + \frac{1}{12}, \frac{1}{6} g - \frac{13}{12}, \frac{1}{3} g - \frac{5}{12}, \frac{1}{6} g + \frac{5}{12}, \frac{1}{6} g - \frac{7}{12},  \frac{1}{6} g + \frac{11}{12}\right\}$
\\
{\bf 12(6):} & $\bbQ(\sqrt{33}), \;\;t^2 - t - 8, \;\;-\frac{71}{48}$, \\
\vspace{4mm}
& \hspace{10mm} $ \;\; \left\{\frac{1}{6} g - \frac{13}{12}, \frac{1}{6} g - \frac{7}{12}, \frac{1}{3} g - \frac{5}{12}, \frac{1}{6} g + \frac{5}{12}, \frac{1}{3} g + \frac{1}{12}, \frac{1}{6} g + \frac{11}{12}\right\}$
\\
\vspace{4mm}
{\bf 14(2,1,1):} & $\bbQ(\sqrt{17}), \;\;t^2 - t - 4,\;\; -\frac{21}{16}, \;\; \left\{\frac{1}{4},  \frac{3}{4}, \frac{5}{4}, \frac{7}{4}, \frac{1}{2} g - \frac{1}{4}, \frac{1}{2} g - \frac{3}{4}, \frac{1}{2} g + \frac{1}{4} \right\}$
\\
\vspace{4mm}
{\bf 14(3,1,1):} & $\bbQ(\sqrt{33}), \;\;t^2 - t - 8, \;\;-\frac{29}{16}, \;\; \left\{\frac{1}{4}, \frac{5}{4}, \frac{3}{4}, \frac{1}{2} g - \frac{3}{4}, \frac{1}{2} g + \frac{1}{4}, \frac{1}{2} g - \frac{1}{4}, \frac{7}{4}\right\}$
\\
{\bf 14(3,2):} & $\bbQ(\sqrt{17}), \;\;t^2 - t - 4,\;\; -\frac{29}{16}, \;\; \left\{\frac{1}{4}, \frac{5}{4}, \frac{3}{4}, \frac{1}{2} g - \frac{1}{4}, \frac{1}{2} g - \frac{3}{4}, \frac{1}{2} g + \frac{1}{4}, \frac{7}{4}\right\}$
\end{longtable}

\bibliography{C:/Dropbox/jdoyle}
\bibliographystyle{amsplain}

\end{document}